\documentclass[11pt]{amsart}
\numberwithin{equation}{section}
\usepackage[english]{babel}
\usepackage[T1]{fontenc}
\usepackage{indentfirst}
\usepackage{enumitem}
\usepackage{amsmath,amssymb, amsbsy}
\usepackage{comment}
\usepackage{amsfonts}
\usepackage{hyperref}
\usepackage{cleveref}
\usepackage{esint}
\usepackage{latexsym}
\usepackage{amsthm}
\usepackage{bm}
\usepackage[dvips]{graphicx}
\usepackage{xcolor}
\usepackage{tikz}
\usepackage{pgfplots}
\usetikzlibrary{intersections}
\usepackage{tikz-3dplot}
\usepackage[outline]{contour}
\DeclareGraphicsExtensions{.pdf,.png,.jpg,.eps}
\usepackage{amsmath}
\usepackage{mathdots}
\usepackage{yhmath}
\usepackage{cancel}
\usepackage{color}
\usepackage{siunitx}
\usepackage{array}
\usepackage{multirow}
\usepackage{amssymb}
\usepackage{textcomp}
\usepackage{gensymb}
\usepackage{tabularx}
\usepackage{extarrows}
\usepackage{booktabs}
\usetikzlibrary{fadings}
\usetikzlibrary{patterns}
\usetikzlibrary{shadows.blur}
\usetikzlibrary{shapes}
\usepackage{bbm}
\usepackage[font=small,labelfont=bf]{caption}
\usepackage[utf8]{inputenc}
\usepackage[active]{srcltx}

\allowdisplaybreaks

\usepackage{doi}
\newcommand{\arxiv}[1]{arXiv:\textcolor{magenta}{\href{https://arxiv.org/abs/#1}{#1}}}
\let\arXiv\arxiv

\newcommand{\mf}[1]{\mathbf{#1}}

\newcommand{\be}{\begin{equation}}
	\newcommand{\ee}{\end{equation}}
\newcommand{\mathbbmm}[1]{\text{\usefont{U}{bbm}{m}{n}#1}}
\newcommand{\ind}{\mathbbmm{1}}

\newcommand{\brd}[1]{\mathbb{#1}}
\newcommand{\R}{\brd{R}}

\newcommand{\cG}{\mathcal{G}}
\newcommand{\cO}{\mathcal{O}}
\newcommand{\cV}{\mathcal{V}}

\newcommand{\N}{\brd{N}}
\newcommand{\Z}{\brd{Z}}
\newcommand{\e}{\varepsilon}
\newcommand{\pa}{\partial}
\newcommand{\supp}{\operatorname{spt}}

\newcommand{\D}{\nabla}

\newcommand{\dist}{\operatorname{dist}}

\newcommand{\loc}{{\rm loc}}
\renewcommand{\u}{\mathbf{u}}

\def\O{\Omega}

\newcommand{\norm}[2]{\left\Vert {#1} \right\Vert_{#2}}
\newcommand{\eps}{\varepsilon}
\newcommand\ddfrac[2]{\frac{\displaystyle #1}{\displaystyle #2}}

\newtheorem{teo}{Theorem}[section]
\newtheorem{Corollary}[teo]{Corollary}
\newtheorem{Lemma}[teo]{Lemma}
\newtheorem{Theorem}[teo]{Theorem}
\newtheorem{Proposition}[teo]{Proposition}
\newtheorem{proposition}[teo]{Proposition}
\newtheorem{lemma}[teo]{Lemma}
\theoremstyle{definition}
\newtheorem{Definition}[teo]{Definition}

\newtheorem{remark}[teo]{Remark}

\pgfplotsset{compat=1.18}

\subjclass[2020] {49Q10, 
35R35, 
35P30. 
}
\keywords{Optimal partition; Torsional rigidity; Unstable two-phase obstacle problem; Regularity of nodal set}

\title[Optimal partition and segregation problems driven by torsional rigidity]{Optimal partition and segregation problems\\ driven by torsional rigidity}

\author{Gabriele Fioravanti, Nicola Soave, and Giorgio Tortone}

\address{Gabriele Fioravanti, Nicola Soave, and Giorgio Tortone\newline\indent Dipartimento di Matematica ``G. Peano'' \newline\indent Universit\`a degli Studi di Torino \newline\indent Via Carlo Alberto 10, 10124, Torino, Italy} \email{gabriele.fioravanti@unito.it, nicola.soave@unito.it, giorgio.tortone@unito.it}

\begin{document}

\begin{abstract}
Spectral optimal partition and segregation problems are deeply connected with harmonic maps, eigenfunctions, and the fine structure of nodal sets for linear elliptic equations. In this paper, we show that replacing the spectral energy by torsional rigidity leads to a genuinely different theory. The resulting optimal configurations are governed locally not by harmonic equations, but by torsion-type energies and unstable free boundary problems, thereby creating a natural bridge between optimal partition theory and the analysis of sublinear free boundary phenomena.

We prove existence of optimal torsional partitions and segregated torsional configurations, together with optimal Lipschitz regularity of the associated nonlinear eigenfunctions. We establish a strong unique continuation principle, characterize the admissible vanishing orders and the corresponding blow-up profiles, derive sharp Hausdorff dimension estimates for the nodal set and its singular subset, and prove $C^{1,\alpha}$-regularity of the regular part of the free boundary. The proofs combine variational arguments with Almgren-type and Weiss-type monotonicity formulae adapted to the intrinsically sublinear torsional regime, blow-up analysis, and tools from geometric measure theory.
\end{abstract}

\maketitle

\section{Introduction}

Optimal partition problems for the eigenvalues of the Laplacian, harmonic maps into singular spaces, and variational systems modelling spatial segregation have attracted considerable research interest in recent decades. Paradigmatic examples are provided by the following problems.

\smallskip

 \textbf{Problem 1: Optimal spectral partitions.} Given a bounded smooth domain $\Omega \subset \mathbb{R}^N$, study
\be\label{spec part prob}
 \inf\left\{
 \sum_{i=1}^k \lambda_1(\omega_i) \ \left|\
 \begin{array}{l}
 (\omega_1,\dots,\omega_k) \ \text{with }\omega_i \subset \Omega \text{ open and non-empty},\\
 \omega_i \cap \omega_j = \emptyset \quad \text{for all } i \neq j
 \end{array}
 \right.
 \right\}.
\ee
Here, $\lambda_1(\omega_i)$ denotes the first Dirichlet eigenvalue of the Laplacian on $\omega_i$.

\smallskip

\textbf{Problem 2: Multi-valued harmonic functions with free boundaries.} Given a bounded smooth domain $\Omega \subset \mathbb{R}^N$ and
$k$ non-negative and non-trivial functions $\varphi_1,\dots,\varphi_k \in C^{0,1}(\partial\Omega)$ satisfying the segregation condition
$\varphi_i \cdot \varphi_j \equiv 0$ on $\partial\Omega$ for $i\neq j$, study
\be\label{harm part prob}
 \inf\left\{
 \sum_{i=1}^k \int_{\Omega} |\nabla u_i|^2 \, dx \ \left|\
 \begin{array}{l}
 u_i-\varphi_i \in H^1_0(\Omega),\\
 u_i \cdot u_j \equiv 0 \quad \text{in } \Omega \text{ for all } i\neq j
 \end{array}
 \right.
 \right\}.
\ee

Thanks to the contributions of several research groups, the underlying theory is now well developed. We refer to \cite{Alp, BuBu, BuBuHe, CafLin2, ConTerVer05', OgVe1, OgVe, OgVe2, RaTaTe, TaTe12} for Problem 1 and to \cite{Alp, CafLin1,ConTerVer05,GrSc, OgVe, SoTe15, TaTe12} for Problem 2, as well as to the references therein. In particular, it is known that both problems can be addressed by means of a common strategy, originally developed in the study of nodal sets of harmonic functions and, more generally, of solutions to linear elliptic equations.

The intention of this paper is to address variational partition and segregation problems in a different regime, where the local behavior is dictated by torsion-type energies rather than harmonic ones.
More precisely, we shall study the following problems.

\smallskip

 \textbf{Problem A: Optimal torsional partitions.} Given a bounded smooth domain $\Omega \subset \mathbb{R}^N$, study
\be\label{eq:partition.min}
 \mathcal{L}:=\inf\left\{
 \sum_{i=1}^k \lambda_{2,1}(\omega_i) \ \left|\
 \begin{array}{l}
 \omega_i \subset \Omega \quad \text{open and non-empty},\\
 \omega_i \cap \omega_j = \emptyset \quad \text{for all } i \neq j
 \end{array}
 \right.
 \right\},
\ee
where
\[
 \lambda_{2,1}(\omega)
 :=
 \inf\left\{
 \int_{\omega} |\nabla u|^2\,dx
 \ \left|\
 u \in H^1_0(\omega), \ \|u\|_{L^1(\omega)} = 1
\right. \right\}.
\]
Here, $\lambda_{2,1}(\omega)$ is the optimal constant in the Poincar\'e inequality for the embedding $H^1_0(\omega)\hookrightarrow L^1(\omega)$. Equivalently, $\lambda_{2,1}(\omega)=1/T(\omega)$
where $T(\omega)$ denotes the torsional rigidity of $\omega$.
We refer to \cite{BrDPFr, BrFr, BrFr2, BrFrRu} and to references therein for characterization and properties of $\lambda_{2,1}$.

\smallskip

 \textbf{Problem B: Multi-valued torsional functions with free boundaries.} Given a bounded smooth domain $\Omega \subset \mathbb{R}^N$, $k$ positive numbers $\lambda_1,\dots,\lambda_k \in (0,+\infty)$, and $k$ non-negative and non-trivial functions $\varphi_1,\dots,\varphi_k \in C^{0,1}(\partial\Omega)$ satisfying the segregation condition
$\varphi_i \cdot \varphi_j \equiv 0$ on $\partial\Omega$ for $i\neq j$, study
\be\label{pbB}
 \inf\left\{
 \sum_{i=1}^k \int_{\Omega} \left( |\nabla u_i|^2 - 2\lambda_i u_i \right)\,dx
 \ \left|\
 \begin{array}{l}
 u_i-\varphi_i \in H^1_0(\Omega),\\
 u_i \cdot u_j \equiv 0 \quad \text{in } \Omega \text{ for all } i\neq j
 \end{array}
 \right.
 \right\}.
\ee
Note that, if $(u_1,\dots,u_k)$ is a minimizer, then $-\Delta u_i=\lambda_i$ in its positivity set. Thus, the problem is a natural multi-phase counterpart of the classical torsion problem, with prescribed segregated boundary traces. 

\medskip

Although the difference between Problem A and Problem 1, and between Problem B and Problem 2, may at first appear subtle, it has far-reaching consequences. In the torsional setting, the optimal configurations are governed by a different local geometry, which separates them sharply from the harmonic configurations arising in Problems 1 and 2. As a consequence, most of the techniques developed for Problems 1 and 2 do not extend to Problems A and B, and new ideas and tools are required.

The reason is structural. Problems 1 and 2 are governed by techniques related to nodal sets of harmonic functions and, more generally, to solutions of linear elliptic equations. Problems A and B, on the other hand, are tied to unstable free boundary problems and to nodal sets generated by sublinear equations. This places the analysis in a different regime and leads to new geometric and analytic phenomena.

\subsection{Main results for Problem A} In what follows, we present our main results for Problem A, highlighting the main differences with respect to Problem 1 and briefly describing the strategies used to overcome the main obstacles. We will always implicitly assume that $N \ge 2$ (the case $N=1$ can be treated by explicit calculations).

The first result concerns the existence of a solution. A possible approach consists in adapting the general strategy developed in \cite{BuBuHe} (see also \cite{BuBu}), which provides the existence of an optimal partition in the class of \emph{quasi-open} sets. On the other hand, in order to obtain an open partition, it is convenient to argue as in \cite{CafLin2,ConTerVer05'}, and reformulate the original shape optimization problem as a minimization problem for segregated vector-valued functions: given $k\in \N$, we set
\[
\Sigma_k := \left\{ \xi \in \R^k \ \left| \ \sum_{i\neq j} \xi_i^2 \xi_j^2=0 \right.\right\},
\]
and we denote by $H^1_0(\O,\Sigma_k)$ the class of vector-valued function $\u\in H^1_0(\O,\R^k)$ such that $\u(x) \in \Sigma_k$ for almost every $x \in \Omega$, namely $u_i u_j \equiv 0$ a.e. in $\O$ for every $i\neq j$. Now we consider the minimization problem
\be\label{e:partition.functions}
\ell:= 
\inf\left\{ \int_\O |\nabla \u|^2\,dx \ \left|  \ \u \in H^1_0(\O,\Sigma_k) \text{ such that }\mf{u} \ge 0 \text{ and }\norm{u_i}{L^1(\O)} = 1\text{ for every $i=1,\dots,k$}\right.\right\}.
\ee

\begin{Theorem}\label{t:existence-equivalence} The following statements hold true.
\begin{enumerate}
    \item [($i$)] The minimization problem \eqref{e:partition.functions} has a solution $\u\in H^1_0(\O,\Sigma_k)$.
    \item [($ii$)]  Any minimizer of \eqref{e:partition.functions} is Lipschitz continuous in $\overline{\Omega}$.
    \item [($iii$)] Problems \eqref{eq:partition.min} and \eqref{e:partition.functions} are equivalent, in the sense that $\mathcal{L}=\ell$; if $\u$ is a solution to \eqref{e:partition.functions}, then $(\{u_1>0\},\dots,\{u_k>0\})$ is a solution to \eqref{eq:partition.min}; and, conversely, if $(\omega_1,\dots,\omega_k)$ solves \eqref{eq:partition.min}, then $\u=(u_1,\dots,u_k)$ defined by
    \[
    u_i := \mathrm{argmin}\left\{\int_{\omega_i} |\nabla u|^2\,dx \ \left| \  \text{$u\in H_0^1(\omega_i)$ such that $u \ge 0$ and $\|u\|_{L^1(\omega_i)}=1$}\right.\right\},
    \]
    solves \eqref{e:partition.functions}. In particular, \eqref{eq:partition.min} has a solution. 
\end{enumerate}
\end{Theorem}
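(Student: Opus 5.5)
The plan is to run the three parts in order, with (ii) carrying essentially all of the work.

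\emph{Part (i): existence.} I would use the direct method. The admissible class is non-empty: take $k$ disjoint balls in $\Omega$ and the normalized torsion functions on them. For a minimizing sequence $\u_n$, the Dirichlet energy is bounded, so up to a subsequence $\u_n\rightharpoonup \u$ weakly in $H^1_0(\Omega,\R^k)$. By Rellich, $\u_n\to\u$ strongly in $L^2$ and hence in $L^1$, since $\Omega$ is bounded. Therefore $\u\ge0$ and $\|u_i\|_{L^1}=1$. Up to a further subsequence we have a.e. convergence, which gives $u_iu_j=0$ a.e. Lower semicontinuity of the norm then shows that $\u$ is a minimizer.

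\emph{Part (iii): equivalence.} Suppose $(\omega_i)$ is an open partition. Each $\lambda_{2,1}(\omega_i)$ is attained by a nonnegative $u_i\in H^1_0(\omega_i)$ with $\|u_i\|_{L^1}=1$ (replace $u$ by $|u|$). Extending by zero gives an admissible $\u$, so $\ell\le\mathcal L$.

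For the converse, let $\u$ be a minimizer from (i). By (ii) it is continuous, so $\omega_i:=\{u_i>0\}$ are open, pairwise disjoint and non-empty. Moreover $u_i\in H^1_0(\omega_i)$, since $u_i$ is continuous, vanishes on $\partial\omega_i\cap\Omega$ and on $\partial\Omega$; this is the standard fact $(u-\e)^+\in H^1_0(\omega_i)$ with $(u-\e)^+\to u$. Hence $\mathcal L\le\sum_i\int|\nabla u_i|^2=\ell$, so $\mathcal L=\ell$ and $(\omega_i)$ is optimal.

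If instead $(\omega_i)$ is optimal, the $u_i$ built in the statement give an admissible $\u$ with energy $\mathcal L=\ell$, so $\u$ is a minimizer. The argmin is unique: on $\{u\ge0,\ \|u\|_{L^1}=1\}$ the constraint is linear, $\int u=1$, and the Dirichlet energy is strictly convex.

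\emph{Part (ii): Lipschitz regularity, the main obstacle.} First I would derive the natural differential inequalities. Varying only the $i$-th component within its own support, with the linear constraint $\int u_i=1$, shows that $-\Delta u_i=\lambda_i$ in $\{u_i>0\}$ in the weak sense, where $\lambda_i=\int|\nabla u_i|^2$. Comparing with $(u_i-tu_i\varphi)$-type variations, or using the variation $u_i\mapsto u_i+t\varphi$ with $\varphi\le0$ and renormalizing, gives $-\Delta u_i\le\lambda_i$ in $\Omega$. Next, take $\hat u_i=u_i-\sum_{j\ne i}u_j$ and the competitor obtained by replacing $\u$ in a ball $B$ by the harmonic-type extension $(\hat v)^{\pm}$, suitably renormalized in $L^1$. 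As in Conti--Terracini--Verzini, this yields $-\Delta \hat u_i\ge\lambda_i\chi_{\{u_i>0\}}-\sum_{j\neq i}\lambda_j\chi_{\{u_j>0\}}$. In particular $\u$ belongs to a class of the type $\mathcal S$ with bounded right-hand sides, and is therefore bounded and continuous.

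For the Lipschitz bound I would rather prove local quasi-minimality. On small balls, $\u$ minimizes $\int|\nabla\mf v|^2-2\sum\lambda_i v_i$ among segregated $\mf v$ with the same trace, up to an error $Cr^{N+1}$ coming from the renormalization, since $\|\cdot\|_{L^1}$ varies by $O(r^N\|\u\|_\infty)$. I would then use an Alt--Caffarelli / Caffarelli--Lin style argument. One route is an Alt--Caffarelli--Friedman monotonicity formula, with the torsion term treated perturbatively, to get a pointwise bound for $|\nabla u_i|$ near the interface. Alternatively one can show $\sup_{B_r(x_0)}|\u|\le Cr$ for free boundary points $x_0$, via a Campanato-type decay of the energy $r^{-N}\int_{B_r}|\nabla\u|^2\le C$. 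Interior gradient estimates for $-\Delta u_i=\lambda_i$ then conclude the bound inside each phase. Near $\partial\Omega$ the same reasoning applies using smoothness of $\Omega$ and $\u=0$ there, with barriers of the form $C\,\mathrm{dist}(x,\partial\Omega)$ built from $-\Delta u_i\le\lambda_i$. The delicate point I expect is the uniform decay $r^{-N}\int_{B_r}|\nabla\u|^2\le C$ at interface points, where the sublinear source competes with segregation. I would try a two-phase ACF formula with $\Delta \hat u_i^\pm\ge -\max\lambda_j$.
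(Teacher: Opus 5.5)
Parts (i) and (iii) are correct and follow the paper. Your justification that $u_i\in H^1_0(\{u_i>0\})$ via $(u_i-\e)^+$, and your uniqueness of the argmin by strict convexity, are welcome details the paper leaves implicit.

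The gap is in part (ii), which carries the real content of the theorem: you never prove the interior Lipschitz bound. After deriving the two differential inequalities you extract only continuity. You then start a quasi-minimality/ACF program, but leave its decisive estimate, $r^{-N}\int_{B_r(x_0)}|\nabla\u|^2\le C$ uniformly at interface points, as something you ``would try''. An Alt--Caffarelli--Friedman or Caffarelli--Jerison--Kenig formula applied to the pair $u_i$, $\sum_{j\neq i}u_j$ bounds only the \emph{product} of the two weighted Dirichlet averages. Turning that product bound into a bound on each factor is precisely the nontrivial step, and it is missing. One would need to exploit that $u_i$ is an almost-subsolution and $u_i-\sum_{j\neq i}u_j$ an almost-supersolution. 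The quasi-minimality you want to feed into this is also not free. Its renormalization error is controlled by $\int_{B_r}|\nabla\u|^2$, and the paper's Proposition \ref{p:almost} makes it small only by using the Lipschitz bound itself. Using it to prove Lipschitz regularity therefore needs at least a bootstrap from Caccioppoli/H\"older bounds, which you have not set up.

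The missing idea is that no new free-boundary argument is needed. Set $M=\max_i\lambda_{2,1}(\omega_i)$. Your inequalities give $-\Delta u_i\le M$ and $-\Delta\big(u_i-\sum_{j\neq i}u_j\big)\ge -M$, since the supports are disjoint. This says exactly that $\u\in\mathcal S^*_{M,k}(\Omega)$. For this class the scale-invariant estimate $[u_i]_{C^{0,1}(\overline{B_{r/2}(x_0)})}\le Cr^{-1}\|\u\|_{x_0,r}$ is already known: it is Lemma \ref{l:lipschitz-conti}, from \cite{ConTerVer05}, built on the very Caffarelli--Jerison--Kenig almost-monotonicity formula you were reaching for. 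With that estimate in hand, your boundary step closes the argument exactly as in Theorem \ref{t:Lipschitz}: compare with the torsion function of $\Omega$ to get $0\le u_i\le C\,\mathrm{dist}(x,\partial\Omega)$, then apply the scaled interior estimate.

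One smaller correction concerns the first variation. The perturbation $u_i+t\varphi$ with $\varphi\le0$ is not admissible, because it leaves the cone $\u\ge0$ and can break segregation. Use instead $(u_i-t\varphi)^+/\|(u_i-t\varphi)^+\|_{L^1}$ with $\varphi\ge0$, as in the proof of Proposition \ref{p:classeS}.
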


The main advantages of reformulation \eqref{e:partition.functions} are twofold. First, it allows us to recast the existence of an open solution to \eqref{eq:partition.min} in terms of the regularity of segregated functions, a question that can be addressed through the approach developed in \cite{ConTerVer05}. Furthermore, it provides the extremality conditions for the minimal density collected in Section \ref{sec: basics} below, which turn out to be decisive in the subsequent regularity analysis of the free boundary. In particular, we mention the following:

\begin{Proposition}\label{p:classeS}
Let $\u \in H^1_0(\O,\Sigma_k)$ be a minimizer of \eqref{e:partition.functions} and $i=1,\dots,k$. Then 
\[
-\Delta u_i \leq \lambda_{2,1}(\omega_i)\ind_{\{u_i>0\}},\quad 
  -\Delta \bigg( u_{i}-\sum_{j\neq i}u_{j}\bigg) \ge \lambda_{2,1}(\omega_i)\ind_{\{u_{i}>0\}\cup\{\u=0\}} - \sum_{j\neq i}\lambda_{2,1}(\omega_j)\ind_{\{u_{j}>0\}}\quad \text{in $\O$},
\]
where the previous inequalities are understood in the weak sense. 
\end{Proposition}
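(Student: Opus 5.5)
The plan is to derive both inequalities from first-order variations of the Rayleigh quotients. For $v\in H^1_0(\O)$ with $v\ge0$, $v\not\equiv0$, set $Q(v):=\int_\O|\nabla v|^2\,dx\,/\,\|v\|_{L^1(\O)}^2$. Throughout, $\omega_j:=\{u_j>0\}$ and $\lambda_j:=\lambda_{2,1}(\omega_j)$. By homogeneity, any segregated nonnegative $(v_1,\dots,v_k)$ with all $v_j\not\equiv0$ can be normalized componentwise into an admissible competitor for \eqref{e:partition.functions}. Hence minimality of $\u$ gives
\[
\sum_j Q(v_j)\ge \sum_j Q(u_j)=\ell .
\]
Comparing $u_j$ with competitors supported in $\omega_j$ gives $Q(u_j)=\lambda_j=\int_\O|\nabla u_j|^2\,dx$.

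\textbf{First inequality.} Fix $\varphi\in C^\infty_c(\O)$ with $\varphi\ge0$, and vary only the $i$-th component: $v_i:=(u_i-t\varphi)^+$ for small $t>0$, and $v_j:=u_j$ for $j\neq i$. Segregation is preserved since $v_i\le u_i$, so minimality gives $Q(v_i)\ge\lambda_i$. For the numerator, pointwise $|\nabla w^+|\le|\nabla w|$ gives
\[
\int_\O|\nabla v_i|^2\,dx\le \lambda_i-2t\int_\O\nabla u_i\cdot\nabla\varphi\,dx+O(t^2).
\]
For the denominator, the key refinement is to integrate only over $\omega_i$:
\[
\int_\O v_i\,dx\ \ge\ \int_{\{u_i>0\}}(u_i-t\varphi)\,dx = 1-t\int_{\{u_i>0\}}\varphi\,dx .
\]
Inserting both bounds into $\lambda_i\le Q(v_i)$, expanding to first order in $t$ and letting $t\to0^+$ yields
\[
\int_\O\nabla u_i\cdot\nabla\varphi\,dx\le\lambda_i\int_\O\ind_{\{u_i>0\}}\varphi\,dx,
\]
which is the first inequality. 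If we had used the cruder bound $\int_\O w^+\,dx\ge\int_\O w\,dx$, we would lose the indicator.

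\textbf{Second inequality.} Set $U:=\sum_{j\ne i}u_j$ and $w:=u_i-U+t\varphi$, with $\varphi\ge0$. Define the competitor
\[
v_i:=w^+,\qquad v_j:=(u_j-t\varphi)^+\quad (j\neq i).
\]
This is segregated: if $v_i>0$ then $u_j<t\varphi$ for all $j\ne i$, so every $v_j$ vanishes there; the $v_j$ with $j\ne i$ are segregated because $v_j\le u_j$. Moreover, wherever $w<0$ we have $u_i=0$, so $w^-=(U-t\varphi)^+=\sum_{j\ne i}v_j$ with disjoint supports. Consequently
\[
\sum_j\int_\O|\nabla v_j|^2\,dx=\int_\O|\nabla w|^2\,dx=\sum_j\lambda_j+2t\int_\O\nabla(u_i-U)\cdot\nabla\varphi\,dx+O(t^2).
\]
For the denominators $D_j:=\int_\O v_j\,dx$ we have $|D_j-1|\le Ct$. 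Restricting the integrals as before gives the lower bounds
\[
D_i\ge 1+t\int_{\{u_i>0\}\cup\{\u=0\}}\varphi\,dx,\qquad D_j\ge1-t\int_{\{u_j>0\}}\varphi\,dx\quad(j\neq i).
\]
The first bound uses that $w=u_i+t\varphi$ on $\{u_i>0\}$ and $w=t\varphi$ on $\{\u=0\}$.

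\textbf{Main obstacle: linearizing the sum of quotients.} The difficulty is that $\sum_jQ(v_j)$ has a different denominator in each term, so the telescoped identity for the numerators cannot be used directly. The plan is to write $N_j:=\int_\O|\nabla v_j|^2\,dx=\lambda_j+a_j$ and $D_j=1+b_j$. Then
\[
\frac{N_j}{D_j^2}=\lambda_j+a_j-2\lambda_jb_j+O(|a_j||b_j|+b_j^2).
\]
Since $v_j\to u_j$ in $H^1$ as $t\to0$, we have $a_j=o(1)$, and $b_j=O(t)$, so the remainder is $o(t)$. The inequality $\sum_jQ(v_j)\ge\sum_j\lambda_j$ then becomes
\[
\sum_j\big(N_j-2\lambda_jD_j\big)\ \ge\ -\sum_j\lambda_j+o(t),
\]
which is the linearization of $\sum_j\bigl(\int_\O|\nabla v_j|^2\,dx-2\lambda_j\int_\O v_j\,dx\bigr)$. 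Now insert the exact expression for $\sum_jN_j$ and the lower bounds on $D_j$; they enter with negative coefficients, so they give an upper bound on the left-hand side. Dividing by $2t$ and letting $t\to0^+$ produces
\[
\int_\O\nabla\Big(u_i-\sum_{j\ne i}u_j\Big)\cdot\nabla\varphi\,dx\ \ge\ \lambda_i\int_{\{u_i>0\}\cup\{\u=0\}}\varphi\,dx-\sum_{j\ne i}\lambda_j\int_{\{u_j>0\}}\varphi\,dx,
\]
which is the weak form of the second inequality. The same $o(t)$ bookkeeping, with $a_j=o(1)$ obtained from $H^1$-continuity, also justifies the first-order expansion in the first part.
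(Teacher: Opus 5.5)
Your proof is correct and is essentially the paper's argument. The competitors are the same: on $\{u_j>0\}$ one has $(\hat u_i+t\varphi)^-=(u_j-t\varphi)^+$, so the paper's $v_j$ is your normalized $(u_j-t\varphi)^+$. Both inequalities then come from the same first-order expansion in $t$.

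The only difference is bookkeeping. You use one-sided bounds (restricting the $L^1$ integrals to the positivity sets, and $|\nabla w^+|\le|\nabla w|$) and linearize each quotient separately. The paper instead uses asymptotic expansions and a rearrangement through $m_i^2-m_j^2$. Your version has the small advantage of avoiding the paper's claimed equality for $\int_\O|\nabla(u_i-t\varphi)^+|^2\,dx$, which holds in general only as an upper bound.
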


The proposition suggests that the reference scalar equation for our vector-valued problem is not a linear equation, as in the case of spectral partitions, but rather 
\be\label{2pop}
-\Delta w=\lambda_+ \ind_{\{w> 0\}} - \lambda_- \ind_{\{w<0\}},
\ee
which is the equation of the \emph{unstable two-phase obstacle problem}, studied in \cite{SoaTer18}. This can be interpreted as the limiting case of the family of sublinear equations 
\be\label{sube}
-\Delta w=\lambda_+ (w^+)^{q-1} - \lambda_- (w^-)^{q-1}, \quad q \in (1,2),
\ee
as $q \to 1^+$, for which we refer to \cite{SoaTer18} (see also \cite{AnShWe2, MoWe, Ru, SoaTor24, SoaWet18} for strictly related results). Unlike in the linear case, an Almgren monotonicity formula is not available for \eqref{2pop} and \eqref{sube}; such a formula proves to be a fundamental tool in the analysis of the nodal set and of the free-boundary, defined by 
\[
Z(\u):=\{\u=0\} \cap \Omega \quad \text{and} \quad \Gamma(\u):=\bigcup_{i=1}^k \pa\{u_i>0\} \cap \Omega,
\]
respectively. Consequently, this study requires a substantially different approach compared to the linear case, as already extensively observed in the aforementioned contributions. In our case, the first result in the nodal analysis is a strong unique continuation principle.

\begin{Definition}\label{D:VOi}
Let $\u \in H^1(\Omega,\Sigma_k)$ and $x_0 \in \Omega$. We define the \emph{vanishing order of $\u$ at $x_0$} as
\[
\mathcal{O}(\mf{u},x_0):= \sup\left\{\gamma>0\left| \ \limsup_{r \to 0^+} \frac{\|\u\|_{x_0,r}}{r^{\gamma}} <+\infty \right. \right\},
\]
where
\[
\|u\|_{x_0,r}^2:= \frac{1}{r^{N-2}} \int_{B_r(x_0)} |\nabla u|^2\,dx + \frac{1}{r^{N-1}} \int_{\partial B_r(x_0)} u^2\,d\sigma \quad \text{and} \quad \|\mf{u}\|_{x_0,r}^2:= \sum_{i=1}^k \|u_i \|_{x_0,r}^2.
\]
\end{Definition}

\begin{remark}
    By the Poincar\'e inequality
    \be\label{Poinc norm}
\frac{1}{r^N} \int_{B_r(x_0)} u^2\,dx \le \frac{1}{N-1} \left( \frac{1}{r^{N-2}} \int_{B_r(x_0)} |\nabla u|^2\,dx + \frac{1}{r^{N-1}} \int_{\partial B_r(x_0)} u^2\,d\sigma\right),
\ee
the norm $\|\cdot\|_{x_0,r}$ is equivalent to the usual $H^1$-norm on $B_r(x_0)$, in its scale-invariant form.
\end{remark}

\begin{Theorem}\label{prop: non-deg}
If $\u\in H^1_0(\O,\Sigma_k)$ is a minimizer of \eqref{e:partition.functions}, then $\u$ is \emph{$2$-non-degenerate}, in the sense that 
\[
\liminf_{r \to 0^+} \frac{\|\u\|_{x_0,r}}{r^2} >0 \qquad \text{for all } x_0 \in Z(\u).
\]
In particular, $\cO(\mf{u},x_0) \in [1, 2]$ for every $x_0 \in Z(\u)$, the set $Z(\u)$ has empty interior, and $Z(\u) = \Gamma(\u)$.
\end{Theorem}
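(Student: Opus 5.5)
The plan is to obtain non-degeneracy from the subsolution/supersolution structure in Proposition~\ref{p:classeS}, combined with a comparison argument in which a localized competitor lowers the energy. The model is the unstable obstacle problem. There, non-degeneracy is proved by testing against $w - \lambda|x-x_0|^2/(2N)$, or by a direct variational competitor.

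\textbf{Step 1 (non-degeneracy via superharmonicity of the difference).} Fix $x_0\in Z(\u)$. Since $Z(\u)\cap\Omega$ is closed with $\u$ Lipschitz by Theorem~\ref{t:existence-equivalence}, one of two situations holds for every $r>0$. Either $\u\equiv0$ on $B_r(x_0)$ (the interior case, to be excluded), or some component, say $u_i$, is positive somewhere near $x_0$.

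\textbf{Step 2 (excluding interior vanishing).} Suppose $\u\equiv0$ on a ball $B\subset\Omega$. Take $\varphi\ge0$ supported in $B$. Set $\tilde u_1=u_1+\eps\varphi$ and renormalize $\tilde u_1$ in $L^1$. The Dirichlet energy changes by $\eps^2\int|\nabla\varphi|^2$, while the $L^1$ norm grows by $\eps\int\varphi$. The quotient therefore decreases at first order by $2\lambda_{2,1}(\omega_1)\eps\int\varphi$, which contradicts minimality. Segregation is preserved because $\varphi$ lives where all $u_j$ vanish.

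\textbf{Step 3 (2-non-degeneracy).} Take $x_0\in Z(\u)$. Since $x_0$ is not interior to $Z$ by Step 2, there are points $y\to x_0$ with $u_i(y)>0$ for some fixed $i$ along a subsequence. Set $w=u_i-\sum_{j\ne i}u_j$ and $\lambda=\lambda_{2,1}(\omega_i)$. By Proposition~\ref{p:classeS},
\[-\Delta w\ge \lambda\ind_{\{w>0\}\cup\{\u=0\}}-\sum_{j\neq i}\lambda_{2,1}(\omega_j)\ind_{\{u_j>0\}}.\]
On the connected component $D$ of $\{w>0\}$ containing $y$, this gives $-\Delta w\ge\lambda$. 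Define $h(x)=w(x)-\frac{\lambda}{2N}|x-y|^2$, which satisfies $-\Delta h\ge 0$ in $D\cap B_r(y)$, with $h(y)>0$. Then $\min h$ is attained on $\partial(D\cap B_r(y))$. On $\partial D$ we have $w=0$, so $h<0$ there. Hence $\min h\le h(y)$ is consistent, but the \emph{maximum principle} is what is needed, and it goes the wrong way for a supersolution.

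So instead I will use the subsolution inequality $-\Delta u_i\le\lambda\ind_{\{u_i>0\}}$. This means $h=u_i+\frac{\lambda}{2N}|x-y|^2$ is subharmonic on $\{u_i>0\}\cap B_r(y)$. Since $h(y)>0$ and $h\le \frac{\lambda}{2N}r^2$ on $\partial\{u_i>0\}$, this does not force growth either. Genuine growth must come from the equation $-\Delta u_i=\lambda$ in $\{u_i>0\}$ (equality) together with the variational argument of Step 2 quantified. I will therefore argue by contradiction and blow-up. Suppose $\|\u\|_{x_0,r_n}=o(r_n^2)$ and rescale $\u_n(x)=\u(x_0+r_nx)/r_n^2$. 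By Lipschitz-type and Caccioppoli bounds, $\u_n\to0$ in $H^1(B_1)$. Each $\u_n$ is a local minimizer of the rescaled energy $\int|\nabla \mathbf v|^2-2\sum\lambda_j v_j$, after linearizing the $L^1$ constraint through Lagrange multipliers. The competitor $\mathbf v$ that replaces $\u_n$ on $B_{1/2}$ by $\max(u_{n,1},c\,\psi)$ pasted appropriately, with $\psi$ a fixed bump, lowers the energy by a fixed positive amount $\sim\lambda c$, minus $o(1)$. This is a contradiction. The main obstacle is making this competitor compatible with segregation: the other components must be cut down on the support of $\psi$, and that costs $o(1)$ since they tend to $0$ in $H^1$.

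\textbf{Step 4 (conclusions).} Lipschitz continuity gives $\|\u\|_{x_0,r}\lesssim r$, so $\cO\ge1$, and non-degeneracy gives $\cO\le2$. Empty interior follows from Step 2. For $Z=\Gamma$: any point of $\Gamma$ lies in $Z$ by continuity. Conversely, a point of $Z$ not in $\Gamma$ would have a neighborhood on which $\u\equiv0$, contradicting the empty interior.
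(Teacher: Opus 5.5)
Your final argument in Step 3 is correct, and at its core it uses the same competitor mechanism as the paper. Under the degeneracy hypothesis $\|\u\|_{x_0,r_n}=o(r_n^2)$, you add a nonnegative bump of height $\sim r_n^2$ to one component near $x_0$ and cut the other components off. The cut-off costs $o(r_n^{N+2})$, while the bump gains a fixed multiple of $r_n^{N+2}$.

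The difference is how the $L^1$-constraint is handled. The paper never leaves the constrained problem. It takes $u_1+w_n$, with $w_n$ the rescaled torsion function of $B_{r_n}(x_0)$ for $\underline{\Lambda}=\min_i\lambda_{2,1}(\omega_i)$, renormalizes every component, and expands the normalized quotients to first order. This gives the gain $r_n^{N+2}(\underline{\Lambda}-2\lambda_{2,1}(\omega_1))\int_{B_1}w<0$, where the term $-2\lambda_{2,1}(\omega_1)$ is exactly the effect of the renormalization.

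You instead rescale by $r_n^2$ and compare with the unconstrained functional $J_{\Lambda,B_1}$. The rescaled energies of $\u_n$ tend to $0$, while the competitor with $u_{n,1}+c\psi$ has energy tending to $c^2\int|\nabla\psi|^2-2\lambda_{2,1}(\omega_1)c\int\psi<0$ for small $c$. This is clean, but your justification is not literally true. The $\u_n$ are not local minimizers ``after linearizing the $L^1$ constraint through Lagrange multipliers''; they are only almost-minimizers. What you need is Proposition~\ref{p:almost}. Its rescaled form \eqref{eq:alm:min:rescaled} with $\rho_r=r^2$ has error $C r_n^{N-1}\to0$ on $B_1$, which uses $N\ge2$. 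Its proof uses only Lipschitz continuity, not non-degeneracy, so citing it here is not circular.

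Alternatively, for your specific competitor you can expand by hand, which is exactly the paper's computation. Since the mass changes are $\delta_i=O(r_n^{N+2})$, the quadratic errors from $1/(1+\delta_i)^2$ are $O(r_n^{2N+4})=o(r_n^{N+2})$. So the paper's route is self-contained at this point, while yours front-loads the heavier almost-minimality statement (which the paper needs later for the blow-ups anyway).

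Minor points:
\begin{itemize}
\item Delete the two maximum-principle attempts at the start of Step 3; as you note yourself, they lead nowhere.
\item $\u_n\to0$ in $H^1(B_1)$ follows directly from $\|\u_n\|_{0,1}=\|\u\|_{x_0,r_n}/r_n^2$; no Caccioppoli estimate is needed.
\item Using $u_{n,1}+c\psi$ instead of $\max(u_{n,1},c\psi)$ avoids needing uniform smallness of $u_{n,1}$.
\item Step 2 is correct but redundant, since non-degeneracy already rules out $\u\equiv0$ on a ball.
\item Step 4 matches the paper.
\end{itemize}
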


This result provides universal bounds on the admissible vanishing orders for nonlinear eigenfunctions, and establishes that any optimal partition exhausts the whole domain $\Omega$, namely $\bigcup_i \overline{\omega_i}=\overline{\Omega}$. This is a non-trivial fact, since in Problem A such a condition is not required (see \eqref{eq:partition.min}). As observed in \cite{SoaTer18}, the existence of universal bounds on admissible vanishing orders is a distinctive feature of sublinear equations, in sharp contrast with the linear case and, consequently, with spectral partition problems of type \eqref{spec part prob}.

The next substantial step is the exact characterization of the admissible vanishing orders, together with the blow-up analysis at points of the nodal set. Once again, this step requires ingredients that differ from those used in the spectral case, and the sublinear nature of the problem becomes even more apparent. 

This is reflected in the following dichotomy. At points with vanishing order $\cO(\u,x_0) \in [1,2)$, we prove perturbed Almgren-type monotonicity formulae. The corresponding limiting profiles are $\cO(\u,x_0)$-homogeneous and harmonic-like; namely, they belong to the same class of admissible blow-up profiles as in problem \eqref{spec part prob}. By contrast, at points with quadratic vanishing order - the natural scaling order for equation  \eqref{2pop} - an Almgren formula does not seem to be available. Instead, a two-parameter family of Weiss-type monotonicity formulae turns out to be useful. This approach is inspired by \cite{SoaTer18}, but it has to be suitably adapted to the multi-phase setting. This allows one to prove the convergence of blow-up sequences to a different class of homogeneous limiting profiles, related to equation \eqref{2pop}. The precise statements related to the blow-up analysis are postponed to Section \ref{sec: blow-up}, since they are rather lengthy and require additional notation.

Here, we highlight another fundamental ingredient of the analysis, which is also of independent interest. In order to classify the admissible profiles and derive useful information for the study of the free boundary, it is essential to transfer the minimality property from $\u$, a minimizer of \eqref{e:partition.functions}, to its blow-up limits. 
This passage, however, is particularly delicate and, in fact, impossible in this form, due to the presence of the $L^1$-normalizations in the definition of problem \eqref{e:partition.functions}. To overcome this obstacle, we prove that solutions of \eqref{e:partition.functions} can be locally characterized as \emph{almost-minimizers} of an unconstrained problem, as follows.

\begin{proposition}\label{p:almost}
    Let $\u\in H^1_0(\O,\Sigma_k)$ be a minimizer of \eqref{e:partition.functions}, $\Lambda:= (\lambda_{2,1}(\{u_1>0\}),\dots,\lambda_{2,1}(\{u_k>0\}))$, and $K \subset \Omega$ be compact. There exist $C>0$ and $r_0>0$ such that, for every $x_0 \in Z(\u) \cap K$ and every $r\in(0,r_0)$, the following almost-minimality condition is satisfied:
\be\label{e:almost-minimality}
\int_{B_r(x_0)} \Big(|\nabla \u|^2 - 2 (\Lambda \cdot \u)\Big)\,dx \leq  \int_{B_r(x_0)}  \Big(|\nabla \mathbf{w}|^2 - 2 (\Lambda \cdot \mathbf{w})\Big)\,dx  + C r^{2N+1},
\ee
for every $\mathbf{w} \in H^1(B_r(x_0),\Sigma_k)$ such that $\mf w=\u$ on $\partial B_r(x_0)$.
\end{proposition}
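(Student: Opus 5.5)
The plan is to compare $\u$ with a competitor obtained by replacing $\u$ by $\mathbf{w}$ inside $B_r(x_0)$ and then restoring the $L^1$ constraints by rescaling each component. Given $\mathbf{w}$ as in the statement, set $\tilde{\mathbf{w}} := \mathbf{w}$ in $B_r(x_0)$ and $\tilde{\mathbf{w}} := \u$ in $\Omega\setminus B_r(x_0)$. Then $\tilde{\mathbf{w}}\in H^1_0(\O,\Sigma_k)$.

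First we reduce to a convenient class of competitors. Replacing $\mathbf{w}$ by $|\mathbf{w}|$ componentwise preserves $\Sigma_k$, the boundary datum (since $\u\ge0$) and the Dirichlet energy, and it can only increase $\Lambda\cdot\mathbf{w}$. So we may assume $\mathbf{w}\ge0$. Let $J_r(\mathbf{v})$ denote the functional on $B_r(x_0)$. We may also assume $J_r(\mathbf{w})\le J_r(\u)$, since otherwise there is nothing to prove.

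Next we bound $\mathbf{w}$ a priori. By Theorem \ref{t:existence-equivalence}(ii), $\u$ is Lipschitz, and $\u(x_0)=0$, so $|\u|\le Lr$ on $\partial B_r(x_0)$. We can further replace $\mathbf{w}$ by its truncation $\min(w_i, Lr + C_0 r^2)$ and by its harmonic-type replacement. More robustly, comparing $J_r(\mathbf{w})\le J_r(\u)$ and using Poincaré in the form \eqref{Poinc norm} together with Hölder on the linear term gives
\[
\int_{B_r}|\nabla \mathbf{w}|^2\le C r^{N},\qquad \int_{B_r}|\mathbf{w}|\le C r^{N+1}.
\]
Hence $m_i:=\int_{B_r} w_i$ and $n_i:=\int_{B_r}u_i$ are both $O(r^{N+1})$.

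Now set $t_i := \|\tilde w_i\|_{L^1(\Omega)} = 1 + (m_i - n_i)$, which equals $1+O(r^{N+1})$. Then $\mathbf{v}:=(\tilde w_i/t_i)_i$ is admissible for \eqref{e:partition.functions}, so
\[
\sum_i \int_\O |\nabla u_i|^2 \le \sum_i t_i^{-2}\int_\O|\nabla \tilde w_i|^2 .
\]
Write $D_i := \int_\O |\nabla u_i|^2=\lambda_{2,1}(\omega_i)=\Lambda_i$, and let $\delta_i$ be the change of Dirichlet energy inside the ball, so that $\int_\O|\nabla\tilde w_i|^2 = D_i+\delta_i$ with $\delta_i = O(r^N)$. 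Expanding $t_i^{-2} = 1 - 2(m_i-n_i) + O((m_i-n_i)^2)$ gives
\[
0\le \sum_i \Big[\delta_i - 2\Lambda_i (m_i-n_i)\Big] + O\big(|\delta_i|\,|m_i-n_i|\big)+O\big((m_i-n_i)^2\big).
\]
The bracket is exactly $J_r(\mathbf{w})-J_r(\u)$, where we use that the Lagrange multiplier equals $D_i$, i.e.\ that the energy of $u_i$ equals $\Lambda_i$ with the normalization $\|u_i\|_{L^1}=1$. The error terms are $O(r^N\cdot r^{N+1}) + O(r^{2N+2}) = O(r^{2N+1})$, which yields \eqref{e:almost-minimality}.

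The main obstacle is the a priori control of $\mathbf{w}$, namely $\int_{B_r}|\nabla\mathbf{w}|^2\lesssim r^N$ and $\int_{B_r}|\mathbf{w}|\lesssim r^{N+1}$, needed uniformly in $x_0\in Z(\u)\cap K$ and $r<r_0$. For this we use the Lipschitz bound of $\u$ uniformly on $K$, together with the inequality
\[
\int|\nabla \mathbf{w}|^2\le J_r(\u)+2\Lambda\!\cdot\!\!\int \mathbf{w}.
\]
We then control $\int \mathbf{w}$ by $\int|\u| + C r^{N/2+1}\|\nabla(\mathbf{w}-\u)\|_{L^2}$ via Poincaré on $H^1_0(B_r)$ and absorb. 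The choice of $r_0$ only ensures that $t_i\in(1/2,2)$.
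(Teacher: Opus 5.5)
Your proposal is correct and follows essentially the same route as the paper: the same competitor ($\mathbf{w}$ glued to $\mathbf{u}$ outside $B_r(x_0)$ and renormalized componentwise in $L^1$), the same reductions to $\mathbf{w}\ge 0$ and $J_r(\mathbf{w})\le J_r(\mathbf{u})$, and the same Poincar\'e--Young absorption. That absorption gives $\int_{B_r}|\nabla \mathbf{w}|^2\lesssim r^N$ and $L^1$ discrepancies $\lesssim r^{N+1}$, hence the $r^N\cdot r^{N+1}$ error. The differences are cosmetic: the paper first writes the exact weighted inequality $\sum_i t_i^{-2}\bigl(T_i(w_i,r)-T_i(u_i,r)\bigr)\ge 0$, with $T_i(g,r)=\int_{B_r}(|\nabla g|^2-2\Lambda_i g)\,dx$ (the quadratic term $-\Lambda_i(t_i-1)^2$ has a favorable sign), and then strips the weights, while you Taylor-expand $t_i^{-2}$ directly. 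Your aside about truncating $\mathbf{w}$ should simply be dropped, since truncation can increase the functional and is not needed.
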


With the results obtained in the blow-up analysis at our disposal, we can proceed with the study of the regularity of the free boundary. First, in view of the harmonic-like local behavior of minimizers close to any nodal point, we inherit from \cite{CafLin1, TaTe12, SoTe15} a universal gap between the lower admissible vanishing order and the higher ones. This allows us to conveniently define a closed \emph{singular subset} of $Z(\u)$. 

Second, by using Federer's dimension reduction principle, we obtain optimal estimates on the Hausdorff dimension of the nodal set and of its singular subset. Finally, we study the regularity of the regular set by means of geometric measure theory techniques, suitably combined with the Euler-Lagrange equations in Proposition \ref{p:classeS}. Collecting the previous results, we obtain the following statement.

\begin{Theorem}\label{thm: fb}
    Let $\u$ be a minimizer of \eqref{e:partition.functions}. Then the following holds true.
    \begin{enumerate}
        \item [($a$)] Let $x_0 \in Z(\u)$. Then, either $\cO(\u,x_0) =1$ or $\cO(\u,x_0) \ge 3/2$. The singular set
    \[
    \Sigma(\u):= \{x_0 \in Z(\u) \ \left| \ \cO(\u,x_0)>1\right.\},
    \]
    is a relatively closed subset of $Z(\u)$, and:
    \begin{enumerate}
        \item[($i$)] $\mathrm{dim}_{\mathcal{H}}(Z(\u)) \le N-1$;
        \item[($ii$)] $\mathrm{dim}_{\mathcal{H}}(\Sigma(\u))  \le N-2$, and $\Sigma(\u)$ is a discrete set if $N=2$.
    \end{enumerate} 
        \item [($b$)] The regular set $\mathcal R(\u) := Z(\u) \setminus \Sigma(\u)$ is locally a $C^{1,\alpha}$-regular hypersurface, for every $\alpha \in (0,1)$; more precisely, if $x_0 \in \mathcal{R}(\u)$, then there exist two indices $i \neq j$ in $\{1,\dots,k\}$ and $r>0$ such that
        \[
        u_i\not\equiv0, \ u_j\not\equiv0  \ \text{ in }B_r(x_0), \quad \text{and } \quad u_h\equiv0 \ \text{ in }B_r(x_0) \quad \text{for every }h\not=i,j, 
        \]
    $w:= u_i-u_j$ solves \eqref{2pop} in $B_r(x_0)$ with $\lambda_+=\lambda_{2,1}(\{u_i>0\})$, $\lambda_-=\lambda_{2,1}(\{u_j>0\})$, and $\nabla w(x_0) \neq 0$.
    \end{enumerate}
\end{Theorem}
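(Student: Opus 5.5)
The proof assembles the blow-up analysis of Section \ref{sec: blow-up} with classical tools for segregated harmonic-like configurations. Fix $x_0\in Z(\u)$; by Theorem \ref{prop: non-deg} we have $\cO(\u,x_0)\in[1,2]$. We treat the two vanishing-order regimes separately.

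\emph{Case $\cO(\u,x_0)\in[1,2)$.} The perturbed Almgren formula gives convergence of blow-up sequences to nontrivial $\cO(\u,x_0)$-homogeneous profiles. By Proposition \ref{p:almost}, with the error $Cr^{2N+1}$ vanishing after rescaling, these profiles are local minimizers of the Dirichlet energy in $H^1(\R^N,\Sigma_k)$. They therefore belong to the class of segregated harmonic profiles of \cite{CafLin1,TaTe12,SoTe15}. For this class it is known that the homogeneity is either $1$, with profile $a(x\cdot\nu)^+e_i+a(x\cdot\nu)^-e_j$, or at least $3/2$; this is the universal gap. At $x_0$ with $\cO(\u,x_0)=2$ the gap is trivially satisfied.

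\emph{Closedness of $\Sigma(\u)$.} We show that $\{\cO=1\}$ is relatively open in $Z(\u)$. First, $x\mapsto\cO(\u,x)$ is upper semicontinuous on $Z(\u)$, since it is the limit of monotone almost-Almgren quotients that are continuous in $x$, with errors uniform on compacts. Upper semicontinuity together with the gap then gives openness.

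\emph{Dimension estimates.} We run Federer's reduction principle on the class of blow-up limits. The required ingredients are:
\begin{itemize}
\item compactness of blow-ups in $H^1_{\loc}$ and local uniform convergence, from the Lipschitz bounds of Theorem \ref{t:existence-equivalence};
\item upper semicontinuity of the nodal set under convergence, via non-degeneracy;
\item homogeneity of the limits, which for order-$2$ points comes from the Weiss family.
\end{itemize}
Since $Z(\u)$ has empty interior, no limit profile vanishes on an open set, so $\dim Z\le N-1$. For $\Sigma$ we must exclude singular profiles that are translation invariant in $N-1$ directions. Such a profile is one-dimensional, and a one-dimensional segregated minimizer, harmonic-like or solving \eqref{2pop}, has only simple zeros with order $1$. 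This gives $\dim\Sigma\le N-2$. When $N=2$, the same homogeneity argument shows that $\Sigma$ has no accumulation points, so it is discrete.

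\emph{Regular part (b).} At $x_0\in\mathcal R(\u)$ the blow-up is $a(x\cdot\nu)^+e_i+a(x\cdot\nu)^-e_j$ with $a>0$. By uniform convergence and non-degeneracy, in a small ball all components $u_h$ with $h\ne i,j$ must vanish. Otherwise their supports would be points where the rescalings stay bounded below, contradicting convergence to a profile with only two phases; this uses the $\liminf$ bound of Theorem \ref{prop: non-deg} applied at nearby free-boundary points of $u_h$.

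With only two phases present, Proposition \ref{p:classeS} yields two-sided inequalities for $w=u_i-u_j$, so $-\Delta w=\lambda_+\ind_{\{w>0\}}-\lambda_-\ind_{\{w<0\}}$. The equation holds exactly because $\{w=0\}$ has zero measure: it is contained in $Z(\u)$, which has dimension at most $N-1$. Then $w\in C^{1,\alpha}$ for every $\alpha<1$ by Calderón–Zygmund estimates, since the right-hand side is in $L^\infty$. Moreover $\nabla w(x_0)=a\nu\neq0$, because the blow-up is linear with nonzero slope and the $C^1$ convergence follows from the $C^{1,\alpha}$ bounds. The implicit function theorem then gives that $Z(\u)\cap B_r=\{w=0\}$ is a $C^{1,\alpha}$ hypersurface.

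\emph{Main obstacle.} The delicate step is the dimension reduction at points of order exactly $2$. There Almgren's formula is unavailable and homogeneity of blow-ups must be extracted from the Weiss-type formulae. One must also check that the one-dimensional $2$-homogeneous profiles of \eqref{2pop} do not produce nodal points of order greater than $1$, or that they produce nodal sets of codimension at least $2$. I would first try to classify one-dimensional profiles explicitly via ODE analysis.
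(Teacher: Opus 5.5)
Your overall architecture (blow-ups, the harmonic gap, Federer reduction, \eqref{2pop} for $w=u_i-u_j$) matches the paper's. However, two steps rest on arguments that do not work, and both sit exactly where the torsional problem departs from the harmonic one.

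\textbf{Two-phase localization in ($b$).} Your argument here fails. Theorem~\ref{prop: non-deg} only gives $\|\u\|_{y,r}\gtrsim r^2$, and this is too weak for two reasons. First, it bounds the whole vector, so at a free-boundary point of $u_h$ it is already saturated by the neighbouring phases and says nothing about $u_h$ itself. Second, it is quadratic, whereas at an order-one point the blow-ups are divided by $(r^{1-N}H(\u,x_0,r))^{1/2}=r^{1+o(1)}$, so after rescaling the lower bound is $r^{1-o(1)}\to0$. Hence islands of a third phase lying within $o(r_n)$ of the interface are compatible with $C^{0,\alpha}$ convergence to $c(x_N^+,x_N^-,0,\dots,0)$.

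The paper excludes such islands differently. Uniform-in-center Almgren and doubling estimates (Lemmas~\ref{lem: alm unif} and \ref{lem: doub unif}, which require $\cO=1$ on a whole neighbourhood of $x_0$ in $Z(\u)$) give compactness of blow-ups with varying centers (Proposition~\ref{T:blow:up=1}). This yields Reifenberg flatness of $\mathcal R(\u)$, and then the clean-up property that $B_{R_0}(x_0)\setminus Z(\u)$ has exactly two connected components. Only then does the strong maximum principle assign one phase to each component. After that, your derivation of \eqref{2pop}, the $C^{1,\alpha}$ regularity and $\nabla w(x_0)\neq0$ are fine.

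\textbf{Closedness of $\Sigma(\u)$.} This is needed before the previous step, but you base it on the claim that $\cO(\u,\cdot)$ is a limit of almost-monotone Almgren quotients with errors uniform on compacts. That is not available, for three reasons:
\begin{enumerate}
\item Lemma~\ref{p:Almgren-below} holds only where $\cO<2$, with $C$ and $r_0$ depending on the point through the non-uniform lower bound of Lemma~\ref{L:O<2:degenerate}.
\item At points with $\cO=2$ there is no Almgren formula at all.
\item The uniform version presupposes the very openness you want to prove.
\end{enumerate}
The paper instead uses Proposition~\ref{prop: upper}, proved by the iterative Weiss argument of Section~\ref{sec: upper}. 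It applies the same proposition along sequences of rescaled functions to verify the singular-set hypothesis of Federer's principle for $\Sigma$; your list of ingredients omits this, since it only addresses $Z$. This semicontinuity across the quadratic threshold is the real obstacle. The last step of the dimension reduction is quick: a $2$-homogeneous one-dimensional profile in $\mathcal T_\lambda$ would be $-\lambda_i x_N^2/2<0$.

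For the dichotomy alone, a shorter repair exists. Take $\gamma\in(1,3/2)$ and $x_n\in\Sigma(\u)$ with $x_n\to x_0$.
\begin{itemize}
\item Lemma~\ref{lem: char O W} gives $W_{\gamma,1}(\u,x_n,0^+)=0$.
\item The Lipschitz bound gives $\Phi_\gamma(\u,x_n,s)\le Cs^{2-2\gamma}$ uniformly in $n$, hence $W_{\gamma,1}(\u,x_n,r)\ge -Cr^{3-2\gamma}$.
\item Letting $n\to\infty$ and applying Lemma~\ref{lem: char O W}($ii$) at $x_0$ rules out $\cO(\u,x_0)=1$.
\end{itemize}
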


Thus, despite the fact that the analysis in the case under consideration is substantially more involved than in the spectral case \eqref{spec part prob}, one can recover the same optimal estimates on the dimensions of the nodal and singular sets, and on the regularity of the free boundary.

We conclude the presentation of the results concerning Problem A with two additional questions.

\begin{remark}\label{rem: further}
$1$) In light of the previous theorem, a first question concerns the properties of the possible limiting configurations associated with singular points. Focusing on the two-dimensional case ($N=2$), for the classical spectral problem \eqref{spec part prob} it is known that all admissible limiting profiles have half-integer vanishing orders and, up to rotations, the densities are given by the restriction of $\operatorname{Re}(z^d)$ to certain connected components of its positivity set, with $d \in \mathbb Z/2$. We refer to \cite{ConTerVer05', TaTe12} and also to \cite[Theorem 1.4]{SoTe15} for more details. This implies that, in dimension $N=2$, the free boundary consists of smooth arcs meeting at singular points \emph{with equal angles}.

For Problem A, the picture is different in several aspects. In view of the upper bound on the possible vanishing orders, it is natural to ask whether this universal bound entails a corresponding rigidity of the admissible profiles - for instance, whether they might be finite in number. This is indeed the case for harmonic-like admissible blow-up limits (see Section \ref{sec: hom harm}). On the contrary, in Proposition \ref{prop: homog} we show the existence of infinitely many geometrically distinct $2$-homogeneous profiles - hence profiles with maximal vanishing order at $0$, not obtainable from one another by rigid motions - satisfying all the extremality conditions identified for blow-ups, related to equation \eqref{2pop}. This already marks a first difference. Moreover, for these homogeneous profiles, the free boundary need not consist of smooth arcs meeting at singular points with equal angles. Such a conclusion would follow only if $\lambda_{2,1}(\omega_i)=\bar \lambda$ were independent of $i$, a non-trivial property which is currently open. As far as we know, the analogous question is open also for optimal partitions of \eqref{spec part prob}.

$2$) Still in view of the parallel with Problem 1, it is natural to ask whether an optimal partition is connected, in the sense that each $\omega_i$ is connected. In the spectral case, this property follows from a simple argument. Indeed, if $A_1, A_2$ are two disjoint open sets, then $\lambda_1(A_1\cup A_2)=\min\{\lambda_1(A_1),\lambda_1(A_2)\}$.
Thus, if $(\omega_1,\dots,\omega_k)$ is an optimal spectral partition and, for instance, $\omega_1=A_1\cup A_2$ with $A_1$ and $A_2$ disjoint and open, then one may replace $\omega_1$ by the component realizing the minimum above without increasing the energy. This yields another optimal partition in which one phase has an empty open component. Such a configuration is incompatible with the unique continuation principle, and hence optimal spectral partitions must be connected.

In the torsional case of Problem A, the situation is substantially different. The preceding argument relies crucially on the minimum property of $\lambda_1$, which fails for the nonlinear eigenvalue $\lambda_{2,1}$. Indeed, as already observed in \cite{BrFr2}, for two disjoint open sets $A_1, A_2$ one has
\be\label{e:conn}
\lambda_{2,1}(A_1\cup A_2) = \left(\frac{1}{\lambda_{2,1}(A_1)} + \frac{1}{\lambda_{2,1}(A_2)}\right)^{-1}.
\ee
This formula allows for a compensation between different connected components, and the connectedness of optimal torsional partitions appears to be a much more delicate issue. We leave this question open. We simply note that, in dimension one, where explicit computations are available, one can verify directly that every optimal partition is connected.

\end{remark}

\subsection{Main results for Problem B} Problem \eqref{pbB} can be treated in a similar way to Problem A. Here we summarize the main results in a single statement where, again, we focus on $N \ge 2$.

\begin{Theorem}
Under the general assumptions of Problem B, the following statements hold true.
\begin{enumerate}
    \item [($i$)] The minimization problem \eqref{pbB} has a solution. 
    \item [($ii$)] Any minimizer is Lipschitz continuous in $\overline{\Omega}$.
    \item [($iii$)] The following extremality conditions are satisfied: 
    \[
-\Delta u_i \leq \lambda_i \ind_{\{u_i>0\}},\quad 
  -\Delta \bigg( u_{i}-\sum_{j\neq i}u_{j}\bigg) \ge \lambda_i \ind_{\{u_{i}>0\}\cup\{\u=0\}} - \sum_{j\neq i}\lambda_j \ind_{\{u_{j}>0\}}\quad \text{in $\O$}.
\]
    \item [($iv$)] The conclusions of both Theorems \ref{prop: non-deg} and \ref{thm: fb} hold for any minimizer of \eqref{pbB}.
\end{enumerate}
\end{Theorem}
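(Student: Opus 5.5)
The plan is to follow the scheme used for Problem A, with one simplification: Problem B has no $L^1$-constraints, so the $\lambda_i$ are given constants rather than Lagrange multipliers, and the functional is already of the unconstrained type appearing in Proposition \ref{p:almost}.

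\emph{Existence ($i$).} I would use the direct method. The admissible class is non-empty, because the segregation condition on the $\varphi_i$ lets us extend them by Lipschitz functions with disjoint supports; for instance, take harmonic-type extensions multiplied by cutoffs near each support, using disjointness of the supports on $\partial\Omega$. The class is weakly closed in $H^1$: strong $L^2$ convergence along a subsequence preserves both $u_iu_j=0$ a.e. and the trace condition. Coercivity follows from
\[
\int_\Omega u_i \le C\|u_i\|_{H^1},
\]
so the linear term is dominated by the Dirichlet term via Poincaré applied to $u_i-\varphi_i$. Lower semicontinuity then gives a minimizer. Replacing $u_i$ by $|u_i|$ (or $u_i^+$) does not increase the energy, so we may assume $\mathbf u\ge0$.

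\emph{Extremality ($iii$).} I would use the same variations as in Proposition \ref{p:classeS}.
\begin{enumerate}
\item[(a)] For the subsolution inequality, take $u_i-t\phi$ with $\phi\ge0$ and truncate at $0$, i.e. use $(u_i-t\phi)^+$. Segregation is preserved, and the first variation gives $-\Delta u_i\le\lambda_i\ind_{\{u_i>0\}}$.
\item[(b)] For the second inequality, take $\hat u_i=(u_i-\sum_{j\ne i}u_j+t\phi)^+$ and $\hat u_j=(u_i-\sum_{h\neq i} u_h+t\phi)^-\ind_{\{u_j>0\}}$. This competitor is segregated. Expanding the energy to first order in $t$ should give the stated inequality. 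The term $\lambda_i\ind_{\{\mathbf u=0\}}$ comes from the region where $\hat u_i$ becomes positive on the zero set.
\end{enumerate}

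\emph{Lipschitz regularity ($ii$).} By ($iii$), $\mathbf u$ belongs to a class of Conti--Terracini--Verzini type with bounded right-hand sides. I would therefore repeat the argument used for Theorem \ref{t:existence-equivalence}($ii$), which adapts \cite{ConTerVer05}: first uniform Hölder bounds via blow-up and an Alt--Caffarelli--Friedman-type monotonicity formula, then Lipschitz continuity via the ACF formula with bounded-data corrections. The Lipschitz data $\varphi_i$ and the smoothness of $\Omega$ give boundary Lipschitz bounds by barrier arguments.

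\emph{Theorems \ref{prop: non-deg} and \ref{thm: fb} ($iv$).} These are the main obstacle only in bookkeeping. Those proofs use exactly three inputs: the extremality conditions, Lipschitz regularity, and the almost-minimality \eqref{e:almost-minimality}. For Problem B, \eqref{e:almost-minimality} holds with $C=0$. Indeed, any $\mathbf w\in H^1(B_r(x_0),\Sigma_k)$ with $\mathbf w=\mathbf u$ on $\partial B_r$, extended by $\mathbf u$ outside the ball, is admissible. After replacing $\mathbf w$ by $|\mathbf w|$, which only lowers the energy, we obtain exact local minimality with $\Lambda=(\lambda_1,\dots,\lambda_k)$.

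The non-degeneracy argument needs a check: a comparison with a competitor that vanishes on a small ball. That argument uses only the local functional, so it transfers directly. The Almgren, Weiss, blow-up and dimension-reduction arguments then apply verbatim, and so do the GMS regularity argument and the identification of $w=u_i-u_j$ as a solution of \eqref{2pop} with $\lambda_\pm=\lambda_i,\lambda_j$. The one point to verify is that no step used the specific values $\lambda_{2,1}(\omega_i)$; only their positivity was needed, which holds here by assumption.
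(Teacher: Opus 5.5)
Your route is the one the paper intends: the authors only say that Problem B follows by straightforward adaptation of Problem A, with exact minimality replacing Proposition \ref{p:almost}. Existence, the two variations giving ($iii$), and \eqref{e:almost-minimality} with $C=0$ on every ball $B_r(x_0)\subset\subset\Omega$ are fine. One small fix for non-emptiness: only the sets $\{\varphi_i>0\}$ are disjoint and the supports may touch, so use $(\Phi_i-\sum_{j\neq i}\Phi_j)^+$ for nonnegative extensions $\Phi_i$ instead of cut-offs.

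\emph{Lipschitz continuity up to $\partial\Omega$.} This step, done ``by barrier arguments'', fails. In Problem A the bound came from the zero data: $0\le u_i\le W\le C\,\dist(x,\partial\Omega)$ with $W$ the torsion function of $\Omega$. For nonzero, merely Lipschitz traces, a barrier built on an exterior ball vanishes only quadratically along $\partial\Omega$, so it cannot absorb a corner of the data. In fact the conclusion can fail under the stated hypotheses. Take $N=k=2$, $\Omega$ the unit disc, $\varphi_1(\theta)=(\frac{\pi}{2}-|\theta|)^+$ and $\varphi_2(\theta)=(|\theta|-\frac{\pi}{2})^+$. By ($iii$), $v:=u_1-u_2$ satisfies $-\lambda_2\le-\Delta v\le\lambda_1$. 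Hence $v=h+q$ with $q\in C^{1,\alpha}(\overline\Omega)$ and $h$ harmonic with trace $\varphi_1-\varphi_2=\frac{\pi}{2}-|\theta|=\frac{4}{\pi}\sum_{n\ \mathrm{odd}}n^{-2}\cos(n\theta)$. Then $\partial_r h(r,0)=\frac{4}{\pi}\sum_{n\ \mathrm{odd}}r^{n-1}/n\to+\infty$ as $r\to1^-$. Since $v$ is continuous with $v(1,0)=\pi/2$, we have $u_1=v$ near $(1,0)$, so $u_1$ is not Lipschitz there. Only interior Lipschitz continuity transfers: the estimate of Lemma \ref{l:lipschitz-conti} is local and needs only ($iii$). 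That is all ($iv$) uses, but the statement in $\overline\Omega$ needs extra assumptions on the traces. The paper's one-line reduction does not address this point either.

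\emph{Non-degeneracy.} The competitor you name is the wrong one. There is no measure term to gain here, so making the competitor vanish on $B_r(x_0)$ only discards the favourable term $-2\int_{B_r(x_0)}\Lambda\cdot\mathbf u$. Under the contradiction hypothesis $\|\mathbf u\|_{x_0,2r_n}=o(r_n^2)$, the two energies then differ by $o(r_n^{N+2})$ with no definite sign, and nothing follows. What works, as in Section \ref{sec: non-deg}, is to \emph{insert} positivity. Replace $u_1$ by $u_1+w_n$, where $w_n(x)=r_n^2w((x-x_0)/r_n)$, $-\Delta w=\lambda_1$ in $B_1$ and $w\in H^1_0(B_1)$; replace $u_j$ by $(1-\eta_n)u_j$ for $j\neq1$. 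Since $-\Delta w_n\le\lambda_1\ind_{\{w_n>0\}}$, the cross term is at most $2\lambda_1\int_{B_{r_n}(x_0)}u_1=o(r_n^{N+2})$, and the cut-offs cost $o(r_n^{N+2})$. The energy therefore changes by at most $-\lambda_1r_n^{N+2}\int_{B_1}w+o(r_n^{N+2})<0$, a contradiction. Without the $L^1$ normalization this is simpler than in Problem A.

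Finally, the Almgren and Weiss formulae also require the domain-variation identity, item ($iii$) of Definition \ref{def T}, which is missing from your list of inputs. It follows from exact local minimality via inner variations, as in Lemma \ref{l:inner} and Remark \ref{rem: min G T}.
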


We shall not provide detailed proofs of the statements, as they follow by straightforward adaptations of the arguments developed for Problem A. Actually, the constructions of explicit competitors become considerably simpler in the setting of Problem B, where no normalization condition has to be handled. Moreover, no counterpart of Proposition \ref{p:almost} is needed, since the functional is already unconstrained.

In light of the simplified structure of the problem, we can also point out some further results. First, if the parameters  $\lambda_i$ are not all equal, then the construction in Proposition \ref{prop: homog} actually provides limiting problems for which the free boundary consists of smooth arcs meeting at singular points \emph{with different angles}. 

Second, in a specific setting, we can address the question of connectedness of optimal domains discussed in Remark \ref{rem: further} above.

\begin{proposition}\label{prop: connect}
    Under the general assumptions of Problem B, let $\lambda_1 =\cdots=\lambda_k= \bar \lambda>0$, and suppose that $\{\varphi_i>0\}\subset \pa \Omega$ is connected for every $i$. Then, for any minimizer $\u$ of \eqref{pbB}, the positivity set $\{u_i>0\}$ is connected for every $i$.  
\end{proposition}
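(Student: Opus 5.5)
We argue by contradiction: if some $\{u_i>0\}$ is disconnected, we build a competitor with strictly lower energy.

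\textbf{Setup.} Fix a minimizer $\u$ and an index $i$, say $i=1$. Write $\omega_1=\{u_1>0\}$, which is open since $\u$ is Lipschitz. Since $\{\varphi_1>0\}$ is connected and $u_1$ is continuous up to $\partial\Omega$, at most one connected component of $\omega_1$ touches $\{\varphi_1>0\}$. Suppose there is another component $A$. Its boundary satisfies $\partial A\subset Z(\u)\cup(\partial\Omega\cap\{\varphi_1=0\})$. Hence $u_1\ind_A\in H^1_0(A)$ and $-\Delta u_1=\bar\lambda$ in $A$, so $u_1|_A$ is the torsion function of $A$.

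\textbf{Key observation.} Because all $\lambda_j$ equal $\bar\lambda$, the energy density $|\nabla u_j|^2-2\bar\lambda u_j$ does not depend on the label $j$. So any region can be reassigned from one phase to another without changing the functional. Define the competitor $\tilde\u$ by setting $\tilde u_1=u_1\ind_{\Omega\setminus A}$ and $\tilde u_2=u_2+u_1\ind_A$, leaving the other components unchanged. Here $j=2$ is chosen as a phase adjacent to $A$, i.e.\ with $\partial A\cap\partial\{u_2>0\}\cap\Omega\neq\emptyset$. Then:
\begin{itemize}
\item $\tilde\u$ is admissible: segregation holds and the boundary traces are unchanged, since $u_1=0$ on $\partial A\cap\partial\Omega$;
\item $\tilde\u$ has the same energy, hence is also a minimizer.
\end{itemize}
To see that such an adjacent $j$ exists, use Theorem~\ref{prop: non-deg} for Problem B. It gives $Z=\Gamma$ with empty interior, and at any point of $\partial A\cap\Omega$ some other phase must be present: otherwise that point would be an interior nodal point of a single phase, which contradicts the lower bound on $-\Delta(u_1-\sum_{j\neq 1}u_j)$ from item ($iii$). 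If $\partial A\cap\Omega$ were empty, then $A$ would be a union of components of $\Omega$, contradicting the connectedness of the smooth domain $\Omega$, which we assume implicitly.

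\textbf{Contradiction.} Now apply the extremality conditions to $\tilde\u$. The function $\tilde u_2$ is positive on $A$ and on $\{u_2>0\}$, which touch along a relatively open portion of the interface. On that interface $\tilde u_2$ vanishes while being positive on both sides. Choose a regular point $x_0\in\partial A\cap\partial\{u_2>0\}$. It exists because $\Sigma$ has dimension at most $N-2$, while the interface between two phases is $(N-1)$-dimensional by the blow-up analysis; equivalently, regular points are dense in $\Gamma$. There $\tilde u_2$ vanishes on a hypersurface with nonzero gradient from each side. This violates Theorem~\ref{thm: fb}($b$) applied to $\tilde\u$: near a regular point exactly two distinct phases are present and $\tilde u_2-\tilde u_h$ has nonzero gradient. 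Here only one phase $\tilde u_2$ is present near $x_0$. Alternatively, it violates $-\Delta\tilde u_2\le\bar\lambda\ind_{\{\tilde u_2>0\}}$, since the kink at a $V$-shaped zero produces a positive singular measure in $\Delta\tilde u_2$ with the wrong sign. This second route is the cleanest: $-\Delta\tilde u_2$ carries the measure $-(|\partial_\nu u_1|+|\partial_\nu u_2|)\mathcal H^{N-1}$ on the interface. That measure is negative, which is allowed by the inequality, so this route needs care. Instead, use the other inequality: at points of $Z(\tilde\u)$ where only $\tilde u_2$ is present, it forces $-\Delta(-\tilde u_2)\ge\dots$ to be bounded below. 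This contradicts the negative singular measure in $-\Delta(-\tilde u_2)=\Delta\tilde u_2$, which is a positive jump measure — let me fix the signs properly in the writeup. In any case, one of the two extremality inequalities applied to the merged phase is violated by the $V$-shaped zero set.

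\textbf{Main obstacle.} The main work is the second step. We must verify that the interface between $A$ and a neighbouring phase contains regular points with nonzero one-sided normal derivatives, using Hopf-type non-degeneracy at regular points, and then identify precisely which extremality inequality fails for the merged competitor. The admissibility and the equal-energy swap are routine.
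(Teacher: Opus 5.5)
Your argument is sound in outline but takes a different route from the paper. The paper never produces a second minimizer. It merges the offending component with a neighbour: with the other part of $\{u_1>0\}$ when $\operatorname{int}(\overline{\omega^*\cup\omega^{**}})$ is connected, and otherwise with an adjacent component $\omega_2'$ of $\{u_2>0\}$. On the merged open set it replaces the concatenated function by the minimizer of $\int(|\nabla v|^2-2\bar\lambda v)\,dx$ with the same boundary data. That minimizer is strictly positive by the strong maximum principle, so it differs from the concatenation (which vanishes on the common interface), and the energy strictly drops. You instead relabel $A$ into an adjacent phase at zero cost (this is where $\lambda_1=\lambda_j$ enters), so $\tilde{\mathbf u}$ is again a minimizer. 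You then contradict the local structure of minimizers at a point where only one phase of $\tilde{\mathbf u}$ survives. This removes the case distinction and the mixed boundary value problem on the possibly irregular set $\operatorname{int}(\overline{\omega^{**}\cup\omega_2'})$. The price is a heavier reliance on the Problem~B versions of the extremality conditions and of Theorem~\ref{thm: fb}. The decisive step is the same maximum-principle argument as Step~3 in the proof of Theorem~\ref{thm: fb}($b$), and both routes need a regular point on $\partial A\cap\Omega$.

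Two steps need repair.

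\emph{The order of choices.} You fix $j=2$ by mere adjacency and then claim a regular point in $\partial A\cap\partial\{u_2>0\}$. Neither the blow-up analysis nor density of $\mathcal R(\mathbf u)$ in $\Gamma(\mathbf u)$ gives this, and a priori it can fail: $A$ may meet a given phase only at singular points (e.g.\ the opposite sector at a four-phase junction), and after merging nothing is violated there. Pick the point first, as the paper does in Case~2. The set $\partial A\cap\Omega\subset Z(\mathbf u)$ is relatively closed and disconnects $\Omega$ into the non-empty open sets $A$ and $\Omega\setminus\overline A$; the latter contains the component of $\{u_1>0\}$ reaching $\{\varphi_1>0\}$. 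Since a relatively closed $\mathcal H^{N-1}$-null set cannot disconnect a domain, $\mathcal H^{N-1}(\partial A\cap\Omega)>0$, whereas $\mathcal H^{N-1}(\Sigma(\mathbf u))=0$ because $\dim_{\mathcal H}\Sigma(\mathbf u)\le N-2$. Take $x_0\in\partial A\cap\mathcal R(\mathbf u)$ and let $j$ be the second phase given by Theorem~\ref{thm: fb}($b$) at $x_0$. By the clean-up step in that proof there is $r>0$ such that $B_r(x_0)\setminus Z(\mathbf u)$ has exactly two components, one for each phase. Hence $\{u_1>0\}\cap B_r(x_0)$ is connected, meets $A$, and lies in $A$, so after merging $A$ into phase $j$ only $\tilde u_j$ is non-trivial in $B_r(x_0)$.

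\emph{The inequality.} Two candidates are not violated: $-\Delta\tilde u_j\le\bar\lambda\ind_{\{\tilde u_j>0\}}$, and the lower bound written for an index $h\neq j$ (near $x_0$ it reduces to $\Delta\tilde u_j\ge-\bar\lambda$, the same upper bound). The violated one is the lower bound for the merged index $j$ itself. Near $x_0$ its left-hand side is $-\Delta\tilde u_j$ and its right-hand side is $\bar\lambda\ind_{\{\tilde u_j>0\}\cup\{\tilde{\mathbf u}=0\}}=\bar\lambda$. So $\tilde u_j\ge0$ is superharmonic and not identically zero in $B_r(x_0)$, and the strong maximum principle gives $\tilde u_j>0$ there, contradicting $\tilde u_j(x_0)=0$. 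No Hopf lemma or normal derivatives are needed.

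Your first route also works once you note that $|\tilde{\mathbf u}|=|\mathbf u|$ and $|\nabla\tilde{\mathbf u}|=|\nabla\mathbf u|$ a.e. Then $\cO(\tilde{\mathbf u},x_0)=1$, and Theorem~\ref{thm: fb}($b$) applied to the minimizer $\tilde{\mathbf u}$ demands two non-trivial phases near $x_0$.
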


The proof of this fact is more involved than that of its linear counterpart in Problem 2. 
This reflects the greater delicacy of the connectedness issue in the setting considered here. At the same time, the result seems to suggest that the connectedness property may also hold for the partition problem \eqref{eq:partition.min}.

\begin{remark}
    When dealing with Problem B, we have always supposed that the prescribed traces $\varphi_i$ are non-trivial. One may wonder what happens if one (or some) of the functions $\varphi_i$ vanishes. For instance, one may be tempted to address
    \[
    \inf\left\{
 \sum_{i=1}^k \int_{\Omega} \left( |\nabla u_i|^2 - 2 u_i \right)\,dx
 \ \left|\
 \begin{array}{l}
 u_i \in H^1_0(\Omega),\\
 u_i \cdot u_j \equiv 0 \quad \text{in } \Omega \text{ for all } i\neq j
 \end{array}
 \right.
 \right\},
 \]
 which, arguing as in Theorem \ref{t:existence-equivalence}, is equivalent to 
  \[
  \sup\left\{\sum_{i=1}^k T(\omega_i) \ \left|\
 \begin{array}{l}
 (\omega_1,\dots,\omega_k) \ \text{with }\omega_i \subset \Omega \text{ open and non-empty},\\
 \omega_i \cap \omega_j = \emptyset \quad \text{for all } i \neq j
 \end{array}
 \right.
 \right\}.
    \]
In this case, using the same argument as in Proposition \ref{prop: connect}, it is not difficult to check that any minimizer is of type $(w,0,\dots,0)$, where $w$ is the torsion function in $\Omega$. This shows that, when $\varphi_i =0$, the corresponding density may vanish as well. 
\end{remark}

\subsection{Related problems and future perspectives}

We conclude this introduction by mentioning some related questions and possible directions for future research.

\subsubsection{Optimal partitions for general nonlinear eigenvalues.}
It is natural to introduce a family of variational problems interpolating between the spectral and the torsional settings considered above. More precisely, given an open set $\omega\subset \Omega$ and $q\in[1,2]$, one may consider
\[
 \lambda_{2,q}(\omega)
 :=
 \inf\left\{
 \int_{\omega} |\nabla u|^2\,dx
 \ \left| \
 u \in H^1_0(\omega), \ \|u\|_{L^q(\omega)} = 1
 \right.\right\}.
\]
Note that $\lambda_{2,2}$ coincides with the first Dirichlet eigenvalue $\lambda_1$, while $\lambda_{2,1}$ is defined as in Problem A. We refer to \cite{BrDPFr, BrFr, BrFr2, BrFrRu} for a background on $\lambda_{2,q}$. Then it is natural to study the partition problem
\[
 \mathcal{L}_q:=\inf\left\{
 \sum_{i=1}^k \lambda_{2,q}(\omega_i) \ \left|\
 \begin{array}{l}
 \omega_i \subset \Omega \quad \text{open and non-empty},\\
 \omega_i \cap \omega_j = \emptyset \quad \text{for all } i \neq j
 \end{array}
 \right.
 \right\},
\]
which for $q=2$ is Problem 1, while for $q=1$ is Problem A. What about the intermediate regime? Minimizers should be related to segregated solutions of sublinear equations of the form \eqref{sube} and the corresponding nodal analysis should be connected with the sublinear free boundary problems studied in \cite{SoaTer18}. By combining the strategy developed here with the one in \cite{SoaTer18} (which concerns the full range $q \in [1,2)$) we believe that the main regularity results we proved for $q=1$ could be generalized throughout the whole range $q\in[1,2)$. A further interesting issue is to understand whether the qualitative behavior of optimal partitions changes continuously as $q\to 2^-$, or whether genuinely new phenomena appear in the transition from the linear to the sublinear regime. For instance, since a nonlinear counterpart of \eqref{e:conn} holds for every $q \in [1,2)$, one may ask for which values of $q\in[1,2]$ optimal partitions associated with $\lambda_{2,q}$ have connected phases.

A more sophisticated problem consists in minimizing 
\[
 \mathcal{L}_{q_1,\dots,q_k}:=\inf\left\{
 \sum_{i=1}^k \lambda_{2,q_i}(\omega_i) \ \left|\
 \begin{array}{l}
 \omega_i \subset \Omega \quad \text{open and non-empty}, \ \omega_i \cap \omega_j = \emptyset \quad \text{for all } i \neq j,\\
 q_i \in [1,2] \quad \forall i=1,\dots,k
 \end{array}
 \right.
 \right\}.
\]
Heuristically, in this case the relevant scalar model (at least around regular points) should be 
\[
-\Delta w=\lambda_+(w^+)^{q-1} - \lambda_-(w^-)^{p-1}, \quad \text{with }p \neq q, \ p,q \in [1,2].
\]
This equation has no natural scaling, and nodal properties for local minimizers have been studied in \cite{SoaTor24}. An extension of our results to this inhomogeneous setting seems more challenging.

\subsubsection{Asymptotics and optimal configurations as $k\to+\infty$.}
Finally, one may study the behavior of optimal torsional partitions as the number of phases $k$ tends to infinity. For spectral partitions, this asymptotic regime is connected with the so-called \emph{Honeycomb conjecture} proposed in \cite{CafLin2}. It would be natural to ask whether analogous asymptotic patterns arise for torsional partitions, and whether the limiting cell shapes are governed by the same geometric principles or by different ones. 

\subsubsection{Further variants} Many variants of Problems A and B can be obtained by comparison with the corresponding literature on Problems 1 and 2. For instance, we believe that long-range torsional partitions and segregation problems, in the spirit of \cite{CaPaQu, SoTaZi23, SoTaTeZi18}; torsional partitions with measure constraints, in the spirit of \cite{MaTa}; and torsional segregation phenomena with possible overlap, in the spirit of \cite{SoTe241, SoTe242}, are problems of potential interest.

\subsection{Structure of the paper} In Section \ref{sec: basics}, we prove the existence of a solution to \eqref{e:partition.functions}, establish the optimal regularity of nonlinear eigenfunctions, and consequently complete the proof of Theorem \ref{t:existence-equivalence}. We also derive the extremality conditions for minimizers, namely Proposition \ref{p:classeS} and a domain variation formula. Section \ref{sec: non-deg} is devoted to the proof of the unique continuation principle and of the non-degeneracy property in Theorem \ref{prop: non-deg}. In Section \ref{sec: almost}, we prove the unconstrained almost-minimality property satisfied by minimizers of \eqref{e:partition.functions}, namely Proposition \ref{p:almost}. In Sections \ref{sec: mon} and \ref{sec: upper} we collect the monotonicity formulae and establish an upper semicontinuity property for the vanishing-order map; these results are key ingredients in the blow-up analysis carried out in Section \ref{sec: blow-up}. Section \ref{sec: dim} is devoted to the proof of the Hausdorff dimension estimates for the nodal and singular sets and to the proof of the regularity of the regular set, thus completing the proof of Theorem \ref{thm: fb}. In Section \ref{sec: hom}, we discuss the existence and properties of global homogeneous profiles in dimension $N=2$. Finally, Section \ref{sec: conn} contains the proof of Proposition \ref{prop: connect}.

\subsection{Notations}

We use the standard notation $B_r(x_0)$ (resp. $\partial B_r(x_0)$) for the ball (resp. the sphere) centered at point $x_0$, with radius $r>0$; when $x_0=0$, we often write $B_r=B_r(0)$ and $\partial B_r=\partial B_r(0)$. For any set $A\subset \R^N$, we denote by $\ind_{A}$ the characteristic function of $A$.

We adopt the following vector notation for $k$ valued functions:
\[
\mf{u}=(u_1,\dots,u_k), \quad |\u|^2 = \sum_{i=1}^k u_i^2, \quad |\nabla \u|^2 = \sum_{i=1}^k |\nabla u_i|^2.
\]
Moreover, we say that a vector $\lambda \in \R^k$ is either non-negative or positive if all its components are  either non-negative or positive. 
The positive and negative parts of $u$ are denoted by $u^+$ and $u^-$, respectively.

\section{Existence, optimal regularity and Euler-Lagrange equations}\label{sec: basics}

We begin this section by presenting the proof of the existence of solutions for \eqref{eq:partition.min}.

\begin{proof}[Proof of Theorem \ref{t:existence-equivalence}]
The first point is straightforward: let $\{\mf{u}_n\}$ be a minimizing sequence for $\ell$ (see \eqref{e:partition.functions}). Since we minimize a coercive functional in a weakly closed subset of $H_0^1(\O,\Sigma_k)$ (which is reflexive), the direct method of the calculus of variations gives the existence of a minimizer. The fact that the set of functions $\u \in H^1_0(\O,\Sigma_k)$ such that $\norm{u_i}{L^1(\O)} = 1$ for every $i$ is weakly closed, follows by the compactness of the embedding $H^1(\Omega) \hookrightarrow L^1(\Omega)$.

The second point is postponed to the following Subsection \ref{sub: Lip}. 

Regarding the third point, let $(\omega_1,\dots,\omega_k) \in \mathcal{P}_k(\Omega)$, and for every $i=1,\dots,k$ let $v_i$ be a minimizer for $\lambda_{2,1}(\omega_i)$ (such a minimizer exists, being $\omega_i$ open and bounded). Then
\[
\sum_{i=1}^k \lambda_{2,1}(\omega_i) =  \int_{\O} |\nabla \mf{v}|^2\,dx \ge \ell, 
\]
whence it follows that $\mathcal{L} \ge \ell$, where $\mathcal{L}$ is defined in \eqref{eq:partition.min}. On the other hand, let $\bar\u$ be a minimizer for $\ell$, and let $\bar \omega_i :=\{\bar u_i>0\}$. By point ($ii$), $\bar \omega_i$ is open, and hence
\[
\ell = \int_{\Omega} |\nabla \bar \u|^2\,dx = \sum_{i=1}^k \lambda_{2,1}(\bar\omega_i) \ge \mathcal{L}.
\]
Therefore, $\mathcal{L}=\ell$ and all the previous inequalities are, in fact, equalities. This easily implies also the rest of the thesis.
\end{proof}

\subsection{Lipschitz regularity}\label{sub: Lip}
In order to prove a Lipschitz estimate for minimizers of \eqref{e:partition.functions}, we first need to deduce the Euler-Lagrange equations stated in Proposition \ref{p:classeS}. 

\begin{proof}[Proof of Proposition \ref{p:classeS}]
The proof is inspired by \cite[Theorem 5.1]{ConTerVer05}. We fix $i=1,\dots,k$, and prove the two inequalities separately.

\emph{Step 1)} Let $t >0$, $\varphi \in C^\infty_c(\O)$ be non-negative and define $\mathbf{v} \in H^1(\O,\Sigma_k)$ by
\[
v_i := \frac{(u_i - t \varphi)^+}{\norm{(u_i - t \varphi)^+}{L^1(\O)}}, \qquad v_j := u_j \quad\text{for $j\neq i$}.
\]
Since $\{u_i-t\varphi>0\} \subset \{u_i>0\}$, this is an admissible competitor in the minimality condition, whence we deduce that
\[
\left(\int_\O (u_i-t\varphi)^+\,dx\right)^2 \int_\O |\nabla u_i|^2\,dx \leq \int_\O |\nabla (u_i-t\varphi)^+|^2\,dx .
\]
Now, as $t \to 0^+$ we have that
\[
\int_\O (u_i-t\varphi)^+\,dx = 1 -t \int_{\O}\varphi\ind_{\{u_i>0\}}\,dx + o(t),
\]
where we used the fact that $\|u_i\|_{L^1(\Omega)}=1$, and 
\begin{equation}\label{1051}
    \int_{\O} |\nabla(u_i-t\varphi)^+|^2\,dx = \lambda_{2,1}(\omega_i) - 2t \int_{\O} \nabla u_i \cdot \nabla \varphi\,dx + o(t).
\end{equation}
Therefore
\begin{align*}
\left(1-2 t\int_{\O}\varphi\ind_{\{u_i>0\}}\,dx\right)\lambda_{2,1}(\omega_i)
&\leq \lambda_{2,1}(\omega_i)
-2t\int_{\Omega} \nabla u_i \cdot \nabla \varphi \,dx + o(t),
\end{align*}
as $t \to 0^+$, whence 
\[
\int_{\Omega} \left(\nabla u_i \cdot \nabla \varphi -\varphi \lambda_{2,1}(\omega_i)\ind_{\{u_i>0\}}\right)\,dx \leq 0.
\]
Since this inequality holds for every non-negative $\varphi \in C^\infty_c(\O)$, the first inequality in the thesis follows.

\emph{Step 2)} Let $t >0$, $\varphi \in C^\infty_c(\O)$ be non-negative and set
\[
\hat u_i := u_i - \sum_{j \neq i}u_j.
\]
Now, define $\mathbf{v} \in H^1(\O,\Sigma_k)$ by
\[
v_i := \frac{(\hat u_i + t \varphi)^+}{m_i}, \qquad v_j := \frac{(\hat u_i + t \varphi)^-}{m_j} \ind_{\{u_j>0\} } ,\quad\text{for $j\neq i$},
\]
where
\[
m_i:= \norm{(\hat u_i + t \varphi)^+}{L^1(\O)}, \qquad m_j:=\norm{(\hat u_i + t \varphi)^-}{L^1(\omega_j)}, \quad \text{for $j\neq i$},
\]
so that
\begin{equation}\label{1151}
m_i = 1+t \int_{\O} \varphi \ind_{\{\hat u_i \ge 0\}}\,dx+o(t),\qquad m_j = 1 - t \int_{\O}\varphi \ind_{\{u_j>0\}}\,dx + o(t),\quad\text{for $j\neq i$}.
\end{equation}
Since 
\[
\mathcal{L} = \sum_{i=1}^k \lambda_{2,1}(\omega_i)=\int_{\O} |\nabla \u|^2 \,dx = \int_{\O} |\nabla \hat{u}_i|^2 \,dx,
\]
and
\[
\int_{\O} |\nabla (\hat{u}_i+t\varphi)^+|^2 \,dx = 
\int_{\O} |\nabla (\hat{u}_i+t\varphi)|^2 \,dx - \sum_{j\neq i}\int_{\omega_j} |\nabla (u_j-t\varphi)^+|^2 \,dx,
\]
by considering $\mf{v}$ in the minimality condition we obtain
\begin{align*}
\mathcal{L} = \int_\O |\nabla \hat{u}_i|^2 \,dx & \leq \frac{1}{m_i^2} \int_{\O} |\nabla (\hat u_i+t\varphi)^+|^2\,dx + \sum_{j \neq i} \frac{1}{m_j^2} \int_{\omega_j} |\nabla (\hat u_i+t \varphi)^-|^2\,dx 
\\
&= \frac{1}{m_i^2}\int_\O |\nabla (\hat{u}_i+t\varphi)|^2 \,dx + \sum_{j\neq i}\left(\frac{m_i^2-m_j^2}{m_j^2m_i^2}\right)\int_{\omega_j} |\nabla (u_j-t\varphi)^+|^2 \,dx.
\end{align*}
Recalling \eqref{1051}, this can be rewritten as
\[
m_i^2 \mathcal{L} \leq \mathcal{L} + 2t \int_\O \nabla \hat{u}_i\cdot \nabla \varphi \,dx +  \sum_{j\neq i}\left(\frac{m_i^2-m_j^2}{m_j^2}\right)\left(\lambda_{2,1}(\omega_j) - 2t \int_\O \nabla u_j \cdot \nabla \varphi \,dx \right) + o(t),
\]
as $t \to 0^+$; here we also used the fact that $(m_i^2-m_j^2)/(m_i^2 m_j^2)$ remains bounded for small $t$, since clearly $m_i^2 m_j^2 \to 1$ as $t \to 0^+$ (by \eqref{1151}), and 
\[
m_i^2-m_j^2= 2 t \int_\O \varphi \left(\ind_{\{\hat u_i\ge 0\}}+\ind_{\{u_j>0\}}\right)\,dx + o(t).
\]
Therefore, dividing by $2t$ and passing to the limit as $t \to 0^+$, we infer that
\begin{align*}
0\leq & \int_\O \left(\nabla \hat{u}_i \cdot \nabla \varphi - \mathcal{L} \varphi \ind_{\{\hat u_i\ge 0\}}\right)\,dx + \sum_{j \neq i} \lambda_{2,1}(\omega_j) \int_{\O} \varphi(\ind_{\{\hat u_i\ge 0\}}+\ind_{\{u_j>0\}})\,dx \\
=&\int_\O \nabla \hat{u}_i \cdot \nabla \varphi \,dx -\lambda_{2,1}(\omega_i)\int_\O \varphi \ind_{\{\hat u_i\ge 0\}}\,dx  + \sum_{j\neq i}\lambda_{2,1}(\omega_j)\int_{\O}\varphi \ind_{\{u_j>0\}}\,dx.
\end{align*}
Since $\varphi \ge 0$ was arbitrarily chosen, the thesis follows.
\end{proof}

Next, we recall the following Lipschitz estimate for a class of segregated configurations (see \cite[Theorem 8.3]{ConTerVer05}) .
\begin{Lemma}\label{l:lipschitz-conti}
Let $M>0$ and $\u \in \mathcal{S}^*_{M,k}(\O)$, where 
\[
\mathcal{S}^*_{M,k}(\O):= \left\{\u \in H^1_0(\O,\Sigma_k) \ \left| \  u_i \geq 0,\quad -\Delta u_i \leq M,\quad 
  -\Delta \left( u_{i}-\sum_{j\neq i}u_{j}\right) \ge -M\quad \text{in $\O$}\right.\right\}.
\]
Then $\u$ is locally Lipschitz continuous in $\O$, and there exists a dimensional constant $C>0$, such that  
    \[
    \sup_{i=1,\dots,k}[u_i]_{C^{0,1}(\overline{B_{r/2}(x_0)})} \leq \frac{C}{r} \norm{\u}{x_0,r},
    \] 
    for every $x_0 \in \O$ and  $r\in (0,\mathrm{dist}(x_0,\partial\O))$. 
\end{Lemma}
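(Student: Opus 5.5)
The plan is to follow the scheme of \cite[Theorem 8.3]{ConTerVer05}, which rests on two ingredients: a local $L^\infty$ bound, and a uniform bound on the scale-invariant Dirichlet energy at every point and every scale. The second is obtained from an Alt--Caffarelli--Friedman type monotonicity formula for pairs of segregated subsolutions. Since $-\Delta u_i\le M$, the function $u_i + \frac{M}{2N}|x-x_0|^2$ is subharmonic. Hence, by the mean value inequality and the Poincar\'e inequality \eqref{Poinc norm},
\[
\sup_{B_{r/2}(x_0)}|\u|\le C\big(r\,\|\u\|_{x_0,r}+Mr^2\big).
\]
I would treat $Mr^2$ as a lower-order term: by scaling one may assume $r=1$ and absorb $M$ into the constant, or else check that the statement intends a constant depending on $M$ through a term of the form $Mr$.

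Next I would split the points $y\in B_{r/2}(x_0)$ into two types.

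\emph{Interior points.} If $u_i(y)>0$, let $d=\mathrm{dist}(y,Z(\u))$. On $B_d(y)$ only $u_i$ is nonzero. The second inequality, applied with index $i$, together with the first gives $|\Delta u_i|\le M$ there, because the terms for $j\neq i$ vanish on $B_d(y)$. Interior gradient estimates for the Poisson equation then give
\[
|\nabla u_i(y)|\le C\big(d^{-1}\sup_{B_d(y)}u_i + Md\big).
\]

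\emph{Points near the nodal set.} Let $z\in Z(\u)$ with $|y-z|=d$. I need the growth bound $\sup_{B_\rho(z)}|\u|\le C\rho\,(\|\u\|_{x_0,r}/r+M)$ for $\rho\le r/4$. This is where the ACF-type monotonicity formula comes in. For two components $u_i,u_j$ with disjoint supports and $\Delta u_h\ge -M$, the Caffarelli--Jerison--Kenig almost-monotonicity version gives
\[
\frac{1}{\rho^4}\int_{B_\rho(z)}\frac{|\nabla u_i|^2}{|x-z|^{N-2}}\int_{B_\rho(z)}\frac{|\nabla u_j|^2}{|x-z|^{N-2}}
\le C\big(M^2+\|\u\|^2_{z,r/2}/r^{2}\big)^2 .
\]
This bounds the product of the averaged gradients, but it does not bound each component individually. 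To get the individual bound, the second inequality is essential: it forces the ``dominant'' component to be almost superharmonic away from the others. The \cite{ConTerVer05} argument uses this through a contradiction and blow-up procedure. One supposes $\|\u\|_{z,\rho}/\rho\to\infty$ along a sequence and rescales by this quantity. The rescaled functions converge to a nonzero segregated pair of harmonic-like functions in the class $\mathcal S^*_{0,k}$ with Lipschitz growth violated. By the limiting ACF formula, such a limit must consist of a single component that is harmonic across the nodal set and vanishes on a nontrivial set, which contradicts the strong maximum principle.

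Combining the growth bound with the interior estimate at scale $d$ gives $|\nabla u_i(y)|\le C(\|\u\|_{x_0,r}/r + M)$ for all $y\in B_{r/2}(x_0)$, which is the claimed Lipschitz bound.

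The main obstacle is the growth estimate near $Z(\u)$, since it is the only non-routine step. My first attempt would be to quote the ACF/CJK almost-monotonicity for subsolutions with bounded right-hand side and run the blow-up argument, checking that the bounded right-hand side $M$ scales out under the blow-up. I would also keep track of the $M$-dependence, which appears to be hidden in the statement's ``dimensional constant''.
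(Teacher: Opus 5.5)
Your architecture is a legitimate variant of what the paper relies on: an $L^\infty$ bound, then linear growth at nodal points extracted from the Caffarelli--Jerison--Kenig (CJK) product bound together with the second inequality, then interior Poisson estimates at scale $\dist(y,Z(\u))$. The paper proves nothing here. It quotes \cite[Theorem 8.3]{ConTerVer05} and summarizes it as CJK plus Campanato's embedding: a uniform bound on $\rho^{-N}\int_{B_\rho(x)}|\nabla\u|^2\,dx$ is converted into a Lipschitz bound by Campanato's characterization of $C^{0,1}$. You do the same bookkeeping pointwise, which is fine. However, your interior step (``only $u_i$ is nonzero on $B_d(y)$'') and the CJK theorem both presuppose continuity, and for $H^1_0$ data this must be proved first. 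It is quick. $u_i$ has an upper semicontinuous representative and $\hat u_i=u_i-\sum_{j\neq i}u_j$ has a lower semicontinuous one (limits of ball averages). A density argument using $u_iu_j=0$ a.e.\ shows these representatives are pointwise segregated, so $u_i=\hat u_i^+$ is lower semicontinuous as well.

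The decisive blow-up step is not correctly justified as written.
\begin{enumerate}
\item[(a)] What forces a single component is not an ACF formula for the limit class. The pair $(x_N^+,x_N^-)$ satisfies the inequalities defining $\mathcal S^*_{0,2}$ and has a positive ACF functional. The mechanism is quantitative. For $\u_n(x)=\u(z_n+\rho_nx)/A_n$, the CJK functional of each pair rescales as $(\rho_n/A_n)^4\,\Phi(z_n,\rho_n)$. This tends to $0$ because $\Phi$ is bounded and $A_n/\rho_n\to\infty$. By lower semicontinuity at most one component of the limit is non-constant, hence, by segregation, at most one is nonzero.
\item[(b)] To get a limit at all, and a nontrivial one, run the contradiction over normalized $\u_n$, so the constant is universal. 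Also choose $\rho_n$ as an almost-largest scale at which the growth bound fails, so the rescalings are bounded on $B_2$. Otherwise the normalization $\|\u_n\|_{0,1}=1$ can be lost in the weak limit; with the $B_2$ bound, Caccioppoli and compactness of traces preserve it.
\item[(c)] Without equicontinuity you cannot say the limit vanishes at $0$. Use instead the mean value inequality for the superharmonic function $\hat u_i-\frac{M}{2N}|x-z|^2$ at $z\in Z(\u)$:
\[
\fint_{B_s(z)}u_i\,dx\le\fint_{B_s(z)}\sum_{j\neq i}u_j\,dx+C_NMs^2 .
\]
This passes to $L^1_{\loc}$ limits. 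Applied to the only possibly nonzero component, it forces the limit to vanish, contradicting the normalization; no strong maximum principle is needed. This is exactly where the second inequality enters.
\end{enumerate}

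Your suspicion about $M$ is right, and it cannot be absorbed by scaling. Take $u_1=\delta^2\psi((x-x_0)/\delta)$ with $0\le\psi\in C^\infty_c(B_1)$, $|\Delta\psi|\le M$, $u_j\equiv0$ for $j\ge2$, and $\delta<r/2$. Then $[u_1]_{C^{0,1}(\overline{B_{r/2}(x_0)})}=\delta\|\nabla\psi\|_\infty$, while $\frac1r\|\u\|_{x_0,r}=r^{-N/2}\delta^{(N+2)/2}\|\nabla\psi\|_{L^2(B_1)}$, so the ratio blows up as $\delta\to0$. The correct estimate, obtained by normalizing $r=1$ and $\|\u\|_{x_0,1}+M\le1$, is
\[
\sup_i[u_i]_{C^{0,1}(\overline{B_{r/2}(x_0)})}\le\frac{C}{r}\|\u\|_{x_0,r}+CMr .
\]
This is harmless for the paper: $M$ is fixed in Theorem \ref{t:Lipschitz}, and in Lemma \ref{L:compactness} it equals at most $\overline{\Lambda} r_n^2/\rho_n\le C$. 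Fix the scalings in your displays accordingly:
\begin{enumerate}
\item[(i)] $\sup_{B_{r/2}(x_0)}|\u|\le C(\|\u\|_{x_0,r}+Mr^2)$;
\item[(ii)] the CJK bound should carry $M^2r^2$;
\item[(iii)] the final bound should have $Mr$ rather than $M$.
\end{enumerate}
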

The previous Lipschitz estimate is essentially deduced in \cite{ConTerVer05} by combining the Caffarelli-Jerison-Kenig monotonicity formula in \cite{CafJerKenACF} with Campanato's embedding (see also Theorem 1.3 and Remark 1.5 in \cite{CafJerKenACF}).

\begin{Theorem}[Lipschitz continuity of minimizers]\label{t:Lipschitz}
Let $\O$ be a domain satisfying the uniform exterior sphere condition and $\u \in H^1_0(\O,\Sigma_k)$ be a minimizer of \eqref{e:partition.functions}. 

Then $\u \in C^{0,1}(\overline{\O},\Sigma_k)$ and there exists $C>0$, depending only on the dimension and $\O$, such that  
\begin{equation}\label{eq:lip:estimates}
        \sup_{i=1,\dots,k}[u_i]_{C^{0,1}(\overline{ B_{r/2}(x_0)})} \leq \frac{C}{r} 
        \|\u\|_{x_0,r},   
\end{equation}
    for every $x_0 \in \O$ and  $r\in (0,\mathrm{dist}(x_0,\partial\O))$. 
\end{Theorem}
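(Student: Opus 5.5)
The plan has two parts: interior Lipschitz regularity with the scale-invariant estimate \eqref{eq:lip:estimates}, and then a uniform Lipschitz bound up to $\partial\O$.

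\textbf{Interior regularity.} By Proposition \ref{p:classeS}, each $u_i$ satisfies $-\Delta u_i \le \lambda_{2,1}(\omega_i)$. Moreover,
\[
-\Delta\Big(u_i-\sum_{j\ne i}u_j\Big)\ge -\sum_{j\neq i}\lambda_{2,1}(\omega_j).
\]
The right-hand side of the second inequality in Proposition \ref{p:classeS} is bounded below by this quantity, since the positive term can be discarded. Set $M:=\sum_{j=1}^k\lambda_{2,1}(\omega_j)=\ell$. Then $\u\in\mathcal S^*_{M,k}(\O)$, and Lemma \ref{l:lipschitz-conti} yields local Lipschitz continuity together with \eqref{eq:lip:estimates} for every $x_0\in\O$ and $r<\dist(x_0,\pa\O)$. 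The constant in the lemma is dimensional but may depend on $M$. Since $M=\ell$ is fixed by the problem, $C$ depends only on $N$ and $\O$ (and $k$).

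\textbf{Reduction of the boundary estimate.} It remains to prove a uniform bound near $\partial\O$. Since $u_i\ge0$ and $-\Delta u_i\le M$ in $\O$ with $u_i=0$ on $\pa\O$, I will compare with a barrier. Let $\psi$ be the torsion function of $\O$, namely $-\Delta\psi=1$ in $\O$ and $\psi=0$ on $\pa\O$. By the weak maximum principle, $0\le u_i\le M\psi$ in $\O$. Then I will show that $\psi(x)\le C\dist(x,\pa\O)$. This uses the uniform exterior sphere condition: for $y\in\pa\O$ with exterior ball $B_\rho(z)$ touching $\O$ at $y$, a radial function
\[
\Phi(x)=a\big(\rho^{2-N}-|x-z|^{2-N}\big)-\tfrac{1}{2N}\big(|x-z|^2-\rho^2\big)
\]
(with $\log$ replacing the power if $N=2$) is a supersolution of $-\Delta\Phi\ge1$, is nonnegative on $\O\subset\R^N\setminus B_\rho(z)$ for a suitable choice of $a$ depending on $\rho$ and $\operatorname{diam}\O$, and vanishes at $y$ with bounded gradient. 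Comparison gives $u_i(x)\le C|x-y|$, hence $u_i(x)\le C\dist(x,\pa\O)$, with $C$ depending on $N$, $\O$ and $M$.

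\textbf{Patching the estimates.} Fix $x_0\in\O$ and let $d:=\dist(x_0,\pa\O)$. On $B_{d/2}(x_0)$, apply \eqref{eq:lip:estimates} with $r=d/2$. The quantity $\|\u\|_{x_0,d/2}$ is controlled as follows. Since $|\u|\le Cd$ on $B_{d/2}(x_0)$, Caccioppoli's inequality for the subsolutions $u_i$ of $-\Delta u_i\le M$ gives
\[
\frac{1}{r^{N-2}}\int_{B_{r}}|\nabla u_i|^2\le \frac{C}{r^{N}}\int_{B_{2r}}u_i^2+CMr^2\le Cr^2,
\]
where I apply it on slightly smaller concentric balls, say $r=d/4$. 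The boundary term is likewise $O(r^2)$. Hence $\|\u\|_{x_0,r}\le Cr$, and the gradient at $x_0$ is bounded by a constant independent of $x_0$. Thus $\nabla\u\in L^\infty(\O)$. Because $\O$ is a smooth bounded domain, it is uniformly locally quasiconvex. A bounded gradient together with continuous vanishing boundary values therefore gives $\u\in C^{0,1}(\overline\O)$. Points near the boundary that are far apart along paths inside $\O$ are handled using $u_i\le C\dist(\cdot,\pa\O)$.

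\textbf{Main obstacle.} I expect the main difficulty to be keeping the constants uniform, in particular making the barrier construction depend only on the exterior sphere radius. Checking the Caccioppoli step for the sign-constrained subsolutions is the other point needing care, but it is standard: test the inequality with $\eta^2u_i\ge0$.
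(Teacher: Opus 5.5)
Your proposal is correct and follows the same route as the paper. Interior regularity comes from Lemma \ref{l:lipschitz-conti} via Proposition \ref{p:classeS}; the paper uses $M=\max_i\lambda_{2,1}(\omega_i)$, which suffices because the positivity sets are disjoint, but your $M=\ell$ works equally well. Boundary regularity comes from comparison with a torsion function, giving $u_i\le C\dist(\cdot,\pa\O)$. The only difference is that you write out the barrier construction and the interior-to-boundary patching (Caccioppoli at scale $\dist(x_0,\pa\O)/4$, then the two-case argument, which by itself makes the quasiconvexity remark unnecessary), whereas the paper delegates both to standard references.
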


\begin{proof}
    The Lipschitz continuity of $\u$ in the interior of $\O$ follows immediately by Lemma \ref{l:lipschitz-conti}. Indeed, by Proposition \ref{p:classeS}, if we set 
    \[
    \overline{\Lambda} := \max_{i=1,\dots,k}\lambda_{2,1}(\omega_i),
    \]
    it is immediate to see that minimizers belong to the class of segregated functions $\mathcal{S}_{\overline{\Lambda},k}^*(\O)$. Regarding the regularity up to the boundary, the estimate follows from a comparison argument. Let $w \in H^1_0(\O)$ be the unique solution to
   \[
   -\Delta w = \overline{\Lambda} \quad\text{in }\O,\qquad w=0\quad\text{on }\partial \O.
   \]
   By the comparison principle $0\leq u_i\leq w$ in $\O$, for every $i=1,\dots,k$, and $w \in C^{0,1}(\overline{\O})$ by the uniform exterior sphere condition. Therefore
\[
0\leq u_i \leq [w]_{C^{0,1}(\overline{\O})} \mathrm{dist}(x,\partial \O) \quad \text{in }\Omega.
\]
At this point, the Lipschitz continuity up to the boundary is a rather standard fact, for which we refer to \cite[Proposition 3.3]{HanLin} or \cite[Theorem 3.4]{SoTaTeZi18}.
\end{proof}

Having established that the minimizers of \eqref{e:partition.functions} are Lipschitz continuous in $\overline\O$, the proof of Theorem \ref{t:existence-equivalence} is now complete.

Next, we deduce two immediate consequences of this property. On one hand, by exploiting the continuity of each component of the minimizers, we introduce a different formulation of the PDEs in Proposition \ref{p:classeS}.

\begin{Corollary}\label{cor:EL eq}
Let $\u \in H^1_0(\O,\Sigma_k)$ be a minimizer of \eqref{e:partition.functions} and $i=1,\dots,k$. Then, there exists a non-negative Radon measure $\mu_i \in (C_c(\O))'$ supported on $\pa \{u_i>0\}$, such that 
\[
-\Delta u_i = \lambda_{2,1}(\omega_i)\ind_{\{u_i>0\}} - \mu_i \quad\text{in $\O$,}
\]
where the equation holds in the sense of distributions.
\end{Corollary}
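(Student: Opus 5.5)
The natural route is the Riesz representation theorem for non-negative distributions, combined with the local equation in the open positivity set. By Proposition \ref{p:classeS}, the distribution
\[
T_i := \lambda_{2,1}(\omega_i)\ind_{\{u_i>0\}} + \Delta u_i
\]
is non-negative. That is, $\langle T_i,\varphi\rangle = \int_\O \big(\lambda_{2,1}(\omega_i)\ind_{\{u_i>0\}}\varphi - \nabla u_i\cdot\nabla\varphi\big)\,dx \ge 0$ for every non-negative $\varphi\in C^\infty_c(\O)$. A non-negative distribution is a Radon measure: for compact $K\subset\O$, fix a cutoff $\chi\in C^\infty_c(\O)$ with $0\le\chi\le1$ and $\chi=1$ on $K$. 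Then for $\varphi$ supported in $K$ we have $|\langle T_i,\varphi\rangle|\le \|\varphi\|_\infty\langle T_i,\chi\rangle$. So $T_i$ extends to a positive linear functional on $C_c(\O)$, and by Riesz it is given by a non-negative Radon measure $\mu_i$. With this definition, $-\Delta u_i = \lambda_{2,1}(\omega_i)\ind_{\{u_i>0\}}-\mu_i$ in $\mathcal{D}'(\O)$.

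It remains to locate the support, i.e.\ to show $\mu_i$ vanishes on $\O\setminus\pa\{u_i>0\}$. This open set splits into the two disjoint open sets $\{u_i>0\}$ and $\operatorname{int}\{u_i=0\}$; the first is open by the continuity in Theorem \ref{t:Lipschitz}.

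\emph{On $\operatorname{int}\{u_i=0\}$.} Here $u_i\equiv0$ and the indicator vanishes, so for test functions supported there $\langle T_i,\varphi\rangle=0$.

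\emph{On $\omega_i=\{u_i>0\}$.} We need $-\Delta u_i=\lambda_{2,1}(\omega_i)$ there, which is the step requiring a small argument beyond Proposition \ref{p:classeS}. I would use a two-sided variation: take $\varphi\in C^\infty_c(\omega_i)$ of any sign. Since $\operatorname{supp}\varphi$ is compact in $\omega_i$ and $u_i$ is continuous, $u_i\ge c>0$ on it. Hence $u_i+t\varphi>0$ on $\operatorname{supp}\varphi$ for $|t|$ small, and $v_i=(u_i+t\varphi)/\|u_i+t\varphi\|_{L^1}$, $v_j=u_j$ is admissible in \eqref{e:partition.functions}; the segregation is preserved because the support of $v_i$ is unchanged. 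Expanding to first order, exactly as in Step 1 of the proof of Proposition \ref{p:classeS} but now for both signs of $t$, gives
\[
\int \nabla u_i\cdot\nabla\varphi\,dx=\lambda_{2,1}(\omega_i)\int\varphi\,dx,
\]
so $\mu_i=0$ on $\omega_i$. (Equivalently, $u_i|_{\omega_i}$ is the minimizer defining $\lambda_{2,1}(\omega_i)$, whose Euler--Lagrange equation is this.)

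Combining the two cases, $\mu_i(\O\setminus\pa\{u_i>0\})=0$, so $\operatorname{supp}\mu_i\subset\pa\{u_i>0\}\cap\O$. The only point needing any care is the local admissibility of the two-sided perturbation, and continuity of $u_i$ settles it.
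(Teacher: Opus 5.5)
Your proposal is correct and follows the paper's argument: non-negativity of $\lambda_{2,1}(\omega_i)\ind_{\{u_i>0\}}+\Delta u_i$ from Proposition \ref{p:classeS}, the fact that a non-negative distribution is a non-negative Radon measure, and localization of the support through the equation $-\Delta u_i=\lambda_{2,1}(\omega_i)$ in the open set $\omega_i$. You make explicit two points the paper's one-line proof leaves implicit: the two-sided variation that justifies this equation in $\omega_i$, and the trivial vanishing of $\mu_i$ on $\operatorname{int}\{u_i=0\}$.
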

\begin{proof}
It is a consequence of Proposition \ref{p:classeS}, the fact that $\{u_i>0\}$ is open (and hence $u_i$ satisfies $-\Delta u_i=\lambda_{2,1}(\omega_i)$ in $\omega_i$), and the fact that any non-negative distribution is a non-negative Radon measure (we refer e.g. to \cite[Chapter 2]{Hor}). Alternatively, one can argue more directly as in \cite[Lemma 5.5]{TaTe12}.
\end{proof}
On the other hand, we can also derive the Euler-Lagrange equation related to inner variations. In particular, we deduce the validity of a domain variation formula (or Pohozaev-type identity) for minimizers of problem \eqref{e:partition.functions}.
\begin{Lemma}\label{l:inner}
If $\u \in H^1_0(\O,\Sigma_k)$ is a minimizer of \eqref{e:partition.functions}, then
\[
    \sum_{i=1}^k \int_{\O}\Big( \left(|\nabla u_i|^2 -  2 \lambda_{2,1}(\omega_i)u_i\right)\mathrm{div}\xi  - 2 D\xi\cdot\nabla u_i\cdot  \nabla u_i \Big)\,dx = 0,
\]
for every $\xi \in C^\infty_c(\O,\R^N)$. In particular, for every $x_0\in \{\u=0\}$ and $r \in (0,\dist(x_0,\pa \Omega))$ we have that
\[
\begin{split}
\frac{N-2}{2}\int_{B_r(x_0)} &|\nabla \u|^2\,dx   + r\int_{\partial B_r(x_0)} |\partial_\nu \u|^2\,d\sigma\\
&= \frac{r}{2}\int_{\partial B_r(x_0)} |\nabla \u|^2\,d\sigma 
+N \sum_{i=1}^k \lambda_{2,1}(\omega_i) \int_{B_r(x_0)}  u_i  \,dx 
-r \sum_{i=1}^k\lambda_{2,1}(\omega_i)\int_{\partial B_r(x_0)} u_i\,d\sigma.
\end{split}
\]
\end{Lemma}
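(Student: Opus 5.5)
We will obtain the domain variation formula by composing the minimizer with a family of diffeomorphisms, using that such compositions preserve the segregation constraint and are compatible with the $L^1$ normalizations after rescaling. Fix $\xi\in C^\infty_c(\O,\R^N)$ and, for $|t|$ small, set $\Phi_t(x):=x+t\xi(x)$, which is a diffeomorphism of $\O$ onto itself equal to the identity near $\pa\O$. Define $u_{i,t}:=u_i\circ\Phi_t^{-1}$ and $v_{i,t}:=u_{i,t}/\|u_{i,t}\|_{L^1(\O)}$. Since $\Phi_t$ is a bijection, $u_{i,t}u_{j,t}\equiv0$ for $i\neq j$, $v_{i,t}\ge0$ and $v_{i,t}\in H^1_0(\O)$, so $\mathbf v_t$ is admissible in \eqref{e:partition.functions}. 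Hence $t\mapsto \sum_i \int_\O|\nabla v_{i,t}|^2$ attains its minimum at $t=0$, and its derivative at $t=0$ vanishes, provided we verify differentiability.

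Next we compute the expansions by the change of variables $y=\Phi_t(x)$. We have $\int_\O u_{i,t}\,dy=\int_\O u_i\,\det D\Phi_t\,dx=1+t\int_\O u_i\,\mathrm{div}\xi\,dx+o(t)$. Similarly,
\[
\int_\O|\nabla u_{i,t}|^2\,dy=\int_\O |(D\Phi_t)^{-T}\nabla u_i|^2\det D\Phi_t\,dx
=\int_\O|\nabla u_i|^2\,dx+t\int_\O\Big(|\nabla u_i|^2\mathrm{div}\xi-2D\xi\nabla u_i\cdot\nabla u_i\Big)dx+o(t).
\]
Both expansions are differentiable in $t$ because the integrands depend smoothly on $t$ with $L^1$ bounds; no regularity beyond $H^1$ is needed. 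Then
\[
\int_\O|\nabla v_{i,t}|^2=\frac{\int|\nabla u_{i,t}|^2}{(\int u_{i,t})^2}=\lambda_{2,1}(\omega_i)+t\Big[\int_\O(|\nabla u_i|^2\mathrm{div}\xi-2D\xi\nabla u_i\cdot\nabla u_i)-2\lambda_{2,1}(\omega_i)\int_\O u_i\,\mathrm{div}\xi\Big]+o(t),
\]
using $\int|\nabla u_i|^2=\lambda_{2,1}(\omega_i)$, which holds by Theorem \ref{t:existence-equivalence}. Summing over $i$ and setting the first variation to zero gives the first identity.

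For the Pohozaev identity, take $\xi(x)=\eta_\delta(|x-x_0|)(x-x_0)$, where $\eta_\delta$ is a cutoff equal to $1$ on $[0,r]$ and decreasing linearly to $0$ on $[r,r+\delta]$, and then let $\delta\to0$. Here $\mathrm{div}\xi=N\eta_\delta+\eta_\delta'|x-x_0|$ and $D\xi=\eta_\delta I+\eta_\delta'\frac{(x-x_0)\otimes(x-x_0)}{|x-x_0|}$. By the Lipschitz continuity of Theorem \ref{t:Lipschitz}, the Lebesgue differentiation theorem applies for a.e. $r$ (for the sphere terms), and continuity extends it to every $r$ since all terms are continuous in $r$ when $\u$ is Lipschitz. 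This yields
\[
(N-2)\int_{B_r}|\nabla\u|^2-2N\sum_i\lambda_i\int_{B_r}u_i-r\int_{\pa B_r}|\nabla\u|^2+2r\int_{\pa B_r}|\pa_\nu\u|^2+2r\sum_i\lambda_i\int_{\pa B_r}u_i=0,
\]
which after dividing by $2$ is the stated formula. The hypothesis $x_0\in\{\u=0\}$ is not actually needed for the identity. The only delicate point is justifying the $o(t)$ expansions and the limit $\delta\to0$, and both are routine given the Lipschitz bound.
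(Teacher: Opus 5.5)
Your argument is the paper's own. You use the same inner variation $u_i\circ(\mathrm{id}+t\xi)^{-1}$ renormalized in $L^1$, the same first-order expansions (which you compute correctly), and the same radial field $\eta(|x-x_0|)(x-x_0)$. The only differences are that you cut off in the outer annulus $B_{r+\delta}\setminus B_r$ instead of the inner one, and with a piecewise-linear profile; that needs a routine approximation of Lipschitz fields by $C^\infty_c$ ones, or a direct bi-Lipschitz change of variables.

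One claim should be dropped. Lipschitz continuity does not make $r\mapsto\int_{\partial B_r(x_0)}|\nabla\u|^2\,d\sigma$ or $r\mapsto\int_{\partial B_r(x_0)}|\partial_\nu\u|^2\,d\sigma$ continuous: for $\nabla\u\in L^\infty$ these traces exist only for a.e.\ $r$ (think of $(|x|-1)^+$ at $r=1$). So the limit $\delta\to0$ gives the identity for a.e.\ $r$. That is also all the paper's limiting argument yields, and all that is used later, where the identity enters only through the a.e.\ derivatives of $D_t$ and $H$.
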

\begin{proof}
Let $\xi \in C^\infty_c(\R^N,\R^N)$ be a given vector field with compact support in $\O$, and let $\Psi_t : \O\to \O$ be the diffeomorphism
\[
\Psi_t(x) := x + t \xi(x) \quad\text{for }x \in \O.
\]
Therefore, for $t$ small enough, the inverse map $\Phi_t := (\Psi_t)^{-1}$ satisfies
\[
\Phi_t (x) := x - t \xi(x) + o(t) \quad\text{for $x\in\O$ as $t \to 0^+,$}
\]
where the remainder term $o(t)$ should be understood in the $C^1(\overline{\Omega})$ sense. Thus, the vector-valued function $\u_t := (u_{t,1},\dots,u_{t,k})$ with
\[
u_{t,i} := \frac{u_{i} \circ \Phi_t}{\norm{u_{i} \circ \Phi_t}{L^1(\O)}} \in H^1_0(\O,\Sigma_k),
\]
is well-defined and is an admissible competitor for the minimization problem. For every $i=1,\dots,k$ we have
\begin{align*}
\int_{\O}|\nabla (u_i \circ \Phi_t)|^2\,dx &= \lambda_{2,1}(\omega_i) + t \int_\O \Big(|\nabla u_i|^2 \mathrm{div}\xi -2 D \xi \nabla u_i \cdot \nabla u_i \Big)\,dx + o(t), \\
\norm{u_{i} \circ \Phi_t}{L^1(\O)}^2  &= 1 + 2t \int_\O u_i \mathrm{div}\xi \,dx + o(t), 
\end{align*}
and hence
\[
\frac{\int_{\O}|\nabla (u_i \circ \Phi_t)|^2\,dx}{\norm{u_{i} \circ \Phi_t}{L^1(\O)}^2}
= \lambda_{2,1}(\omega_i) +t \int_{\O} \Big(\left(|\nabla u_i|^2 -  2 \lambda_{2,1}(\omega_i)u_i\right)\mathrm{div}\xi  - 2 D\xi\cdot\nabla u_i\cdot  \nabla u_i \Big)\,dx + o(t),
\]
where all the remainders $o(t)$ are uniform in $x \in \O$. Moreover, since $\u_0=\u$ is a minimizer, the functional
\[
t \mapsto \sum_{i=1}^k \frac{1}{\norm{u_{i} \circ \Phi_t}{L^1(\O)}^2}\int_{\O}|\nabla (u_i \circ \Phi_t)|^2\,dx,
\]
has a minimum at $t=0$, which leads to the first claimed condition in the thesis.

Now, without loss of generality, we assume that $x_0=0$. For $\eps>0$, choose $\rho_\eps\in C^\infty_c(B_r)$ such that 
    \[
    \rho_\eps =1 \quad\text{in }B_{(1-\eps)r},\qquad 
    \nabla \rho_\eps = -\frac{1}{r\eps} \frac{x}{|x|}+ o(\eps)\quad \text{in }B_{r}\setminus B_{(1-\eps)r}.
    \]
We consider the inner variation along the direction $\xi_\eps(x):= x \rho_\eps(x)$. Since
\[
\mathrm{div}\,\xi_\eps (x) = N \rho_\eps (x) + x\cdot \nabla \rho_\eps(x), \qquad 
D \xi_\eps (x) = \rho_\eps(x) \mathrm{Id} + x \otimes \nabla \rho_\eps(x),
\]
by the previous computations, we deduce that
\begin{align*}
0=&\sum_{i=1}^k \int_{\O} \left(\left(\frac12|\nabla u_i|^2 -\lambda_{2,1}(\omega_i)  u_i\right)\mathrm{div}\xi_\eps  - D\xi_\eps\cdot\nabla u_i\cdot  \nabla u_i \right)\,dx \\
=&\sum_{i=1}^k \int_{\O} \left(\frac{N-2}{2}|\nabla u_i|^2 - N \lambda_{2,1}(\omega_i)  u_i \right)\rho_\eps \,dx\\
&\quad + \sum_{i=1}^k \int_{\O} \left(\left(\frac12|\nabla u_i|^2 -  \lambda_{2,1}(\omega_i)  u_i\right)(x\cdot \nabla \rho_\eps) - (x\cdot \nabla u_i)(\nabla \rho_\eps \cdot \nabla u_i)\right)\,dx \\
=&\sum_{i=1}^k \int_{B_r} \left(\frac{N-2}{2}|\nabla u_i|^2 -N  \lambda_{2,1}(\omega_i) u_i \right)\rho_\eps \,dx\\
&\quad - \sum_{i=1}^k \frac{1}{r\eps}\int_{B_r\setminus B_{(1-\eps)r}} \left(\frac12|\nabla u_i|^2 -  \lambda_{2,1}(\omega_i)  u_i - (\partial_\nu u_i)^2\right)|x|\,dx + o(\eps).
\end{align*}
The thesis follows by passing to the limit as $\eps \to 0^+$.
\end{proof}

\section{Unique continuation property, vanishing order and non-degeneracy}\label{sec: non-deg}

Let $(\omega_1,\dots,\omega_k)$ be an optimal partition of \eqref{eq:partition.min}, and let $\u$ be the associated optimal function for problem \eqref{e:partition.functions}. In this section we prove Theorem \ref{prop: non-deg}, which is the starting point in the study of the nodal set and the free boundary, defined by
\[
Z(\u) = \{x \in \Omega \ \left| \ \u(x) = 0 \right.\} = \Omega \setminus \bigcup_{i=1}^k \omega_i \quad \text{and} \quad
\Gamma(\u)= \bigcup_{i=1}^k \partial \{u_i>0\} \cap \Omega = \bigcup_{i=1}^k \partial \omega_i \cap \Omega,
\]
respectively. We recall the notions of vanishing order and non-degeneracy introduced in Definition \ref{D:VOi} and Theorem \ref{prop: non-deg}, respectively. 

\begin{proof}[Proof of Theorem \ref{prop: non-deg}]
Suppose by contradiction that there exist $x_0\in Z(\u)$ and  $r_n\to 0^+$ such that 
    \begin{equation}\label{eq:nd:proof:1}
         \lim_{ n} \frac{\|\u\|_{x_0,2r_n}}{r_n^{2}}=0.
    \end{equation}
    Note that also
    \be\label{eq:nd:proof:1'}
    \lim_n \frac{1}{r_n^2}\left(\frac{1}{r_n^N} \int_{B_{2r_n}(x_0)} |\u|^2\,dx\right)^{1/2}=0,
    \ee
    by the Poincar\'e inequality.
    
    Let $\underline {\Lambda}:= \min_{i=1,\dots,k} \lambda_{2,1}(\omega_i)>0$, and let $w \in H^1_0(B_1)$ be the unique solution to 
    \[
    -\Delta w = \underline {\Lambda} \quad \text{in $B_1$},\qquad w=0\quad \text{on $\partial B_1$}.
    \]
    Then, define 
    \[
    w_n:=\begin{cases}
      \displaystyle  r_n^{2} w\Big(\frac{x-x_0}{r_n}\Big) &\text{in }B_{r_n}(x_0)\\
        0&\text{in }\Omega\setminus B_{r_n}(x_0).
    \end{cases}
    \]
    which satisfies   \begin{equation}\label{eq:nd:proof:2}
            -\Delta w_n \le \underline {\Lambda} \ind_{\{w_n>0\}}\qquad \text{in } \Omega, 
    \end{equation}
    and
    \begin{equation}\label{eq:nd:proof:3}
        \int_{\Omega} |\nabla w_n|^2\, dx = \underline {\Lambda} r_n^{N+2}\int_{B_1}w\,dx.
    \end{equation}
Finally, let $\eta_n\in C_c^\infty(\Omega)$ be such that $\eta_n=0$ in $\Omega\setminus B_{2{r_n}}(x_0)$, $\eta_n=1$ in $B_{r_n}(x_0)$ and $|\D \eta_n|\le C/{r_n}$ for some dimensional constant $C>0$. For every $i=1,\dots,k$, by testing 
\[
-\Delta u_i \leq \lambda_{2,1}(\omega_i)\ind_{\{u_i>0\}},
\]
with $\eta_n^2 u_i$ and using standard computations combined with \eqref{eq:nd:proof:1'}, we obtain that
\begin{equation}\label{eq:nd:proof:4}
    \int_{B_{2{r_n}}(x_0)}\eta_n^2|\D u_i|^2\,dx \le C \int_{B_{2{r_n}}(x_0)} |\D \eta_n|^2 u_i^2\,dx + C \int_{B_{2{r_n}}(x_0)} \eta_n^2 u_i \, dx = o(r_n^{N+2}).
\end{equation}
Moreover, testing \eqref{eq:nd:proof:2} with $u_i$ and using again \eqref{eq:nd:proof:1'}, 
we also have
\begin{equation}\label{eq:nd:proof:5}
\int_{\Omega} \D w_n\cdot \D u_i\,dx \le \underline {\Lambda}\int_{B_{r_n}(x_0)} u_i\,dx =o(r_n^{N+2}).
\end{equation}
Furthermore, by \eqref{eq:nd:proof:1} and \eqref{eq:nd:proof:1'}
\be\label{eq:nd:proof:6}
\begin{split}
\int_{\Omega} (1-\eta_n)&u_j \D \eta_n\cdot\D u_j\,dx  \le \frac{C}{r_n}\int_{B_{2r_n}(x_0)} |u_j||\nabla u_j| \,dx \\
& \le \frac{C}{r_n} \cdot r_n^{N/2}\left( \frac{1}{r_n^N} \int_{B_{2r_n}(x_0)} u_j^2\,dx\right)^{1/2} \cdot r_n^{\frac{N-2}{2}} \left( \frac{1}{r_n^{N-2}} \int_{B_{2r_n}(x_0)} |\nabla u_j|^2\,dx\right)^{1/2}
= o(r_n^{N+2}).
\end{split}
\ee
Now, let $\mathbf{v}_n=(v_{1,n},\dots,v_{k,n})$ be defined by
\[
v_{1,n}:= \frac{u_1+w_n}{\norm{u_1+w_n}{L^1(\O)}},\qquad v_{j,n}:=\frac{(1-\eta_n)u_j}{\norm{(1-\eta_n)u_j}{L^1(\O)}}, \text{ for }j=2,\dots,k\,,
\]
which is an admissible competitor for the minimization problem. Note that for $j=2,\dots,k$ we have
\begin{equation}
\label{1251}\begin{split}
\norm{u_1+w_n}{L^1(\O)} &= 1+\int_{B_{r_n}(x_0)} w_n\,dx = 1+ r_n^{N+2}\int_{B_1}w\,dx,\\
\norm{(1-\eta_n)u_j}{L^1(\O)} & \geq 1- \int_{B_{2r_n}(x_0)}u_j\,dx  \geq 1 + o(r_n^{N+2}).
\end{split}
\end{equation}
Combining the first equality with \eqref{eq:nd:proof:3} and \eqref{eq:nd:proof:5}, we deduce that
\begin{align*}
\int_\O |\nabla v_{1,n}|^2\,dx &= \frac{1}{\norm{u_1+w_n}{L^1(\O)}^2}\int_\O |\nabla (u_1+ w_n)|^2\,dx\\
& = \left(1- 2 r_n^{N+2}\int_{B_1}w\,dx \right)\left(\lambda_{2,1}(\omega_1) + \underline {\Lambda} r_n^{N+2}\int_{B_1} w  \,dx\right) + o(r_n^{N+2})\\
&= \lambda_{2,1}(\omega_1) +  r_n^{N+2} (\underline {\Lambda} -2 \lambda_{2,1}(\omega_1))\int_{B_1}w\,dx + o(r_n^{N+2});
\end{align*}
in addition, the second inequality in \eqref{1251} together with \eqref{eq:nd:proof:4} and 
\eqref{eq:nd:proof:6} gives
\begin{align*}
\int_\O |\nabla v_{j,n}|^2\,dx &= \frac{1}{\norm{(1-\eta_n)u_j}{L^1(\O)}^2}\int_\O \Big(|\nabla \eta_n|^2 u_j^2 + (1-\eta_n)^2 |\nabla u_j|^2 - 2(1-\eta_n)u_j\nabla \eta_n \cdot \nabla u_j\Big)\,dx\\
& \le \frac{1}{1+o(r_n^{N+2})}  \left(\int_{\O} |\nabla u_j|^2\,dx + o(r_n^{N+2})\right) = \lambda_{2,1}(\omega_j) +o(r_n^{N+2}).
\end{align*}
Then, using the previous estimates and the minimality of $\u$, we deduce that
\[
    0\le \sum_{i=1}^k\left(\int_\O |\nabla v_{i,n}|^2 \,dx - \lambda_{2,1}(\omega_i) \right)
    \le  r_n^{N+2}(\underline {\Lambda} -2\lambda_{2,1}(\omega_1))\int_{B_1} w\,dx + o(r_n^{N+2}),
\]
which leads to a contradiction for large $n$, since $w>0$ in $B_1$ and $\underline {\Lambda} < 2\lambda_{2,1}(\omega_i)$ for every $i=1,\dots,k$. The non-degeneracy directly implies that $\cO(\u,x_0) \le 2$ for any $x_0 \in Z(\u)$. The fact that $\cO(\u,x_0) \ge 1$ follows by the Lipschitz continuity. The rest of the thesis follows straightforwardly.
\end{proof}

\section{Almost-minimality condition}\label{sec: almost}
In order to better understand the nature of the optimal partition, it is convenient to remove the $L^1$-constraint in the minimization problem \eqref{e:partition.functions}. Precisely, let $\Lambda:= (\lambda_{2,1}(\omega_1),\dots,\lambda_{2,1}(\omega_k))$; by exploiting the Lipschitz continuity of minimizers of \eqref{e:partition.functions}, we can show the validity of the (unconstrained) almost-minimality condition close to nodal points stated in Proposition \ref{p:almost}.

\begin{proof}[Proof of Proposition \ref{p:almost}]
Without loss of generality, we may assume that $\mf w$ is non-negative. Otherwise, we can replace it with $\overline{\mf w}:=(|w_1|,\dots,|w_k|) \in H^1(B_r(x_0),\Sigma_k)$, which has the same trace as $\mf w$ on $\partial B_r(x_0)$ since $\u$ is non-negative. Furthermore, because $-(\Lambda \cdot \overline {\mf w}) \le (\Lambda \cdot \mf w)$, if \eqref{e:almost-minimality} holds for $\overline{\mf w}$, then the same inequality also holds for $\mf w$.

For the sake of simplicity, we assume that $x_0=0 \in Z(\u)$. Moreover, we introduce the notation
\[
T_i(g,r):=\int_{B_r}\left(|\nabla g|^2 - 2\lambda_{2,1}(\omega_i)g\right)\,dx.
\]

\emph{Step 1)} We claim that
\be\label{e:almost1}
\sum_{i=1}^k \frac{T_i(u_i,r)}{(1+\delta_i)^2}
\leq 
\sum_{i=1}^k \frac{T_i(w_i,r)}{(1+\delta_i)^2}, \qquad \text{where }\delta_i := \int_{B_r}(w_i-u_i)\,dx, 
\ee 
for all $r>0$ such that $B_r \subset \Omega$ and $\mf{w} \in H^1(B_r)$ with $\mf{w}=\mf{u}$ on $\pa B_r$ and $\mf{w} \ge 0$
(note that $\delta_i>-1$ for $r>0$ sufficiently small; indeed, it is sufficient to take $r$ such that $\supp(u_i) \setminus B_r \neq \emptyset$, so that $\|u_i\|_{L^1(B_r)}<1$). To prove this claim, we start observing that, by minimality,
\be\label{e:mi}
\sum_{i=1}^k \lambda_{2,1}(\omega_i) = \sum_{i=1}^k\int_\O |\nabla u_i|^2\,dx \leq \sum_{i=1}^k \int_\O |\nabla v_i|^2\,dx,
\ee
for every $\mathbf{v}\in H^1_0(\O,\Sigma_k)$ such that $\norm{v_i}{L^1(\O)}=1$ for every $i=1,\dots,k$.
Now, define $\mathbf{v}\in H^1_0(\O,\Sigma_k)$ by
\[
v_i := 
\begin{cases}
{w_i}/{m_i} & \mbox{in }B_r\\
{u_i}/{m_i} & \mbox{in }\O\setminus B_r,
\end{cases}\qquad m_i:= 1+ \delta_i.
\]
This is an admissible competitor in \eqref{e:mi}, and since
\begin{align*}
\int_\O |\nabla v_i|^2 \,dx 
= \frac{1}{m_i^{2}} \left(\lambda_{2,1}(\omega_i) +\int_{B_r}\left(|\nabla w_i|^2 - |\nabla u_i|^2\right) \,dx\right),
\end{align*}
by \eqref{e:mi} we infer that 
\[
\sum_{i=1}^k \lambda_{2,1}(\omega_i) \leq \sum_{i=1}^k\frac{1}{m_i^{2}}\lambda_{2,1}(\omega_i) +\sum_{i=1}^k \frac{1}{m_i^{2}}\int_{B_r}\left(|\nabla w_i|^2 - |\nabla u_i|^2\right) \,dx;
\]
that is (recalling that $m_i=1+\delta_i$ and the definition of $T_i$)
\[
\begin{split}
0 & \leq \sum_{i=1}^k\frac{1}{m_i^2}\left(\int_{B_r}\left(|\nabla w_i|^2 - |\nabla u_i|^2\right) \,dx + \lambda_{2,1}(\omega_i) ( 1- m_i^{2})\right) \\
& = \sum_{i=1}^k\frac{1}{m_i^2}\left(\int_{B_r}\left(|\nabla w_i|^2 - |\nabla u_i|^2\right) \,dx - \lambda_{2,1}(\omega_i) \left( \delta_i^2 +2\int_{B_r} (w_i-u_i)\,dx\right) \right)\\
& = \sum_{i=1}^k\frac{1}{m_i^2}\left(T_i(w_i,r)-T_i(u_i,r) - \lambda_{2,1}(\omega_i)\delta_i^2\right) \le \sum_{i=1}^k\frac{1}{(1+\delta_i)^2}\left(T_i(w_i,r)-T_i(u_i,r)\right),
\end{split}
\]
where we used the fact that $\lambda_{2,1}(\omega_i)>0$ for every $i$. This completes the proof of the claim.

\emph{Step 2)} We claim that for every $\mf{w}$ such that
\be\label{1451}
\sum_{i=1}^k T_i(w_i,r) \le \sum_{i=1}^k T_i(u_i,r),
\ee
the estimates
\be\label{e:qui2}
\int_{B_r}|\nabla (\mathbf{w}-\u)|^2\,dx \leq C \int_{B_r}|\nabla \u|^2\,dx + C r^{N+2} \quad \text{and} \quad |\delta_i| \le C r^{N+1},
\ee
hold for sufficiently small $r>0$.

First, since $w_i-u_i \in H^1_0(B_r)$, by the Poincar\'{e} inequality we have 
\be\label{e:qui}
|\delta_i|\leq C r^{N/2+1}\left(\int_{B_r}|\nabla (w_i-u_i)|^2\,dx\right)^{1/2}.
\ee
On the one hand, we have the following upper bound:
\be\label{1452}
\begin{split}
T_i(w_i-u_i,r) &=  T_i(w_i,r) - T_i(u_i,r) + 2 \int_{B_r}\nabla u_i \cdot \nabla(u_i-w_i)\,dx \\
&\leq T_i(w_i,r) - T_i(u_i,r) +\frac{1}{\eps^2} \int_{B_r}|\nabla u_i|^2\,dx+ \eps^2 \int_{B_r}|\nabla (w_i-u_i)|^2\,dx ,
\end{split}
\ee
which by \eqref{1451} implies that 
\[
\sum_{i=1}^k T_i(w_i-u_i,r)\leq \frac{1}{\eps^2} \int_{B_r}|\nabla \u|^2\,dx+ \eps^2 \int_{B_r}|\nabla (\mathbf{w}-\u)|^2\,dx,
\]
where we choose $\eps>0$ so small that $\eps^2<1/2$. On the other hand, by applying Young's inequality to \eqref{e:qui}, we obtain
\begin{align*}
T_i(w_i-u_i,r) &= \int_{B_r} |\nabla (w_i-u_i)|^2\,dx -2 \lambda_{2,1}(\omega_i)\delta_i\\
&\geq \int_{B_r}|\nabla (w_i-u_i)|^2\,dx - C r^{N+2} - \frac12 \int_{B_r}|\nabla (w_i-u_i)|^2\,dx\\
&= \frac12 \int_{B_r}|\nabla (w_i-u_i)|^2\,dx- C r^{N+2}.
\end{align*}
Thus, summing this estimate through $i=1,\dots,k$ and combining it with the previous upper bound, we get the first inequality in \eqref{e:qui2}; as a consequence, by exploiting the Lipschitz continuity of $\u$ (see Theorem \ref{t:Lipschitz}), we also deduce the second inequality in \eqref{e:qui2}:
\[
\delta_i^2 \leq \sum_{j=1}^k \delta_j^2 \leq C r^{N+2} \left(\int_{B_r}|\nabla \u|^2\,dx + r^{N+2}\right) \leq C r^{2N+2},
\]
as desired.

\emph{Step 3)} The thesis of the proposition follows by showing that, for sufficiently small $r$, the denominators in \eqref{e:almost1} can be removed at the cost of an additive error term. 
In order to show \eqref{e:almost-minimality}, it is not restrictive to assume that \eqref{1451} holds, otherwise \eqref{e:almost-minimality} is trivially satisfied. Therefore, by Step 2 there exists $r_0>0$ so that 
\[
\left|\frac{2+\delta_i}{(1+\delta_i)^2}\right| <\frac52, \qquad \text{for all } r \in (0,r_0),
\]
and hence
\[
\left|\frac{T_i(g,r)}{(1+\delta_i)^2}-T_i(g,r)\right| \le C|\delta_i| |T_i(g,r)|, \qquad \text{for all } g \in H^1(B_r).
\]
By combining the latter estimate with \eqref{e:almost1}, we infer that 
\[
\int_{B_r} \left(|\nabla \u|^2 - 2 (\Lambda \cdot \u)\right)\,dx \leq \int_{B_r} \left(|\nabla \mathbf{w}|^2 - 2 (\Lambda \cdot \mathbf{w})\right)\,dx + C \sum_{i=1}^k |\delta_i|\left(|T_i(w_i,r)|+|T_i(u_i,r)|\right).
\]
It remains to estimate the sum on the right hand side. By Lipschitz continuity
\[
|T_i(u_i,r)| \le \int_{B_r} |\nabla u_i|^2\,dx + 2\lambda_{2,1}(\omega_i) \int_{B_r} u_i \,dx \le C r^N(1+r) \le Cr^N.
\]
Moreover, by \eqref{e:qui2} and the Lipschitz continuity of $\mf{u}$
\begin{align*}
|T_i(w_i-u_i,r)| & \leq  \int_{B_r}|\nabla (w_i-u_i)|^2\,dx + 2\lambda_{2,1}(\omega_i)|\delta_i|\\
&\leq  C\left( \int_{B_r}|\nabla\u|^2\,dx + r^{N+2}\right) + C r^{N+1} \leq  C r^N.
\end{align*}
Thus, in view of \eqref{e:qui2} and \eqref{1452},
\[
|T_i(w_i,r)| \leq |T_i(w_i-u_i,r)| +|T(u_i,r)|  + \int_{B_r}|\nabla u_i|^2\,dx  +  \int_{B_r}|\nabla (w_i-u_i)|^2\,dx \leq C r^N.
\]
By collecting all the previous estimates and combining them with \eqref{e:qui2} again, we finally infer that 
\[
\sum_{i=1}^k |\delta_i| \left(|T_i(w_i,r)|+|T_i(u_i,r)|\right)\leq C r^{2N+1},
\]
which gives the desired result.
\end{proof}

\begin{remark}
    In the following, we will apply the almost minimality condition to the rescaled function 
    \[
\u_r(x):=\frac{1}{\rho_r}\u(x_0+rx),
\]
where $x_0\in Z(\u)$ and $\rho_r\to 0^+$ as $r\to 0^+$ is chosen in such a way that $\rho_r/r^2\ge c$, for some constant $c>0$ which does not depend on $r$. Hence, by rescaling the almost-minimality condition \eqref{e:almost-minimality}, we deduce that there exists $C>0$ such that 
\begin{equation}\label{eq:alm:min:rescaled}
\int_{B_t}\left( |\nabla \u_{r}|^2 - 2\frac{r^2}{\rho_r}(\Lambda\cdot \u_{r})\right)\,dx\leq 
\int_{B_t}\left(|\nabla \mf{v}|^2 - 2\frac{r^2}{\rho_r}(\Lambda \cdot \mf{v})\right)\,dx + C t^{2N+1} \frac{r^4}{\rho_r^2}r^{N-1},
\end{equation}
for any $t>0$, $r>0$ small enough, and any $\mf v\in H^1(B_t,\Sigma_k)$ with $\mf v = \u_r$ on $\partial B_t$. Note that the last term is negligible, being $N\geq 2$ and by the assumption $\rho_r/r^2\ge c>0$.
\end{remark}

\section{Monotonicity formulae and vanishing order}\label{sec: mon}

We collect here some monotonicity formulae which will be used in the next sections in order to study the free-boundary regularity. These formulae must be applied not only to minimizers of problem \eqref{e:partition.functions}, but also to possible rescalings and to limits of rescalings. Therefore, it is convenient to introduce classes of functions that do not impose any boundary conditions on the data.

\begin{Definition}\label{def T}
    Let $\Omega \subset \R^N$ be a domain and $\lambda \in\R^k$ be positive. We say that $\u \in \mathcal{T}_\lambda(\Omega)$ if 
\begin{enumerate}
\item[($i$)] $\u \not \equiv 0$ is non-negative, locally Lipschitz continuous in $\Omega$, and $u_i u_j \equiv 0$ in $\Omega$ for every $i \neq j$;
\item[($ii$)] each component $u_i$ satisfies
\[
-\Delta u_i = \lambda_i \quad \text{in $\{u_i>0\}$};
\]
\item[($iii$)] for every $x_0\in Z(\u)$ and $r \in (0,\dist(x_0,\pa \Omega))$ we have that
\[
\begin{split}
r\int_{\partial B_r(x_0)} |\nabla \u|^2\,d\sigma &=
(N-2)\int_{B_r(x_0)} |\nabla \u|^2\,dx + 2r\int_{\partial B_r(x_0)} |\partial_\nu \u|^2\,d\sigma\\
&\quad +2r \int_{\partial B_r(x_0)} (\lambda \cdot \u)\,d\sigma - 2N\int_{B_r(x_0)} (\lambda\cdot \u) \,dx.
\end{split}
\]
\end{enumerate}
We say that $\u \in \mathcal{T}_{\lambda,\loc}(\Omega)$ if $\u \in \mathcal{T}_\lambda(\Omega')$ for every $\O'\subset\O$.
\end{Definition}
By Theorem \ref{t:Lipschitz}, Corollary \ref{cor:EL eq} and Lemma \ref{l:inner} we already know that if $\u\in H^1_0(\O,\Sigma_k)$ is a minimizer of \eqref{e:partition.functions}, then $\u \in \mathcal{T}_\Lambda(\O)$ with $\Lambda:=(\lambda_{2,1}(\omega_1),\dots,\lambda_{2,1}(\omega_k))$.

\begin{Definition}\label{def G}
Let $\Omega \subset \R^N$ be a domain. We say that $\u \in \mathcal{G}(\Omega)$ if 
\begin{enumerate}
\item[($i$)] $\u \not \equiv 0$ is non-negative, locally Lipschitz continuous in $\Omega$, and $u_i u_j \equiv 0$ in $\Omega$ for every $i \neq j$;
\item[($ii$)] each component $u_i$ is harmonic in $\{u_i>0\}$;
\item[($iii$)] for every $x_0 \in \Omega$ and $r \in (0,\dist(x_0,\pa \Omega))$ 
\[
r\int_{\partial B_r(x_0)} |\D \u|^2 \,d\sigma = (N-2) \int_{B_r(x_0)} |\D \u|^2\,dx 
+2r\int_{\partial B_r(x_0)} |\pa_\nu \u|^2 \,d\sigma.
\]
\end{enumerate}
We say that $\u \in \mathcal{G}_{\loc}(\Omega)$ if $\u \in \mathcal{G}(\Omega')$ for every $\O'\subset\O$.
\end{Definition}

This class of functions has been introduced and studied in \cite{TaTe12}. Inspired by Proposition \ref{p:almost}, we introduce the following functional.

\begin{Definition}\label{def J}
Let $\Omega \subset \R^N$ be a domain, $\lambda \in \R^k$ be non-negative and let
\[
J_{\lambda,\Omega}(\mf{u}):= \int_{\Omega} \Big(|\nabla \mf{u}|^2-2(\lambda \cdot \u)\Big)\,dx.
\]
We say that $\mf{u} \in H^1(\Omega,\Sigma_k)$ is a \emph{minimizer} of $J_{\lambda,\Omega}$ (with fixed trace) if 
\[
J_{\lambda,\Omega}(\mf{u}) \le J_{\lambda,\Omega}(\mf{v}) \qquad \text{for every }  \mf{v} \in H^1(\Omega, \Sigma_k) \text{ such that } \mf{v}-\mf{u} \in H_0^1(\Omega,\R^k).
\]
We also say that $\mf{u} \in H^1_{\loc}(\Omega,\Sigma_k)$ is a \emph{local minimizer} of $J_{\lambda, \cdot}$ in $\Omega$ if for every $\O'\subset\O$ it holds \[ J_{\lambda,\Omega'}(\mf{u}) \le J_{\lambda,\Omega'}(\mf{v}) \qquad \text{for every }  \mf{v} \in H^1(\Omega', \Sigma_k) \text{ such that } \mf{v}-\mf{u} \in H_0^1(\Omega',\R^k). \]
\end{Definition}

\begin{remark}\label{rem: min G T}
In the case $\lambda=0$, any minimizer of $J_{0,\O}$ is a harmonic map in $H^1(\O,\Sigma_k)$ (in the sense that it minimizes the Dirichlet energy under the segregation constraint), and belongs to the class $\cG(\Omega)$. Similarly, if $\lambda$ is positive, minimizers of $J_{\lambda,\Omega}$ belong to the class $\mathcal T_\lambda(\O)$. These facts can be checked by arguing as in Section \ref{sec: basics} (with simpler proofs, since we do not have to deal with the normalization condition on the $L^1$-norms).
\end{remark}

\subsection{Monotonicity formulae in the class \texorpdfstring{$\mathcal{T}_\lambda(\O)$}{}}
Let $\Omega \subset \R^N$ be a domain, $\lambda\in \R^k$ be positive and let $\u \in \mathcal{T}_\lambda(\Omega)$. For $x_0\in Z(\u)$, $r\in(0,\dist(x_0,\pa \Omega))$ and $t,\gamma>0$, we define
\be\label{def H D etc}
\begin{split}
    &H(\u,x_0,r):=\int_{\partial B_r(x_0)} |\mf{u}|^2 \, d\sigma,\\
    &D_{t}(\u,x_0,r):=\int_{B_r(x_0)} \Big( |\D \mf{u}|^2 - t (\lambda\cdot \u) \Big)\, dx,\\
    &N_t(\u,x_0,r):=\frac{rD_t(\u,x_0,r)}{H(\u,x_0,r)}, \qquad \text{ whenever }H(\u,x_0,r)\neq0,\\     &W_{\gamma,t}(\u,x_0,r):=\frac{D_t(\u,x_0,r)}{r^{N-2+2\gamma}}-\gamma\frac{H(\u,x_0,r)}{r^{N-1+2\gamma}}.
\end{split}
\ee
We will refer to $N_t$ and to $W_{\gamma,t}$ as the Almgren frequency function and the Weiss function, respectively. Note that these quantities also depend on $\lambda$, but this dependence is implicit in the choice of $\u$, and hence we will not explicitly stress it.
By standard computations, and using in particular the validity of the domain variation formula (Lemma \ref{l:inner}), it is not difficult to show that
\begin{align}\label{eq:H':D'}
\begin{split}
        H'(\u,x_0,r)=\;&\frac{N-1}{r}H(\u,x_0,r)+2\int_{\partial B_r(x_0)}\sum_{i}u_i\partial_\nu u_i\, d\sigma,\\
          D'_t(\u,x_0,r) =\;&\frac{N-2}{r}D_{t}(\u,x_0,r)+ 
     \int_{\partial B_r(x_0)} \Big(2|\pa_\nu \u|^2 + (2-t)(\lambda\cdot \u) \Big)\, d\sigma\\
     & +\frac{t(N-2)-2N}{r}
        \; \int_{ B_r(x_0)} 
        (\lambda \cdot \u)\, dx,
        \end{split}
\end{align}
where here and in what follows we denote by $\prime$ the derivative with respect to $r$. Moreover, by the divergence theorem
\be\label{D con teo div}
D_1(\u,x_0,r) =  \sum_i \int_{B_r(x_0)} \left(|\nabla u_i|^2 + u_i \Delta u_i\right)dx =\int_{\partial B_r(x_0)} \sum_i u_i\pa_\nu u_i\,d\sigma.
\ee

Unlike the functions in $\cG$, for the elements of $\mathcal{T}$ the Almgren frequency function is not monotone. This is due to the fact that the equation satisfied by each component $u_i$ in its positivity set is not linear, but rather of sublinear type (we refer to the discussion of the analogy of the current problem with the unstable two-phase obstacle problem). Nevertheless, as in \cite[Section 2]{SoaTer18}, the previous expressions allow us to derive a formula for the derivative of $W_{\gamma,t}$, and to obtain its monotonicity for suitable values of the parameters.

\begin{Lemma}
    It holds that
\begin{align}\label{eq:W'}
    \begin{split}
        r^{N-2+2\gamma}W'_{\gamma,t}(\u,x_0,r)= \; &2\int_{\partial B_r(x_0)} \sum_i\Big(\partial_\nu u_i-\frac{\gamma}{r}u_i\Big)^2\, d\sigma + (2-t) \int_{ \partial B_r(x_0)}(\lambda \cdot \u)\, d\sigma
        \\
        &+\frac{(N-2)t-2N+2\gamma(t-1)}{r}
        \int_{ B_r(x_0)} (\lambda \cdot \u)\, dx.
    \end{split}
\end{align}
\end{Lemma}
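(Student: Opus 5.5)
The identity follows by differentiating $W_{\gamma,t}$ directly, using the expressions for $H'$ and $D'_t$ in \eqref{eq:H':D'}, and then eliminating the mixed term $\int_{\partial B_r}\sum_i u_i\partial_\nu u_i$ through the divergence identity \eqref{D con teo div}. Throughout, write $D_t$, $H$, $W$ for the quantities at $(\u,x_0,r)$.

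First, the quotient rule gives
\[
r^{N-2+2\gamma}W' = D_t' - \frac{N-2+2\gamma}{r}D_t - \gamma\frac{H'}{r} + \gamma\frac{N-1+2\gamma}{r^2}H .
\]
We substitute \eqref{eq:H':D'}. The terms $\frac{N-2}{r}D_t$ cancel. In the $H$ terms, $-\gamma\frac{N-1}{r^2}H$ combines with $\gamma\frac{N-1+2\gamma}{r^2}H$ to give $\frac{2\gamma^2}{r^2}H$. This leaves
\[
2\int_{\partial B_r}|\partial_\nu\u|^2 - \frac{2\gamma}{r}\int_{\partial B_r}\sum_i u_i\partial_\nu u_i + \frac{2\gamma^2}{r^2}H - \frac{2\gamma}{r}D_t + (2-t)\int_{\partial B_r}(\lambda\cdot\u) + \frac{t(N-2)-2N}{r}\int_{B_r}(\lambda\cdot\u).
\]

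The key step is rewriting $D_t$. Since $D_t = D_1 - (t-1)\int_{B_r}(\lambda\cdot\u)$, identity \eqref{D con teo div} gives
\[
D_t=\int_{\partial B_r}\sum_i u_i\partial_\nu u_i - (t-1)\int_{B_r}(\lambda\cdot\u).
\]
Substituting, the term $-\frac{2\gamma}{r}D_t$ produces a second copy of $-\frac{2\gamma}{r}\int_{\partial B_r}\sum_i u_i\partial_\nu u_i$, together with $\frac{2\gamma(t-1)}{r}\int_{B_r}(\lambda\cdot\u)$.

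The boundary terms now complete a square:
\[
2|\partial_\nu\u|^2 - \frac{4\gamma}{r}\sum_i u_i\partial_\nu u_i + \frac{2\gamma^2}{r^2}|\u|^2 = 2\sum_i\Big(\partial_\nu u_i - \frac{\gamma}{r}u_i\Big)^2 .
\]
The remaining bulk terms carry the coefficient $\frac{(N-2)t-2N+2\gamma(t-1)}{r}$, which is exactly \eqref{eq:W'}.

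The only delicate point is justifying \eqref{D con teo div}, i.e.\ $\int_{B_r}u_i\Delta u_i = -\lambda_i\int_{B_r}u_i$. By Corollary \ref{cor:EL eq}, in the class $\mathcal{T}_\lambda$ we have $-\Delta u_i = \lambda_i\ind_{\{u_i>0\}} - \mu_i$ with $\mu_i$ supported on $\partial\{u_i>0\}$, where $u_i=0$, so $u_i\,d\mu_i=0$. For the minimizers this is direct. In the abstract class one instead integrates by parts on $\{u_i>\eps\}$ and lets $\eps\to0$, using Lipschitz continuity. One should also note that $H$ and $D_t$ are absolutely continuous in $r$ (again by Lipschitz continuity), so the identity holds for a.e.\ $r$.
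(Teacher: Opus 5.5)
Your proof is correct and follows the same route the paper indicates: differentiate $W_{\gamma,t}$, substitute $H'$ and $D_t'$ from \eqref{eq:H':D'}, and rewrite $D_t$ with \eqref{D con teo div} so that the boundary terms complete the square. Your remarks on justifying \eqref{D con teo div} in the abstract class $\mathcal{T}_\lambda$ (where Corollary \ref{cor:EL eq} is not directly available, since it is stated for minimizers) and on the identity holding for a.e.\ $r$ go slightly beyond what the paper spells out, and are fine.
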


In the case $t=2$, this gives the monotonicity of the Weiss function for a range of parameters $\gamma$.

\begin{Lemma}\label{L:monotonicity:W_2}
    If $\gamma\ge2$, the function $W_{\gamma,2}(\u,x_0,r)$ is monotone non-decreasing with respect to $r$. Moreover, if $W_{\gamma,2}(\u,x_0,r)=const.$ for $r_1<r<r_2$ then $\u$ is $\gamma$-homogeneous with respect to $x_0$ in $B_{r_2}(x_0)\setminus { B_{r_1}(x_0)}$.
\end{Lemma}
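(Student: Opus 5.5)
Setting $t=2$ in the derivative formula \eqref{eq:W'} kills the boundary term $(2-t)\int_{\partial B_r(x_0)}(\lambda\cdot\u)\,d\sigma$. The coefficient of the bulk term becomes
\[
(N-2)\cdot 2-2N+2\gamma(2-1)=2\gamma-4,
\]
so that
\[
r^{N-2+2\gamma}W'_{\gamma,2}(\u,x_0,r)=2\int_{\partial B_r(x_0)}\sum_i\Big(\partial_\nu u_i-\frac{\gamma}{r}u_i\Big)^2 d\sigma+\frac{2(\gamma-2)}{r}\int_{B_r(x_0)}(\lambda\cdot\u)\,dx .
\]
Since $\lambda$ is positive and $\u\ge0$, the second term is non-negative precisely when $\gamma\ge2$, and the first term is always non-negative. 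Hence $W'_{\gamma,2}\ge0$ for a.e. $r$.

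To make this rigorous I would first justify \eqref{eq:W'} itself. $H$ and $D_t$ are absolutely continuous in $r$ thanks to the local Lipschitz continuity of $\u$. Differentiating $W_{\gamma,t}=r^{-(N-2+2\gamma)}D_t-\gamma r^{-(N-1+2\gamma)}H$ and inserting \eqref{eq:H':D'} (which rests on the domain variation identity ($iii$) in Definition \ref{def T}) produces the terms $2\int|\partial_\nu\u|^2-\frac{4\gamma}{r}\int\sum_i u_i\partial_\nu u_i+\frac{2\gamma^2}{r^2}H$. The middle term is rewritten using $D_1=\int_{\partial B_r}\sum_i u_i\partial_\nu u_i$ from \eqref{D con teo div}, and $D_t$ is expressed through $D_1$ and $\int_{B_r}(\lambda\cdot\u)$; these terms then complete the square. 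Monotonicity then follows by integrating the non-negative derivative, since $W$ is absolutely continuous.

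For the rigidity statement, suppose $W_{\gamma,2}$ is constant on $(r_1,r_2)$. Then $W'=0$ a.e., and both non-negative terms vanish a.e. In particular $\partial_\nu u_i=\frac{\gamma}{r}u_i$ on $\partial B_r(x_0)$ for a.e. $r\in(r_1,r_2)$. By continuity of $\nabla\u$ away from the free boundary, and the Lipschitz continuity of $\u$ overall, this gives $x\cdot\nabla u_i(x_0+x)=\gamma u_i(x_0+x)$ a.e. in the annulus. This means $\frac{d}{ds}\big(s^{-\gamma}u_i(x_0+sx)\big)=0$ along rays, so each $u_i$ is $\gamma$-homogeneous in $B_{r_2}(x_0)\setminus B_{r_1}(x_0)$. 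When $\gamma>2$ one additionally gets $\int_{B_r}(\lambda\cdot\u)=0$, which is a stronger conclusion and not needed.

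The only delicate step is the bookkeeping in deriving \eqref{eq:W'} with the correct coefficients, in particular the $r$-derivative of $\int_{B_r}(\lambda\cdot\u)$ and the factor coming from $D_t-D_1$. Everything else is a sign check.
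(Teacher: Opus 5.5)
Your proposal is correct and matches the paper's proof: setting $t=2$ in \eqref{eq:W'} removes the boundary $(\lambda\cdot u)$ term and leaves the bulk coefficient $2\gamma-4\ge 0$, and in the constant case the vanishing of the squared term gives $\nabla u_i\cdot(x-x_0)=\gamma u_i$ in the annulus, hence homogeneity. Your derivation of \eqref{eq:W'} from the domain variation identity, and the a.e./along-rays justification of homogeneity, fill in details the paper leaves implicit.
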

\begin{proof}
    By \eqref{eq:W'} with $t=2$, we obtain that
    \[
     r^{N-2+2\gamma}W'_{\gamma,2}(\u,x_0,r) \ge \frac{2\gamma-4}{r}\int_{B_r(x_0)} (\lambda \cdot \u)\, dx \ge 0, 
    \]
    since $\u$ and $\lambda$ are non-negative and $\gamma\ge2$. Furthermore, if $W_{\gamma,2}(\u,x_0,r)=const.$ for $r_1<r<r_2$, then $\D u_i\cdot (x-x_0)-{\gamma}u_i=0$ in $B_{r_2}(x_0)\setminus B_{r_1}(x_0)$ for each $i=1,\dots,k$.  This means that $\mathbf{u}$ is $\gamma$-homogeneous.
\end{proof}

If instead we focus on the Weiss functional with $t=1$, the monotonicity is not guaranteed. Indeed, from formula \eqref{eq:W'} we see that 
\[
W_{\gamma,1}'(\u,x_0,r) = \Psi_\gamma(\u,x_0,r) - \Phi_\gamma(\u,x_0,r),
\]
where $\Psi_\gamma(\u,x_0,r)$ and $\Phi_\gamma(\u,x_0,r)$ are non-negative quantities defined by 
\be\label{def Phi_g}
\begin{split}
\Psi_\gamma(\u,x_0,r)&:= \frac{2}{r^{N-2+2\gamma}}\int_{\partial B_r(x_0)} \sum_i\Big(\partial_\nu u_i-\frac{\gamma}{r}u_i\Big)^2\, d\sigma + \frac{1}{r^{N-2+2\gamma}}\int_{\partial B_r(x_0)} (\lambda \cdot \u)\, d\sigma,\\
\Phi_\gamma(\u,x_0,r)&:= \frac{N+2}{r^{N-1+2\gamma}} \int_{ B_r(x_0)} (\lambda \cdot \u)\, dx.
\end{split} 
\ee

\subsection{Monotonicity formulae in the class \texorpdfstring{$\cG(\Omega)$}{}} Let $\Omega \subset \R^N$ be a domain, and let $\mf{u} \in \cG(\Omega)$. We can define the functions $H$, $D=D_0$, $N=N_0$, $W_{\gamma}=W_{\gamma,0}$ exactly as in \eqref{def H D etc}. We stress that for functions in $\cG(\Omega)$ we will always choose $t=0$, and hence we omit the dependence on $t$ in the functionals $D$, $N$, $W_\gamma$.

 The following results have been proved in \cite[Sections 2 and 3]{TaTe12} (actually, in \cite{TaTe12} a more general situation is  considered, in which each component $u_i$ solves a semilinear problem where it is positive; thus, in the present setting we obtain simplified statements with respect to those in \cite{TaTe12}).
 
 \begin{proposition}\label{prop: alm G}
 Let $\mf{u} \in \cG(\Omega)$ and $x_0 \in Z(\u)$. Then:
 \begin{enumerate}
     \item [($i$)] $H(\mf{u},x_0,r) \neq 0$ for every $r\in (0,\dist(x_0,\pa\O))$, the function $N(\mf{u},x_0,\cdot)$ is non-decreasing in $r$, and moreover
 \[
 \frac{d}{dr} \log\left(\frac{ H(\mf{u},x_0,r)}{r^{N-1}}\right)  = \frac{2 N(\mf{u},x_0,r)}{r}.
 \]
 Furthermore, $N(\u,x_0,r) = \gamma$ for all $r \in (r_1,r_2)$ if and only if $\u$ is $\gamma$-homogeneous with respect to $x_0$ in $B_{r_2}(x_0) \setminus {B_{r_1}(x_0)}$.
     \item [($ii$)] The map $Z(\u) \ni x_0 \mapsto N(\u,x_0,0^+)$ is upper semi-continuous.
     \item [($iii$)] If $x_0 \in Z(\u)$, then either $N(\u,x_0,0^+)=1$, or $N(\u,x_0,0^+) \ge 3/2$.
 \end{enumerate}
 \end{proposition}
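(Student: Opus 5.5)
The plan follows the classical Almgren argument, done directly in the class $\cG(\Omega)$, using only conditions ($i$)--($iii$) of Definition~\ref{def G}.

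\emph{Part ($i$).} I would first compute the derivatives of $H$ and $D=D_0$. For $H$, the formula \eqref{eq:H':D'} (with $\lambda=0$) gives $H' = \frac{N-1}{r}H + 2\int_{\partial B_r}\sum_i u_i\partial_\nu u_i$. Since each $u_i$ is harmonic in $\{u_i>0\}$ and Lipschitz, testing with $u_i$ (approximating by $(u_i-\eps)^+$) yields $D(r)=\int_{\partial B_r}\sum_i u_i\partial_\nu u_i\,d\sigma$, in analogy with \eqref{D con teo div}. Hence $\frac{d}{dr}\log(H/r^{N-1}) = 2D/H = 2N/r$ wherever $H\neq0$.

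Next, the Pohozaev identity ($iii$) gives $D' = \frac{N-2}{r}D + 2\int_{\partial B_r}|\partial_\nu\u|^2$. Then
\[
\frac{N'}{N} = \frac1r + \frac{D'}{D} - \frac{H'}{H},
\]
so that
\[
N' = \frac{2r}{H^2}\left(\int_{\partial B_r}|\partial_\nu \u|^2\int_{\partial B_r}|\u|^2 - \Big(\int_{\partial B_r}\u\cdot\partial_\nu\u\Big)^2\right)\ge0
\]
by Cauchy--Schwarz. Equality holds exactly when $\partial_\nu\u = c(r)\u$ on $\partial B_r$, with $c(r) = N/r = \gamma/r$; this gives homogeneity, and the converse is a direct computation.

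\emph{Nonvanishing of $H$, the main obstacle.} Suppose $H(r_0)=0$ for some $r_0$. On an interval $(r_0,r_1)$ where $H>0$, either $D\le0$ there, or $N$ is monotone and bounded near $r_0$, so $\log(H/r^{N-1})$ has bounded derivative and cannot tend to $-\infty$, a contradiction. If $D\le 0$, then $\u\equiv0$ on $B_r$, contradicting $H>0$. Thus $H$ vanishes on no interval boundary and $\u\equiv0$ in $B_{r_0}$. One then has to propagate the zero set outward: the set of radii with $\u\equiv 0$ on $B_r$ is both open and closed by the same ODE argument applied at any center in the zero region. Connectedness of $\Omega$ then forces $\u\equiv0$, contradicting $\u\not\equiv 0$. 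This unique-continuation step is where care is needed, and I would follow \cite{TaTe12}.

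\emph{Part ($ii$).} Write $N(x_0,0^+)=\inf_r N(x_0,r)$. For fixed $r$, the map $x_0\mapsto N(x_0,r)$ is continuous because $H(x_0,r)>0$ and $\u$ is Lipschitz. An infimum of continuous functions is upper semicontinuous.

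\emph{Part ($iii$).} I would argue by blow-up and dimension reduction.
\begin{itemize}
\item Normalized blow-ups $\u(x_0+rx)/\sqrt{H(r)/r^{N-1}}$ are bounded in $H^1$ by the doubling property that follows from ($i$).
\item They converge strongly, using the Lipschitz bounds of Lemma~\ref{l:lipschitz-conti}, to a $\gamma$-homogeneous element of $\cG(\R^N)$ with $\gamma=N(x_0,0^+)$.
\item For such homogeneous profiles, $\gamma\ge1$. The case $\gamma<3/2$ is excluded except for $\gamma=1$, by reducing to the spherical problem: each component restricted to $S^{N-1}$ is a first eigenfunction of its nodal domain, with eigenvalue $\gamma(\gamma+N-2)$.
\item One then uses the segregated spherical partition eigenvalue gap. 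If only two components are present, $\gamma=1$ (the harmonic-function case). With three or more components, the optimal-partition bound (Caffarelli--Lin/Helffer--Hoffmann-Ostenhof--Terracini) gives $\gamma\ge 3/2$.
\item Dimension reduction via Federer handles translation-invariant directions.
\end{itemize}
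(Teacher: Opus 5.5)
Your arguments for ($i$) and ($ii$) are correct. They are the standard Almgren argument of \cite{TaTe12}, and the paper gives no proof of this proposition beyond citing \cite{TaTe12} and \cite{SoTe15}. One clean-up in ($i$): the branch ``$D\le 0$'' is vacuous, since $D=D_0\ge 0$. What you actually use is that, on any interval where $H>0$, monotonicity bounds $N$ from above near the left endpoint and from below near the right endpoint. Hence $\log H$ cannot tend to $-\infty$ at either end, so for each center $H(\u,z,\cdot)$ vanishes either nowhere or identically. The open--closed argument on the interior of $Z(\u)$ then concludes.

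The gap is in ($iii$). First, splitting by the number of vector components is wrong. The function $\u=((x_1x_2)^+,(x_1x_2)^-,0,\dots,0)$ lies in $\cG_{\loc}(\R^N)$, has two components, and has $N(\u,0,0^+)=2$. Conversely, one component can fill several cells, as in $(|x_N|,0,\dots,0)$. The relevant count is the number of connected components of $\{\u\neq0\}\cap\mathbb S^{N-1}$. With exactly two cells, Friedland--Hayman only gives $\gamma\ge1$. Excluding $\gamma\in(1,3/2)$ needs a further argument, e.g.\ harmonicity of a signed combination across the whole interface, and hence regularity of that interface.

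Second, and more seriously, for three or more cells you need the sharp bound $\max_i\lambda_1(\omega_i)\ge\frac32\left(\frac32+N-2\right)$ for every $3$-partition of $\mathbb S^{N-1}$, in every dimension. The references you name do not give this sharp constant for $N\ge4$. Helffer--Hoffmann-Ostenhof--Terracini treat $\mathbb S^2$, and the Caffarelli--Lin three-phase estimates give only a small, non-sharp improvement. This sharp gap in general dimension is precisely what \cite{TaTe12} did not obtain (only $1+\delta_N$, with $\delta_2=1/2$), and what the paper imports from \cite{SoTe15}.

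Your last bullet does not fill the gap. Federer's reduction principle bounds the dimension of sets; it does not produce a gap in the admissible homogeneity degrees. A workable route is induction on $N$:
\begin{enumerate}
\item Blow up the $\gamma$-homogeneous profile at $y\in Z(\u)\setminus\{0\}$. This gives a profile invariant along $y$, of degree at most $\gamma<3/2$ by homogeneity and upper semicontinuity.
\item By the induction hypothesis, every such $y$ is therefore regular.
\item The interfaces are then $C^{1,\alpha}$ hypersurfaces of the sphere satisfying the reflection law, so a suitably signed sum of the components is harmonic in $\R^N\setminus\{0\}$.
\item That sum is then a homogeneous harmonic polynomial, which forces $\gamma=1$.
\end{enumerate}
This route requires three things absent from your sketch:
\begin{itemize}
\item stability of $\cG$ under blow-ups at non-origin points;
\item reduction of translation-invariant elements to $\cG(\R^{N-1})$, which is not immediate from the ball-Pohozaev condition ($iii$) of Definition~\ref{def G};
\item the regularity theory at points of order one.
\end{itemize}

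Finally, Lemma~\ref{l:lipschitz-conti} is not available for general elements of $\cG(\Omega)$: the inequality $-\Delta(u_i-\sum_{j\neq i}u_j)\ge -M$ is not part of Definition~\ref{def G}. Compactness of blow-ups should instead be taken from \cite{TaTe12}, i.e.\ Proposition~\ref{prop: blow-up G}.
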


Concerning point ($iii$), in \cite{TaTe12} it is only proved that $N(\u,x_0,0^+) \ge 1+\delta_N$ whenever $N(\u,x_0,0^+) \neq 1$, with $\delta_N>0$ a dimensional constant, and $\delta_2=1/2$. The fact that $\delta_N=1/2$ for every $N$ was subsequently proved in \cite{SoTe15} (see also \cite{OgVe}).

Proposition \ref{prop: alm G} represents the starting point for the blow-up analysis. Let $\u \in \cG(\Omega)$, $x_0 \in Z(\u)$, and let 
\[
\u_{r}(x):= \frac{\u(x_0+rx)}{\left(r^{1-N} H(\mf{u},x_0,r)\right)^{1/2}}, \qquad x \in \frac{\Omega-x_0}{r}.
\]
Note that $\|\u_r\|_{L^2(\partial B_1)}=1$ for every $r$.

\begin{Proposition}[See Section 3 in \cite{TaTe12}]\label{prop: blow-up G}
With the previous notation, for any sequence of radii $r \to 0^+$ we have that up to a subsequence $\u_r \to \u_0$ in $C^{0,\alpha}_{\loc}(\R^N)$ for every $\alpha \in (0,1)$ and strongly in $H^1_{\loc}(\R^N)$, where $\u_0 \in \cG_{\loc}(\R^N)$. Moreover, $\u_0$ is homogeneous with respect to $0$ of degree $N(\u,x_0,0^+)$. 
\end{Proposition}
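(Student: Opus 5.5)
The plan is the classical blow-up argument of Almgren type, carried out in the class $\cG$, using Proposition \ref{prop: alm G} as the main tool. Without loss of generality take $x_0=0$, set $\gamma:=N(\u,0,0^+)$, and write
\[
h(r):=r^{1-N}H(\u,0,r), \qquad \u_r(x)=\u(rx)/h(r)^{1/2}.
\]
This limit exists because $N(\u,0,\cdot)$ is monotone and non-negative.

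\emph{Step 1 (uniform bounds).} For $R>0$ and $r$ small, a change of variables gives
\[
N(\u_r,0,\rho)=N(\u,0,r\rho).
\]
By monotonicity this is at most $N(\u,0,r_0)=:\bar N$ for $\rho\le R$ and $rR\le r_0$. The logarithmic derivative formula in Proposition \ref{prop: alm G}($i$) applied to $\u_r$ gives, for $\rho\in(1,R)$,
\[
\frac{H(\u_r,0,\rho)}{\rho^{N-1}}\le \rho^{2\bar N}\,\frac{H(\u_r,0,1)}{1}=\rho^{2\bar N}.
\]
Since $D(\u_r,0,\rho)=N(\u_r,0,\rho)H(\u_r,0,\rho)/\rho$, we obtain $\|\u_r\|_{H^1(B_R)}\le C(R)$ uniformly in $r$.

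Next we need uniform Lipschitz bounds. Functions in $\cG$ satisfy $-\Delta u_i\le0$ and $-\Delta(u_i-\sum_{j\ne i}u_j)\ge0$. This is standard in \cite{TaTe12}: it follows from harmonicity in the positivity sets together with the domain variation identity. Hence $\u_r\in\mathcal S^*_{0,k}$ and Lemma \ref{l:lipschitz-conti} applies, giving $\u_r$ uniformly Lipschitz on $B_{R/2}$. Ascoli's theorem then yields convergence in $C^{0,\alpha}_{\loc}$ for every $\alpha<1$, and weak convergence in $H^1_{\loc}$, along a subsequence, to some $\u_0$ with $\|\u_0\|_{L^2(\partial B_1)}=1$. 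In particular $\u_0\not\equiv0$, $\u_0\ge0$, and $\u_0$ is segregated.

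\emph{Step 2 (strong convergence and class $\cG$).} Uniform convergence implies that $\{u_{0,i}>0\}$ is eventually contained in $\{u_{r,i}>0\}$ on compact sets, so $u_{0,i}$ is harmonic in its positivity set. For strong $H^1_{\loc}$ convergence, test $-\Delta u_{r,i}=-\mu_{r,i}$ with $\varphi u_{r,i}$, where $\mu_{r,i}\ge0$ is supported on $\{u_{r,i}=0\}$. This gives
\[
\int\varphi|\nabla u_{r,i}|^2=-\int u_{r,i}\nabla\varphi\cdot\nabla u_{r,i},
\]
because $u_{r,i}\mu_{r,i}=0$ by continuity. The right-hand side converges by weak-times-strong convergence, and the same identity holds for the limit. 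Hence the norms converge and the convergence is strong. Strong $H^1$ convergence, together with the Lipschitz bounds, lets us pass to the limit in the domain variation identity (iii) for almost every radius, and by continuity for every radius. Thus $\u_0\in\cG_{\loc}(\R^N)$.

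\emph{Step 3 (homogeneity).} This is the key step. By strong convergence, $D(\u_r,0,\rho)\to D(\u_0,0,\rho)$ and $H(\u_r,0,\rho)\to H(\u_0,0,\rho)$. The latter is positive by Proposition \ref{prop: alm G}($i$) applied to $\u_0$, since $0\in Z(\u_0)$ because $\u_r(0)=0$. Therefore
\[
N(\u_0,0,\rho)=\lim_r N(\u,0,r\rho)=\gamma \qquad \text{for every } \rho>0.
\]
The rigidity part of Proposition \ref{prop: alm G}($i$) then gives that $\u_0$ is $\gamma$-homogeneous.

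I expect the main obstacle to be Step 2, and more precisely checking that the subsolution/supersolution structure and the measures behave well enough to give both strong convergence and the limiting Pohozaev identity. Everything else follows directly from monotonicity.
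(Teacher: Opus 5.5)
\textbf{Gap: the uniform Lipschitz bound.} Your overall scheme matches the standard blow-up argument of \cite{TaTe12}, which the paper cites rather than reproves: frequency bounds giving $H^1_{\loc}$ bounds, compactness, strong convergence by testing with $\varphi u_{r,i}$, passage to the limit in the Pohozaev identity, and homogeneity from constancy of $N(\u_0,0,\cdot)$. The $H^1$ part of Step 1, Step 2 and Step 3 are fine once you have uniform Lipschitz bounds on $\u_r$. The gap is in how you get those bounds.

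It is false that every $\u\in\cG$ satisfies $-\Delta(u_i-\sum_{j\neq i}u_j)\ge 0$. Take $\u=(|x_N|,0,\dots,0)$. It is non-negative, Lipschitz, and harmonic in its positivity set. It also satisfies Definition \ref{def G}($iii$) at every centre, because that identity involves only $|\nabla\u|^2$ and $|\pa_\nu\u|^2$, which coincide with those of the harmonic function $x_N$. The paper itself lists $(|x_2|,0,\dots,0)$ as an element of $\cG_{\loc}(\R^2)$ in Section \ref{sec: hom harm}. Yet $\Delta|x_N|$ equals twice $\mathcal{H}^{N-1}$ restricted to $\{x_N=0\}$, so $u_1-\sum_{j\neq 1}u_j=|x_N|$ is not superharmonic.

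The domain-variation identity only encodes a weak reflection law for $|\nabla\u|$. It cannot distinguish one component folded across an interface from two distinct components meeting there. The supersolution inequality comes from minimality (as in Proposition \ref{p:classeS}), not from membership in $\cG$. So $\u_r\notin\mathcal{S}^*_{0,k}$ in general, and Lemma \ref{l:lipschitz-conti} does not apply; only $-\Delta u_{r,i}\le 0$ survives.

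The qualitative Lipschitz constant of $\u$ rescales to roughly $r^{1-\gamma}$, which is useless when $\gamma>1$. Without a uniform bound you lose several things:
\begin{itemize}
\item the $C^{0,\alpha}_{\loc}$ convergence;
\item the continuity of $\u_0$ and the harmonicity of $u_{0,i}$ in $\{u_{0,i}>0\}$, both of which you deduce from uniform convergence;
\item the identity for the limit that your strong-convergence argument uses;
\item the local Lipschitz continuity of $\u_0$ required by Definition \ref{def G}($i$).
\end{itemize}

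\textbf{How to repair it.} Apply the monotonicity formula at every nodal point of $\u_r$, not only at $0$.
\begin{enumerate}
\item Each $\u_r$ is qualitatively locally Lipschitz, so for $y\in Z(\u_r)$ we have $H(\u_r,y,\rho)\le C_y\rho^{N+1}$. The logarithmic-derivative formula then forces $N(\u_r,y,0^+)\ge 1$; this is also contained in Proposition \ref{prop: alm G}($iii$).
\item By monotonicity, $N(\u_r,y,\rho)\ge 1$ for all $\rho$. Hence $\rho^{1-N}H(\u_r,y,\rho)\le C\rho^2$ for $\rho\le 4$.
\item The constant $C$ depends only on a bound for $\|\u_r\|_{L^2(B_{R+10})}$: use that $\rho^{1-N}H$ is non-decreasing and average over $\rho\in(4,8)$. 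This is uniform in $y\in Z(\u_r)\cap B_{R+1}$ and in $r$.
\item Let $x\in B_R$ with $u_{r,i}(x)>0$ and $d:=\dist(x,Z(\u_r))<1$, and pick $y\in Z(\u_r)$ with $|x-y|=d$. Since $u_{r,i}$ is harmonic in $B_d(x)$, we get $|\nabla u_{r,i}(x)|\le \frac{N}{d}\sup_{B_{2d}(y)}u_{r,i}$.
\item The mean-value inequality for the subharmonic $u_{r,i}$, together with the growth estimate in step 2, gives $\sup_{B_{2d}(y)}u_{r,i}\le C\big(d^{-N}\int_{B_{4d}(y)}|\u_r|^2\,dx\big)^{1/2}\le Cd$.
\item Points with $d\ge 1$ are handled by interior harmonic estimates and the $L^2$ bound.
\end{enumerate}
This gives $\|\nabla\u_r\|_{L^\infty(B_R)}\le C(R)$ uniformly in $r$, and your Steps 2 and 3 then go through as written.
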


\subsection{Monotonicity formulae and vanishing order}\label{S:mon:and:van}

Roughly speaking, Proposition \ref{prop: blow-up G} shows that, for functions in $\cG(\Omega)$, the value $N(\u,x_0,0^+)$ describes the vanishing order of $\u$ at $x_0$. This is a motivation for the following definition.

\begin{Definition} 
Let $\u \in H^1(\Omega,\Sigma_k)$ and $x_0 \in \Omega$. \begin{enumerate}
    \item[($i$)] We define the \emph{$N$-vanishing order of $\u$ at $x_0$} as 
\[
N_1(\u,x_0,0^+):= \lim_{r \to 0^+} N_1(\u,x_0,r),
\]
provided that the limit exists. 
    \item[($ii$)] We define the \emph{$L^2$-vanishing order of $\u$ at $x_0$} as
\[
\mathcal{V}(\mf{u},x_0):= \sup\left\{\gamma>0 \ \left| \ \limsup_{r \to 0^+} \frac{1}{r^{N-1+2\gamma}} \int_{\pa B_{r}(x_0)} \u^2\,d\sigma <+\infty \right. \right\}.
\]
\end{enumerate}
\end{Definition}

At this point we have three apparently different notions of vanishing order, which will serve for different purposes. In the end, we will be able to prove that for minimizers of \eqref{e:partition.functions} the three notions coincide (at least below the quadratic threshold). This is why we adopt a common terminology.

\begin{remark}
Note that $\mathcal{O}(\u,x_0) =0$ if $\u(x_0) \neq 0$. Furthermore, $\mathcal{O}(\u,x_0)$ is characterized by the property that
\be\label{char O gamma}
\limsup_{r \to 0^+} \frac{\|\u\|_{x_0,r}}{r^{\gamma}} = \begin{cases} 0 & \text{if }\gamma<\mathcal{O}(\u,x_0) \\ +\infty & \text{if } \gamma>\mathcal{O}(\u,x_0). \end{cases}
\ee
A similar characterization can be given for the $L^2$-vanishing order. As a consequence, it is clear that $\mathcal{O}(\u,x_0)\leq \mathcal{V}(\u,x_0)$.
\end{remark}

In what follows we show that, whenever the vanishing order $\cO(\u,x_0)$ of a local minimizer is strictly below $2$, then $\u$ behaves like a function in $\cG(\Omega)$ at an infinitesimal scale. In particular, one can establish a monotonicity formula for the Almgren frequency. To reach such a result we first prove some partial monotonicity of the Weiss-type functional at such points.

\begin{Lemma}\label{L:W:almost:monotone}
   Let $\u \in \mathcal{T}_\lambda(\Omega)$ for some positive $\lambda \in \R^k$, and let $x_0 \in Z(\u)$ be such that     \begin{equation}\label{eq:H1:V<2}
             \mathcal O:=\mathcal O(\u,x_0)= 2-\delta,
     \end{equation}
     for some $\delta>0$. Then, for every $\delta'\in(0,\delta)$, there exist $C_0>0$ and $r_0$ possibly depending on $x_0$ such that the map 
    \[
    r\mapsto W_{\mathcal{O},1}(\u,x_0,r) + C_0 r^{\delta-\delta'},
    \]
    is monotone non-decreasing for $r\in(0,r_0)$.

    Moreover, for every $\e>0$ such that $2\e\in(0,\delta -\delta')$, there exist $C_1>0$ and $r_1>0$ possibly depending on $x_0$ such that the map 
    \[
    r\mapsto W_{\mathcal{O}+\e,1}(\u,x_0,r) + C_1 r^{\delta-\delta'-2\e},
    \]
    is monotone non-decreasing for $r\in(0,r_1)$.

\end{Lemma}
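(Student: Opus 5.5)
We use the decomposition $W_{\gamma,1}'=\Psi_\gamma-\Phi_\gamma$ from \eqref{def Phi_g}. Since $\Psi_\gamma\ge0$, it suffices to show that the negative part $\Phi_\gamma$ is integrable near $0$ with a quantitative bound: if $\Phi_\gamma(\u,x_0,r)\le C r^{\beta-1}$ for some $\beta>0$ and all $r\in(0,r_0)$, then
\[
\frac{d}{dr}\Big(W_{\gamma,1}(\u,x_0,r)+\tfrac{C}{\beta}r^{\beta}\Big)\ge \Psi_\gamma\ge 0.
\]
The task therefore reduces to estimating $\int_{B_r(x_0)}(\lambda\cdot\u)\,dx$ via the vanishing order.

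\emph{Step 1 (decay of $\u$).} Fix $\delta'\in(0,\delta)$. By \eqref{char O gamma} with $\gamma=\mathcal O-\delta'<\mathcal O$, we get $\|\u\|_{x_0,r}\le C r^{\mathcal O-\delta'}$ for $r<r_0$. Scaling \eqref{Poinc norm} gives $\int_{B_r(x_0)}|\u|^2\,dx\le C r^N\|\u\|_{x_0,r}^2\le Cr^{N+2(\mathcal O-\delta')}$. Since $\u\ge0$ and $\lambda$ is positive, Cauchy--Schwarz yields
\[
\int_{B_r(x_0)}(\lambda\cdot\u)\,dx\le C|\lambda|\, r^{N/2}\Big(\int_{B_r(x_0)}|\u|^2\Big)^{1/2}\le C r^{N+\mathcal O-\delta'} .
\]
This uses only the definition of $\mathcal O$ (the $L^1$ mass is controlled by the $L^2$ mass). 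It is the one genuinely analytic input, and it is where the hypothesis $\mathcal O<2$ plays its role.

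\emph{Step 2 (first claim).} Take $\gamma=\mathcal O$. Then
\[
\Phi_{\mathcal O}\le \frac{(N+2)C\,r^{N+\mathcal O-\delta'}}{r^{N-1+2\mathcal O}}=C r^{1-\mathcal O-\delta'}=Cr^{(\delta-\delta')-1},
\]
using $\mathcal O=2-\delta$. With $\beta=\delta-\delta'>0$ we obtain monotonicity of $W_{\mathcal O,1}+C_0r^{\delta-\delta'}$, where $C_0=C/(\delta-\delta')$.

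\emph{Step 3 (second claim).} Take $\gamma=\mathcal O+\e$. The denominator gains an extra factor $r^{2\e}$, so
\[
\Phi_{\mathcal O+\e}\le Cr^{(\delta-\delta'-2\e)-1}.
\]
This exponent is integrable precisely when $2\e<\delta-\delta'$, so the same argument gives monotonicity of $W_{\mathcal O+\e,1}+C_1r^{\delta-\delta'-2\e}$.

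The only point requiring care is that the constants and $r_0$ depend on $x_0$ through the $\limsup$ in Step 1. This is consistent with the statement, so I do not expect any real obstacle.
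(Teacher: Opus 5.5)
Your proof is correct and follows the paper's argument. Both drop the non-negative term $\Psi_\gamma$, bound $r^{-N}\int_{B_r(x_0)}(\lambda\cdot\u)\,dx$ by $C\|\u\|_{x_0,r}\le Cr^{\mathcal O-\delta'}$ using Cauchy--Schwarz and \eqref{Poinc norm}, and then integrate the resulting lower bound $W_{\mathcal O+\e,1}'\ge -Cr^{\delta-\delta'-2\e-1}$, with $\e=0$ for the first claim. One small correction to your commentary: Step 1 does not use $\mathcal O<2$ at all; the hypothesis only enters in Steps 2--3, where $\delta=2-\mathcal O>0$ is exactly what makes the exponent $\delta-\delta'-2\e-1$ exceed $-1$, so that the bound is integrable.
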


\begin{proof}
    First, by assumption \eqref{eq:H1:V<2} and by the definition of vanishing order, it follows that for every $\delta'<\delta$ there exists $r_0>0$ such that
    \[
    \|\u\|_{x_0,r}\le  r^{\mathcal{O}-\delta'} \qquad \text{for all } r \in (0,r_0).
    \]
    Then, using the Poincar\'e inequality \eqref{Poinc norm}
    combined with \eqref{def Phi_g}, we get
    \[
    r^{N-2+2\mathcal{O}} W'_{\mathcal{O},1}(\u,x_0,r)\ge -({N+2}) r^{N-1} \frac{1}{r^N}\int_{B_r(x_0)}(\lambda\cdot \u)\,dx \ge -C r^{N-1}\|\u\|_{x_0,r} \ge - C  r^{N-1+\mathcal{O-\delta'}},
    \]
    whence $W'_{\mathcal{O},1}(\u,x_0,r)\ge -C r^{1-\mathcal O-\delta'}$, and the first part of the thesis follows. The proof of the second part relies on the same argument.
    \end{proof}

The previous lemma allows us to rigorously relate the vanishing order and the limit of the Weiss functionals.

\begin{lemma}\label{lem: char O W}
Let $\u \in \mathcal{T}_\lambda(\Omega)$, and let $x_0 \in Z(\u)$ be such that $\cO(\u,x_0) \le 2$. 
\begin{enumerate}
    \item [($i$)] If $\gamma \in (0, \mathcal{O}(\u,x_0))$ and $t>0$, then 
\[
\lim_{r \to 0^+} W_{\gamma,t}(\u,x_0,r) =0.
\]
     \item [($ii$)] If $\cO(\u,x_0)<2$, and $\gamma > \cO(\u,x_0)$, then
\[
\liminf_{{r \to 0^+}} W_{\gamma,1}(\u,x_0,r)=-\infty.
\]
\end{enumerate}
\end{lemma}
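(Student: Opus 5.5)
Part ($i$) is a direct scaling estimate. Fix $\gamma<\cO(\u,x_0)$ and choose $\gamma'\in(\gamma,\cO(\u,x_0))$. By \eqref{char O gamma} we have $\|\u\|_{x_0,r}\le r^{\gamma'}$ for small $r$. Write
\[
W_{\gamma,t}=r^{-2\gamma}\Big(\tfrac{1}{r^{N-2}}\int_{B_r}|\nabla\u|^2 - \gamma\tfrac{1}{r^{N-1}}\int_{\pa B_r}|\u|^2\Big) - \tfrac{t}{r^{N-2+2\gamma}}\int_{B_r}(\lambda\cdot\u).
\]
The bracket is bounded in absolute value by $C\|\u\|^2_{x_0,r}\le Cr^{2\gamma'}$, so after multiplication by $r^{-2\gamma}$ it is $O(r^{2(\gamma'-\gamma)})\to0$. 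For the $\lambda$ term we use Cauchy--Schwarz together with \eqref{Poinc norm}:
\[
\int_{B_r}(\lambda\cdot\u)\le Cr^{N/2}\Big(\int_{B_r}|\u|^2\Big)^{1/2}\le Cr^{N}\|\u\|_{x_0,r}\le Cr^{N+\gamma'}.
\]
Hence this term is $O(r^{2+\gamma'-2\gamma})$. Since $\gamma<\gamma'\le 2$, we have $2+\gamma'-2\gamma>\gamma'-\gamma+(2-\gamma)>0$, so it also tends to $0$. This gives ($i$).

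For ($ii$), take $\cO:=\cO(\u,x_0)<2$ and $\gamma>\cO$, and argue by contradiction: suppose $W_{\gamma,1}(\u,x_0,r)\ge -M$ for all small $r$. It suffices to treat $\gamma$ close to $\cO$, namely $\gamma=\cO+\e$ with $2\e<\delta-\delta'$ as in Lemma \ref{L:W:almost:monotone}. For larger $\gamma$ the same approach should work with the almost-monotonicity error re-estimated; alternatively, if $W_{\gamma,1}$ were bounded below for a large $\gamma$, then $W_{\gamma',1}$ would be bounded below for a smaller $\gamma'>\cO$ as well, via the relation $W_{\gamma',1}=r^{2(\gamma-\gamma')}W_{\gamma,1}-(\gamma'-\gamma)r^{1-N-2\gamma'}H$. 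The heart of the argument is then to turn the lower bound on $W$ into decay of $H$. Set $h(r):=r^{1-N-2\gamma}H(r)$. Using \eqref{eq:H':D'} and \eqref{D con teo div}, which gives $D_1=\int_{\pa B_r}\sum u_i\pa_\nu u_i$, we get
\[
\frac{d}{dr}h(r)=\frac{2}{r^{N-1+2\gamma}}\Big(D_1-\frac{\gamma}{r}H\Big)=\frac{2}{r}W_{\gamma,1}(r).
\]
Here $W_{\gamma,1}$ is bounded above as $r\to0$, because its almost monotonicity from Lemma \ref{L:W:almost:monotone} makes it bounded above by its value at $r_1$ plus a constant. If it were also bounded below, then $|h'|\le C/r$, so $h(r)\le C|\log r|$; that is, $H(r)\le Cr^{N-1+2\gamma}|\log r|$. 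This already gives $\cV(\u,x_0)\ge\gamma>\cO$.

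The remaining task is to bound $\|\u\|_{x_0,r}$ and not just $H$, so as to contradict the definition of $\cO$. The gradient part is controlled by $W$ itself:
\[
r^{2-N-2\gamma}\int_{B_r}|\nabla\u|^2=W_{\gamma,1}+\gamma h+r^{2-N-2\gamma}\int_{B_r}(\lambda\cdot\u).
\]
The first two terms are $O(|\log r|)$ by the above. The last term is $O(r^{2+\cO-\delta''-2\gamma})$ by the estimate from ($i$), with $\gamma'$ replaced by $\cO-\delta''$, and this is harmless when $2\e$ is small relative to $2-\cO$. Thus $\|\u\|^2_{x_0,r}\le Cr^{2\gamma}|\log r|$ up to lower-order terms. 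By \eqref{char O gamma} this forces $\cO\ge\gamma'$ for every $\gamma'<\gamma$, contradicting $\gamma>\cO$. Consequently $W_{\gamma,1}$ is unbounded below along some sequence, which is the claim about the $\liminf$.

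I expect the main obstacle to be the bookkeeping of exponents. The error term coming from $\int_{B_r}(\lambda\cdot\u)$ must be shown to be genuinely smaller than $r^{2\gamma}$. This is exactly where the assumption $\cO<2$ enters, and it should be done by an iterative bootstrap, improving the decay of $\u$ step by step, if a single step does not suffice.
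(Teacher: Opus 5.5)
Your proof is correct. Part ($i$) is exactly the paper's scaling estimate; the first ``$>$'' in $2+\gamma'-2\gamma>\gamma'-\gamma+(2-\gamma)$ should be ``$=$''.

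For part ($ii$) you take a genuinely different route from the paper. The paper argues softly. It first uses the same scaling identity to reduce to $\gamma=\cO+\e$. It then assumes, by contradiction, that the limits $W_{\gamma,1}(\u,x_0,0^+)$ (which exist by Lemma~\ref{L:W:almost:monotone}) lie in $[0,+\infty)$ for all $\gamma$ in an interval $(\cO,\cO+\alpha)$. It picks two exponents $\gamma_1<\gamma_2$ there and radii with $\|\u\|_{x_0,r_m}/r_m^{\gamma_1}\to\infty$. Comparing the two normalized identities for $\u(x_0+r_m\cdot)/\|\u\|_{x_0,r_m}$ forces both the boundary $L^2$ norm and the Dirichlet energy of the rescalings to vanish, contradicting the normalization.

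You instead integrate the exact identity $\frac{d}{dr}\big(r^{1-N-2\gamma}H\big)=\frac{2}{r}W_{\gamma,1}$. A lower bound on $W_{\gamma,1}$ alone then gives $H\lesssim r^{N-1+2\gamma}|\log r|$. You read off the gradient bound from the definition of $W_{\gamma,1}$, using Lemma~\ref{L:W:almost:monotone} only to bound $W_{\gamma,1}$ from above.

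What your route buys:
\begin{itemize}
\item It works with a single exponent and the bare hypothesis $\liminf_{r\to0^+}W_{\gamma,1}>-\infty$, with no discussion of existence or sign of limits.
\item It gives a quantitative statement: such a lower bound forces $\|\u\|_{x_0,r}\lesssim r^{\gamma}|\log r|^{1/2}$, hence $\cO\ge\gamma$. This is in the spirit of Lemma~\ref{lem 3}.
\item Your reduction to $\gamma$ close to $\cO$, namely that boundedness below at $\gamma$ passes to smaller $\gamma'>\cO$, is the correct form of the paper's observation for the $\liminf$ statement. Note that the ``same approach with re-estimated error'' alternative would not work for large $\gamma$, since $\Phi_\gamma$ is then not integrable near $0$.
\end{itemize}

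Your closing worry about bookkeeping is unfounded. With $\delta''=\delta'$, the error term is $O(r^{\delta-\delta'-2\e})\to0$, so no bootstrap is needed. Alternatively, set $X=\|\u\|_{x_0,r}/r^{\gamma}$. The inequality $X^2\le C(1+|\log r|)+Cr^{2-\gamma}X$ closes directly, since $\gamma<2$.
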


\begin{proof}
The first point follows from the fact that, by definition of $\mathcal{O}(\u,x_0)$ and by the Poincar\'e inequality,
\[
|W_{\gamma,t}(\u,x_0,r)| \le \frac{1+\gamma}{r^{2\gamma}} \|\u\|_{x_0,r}^2 +\frac{ |\lambda| t}{r^{N-2+2\gamma}} \int_{B_r(x_0)} |\u| \,dx \le \frac{1+\gamma}{r^{2\gamma}} \|\u\|_{x_0,r}^2 + C|\lambda| t r^{2-\gamma} \frac{\|\u\|_{x_0,r}}{r^{\gamma}} \to 0,
\]
as $r\to 0^+$, for every $\gamma<2$ and $t >0$ (here we used that $\cO(\u,x_0) \le 2$).

Regarding point ($ii$), we first observe that $W_{\gamma,1}(\mathbf{u},x_0,0^+) < 0$ implies $W_{\gamma',1}(\mathbf{u},x_0,0^+) = -\infty$ for every $\gamma' > \gamma$. Indeed, in such a case 
\[
\begin{split}
    W_{\gamma',1}(\u,x_0,r) & =\frac{W_{\gamma,1}(\u,x_0,r)}{r^{2(\gamma'-\gamma)}} +\frac{(\gamma-\gamma')H(\u,x_0,r)}{r^{N-1+2\gamma'}}
      \to -\infty,
    \end{split}
\]
as $r\to 0^+$. As a consequence, it is sufficient to show that
\begin{equation}\label{eq:transition}
    W_{\mathcal{O}+\eps,1}(\u,x_0,0^+)=-\infty,
\end{equation}
for every $\e>0$ sufficiently small. More precisely, we let $\cO(\u,x_0) =2-\delta$ and $\delta' \in (0,\delta)$, and we show \eqref{eq:transition} whenever $2\e \in (0,{\delta-\delta'})$. We fix such an $\eps$, and suppose by contradiction that there exists $\alpha \in (0, \e]$ such that 
\be\label{1551}
W_{\gamma,1}(\mathbf{u},x_0,0^+) \in [0,+\infty) \qquad \text{for all } \gamma \in (\mathcal O, \mathcal{O} + \alpha),
\ee
where the existence of the limit and its finiteness are consequences of Lemma \ref{L:W:almost:monotone}. Then, we take $\mathcal O < \gamma_1 <\gamma_2<\mathcal O+\alpha$, and consider a sequence $r_m\to0^+$ such that
\[
\lim_{m}\frac{\|\u\|_{x_0,r_m}}{r_m^{\gamma_1}}=\lim_{m}\frac{\|\u\|_{x_0,r_m}}{r_m^{\gamma_2}}=\lim_{m}\frac{\|\u\|_{x_0,r_m}}{r_m^{2}}=+\infty.
\]
Let
\[
\u_{r_m}(x):=\frac{\u(x_0+r_m x)}{\|\u\|_{x_0,r_m}}.
\]
By definition, for $j=1,2$
\begin{align*}
  \frac{ r_m^{2\gamma_j}}{\|\u\|_{x_0,r_m}^2}W_{\gamma_j,1}(\u,x_0,r_m)+ \frac{r_m^2}{\|\u\|_{x_0,r_m}} \int_{B_1} (\lambda \cdot \u_{r_m}) \, dx =\int_{B_1} |\D \u_{r_m}|^2  \, dx-
    \mathcal \gamma_j \int_{\partial B_1} 
    | \u_{r_m}|^2 \, d\sigma, 
\end{align*}
and $\|\u_{r_m}\|_{0,1}=1$ for every $m$. Thus, up to a subsequence (still denoted by $r_{m}$) we have that $\u_{r_m} \to \u_0$ strongly in $L^2(\partial B_1)$, and taking the limit in the previous identity, recalling the bounds in \eqref{1551}, it follows that 
\[
\lim_{m} \int_{B_1}  |\D \u_{r_m}|^2 \, dx = \gamma_j \int_{\partial B_1} | \u_{0}|^2\,d\sigma, \quad \text{for }j=1,2.
\]
Thus
\[
\gamma_1 \int_{\partial B_1} |\u_{0}|^2\,d\sigma = \gamma_2 \int_{\partial B_1} | \u_{0}|^2\,d\sigma,
\]
which in turn forces $\|\u_{r_m}\|_{L^2(\pa B_1)} \to \|\u_0\|_{L^2(\partial B_1)}=0$, and $\|\D \u_{r_m}\|_{L^2(B_1)} \to \gamma_j\|\u_0\|_{L^2(\partial B_1)}=0$ as well, in contradiction with $\|\u_{r_m}\|_{0,1}=1$. This shows that \eqref{1551} is not possible and, as already observed, completes the proof.
\end{proof}

\begin{remark}\label{rem: O W G}
We point out that the assumption $\cO(\u,x_0) \le 2$ is always satisfied if $\u$ is a minimizer of $J_{\lambda,\O}$, by Theorem \ref{prop: non-deg}.
Moreover, we observe that the lemma also holds for $\u \in \cG(\Omega)$ and $x_0 \in Z(\u)$, by replacing $W_{\gamma,t}$ and $W_{\gamma,1}$ with $W_\gamma$. Actually, in such a case we do not need any assumption on $\cO(\u,x_0)$, so that the alternative
\[
\lim_{r \to 0^+} W_\gamma(\u,x_0,r)=0 \quad \text{for $\gamma<\cO(\mf{u},x_0),\quad$ and } \quad \liminf_{r \to 0^+} W_\gamma(\u,x_0,r)=-\infty \quad \text{for $\gamma>\cO(\mf{u},x_0)$,}
\]
is always valid.
\end{remark}

In order to show the monotonicity of the Almgren frequency, we need to establish a more precise non-degeneracy condition at points where the vanishing order $\mathcal{O}(\u,x_0)$ is strictly less than $2$.

\begin{Lemma}\label{L:O<2:degenerate}
        Let $\u \in \mathcal{T}_\lambda(\Omega)$ for some positive $\lambda \in \R^k$, and let $x_0 \in Z(\u)$ be such that $ \mathcal O:=\mathcal O(\u,x_0)= 2-\delta,$ for some $\delta>0$. 
      Then, for every $\gamma>\mathcal O$ it follows that 
      \[\liminf_{r\to0^+}\frac{\|\u\|_{x_0,r}}{r^{\gamma}}=\infty.
      \]
\end{Lemma}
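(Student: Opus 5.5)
The plan is to upgrade the $\liminf$ statement of Lemma \ref{lem: char O W} ($ii$) to a full limit by means of the almost-monotonicity in Lemma \ref{L:W:almost:monotone}. I then translate the divergence of the Weiss functional into a lower bound on $\|\u\|_{x_0,r}$.

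First, it suffices to treat $\gamma=\mathcal O+\e$ with $\e>0$ small. Indeed, if $\gamma'>\gamma$, then $r^{\gamma'}\le r^{\gamma}$ for $r<1$, so $\|\u\|_{x_0,r}/r^{\gamma'}\ge \|\u\|_{x_0,r}/r^{\gamma}$. Fix $\delta'\in(0,\delta)$ and $\e$ with $2\e\in(0,\delta-\delta')$, so in particular $\gamma=\mathcal O+\e<2$. By the second part of Lemma \ref{L:W:almost:monotone}, the map $r\mapsto W_{\gamma,1}(\u,x_0,r)+C_1 r^{\delta-\delta'-2\e}$ is non-decreasing on $(0,r_1)$. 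Hence $\lim_{r\to0^+}W_{\gamma,1}(\u,x_0,r)$ exists in $[-\infty,+\infty)$, because the correction term tends to $0$. By Lemma \ref{lem: char O W} ($ii$) the $\liminf$ equals $-\infty$, so in fact
\[
\lim_{r\to0^+}W_{\gamma,1}(\u,x_0,r)=-\infty .
\]

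Next, I bound $W_{\gamma,1}$ from below in terms of the scale-invariant norm. Dropping the non-negative gradient term and using $r^{1-N}H(\u,x_0,r)\le\|\u\|_{x_0,r}^2$ together with the Poincar\'e inequality \eqref{Poinc norm} (exactly as in the proof of Lemma \ref{lem: char O W} ($i$)), I get
\[
W_{\gamma,1}(\u,x_0,r)\ge -\gamma\frac{\|\u\|_{x_0,r}^2}{r^{2\gamma}} - C|\lambda|\, r^{2-\gamma}\,\frac{\|\u\|_{x_0,r}}{r^{\gamma}} .
\]
Suppose by contradiction that $\|\u\|_{x_0,r_n}/r_n^{\gamma}\le M$ along some sequence $r_n\to0^+$. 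Since $\gamma<2$, the right-hand side stays bounded below by $-\gamma M^2-C M$ along $r_n$. This contradicts $W_{\gamma,1}(\u,x_0,r_n)\to-\infty$, and therefore $\liminf_{r\to0^+}\|\u\|_{x_0,r}/r^\gamma=+\infty$.

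The only delicate point is the first step: passing from $\liminf$ to $\lim$, which is what the almost-monotonicity is needed for. This is also the reason for restricting to $\gamma$ slightly above $\mathcal O$, so that both Lemma \ref{L:W:almost:monotone} applies and $\gamma<2$ makes the $\lambda$-term negligible. The remaining steps are elementary.
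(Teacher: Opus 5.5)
Your proof is correct. It shares the paper's key input, namely $W_{\mathcal O+\e,1}(\u,x_0,r)\to-\infty$ as $r\to0^+$, obtained from Lemma \ref{lem: char O W} combined with the almost-monotonicity of Lemma \ref{L:W:almost:monotone}. Where you differ is in how you convert this into a lower bound on the norm, and your conversion is more direct.

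The paper first discards the trivial case $\liminf_{r\to0^+}\|\u\|_{x_0,r}/r^{\mathcal O}>0$. In the remaining case, a sequence with $\|\u\|_{x_0,r_n}/r_n^{\mathcal O}\to0$ forces $W_{\mathcal O,1}(\u,x_0,0^+)=0$. Almost-monotonicity at the exact order $\mathcal O$ then yields $D_1(\u,x_0,r)/r^{N-2+2\gamma}\ge -Cr^{\delta-\delta'-2\e}$. Hence the divergence of $W_{\gamma,1}$ must come from the boundary term, i.e.\ $\liminf_{r\to0^+}H(\u,x_0,r)/r^{N-1+2\gamma}=+\infty$, which bounds the norm from below.

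You replace all of this with the crude estimate $D_1\ge-\int_{B_r(x_0)}(\lambda\cdot\u)\,dx$ plus Poincar\'e. That suffices because $r^{1-N}H$ is already part of $\|\u\|_{x_0,r}^2$, and the $\lambda$-term is of lower order when $\gamma<2$. You also correctly observe that the full limit (not just the $\liminf$) of $W_{\gamma,1}$ is needed, since your contradiction runs along an arbitrary sequence.

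Your route avoids the case distinction and the Weiss functional at level $\mathcal O$. The paper's route gives, in the non-trivial case, the somewhat sharper information that the $L^2(\partial B_r)$ part alone blows up. However, only the norm version is used afterwards, e.g.\ in Lemma \ref{p:Almgren-below}.
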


\begin{proof}
The thesis is non-trivial only if we assume that there exists a sequence $r_n\to0^+$ such that 
\begin{equation}\label{eq:sequence:H^1:VO}
   \frac{\|\u\|_{x_0,r_n}}{r_n^{\mathcal O}} \to 0, \qquad \text{as } n\to\infty.
\end{equation}
Hence, under \eqref{eq:sequence:H^1:VO}, let
\[
\u_r(x):=\frac{\u(x_0+r x)}{\|\u\|_{x_0,r}}.
\]
Since $\|\mf{u}_r\|_{0,1}=1$ for every $r$, and
\begin{align}\label{eq:Weiss:rescaled}
    \begin{split}
    W_{\mathcal O,1}(\u,x_0,r_n)&= \frac{\|\u\|_{x_0,r_n}^2}{r_n^{2 \mathcal O}} \Big(\int_{B_1} |\D \u_{r_n}|^2 \, dx
    -
    \mathcal O \int_{\partial B_1} | \u_{r_n}|^2 \, d\sigma
    \Big)- r_n^{2-\mathcal O} \frac{\|\u\|_{x_0,r_n}}{r_n^{ \mathcal O}} \int_{B_1} (\lambda \cdot \u_{r_n}) \, dx,
    \end{split}
    \end{align}
    condition \eqref{eq:sequence:H^1:VO} implies that $W_{\mathcal{O},1}(\u,x_0,0^+)=0$, and by the almost monotonicity of the Weiss functional proved in Lemma \ref{L:W:almost:monotone} it follows that
\[
       \frac{D_1(\u,x_0,r)}{r^{N-2+2\mathcal O}}\ge W_{\mathcal{O},1} (\u,x_0,r) \ge -Cr^{\delta - \delta'},
\]
   where $\delta'\in(0,\delta)$. We also recall that $W_{\gamma,1}(\u,x_0,0^+)=-\infty$ for every $\gamma >\cO$, by Lemma \ref{lem: char O W}. 

Let $\gamma=\mathcal O+\e$, where $2\e \in (0,{\delta-\delta'})$. As a consequence of what we have proved so far, we have
\begin{align*}
    -\infty&= W_{\gamma,1}(\u,x_0,0^+) = \lim_{r\to0^+} \Big(
    \frac{D_1(\u,x_0,r)}{r^{N-2+2\gamma}}
- \gamma
     \frac{H(\u,x_0,r)}{r^{N-1+2\gamma}}   
    \Big)\\
    &\ge \limsup_{r\to0^+} 
    \Big( -C
    r^{\delta-\delta'-2\e}
- \gamma
     \frac{H(\u,x_0,r)}{r^{N-1+2\gamma}}   
    \Big) = -\gamma \liminf_{r\to 0^+}   \frac{H(\u,x_0,r)}{r^{N-1+2\gamma}}.
\end{align*}
Since $\|\u\|_{x_0,r}^2\ge r^{1-N}H(\u,x_0,r)$, the thesis follows for any $\gamma$ as before. This directly implies the thesis also for any larger exponent.
\end{proof}

\begin{Lemma}\label{p:Almgren-below}
Let $\u \in \mathcal{T}_\lambda(\Omega)$ for some $\lambda \in \R^k$ positive, and let $x_0 \in Z(\u)$ be such that 
\[
\mathcal{O}(\u,x_0)= 2-\delta,
\]
for some $\delta >0$. Then, for $\delta'\in(0,\delta)$, there exist $C>0$ and $r_0>0$ possibly depending on $x_0$ such that $N_1(\u,x_0,r) +1 \ge 0$ and 
\[
r\mapsto e^{C r^{\delta'} }(N_1(\u,x_0,r) + 1),
\]
is monotone non-decreasing for $r\in (0,r_0)$. Therefore, the limit 
\[
N_1(\u,x_0,0^+):= \lim_{r \to 0^+} e^{C r^{\delta'} }(N_1(\u,x_0,r) + 1) -1.
\]
exists and is finite.
\end{Lemma}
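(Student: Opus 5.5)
The plan is to differentiate $N_1$ directly, using the formulae \eqref{eq:H':D'} with $t=1$ and the identity \eqref{D con teo div}. The only non-signed term will then be controlled through the sharp lower bound on $\|\u\|_{x_0,r}$ provided by Lemma \ref{L:O<2:degenerate}. The first task is to show that $H(\u,x_0,r)>0$ and $N_1+1>0$ for small $r$. To this end, write
\[
r^{N-1}\|\u\|_{x_0,r}^2 = r\int_{B_r(x_0)}|\nabla \u|^2\,dx + H = rD_1 + r\int_{B_r(x_0)}(\lambda\cdot\u)\,dx + H .
\]
Whenever $H\neq 0$ this reads $H\,(N_1+1) = r^{N-1}\|\u\|_{x_0,r}^2 - r\int_{B_r(x_0)}(\lambda\cdot \u)\,dx$. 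By Cauchy--Schwarz and the Poincar\'e inequality \eqref{Poinc norm},
\[
\int_{B_r(x_0)}(\lambda\cdot\u)\,dx \le C r^{N/2}\Big(\int_{B_r(x_0)}|\u|^2\,dx\Big)^{1/2}\le C r^N\|\u\|_{x_0,r}.
\]
Fix $\delta'\in(0,\delta)$ and set $\gamma:=2-\delta'>\mathcal O$. Lemma \ref{L:O<2:degenerate} gives $\|\u\|_{x_0,r}\ge r^{\gamma}$ for $r<r_0$. Hence $r^{N+1}\|\u\|_{x_0,r}\le r^{2-\gamma}\, r^{N-1}\|\u\|_{x_0,r}^2 = r^{\delta'} r^{N-1}\|\u\|^2_{x_0,r}$, and therefore
\[
r^{N-1}\|\u\|_{x_0,r}^2 - r\int_{B_r(x_0)}(\lambda\cdot\u)\,dx\ \ge\ \tfrac12\, r^{N-1}\|\u\|_{x_0,r}^2>0
\]
for $r$ small. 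This first forces $H>0$: if $H(r)=0$, the displayed identity would give $r^{N-1}\|\u\|^2 = r\int(\lambda\cdot \u)$, contradicting the bound just obtained. It then yields
\[
H\,(N_1+1)\ \ge\ \tfrac12\, r^{N-1}\|\u\|_{x_0,r}^2 ,
\]
and in particular $N_1+1>0$.

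Next comes the derivative. Since $\u$ is Lipschitz, $H$ and $D_1$ are absolutely continuous in $r$. Using $H' = \frac{N-1}{r}H + 2D_1$ (via \eqref{D con teo div}) and the $t=1$ case of \eqref{eq:H':D'}, a direct computation gives
\[
N_1' = \frac{2r}{H}\Big(\int_{\partial B_r(x_0)}|\partial_\nu\u|^2\,d\sigma - \frac{\big(\int_{\partial B_r(x_0)}\u\cdot\partial_\nu\u\,d\sigma\big)^2}{H}\Big) + \frac{r}{H}\int_{\partial B_r(x_0)}(\lambda\cdot\u)\,d\sigma - \frac{N+2}{H}\int_{B_r(x_0)}(\lambda\cdot\u)\,dx .
\]
The first bracket is non-negative by Cauchy--Schwarz, and the second term is non-negative since $\lambda,\u\ge0$. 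Only the last term needs to be estimated.

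That last term is the crux, and it is handled by the same two bounds as above:
\[
\frac{N+2}{H}\int_{B_r(x_0)}(\lambda\cdot\u)\,dx \le \frac{C r^N\|\u\|_{x_0,r}\,(N_1+1)}{\tfrac12 r^{N-1}\|\u\|_{x_0,r}^2} = \frac{C r\,(N_1+1)}{\|\u\|_{x_0,r}}\le C r^{1-\gamma}(N_1+1) = C r^{\delta'-1}(N_1+1).
\]
Consequently $(N_1+1)'\ge -C r^{\delta'-1}(N_1+1)$ for a.e. $r\in(0,r_0)$. Multiplying by the integrating factor $e^{(C/\delta') r^{\delta'}}$, whose derivative is exactly $C r^{\delta'-1}e^{(C/\delta')r^{\delta'}}$, shows that $e^{(C/\delta')r^{\delta'}}(N_1+1)$ is non-decreasing. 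Renaming $C/\delta'$ as $C$ gives the statement.

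Since this function is monotone and non-negative, its limit as $r\to0^+$ exists. The limit is finite, being bounded above by the value at $r_0$ and below by $0$. As $e^{Cr^{\delta'}}\to1$, the limit $N_1(\u,x_0,0^+)$ exists and is finite. The main obstacle is precisely the quantitative non-degeneracy $\|\u\|_{x_0,r}\ge r^{2-\delta'}$ needed to absorb the sublinear term. It is supplied by Lemma \ref{L:O<2:degenerate}, which is why the hypothesis $\mathcal O<2$ is essential.
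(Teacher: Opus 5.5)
Your proof is correct and follows essentially the same route as the paper: you differentiate $N_1$ via \eqref{eq:H':D'} and \eqref{D con teo div}, discard the Cauchy--Schwarz and boundary terms, and control $\frac{N+2}{H}\int_{B_r(x_0)}(\lambda\cdot\u)\,dx$ by combining the Poincar\'e bound $\int_{B_r(x_0)}(\lambda\cdot\u)\,dx\le Cr^N\|\u\|_{x_0,r}$ with the lower bound $\|\u\|_{x_0,r}\ge r^{2-\delta'}$ from Lemma \ref{L:O<2:degenerate}; absorbing the $\lambda$-term into $\tfrac12 r^{N-1}\|\u\|_{x_0,r}^2$, instead of carrying the factor $\|\u\|_{x_0,r}-C|\lambda|r^2$ as the paper does, is only cosmetic. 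One small repair is needed: the identity involving $N_1$ presupposes $H\neq0$, so to rule out $H(r)=0$ argue from $rD_1+H=r^{N-1}\|\u\|_{x_0,r}^2-r\int_{B_r(x_0)}(\lambda\cdot\u)\,dx$ together with $D_1(r)=\int_{\partial B_r(x_0)}\u\cdot\partial_\nu\u\,d\sigma=0$ whenever $\u\equiv0$ on $\partial B_r(x_0)$ (the paper leaves this point implicit).
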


\begin{proof}
In this proof, to simplify the notation we omit the dependence of $H$, $D_1$ and $N_1$ from $\u$ and $x_0$, which are fixed. Firstly, by \eqref{eq:H':D'} and \eqref{D con teo div}, we obtain that
\begin{align*}
    \frac{d}{dr} N_1(r) =& \frac{1}{(H(r))^2} \left( (N-2)H(r) D_1(r) + D_1(r) H(r) + 2r H(r) \int_{\partial B_r(x_0)} |\pa_\nu \u|^2\,d\sigma \right.\\
    & \qquad \qquad \qquad \left. -(N-1) D_1(r) H(r) -2r D_1(r) \int_{\partial B_r(x_0)} \sum_i u_i \pa_\nu u_i\,d\sigma\right) \\
    &  + \frac{1}{H(r)} \left( r \int_{\partial B_r(x_0)}(\lambda\cdot \u)\, d\sigma -(N+2) \int_{ B_r(x_0)} 
        (\lambda\cdot \u)\, dx\right)\ge  -\frac{N+2}{H(r)} \int_{ B_r(x_0)} 
        (\lambda\cdot \u)\, dx,
\end{align*}
where we used the Cauchy-Schwarz inequality. Now, by the Poincar\'e inequality \eqref{Poinc norm}, we observe that
\[
\begin{split}
\|\u\|_{x_0,r}^2 &= \frac{1}{r^{N-2}}D_1(r) + \frac{1}{r^{N-1}} H(r) + \frac{1}{r^{N-2}} \int_{B_r(x_0)} (\lambda\cdot \u)\, dx\\
        & \le \frac{1}{r^{N-2}}D_1(r) + \frac{1}{r^{N-1}} H(r) + C r^2 |\lambda|\|\u\|_{x_0,r},
 \end{split}
 \]
whence 
\[
r^2 \|\u\|_{x_0,r} \left( \frac{\|\u\|_{x_0,r}}{r^2} - C |\lambda| \right) \le \frac{1}{r^{N-2}}D_1(r) + \frac{1}{r^{N-1}} H(r).
\]
Since $\cO(\u,x_0)<2$, by Lemma \ref{L:O<2:degenerate} the term in the brackets on the left hand side is positive whenever $r<r_0$, with $r_0$ sufficiently small (which could depend on $x_0$). This proves that for any such $r$ also the right hand side is positive; moreover, we also obtain the estimate
\be\label{2731}
\frac{C}{|\lambda|}\left(\frac{1}{r^N} \int_{B_r(x_0)} (\lambda \cdot \u) \,dx\right) \left( \|\u\|_{x_0,r} - C |\lambda| r^2 \right) \le \frac{1}{r^{N-2}}D_1(r) + \frac{1}{r^{N-1}} H(r),
\ee
for all $r \in (0,r_0)$, and hence we can compute
\[
\begin{split}
 \frac{d}{dr} \log(N_1(r)+1) &\geq -\ddfrac{{r^{1-N}}H(r)}{r^{2-N}D_1(r) + r^{1-N} H(r)} \cdot \ddfrac{(N+2)}{r^{1-N} H(r)} \cdot \frac{1}{r^{N-1}} \int_{B_r(x_0)}(\lambda \cdot \u)\,dx\\
& \ge - \frac{C|\lambda| \, r}{\left( \|\u\|_{x_0,r} - C |\lambda| r^2 \right)} = -\frac{C |\lambda|}{r}\left(\frac{\|\u\|_{x_0,r}}{r^2}-C |\lambda|\right)^{-1},
\end{split}
 \]
for any $r \in (0,r_0)$. If necessary replacing $r_0$ with a smaller quantity, by using Lemma \ref{L:O<2:degenerate} we can suppose that $\|\u\|_{x_0,r} \ge C_1 r^{2-\delta'}$, for $\delta'\in(0,\delta)$, for all $r \in (0,r_0)$, for some $C_1>0$; namely
\be\label{e:51}
\frac{\|\u\|_{x_0,r}}{r^2}-C |\lambda| \ge C_1 r^{-\delta'}-C \ge \frac{C_1}{2} r^{-\delta'}  \qquad \text{for all } r \in (0,r_0),
\ee
whence it follows that 
\[
 \frac{d}{dr} \log(N_1(r)+1) \ge - \frac{2C |\lambda|}{C_1 r^{1-\delta'}} \qquad \text{for all } r \in (0,r_0).
 \]
By integrating, we finally obtain the desired almost monotonicity property for the frequency function. 
\end{proof}

In light of the previous result, we can further say that the three notions of vanishing orders we introduced coincide below the quadratic threshold.

\begin{Lemma}
Let $\u \in \mathcal{T}_\lambda(\Omega)$, and let $x_0 \in Z(\u)$ be such that at least one of the conditions $\mathcal{O}(\u,x_0)<2$ or $\cV(\u,x_0)<2$ holds; then
\be\label{ordini uguali}
    \mathcal{O}(\u,x_0)=\mathcal{V}(\u,x_0) =N_1(\u,x_0,0^+).
\ee
\end{Lemma}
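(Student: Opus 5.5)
We first reduce to the case $\mathcal{O}(\u,x_0)<2$, and then identify the three quantities with the frequency limit $\gamma:=N_1(\u,x_0,0^+)$ supplied by Lemma \ref{p:Almgren-below}.

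\emph{Reduction.} Recall that $\mathcal{O}(\u,x_0)\le \mathcal{V}(\u,x_0)$. Hence if $\cV(\u,x_0)<2$ then also $\cO(\u,x_0)<2$. So we assume $\cO:=\cO(\u,x_0)=2-\delta$ with $\delta>0$. Lemma \ref{p:Almgren-below} then gives the existence and finiteness of $\gamma$.

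\emph{Growth of $H$ and $\cV=\gamma$.} We compute
\[
\frac{d}{dr}\log\frac{H(r)}{r^{N-1}}=\frac{2D_1(r)}{rH(r)}=\frac{2N_1(r)}{r},
\]
using \eqref{eq:H':D'} and \eqref{D con teo div}. Almost monotonicity gives $N_1(r)\to\gamma$. Integrating then yields, for every $\eta>0$ and small $r$,
\[
c\,r^{2(\gamma+\eta)}\le r^{1-N}H(r)\le C\,r^{2(\gamma-\eta)}.
\]
Hence $\cV(\u,x_0)=\gamma$.

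\emph{Comparing $\|\u\|_{x_0,r}$ with $H$.} We write
\[
\|\u\|_{x_0,r}^2=r^{2-N}D_1(r)+r^{1-N}H(r)+r^{2-N}\int_{B_r}(\lambda\cdot\u).
\]
The first two terms equal $r^{1-N}H(r)(N_1(r)+1)$. This is comparable to $r^{1-N}H(r)$, because $N_1+1$ stays bounded above (it converges), and stays bounded below away from zero once we know $\gamma+1>0$. The last term is bounded by $Cr^2\|\u\|_{x_0,r}$, as in the proof of Lemma \ref{p:Almgren-below}.

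Lemma \ref{L:O<2:degenerate} gives $\|\u\|_{x_0,r}\ge r^{2-\delta'}$, so this last term is $o(\|\u\|_{x_0,r}^2)$. Therefore
\[
\|\u\|_{x_0,r}^2\simeq (N_1(r)+1)\,r^{1-N}H(r).
\]
This gives $\cO=\cV$, and combined with the previous step, $\cO=\cV=\gamma$.

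\emph{Main obstacle.} The delicate point is the lower bound $N_1+1\ge c>0$. Without it, the comparison only gives $\cO\ge\cV$ (since $N_1+1$ is bounded above), which we already know in the reverse direction is an inequality.

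To obtain it, I will show that $\gamma\ge 1$ using Lipschitz continuity. We have $r^{1-N}H\le Cr^2$, so integrating the log-derivative identity forces $\liminf N_1\ge 1$. Otherwise $H$ would decay slower than $r^{N+1}$, a contradiction. Thus $N_1(r)+1\ge 3/2$ for small $r$.

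Alternatively, I would argue by contradiction. If $N_1(r_n)+1\to0$, the normalized blow-ups $\u_{r_n}/\|\u\|_{x_0,r_n}$ would have vanishing $H^1$-seminorm and boundary trace. This contradicts the normalization $\|\u_r\|_{0,1}=1$, using again that the $\lambda$-term is negligible by Lemma \ref{L:O<2:degenerate}.
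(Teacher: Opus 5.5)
Your proposal is correct and is essentially the paper's argument. You use the identity $\|\u\|_{x_0,r}^2=(N_1(r)+1)\,r^{1-N}H(r)+r^{2-N}\int_{B_r(x_0)}(\lambda\cdot\u)\,dx$, the upper bound on $N_1+1$ from Lemma \ref{p:Almgren-below}, and Lemma \ref{L:O<2:degenerate} to get $\cO\ge\cV$; the paper does the same, only phrased by contradiction with an exponent $k\in(\cO,\cV)$. Integrating $\frac{d}{dr}\log(r^{1-N}H)=2N_1/r$ to get $\cV=N_1(\u,x_0,0^+)$ is exactly the standard argument the paper cites.

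The lower bound $N_1+1\ge c>0$ that you flag as the main obstacle is not needed. It only gives the trivial inequality $\cO\le\cV$, which already follows from $\|\u\|_{x_0,r}^2\ge r^{1-N}H(r)$, so your proof is complete without that step (though your proof of it via $\gamma\ge 1$ is correct).
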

\begin{proof}
We first show that $\cO(\u,x_0) = \cV(\u,x_0)$. We have already observed that $\mathcal{O}(\u,x_0)\leq \mathcal{V}(\u,x_0)$ (see the discussion after \eqref{char O gamma}). Thus, suppose by contradiction that  $\mathcal{O}(\u,x_0)< \mathcal{V}(\u,x_0)$ and consider $k =2-\alpha \in (\mathcal{O}(\u, x_0), \mathcal{V}(\u, x_0))$, with $\alpha >0$. As in the proof of Lemma \ref{p:Almgren-below}, we have that
    \[
    \|\u\|_{x_0,r} \left( \|\u\|_{x_0,r} - C |\lambda| r^2\right) \le \frac{1}{r^{N-2}}D_1(r) + \frac{1}{r^{N-1}} H(r) = \frac{1}{r^{N-1}} H(r) (N_1(r)+1),
    \]
    and the term inside the brackets on the left hand side is positive for $r \in (0,r_0)$. We deduce that
    \[
    \frac{\norm{\u}{x_0,r}}{r^{k}}\leq \left(\frac{N_1(r)+1}{\norm{\u}{x_0,r}-C |\lambda| r^2}\cdot r^{k}\right)    \frac{H(r)}{r^{N-1+2k}},
    \]
    As in \eqref{e:51}, if necessary, replacing $r_0$ with a smaller quantity, we can suppose that
    \[
    \norm{\u}{x_0,r} - C|\lambda| r^2 \geq \frac{C_1}{2} r^{k}  \quad \text{for all } r \in (0,r_0).
    \]
    for every $r \in (0,r_0)$, for some $C_1>0$. Therefore, using also the almost-monotonicity of the Almgren function, we obtain that
    \[
    \frac{\norm{\u}{x_0,r}}{r^{k}}\leq C (N_1(r)+1 )\frac{H(r)}{r^{N-1+2k}}\leq C e^{C r_0^\alpha}(N_1(R_0)+1)\frac{H(r)}{r^{N-1+2k}}.
    \]
    On the one hand, since $k>\cO(\u,x_0)$, by Lemma \ref{L:O<2:degenerate} the left hand side tends to $+\infty$ as $r \to 0^+$; but on the other hand, as $k<\cV(\u,x_0)$, the limit of the right hand side is $0$. Thus, we reached the desired contradiction.

Regarding the equality $\cV(\u,x_0) = N_1(\u,x_0,0^+)$, the proof exploits the almost monotonicity of the Almgren's frequency in a standard way. We omit the details, since the argument coincides with that of \cite[Corollary 5.3]{Tor} in the case $s=q=1$. 
\end{proof}

\begin{remark}
Thanks to Proposition \ref{prop: alm G}, the previous arguments can be adapted to show that equality \eqref{ordini uguali} also holds for functions in $\cG(\Omega)$ at any nodal point $x_0$ (without any assumption on the vanishing order).
\end{remark}

Finally, as a consequence of the Almgren frequency formula, we state a doubling property which will be useful in the blow-up analysis.

\begin{lemma}\label{lem: doub}
    Let $\u \in \mathcal{T}_\lambda(\Omega)$ for some $\lambda \in \R^k$ positive, and let $x_0 \in Z(\u)$ be such that $\cO(\u,x_0) <2$. There exists $C>0$ and $r_0>0$ possibly depending on $x_0$ such that
    \[
    \frac{1}{r_2^{N-1}}H(\u,x_0,r_2) \le \frac{1}{r_1^{N-1}}H(\u,x_0,r_1) \left(\frac{r_2}{r_1}\right)^{2C},
    \]
    for every $0<r_1<r_2<r_0$.
\end{lemma}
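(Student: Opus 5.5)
The plan is to integrate the logarithmic derivative of $H$, using the almost-monotonicity of the Almgren frequency from Lemma \ref{p:Almgren-below} to obtain a uniform upper bound on $N_1$ near $0$. Throughout, I write $H(r)=H(\u,x_0,r)$ and $D_1(r)=D_1(\u,x_0,r)$.

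\emph{Step 1: logarithmic derivative of $H$.} By \eqref{eq:H':D'} and \eqref{D con teo div} we have
\[
H'(r)=\frac{N-1}{r}H(r)+2D_1(r).
\]
Since $H(r)>0$ for small $r$, this gives
\[
\frac{d}{dr}\log\Big(\frac{H(r)}{r^{N-1}}\Big)=\frac{2D_1(r)}{H(r)}=\frac{2N_1(r)}{r}.
\]
Positivity of $H$ for $r\in(0,r_0)$ comes from the proof of Lemma \ref{p:Almgren-below}. There we showed that $r^{2-N}D_1(r)+r^{1-N}H(r)>0$ for such $r$. If $H(\bar r)=0$ for some $\bar r$, then $\u\equiv0$ on $\partial B_{\bar r}(x_0)$, and by \eqref{D con teo div} also $D_1(\bar r)=0$, which contradicts that positivity. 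So $N_1$ is well defined on $(0,r_0)$.

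\emph{Step 2: uniform bound on $N_1$.} By Lemma \ref{p:Almgren-below}, the function $r\mapsto e^{Cr^{\delta'}}(N_1(r)+1)$ is nonnegative and non-decreasing on $(0,r_0)$. After shrinking $r_0$ slightly so that $N_1(r_0)$ is defined, for every $r<r_0$ we get
\[
N_1(r)+1\le e^{Cr^{\delta'}}(N_1(r)+1)\le e^{Cr_0^{\delta'}}\big(N_1(r_0)+1\big)=:\bar C+1.
\]
Hence $N_1(r)\le \bar C$ on $(0,r_0)$.

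\emph{Step 3: integration.} Integrating the identity of Step 1 from $r_1$ to $r_2$ gives
\[
\log\frac{r_2^{1-N}H(r_2)}{r_1^{1-N}H(r_1)}=\int_{r_1}^{r_2}\frac{2N_1(r)}{r}\,dr\le 2\bar C\log\frac{r_2}{r_1}.
\]
Exponentiating yields the stated doubling inequality with exponent $2\bar C$.

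The only delicate point is the positivity of $H$ on $(0,r_0)$, needed for $N_1$ and the logarithm to make sense. This follows from the non-degeneracy of Lemma \ref{L:O<2:degenerate} as used in \eqref{2731}. The bound on $N_1$ is also not an issue, since only an upper bound is required and that is exactly what almost-monotonicity provides.
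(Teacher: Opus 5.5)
Your proposal is correct and follows the paper's route. The paper invokes the almost-monotonicity of $N_1$ from Lemma \ref{p:Almgren-below} and defers to \cite[Corollary 2.6]{TaTe12}, which is exactly your argument: integrate $\frac{d}{dr}\log\big(r^{1-N}H\big)=2N_1/r$ against a uniform upper bound on the frequency, with positivity of $H$ near $0$, which the paper leaves implicit and you justify. The only cosmetic point is that the statement asks for $C>0$: if $\bar C\le 0$, use $\max\{\bar C,1\}$ instead, which is allowed because $(r_2/r_1)^{2C}$ is increasing in $C$ when $r_2>r_1$.
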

\begin{proof}
In view of Lemma \ref{p:Almgren-below}, the proof is analogous to that of \cite[Corollary 2.6]{TaTe12}.   
\end{proof}

\subsection{Uniform monotonicity formulae when \texorpdfstring{$\mathcal{O}(\mathbf{u},x_0)=1$}{}}

In Section \ref{S:mon:and:van}, assuming that the vanishing order $\mathcal{O}(\u,x_0)<2$, we established the (almost)-monotonicity of the Weiss and Almgren functionals on an interval $(0,r_0)$. However, the radius $r_0>0$ involved in these results depends on the point $x_0$. In this section, we prove that, if we restrict our analysis to points where $\mathcal{O}(\u,x_0)=1$, this $r_0>0$ can be chosen independently of $x_0$. These results allow us to establish the compactness of blow-up sequences with variable centers under the condition that the vanishing order is equal to $1$, a  crucial result for the regularity of the zero set $Z(\u)$.

\begin{lemma}\label{L:W:almost:monotone:O=1}
   Let $\u \in \mathcal{T}_\lambda(\Omega)$ for some positive $\lambda \in \R^k$, and let $x_0 \in Z(\u)$ be such that $\mathcal{O}:=\mathcal{O}(\u,x_0)=1$. Then, for every $\e\in[0,1/2)$ there exists $C>0$ independent of $x_0$ such that the map 
    \[
    r\mapsto W_{1+\e,1}(\u,x_0,r) + C r^{1-2\e},
    \]
    is monotone non-decreasing for $r\in(0,\dist(x_0,\partial \O))$.   
\end{lemma}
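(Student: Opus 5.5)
We follow the proof of Lemma \ref{L:W:almost:monotone}, but replace the decay bound $\|\u\|_{x_0,r}\le r^{\mathcal O-\delta'}$, which depends on $x_0$ through $r_0$, with the uniform Lipschitz bound on $\u$. We start from the decomposition $W'_{1+\e,1}=\Psi_{1+\e}-\Phi_{1+\e}$ given by \eqref{eq:W'} and \eqref{def Phi_g} with $\gamma=1+\e$ and $t=1$. Since $\Psi_{1+\e}\ge0$, it suffices to prove
\[
\Phi_{1+\e}(\u,x_0,r)=\frac{N+2}{r^{N+1+2\e}}\int_{B_r(x_0)}(\lambda\cdot\u)\,dx\le C\,r^{-2\e}
\]
with $C$ independent of $x_0$. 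Integrating in $r$ then shows that $W_{1+\e,1}+\frac{C}{1-2\e}r^{1-2\e}$ is non-decreasing. The exponent $1-2\e$ is positive precisely because $\e<1/2$, so the correction term is integrable at $0$.

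The key estimate is $\int_{B_r(x_0)}(\lambda\cdot\u)\,dx\le C r^{N+1}$, uniformly in $x_0\in Z(\u)$. Since $\u(x_0)=0$ and $\u$ is Lipschitz, we have $|u_i(x)|\le L|x-x_0|$, hence
\[
\int_{B_r(x_0)}(\lambda\cdot\u)\,dx\le |\lambda|\,L\,\omega_N r^{N+1},
\]
which gives $\Phi_{1+\e}\le (N+2)|\lambda|L\omega_N r^{-2\e}$. Here $L$ must be a Lipschitz constant valid on the whole range $r<\dist(x_0,\pa\O)$. Elements of $\mathcal T_\lambda(\O)$ are only locally Lipschitz, so we need $\u$ to be globally Lipschitz on $\O$. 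This holds for minimizers by Theorem \ref{t:Lipschitz}, for their blow-up rescalings, and for any $\u$ with $[\u]_{C^{0,1}(\O)}<\infty$. If only local Lipschitz continuity is available, we would instead restrict to $x_0$ in a compact $K\subset\O$ and $r<\dist(K,\pa\O)$, and use the Lipschitz constant on a neighborhood of $K$. We would state this dependence of $C$ on $[\u]_{C^{0,1}}$ and $\lambda$ explicitly.

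This is also where the hypothesis $\mathcal O(\u,x_0)=1$ enters. It only ensures that $W_{1+\e,1}$ is the relevant functional at $x_0$; the monotonicity itself uses just $\u(x_0)=0$ and the Lipschitz bound. In the general case of Lemma \ref{L:W:almost:monotone}, by contrast, the bound came from the vanishing-order threshold and was therefore non-uniform. The only real obstacle is making sure the Lipschitz constant is uniform. The remaining steps are direct integration of the inequality $W'_{1+\e,1}\ge -Cr^{-2\e}$, which is differentiable almost everywhere because $\u$ is Lipschitz.
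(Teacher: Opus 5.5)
Your proposal is correct and is essentially the paper's argument. The paper likewise drops $\Psi_{1+\e}\ge0$ and controls $\Phi_{1+\e}$ through the Lipschitz bound, via $\|\u\|_{x_0,r}\le Cr$ and the Poincar\'e inequality \eqref{Poinc norm} exactly as in Lemma \ref{L:W:almost:monotone}. You instead bound $\int_{B_r(x_0)}(\lambda\cdot\u)\,dx$ directly by $L|x-x_0|$, which amounts to the same thing. Your remark that the uniformity in $x_0$ is only as good as the uniformity of the Lipschitz constant is a fair refinement of a point the paper leaves implicit. For the final integration step, what you need is that $r\mapsto W_{1+\e,1}(\u,x_0,r)$ is locally Lipschitz (hence absolutely continuous), not merely a.e.\ differentiable; the Lipschitz bound on $\u$ gives this.
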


\begin{proof}
 The Lipschitz regularity of $\u$ ensures that there exists a constant $C>0$ such that $\|\u\|_{x_0,r} \le C r$ for every $r \in (0,\dist(x_0,\partial \O))$.
    Therefore, the thesis follows by arguing exactly as in Lemma \ref{L:W:almost:monotone}.
\end{proof}

In the following lemma, given a compact set $K$, we denote by $K_\delta$ the $\delta$ neighbourhood of $K$. 

\begin{lemma}
    Let $\u \in \mathcal{T}_\lambda(\Omega)$ for some positive $\lambda \in \R^k$. Let $K\subset \O$ be a compact set such that $\mathcal{O}(\u,x)=1$ for every $x\in K_\delta\cap Z(\u)$. Then, for every $\e>0$, there exist $r_0>0$ and $C>0$ (independent of $x$) such that
    \be\label{eq:O=1:nondegeneracy}
\frac {\|\u\|_{x,r}}{r^{1+\e}}\ge C,\qquad \text{for every $r\in(0,r_0)$ and $x\in  K\cap Z(\u)$}.
    \ee
\end{lemma}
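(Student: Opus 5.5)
The plan is to argue by contradiction and compactness, using the uniform Weiss almost-monotonicity of Lemma \ref{L:W:almost:monotone:O=1}, which holds with a constant independent of the center. Suppose the claim fails for some $\e>0$; without loss of generality $\e<1/4$, since the statement for a smaller $\e$ implies it for larger ones. Then there are points $x_n\in K\cap Z(\u)$ and radii $r_n\to 0^+$ with $\|\u\|_{x_n,r_n}\le r_n^{1+\e}/n$. Passing to a subsequence, $x_n\to \bar x\in K\cap Z(\u)$, because $Z(\u)$ is closed by continuity.

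\textbf{Step 1: the Weiss functional is small at $x_n$.} Set $\gamma:=1+\e'$ with $\e'<\e$ fixed, for instance $\e'=\e/2$. Exactly as in \eqref{eq:Weiss:rescaled}, we have
\[
|W_{\gamma,1}(\u,x_n,r_n)|\le C\,\frac{\|\u\|^2_{x_n,r_n}}{r_n^{2\gamma}} + C r_n^{2-\gamma}\,\frac{\|\u\|_{x_n,r_n}}{r_n^{\gamma}},
\]
and the right-hand side tends to $0$. The same bound holds for the Weiss functional with exponent $1+\e$.

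\textbf{Step 2: propagate upward to a fixed radius.} Lemma \ref{L:W:almost:monotone:O=1} applies at every $x_n$, since $\cO(\u,x_n)=1$, and its constant is uniform in the center. It gives
\[
W_{\gamma,1}(\u,x_n,\rho)\ge W_{\gamma,1}(\u,x_n,r_n)-C r_n^{1-2\e'}
\]
for all $\rho\in(r_n,\delta)$. Hence $\liminf_n W_{\gamma,1}(\u,x_n,\rho)\ge 0$ for each fixed $\rho$. By the Lipschitz continuity of $\u$, the map $x\mapsto W_{\gamma,1}(\u,x,\rho)$ is continuous for fixed $\rho>0$, so $W_{\gamma,1}(\u,\bar x,\rho)\ge 0$ for every small $\rho$.

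\textbf{Step 3: contradiction with $\cO(\u,\bar x)=1$.} By Lemma \ref{lem: char O W}($ii$), applicable since $\cO(\u,\bar x)=1<2$, we have $\liminf_{\rho\to0}W_{\gamma,1}(\u,\bar x,\rho)=-\infty$ because $\gamma>1$. This contradicts Step 2.

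The role of the $\delta$-neighbourhood $K_\delta$ is only to make sure the uniform Lemma \ref{L:W:almost:monotone:O=1} and the limiting point are used at points of vanishing order $1$. In fact, only $x_n,\bar x\in K$ are needed, together with a fixed range $\rho<\delta\le\dist(K,\partial\Omega)$.

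The main obstacle is that Step 2 only yields non-negativity of $W_{\gamma,1}(\u,\bar x,\rho)$ for each fixed $\rho$, and one must be sure this genuinely contradicts the $-\infty$ liminf. It does, since the inequality holds for every small $\rho$, while the liminf along $\rho\to0$ is $-\infty$. The point to verify carefully is the continuity in the center at fixed radius. This follows because $D_1$ and $H$ are integrals of Lipschitz quantities over translated balls and spheres: the $H$ term is continuous directly, and the $D_1$ term is continuous by dominated convergence with $|\nabla\u|$ bounded.

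If Lemma \ref{lem: char O W}($ii$) were not directly usable, the fallback is to run the same blow-up argument as in its proof, with centers $x_n$. One rescales $\u(x_n+r_n\cdot)/\|\u\|_{x_n,r_n}$ and compares two exponents $\gamma_1<\gamma_2$ in $(1,1+\e)$ to force the limit to vanish, contradicting the normalization.
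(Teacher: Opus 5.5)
Your argument is correct and is essentially the paper's: the paper combines the same three ingredients through Dini's theorem applied to the truncated functions $G_n$, and you unpack that Dini argument by hand at the limit point $\bar x$. The ingredients are the uniform almost-monotonicity of the Weiss functional, the continuity of $x\mapsto W_{\gamma,1}(\u,x,\rho)$ at fixed radius, and $\liminf_{\rho\to0^+}W_{\gamma,1}(\u,\bar x,\rho)=-\infty$ from Lemma \ref{lem: char O W}.

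One bookkeeping fix is needed in Step 2. Monotonicity of $r\mapsto W_{\gamma,1}(\u,x_n,r)+Cr^{1-2\e'}$ actually gives $W_{\gamma,1}(\u,x_n,\rho)\ge W_{\gamma,1}(\u,x_n,r_n)+Cr_n^{1-2\e'}-C\rho^{1-2\e'}$. So you only obtain $W_{\gamma,1}(\u,\bar x,\rho)\ge -C\rho^{1-2\e'}$, not $\ge 0$. This still contradicts the $-\infty$ liminf as $\rho\to0^+$, so the proof goes through.
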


\begin{proof}

Arguing by contradiction, let us assume that there exist $\e\in(0,1/2)$, a sequence of points $\{x_n\}\subset K\cap Z(\u)$ converging to $x_0\in K \cap Z(\u)$ with $\mathcal{O}(\u,x_n)=\mathcal{O}(\u,x_0)=1$, and a sequence of radii $r_n\to 0^+$ such that
\[
    \frac{\|\u\|_{x_n,r_n}}{r_n^{1+\e}} \to 0 \quad \text{as } n\to\infty.
\]
Following the same computation as in \eqref{eq:Weiss:rescaled}, we deduce that
\be\label{3051}
    W_{1+\e,1}(\u,x_n,r_n)\to 0 \quad \text{as } n\to\infty.
\ee
To reach a contradiction, we introduce the sequence of continuous functions on $K \cap Z(\u)$ defined by
\[
G_n(x):=\max\left\{W_{1+\e,1}(\u,x,r_n)+Cr_n^{1-2\e},-1\right\},
\]
where $C>0$ is the constant provided by Lemma \ref{L:W:almost:monotone:O=1}; in particular, $C$ is independent of both $x$ and $n$. By the same lemma, for every $x\in K\cap Z(\u)$, the map
\[
r\mapsto W_{1+\e,1}(\u,x,r)+Cr^{1-2\e},
\]
is non-decreasing in $(0,r_0)$, with $r_0>0$ independent of $x$. Therefore, up to extracting a decreasing subsequence, still denoted by $r_n$, the sequence $\{G_n\}$ is pointwise non-increasing on $K\cap Z(\u)$; namely,
\[
G_{n+1}(x)\leq G_n(x)
\qquad\text{for every }x\in K\cap Z(\u).
\]
Moreover, for each fixed $x\in K\cap Z(\u)$, Lemma \ref{lem: char O W} together with the almost-monotonicity of the Weiss functional yields
\[
W_{1+\e,1}(\u,x,r)+Cr^{1-2\e}\to -\infty
\qquad\text{as } r\to0^+.
\]
Consequently, $G_n(x)\to -1$ for every $x\in K\cap Z(\u)$. Since the functions $G_n$ are continuous and converge monotonically to the continuous function $-1$, Dini's convergence theorem implies that $G_n\to -1$ uniformly on $K\cap Z(\u)$.
This contradicts \eqref{3051}, according to which $G_n(x_n)\to0$ as $n\to\infty$.
\end{proof}

As an immediate consequence of the preceding uniform estimates, we can refine the Almgren frequency formula at points $x$ with vanishing order $\mathcal O(\u,x)=1$.

\begin{lemma}\label{lem: alm unif}
Let $\u \in \mathcal{T}_\lambda(\Omega)$ for some positive $\lambda \in \R^k$. Let $K\subset \O$ be a compact set such that $\mathcal{O}(\u,x)=1$ for every $x\in K_\delta\cap Z(\u)$. Then for every $\e\in(0,1/2)$ there exist $C>0$ and $r_0>0$ (independent of $x$) such that the map
\[
r\mapsto e^{C r^{1-\e }}(N_1(\u,x,r) + 1),
\]
is monotone non-decreasing for $r\in (0,r_0)$, for every $x \in K \cap Z(\u)$.
\end{lemma}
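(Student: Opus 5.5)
The plan is to repeat the computation in the proof of Lemma \ref{p:Almgren-below} verbatim, replacing the pointwise non-degeneracy from Lemma \ref{L:O<2:degenerate} with the uniform estimate \eqref{eq:O=1:nondegeneracy}. We may take $\delta'=1-\e$, since the vanishing order is $1=2-\delta$ with $\delta=1$. The key observation is that the only two places where $x_0$-dependent constants entered that proof are the positivity of $N_1+1$ and the lower bound \eqref{e:51}. Both came from a lower bound on $\|\u\|_{x_0,r}$ of the form $C_1 r^{2-\delta'}$.

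\emph{Step 1: uniform lower bound.} Fix $\e\in(0,1/2)$ and apply the previous lemma with exponent $1+\e$. This gives $r_0>0$ and $C_1>0$, independent of $x$, such that $\|\u\|_{x,r}\ge C_1 r^{1+\e}$ for all $r\in(0,r_0)$ and $x\in K\cap Z(\u)$. Shrinking $r_0$ if needed (uniformly, since the condition depends only on $C_1$, $|\lambda|$ and the Poincaré constant), we get
\[
\frac{\|\u\|_{x,r}}{r^2}-C|\lambda|\ge C_1 r^{-(1-\e)}-C|\lambda|\ge \frac{C_1}{2}r^{-(1-\e)} \qquad\text{for all } r\in(0,r_0).
\]
We also need $r_0<\dist(K,\partial\Omega)$.

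\emph{Step 2: positivity of $N_1+1$.} We redo the Poincaré-based bound from the proof of Lemma \ref{p:Almgren-below}:
\[
r^2\|\u\|_{x,r}\left(\frac{\|\u\|_{x,r}}{r^2}-C|\lambda|\right)\le r^{2-N}D_1(r)+r^{1-N}H(r).
\]
By Step 1, the left-hand side is positive for all such $r$ and $x$. Hence $H(\u,x,r)>0$ and $N_1(\u,x,r)+1>0$ uniformly, and \eqref{2731} holds with constants independent of $x$.

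\emph{Step 3: differential inequality.} Recall the lower bound $N_1'\ge -\frac{N+2}{H}\int_{B_r(x)}(\lambda\cdot\u)\,dx$ from the proof of Lemma \ref{p:Almgren-below}. It uses only the domain variation identity in Definition \ref{def T} and Cauchy–Schwarz, so it is valid at every $x\in Z(\u)$. Combining it with \eqref{2731} and Step 1, as in that proof, gives
\[
\frac{d}{dr}\log\big(N_1(\u,x,r)+1\big)\ge -\frac{C|\lambda|}{r}\cdot\frac{2}{C_1}r^{1-\e}=-C' r^{-\e}.
\]
Integrating then shows that $r\mapsto e^{C'' r^{1-\e}}(N_1(\u,x,r)+1)$ is non-decreasing on $(0,r_0)$, with $C''=C'/(1-\e)$, uniformly in $x\in K\cap Z(\u)$.

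The main obstacle is checking that every constant is genuinely uniform. For the Poincaré constant and $|\lambda|$ this is dimensional and automatic. The delicate input is the uniform non-degeneracy, which is exactly what the preceding lemma supplies, using the hypothesis on $K_\delta$.
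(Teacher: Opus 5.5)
Your proposal is correct and takes the same route as the paper. The paper's proof simply reruns the argument of Lemma \ref{p:Almgren-below} with the uniform non-degeneracy estimate \eqref{eq:O=1:nondegeneracy} (at exponent $1+\e$) in place of Lemma \ref{L:O<2:degenerate}, which gives $\frac{d}{dr}\log(N_1+1)\ge -C' r^{-\e}$ and hence the factor $e^{Cr^{1-\e}}$, with your tracking of which constants must be uniform in $x$ being exactly the point.
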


\begin{proof}
    The proof is exactly as that of Lemma \ref{p:Almgren-below}, using the estimate \eqref{eq:O=1:nondegeneracy} instead of  Lemma \ref{L:O<2:degenerate}.
\end{proof}

This allows us to establish a uniform-in-$x$ doubling property:

\begin{lemma}\label{lem: doub unif}
Let $\u \in \mathcal{T}_\lambda(\Omega)$ for some positive $\lambda \in \R^k$. Let $K\subset \O$ be a compact set such that $\mathcal{O}(\u,x)=1$ for every $x\in K_\delta\cap Z(\u)$. 
    There exist $C>0$ and $r_0>0$ (independent of $x$) such that
    \[
    \frac{1}{r_2^{N-1}}H(\u,x,r_2) \le \frac{1}{r_1^{N-1}}H(\u,x,r_1) \left(\frac{r_2}{r_1}\right)^{2C},
    \]
    for every $0<r_1<r_2<r_0$, for every $x \in K \cap Z(\u)$.
\end{lemma}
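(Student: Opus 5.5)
The argument follows \cite[Corollary 2.6]{TaTe12}, which is also how Lemma \ref{lem: doub} was obtained from Lemma \ref{p:Almgren-below}. The only change is that we use the uniform almost-monotonicity of Lemma \ref{lem: alm unif} instead of the pointwise one, so all constants are independent of $x\in K\cap Z(\u)$. We fix $\e\in(0,1/2)$, say $\e=1/4$, and take $C_0,r_0$ from Lemma \ref{lem: alm unif}.

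\emph{Step 1: uniform bound on the frequency.} For every $x\in K\cap Z(\u)$ and $r\in(0,r_0)$, monotonicity gives
\[
N_1(\u,x,r)+1\le e^{C_0 r_0^{1-\e}}\bigl(N_1(\u,x,r_0)+1\bigr).
\]
We need $N_1(\u,x,r_0)$ bounded uniformly in $x\in K\cap Z(\u)$. The numerator $r_0D_1(\u,x,r_0)$ is bounded by the Lipschitz bound of $\u$ on $K_\delta$. For the denominator, the estimate \eqref{eq:O=1:nondegeneracy} gives $\|\u\|_{x,r}\ge c\,r^{1+\e}$ uniformly. Combined with the inequality $\|\u\|_{x,r}(\|\u\|_{x,r}-C|\lambda|r^2)\le r^{1-N}H(r)(N_1(r)+1)$ from the proof of Lemma \ref{p:Almgren-below}, this shows that $r^{1-N}H(\u,x,r)(N_1+1)$ is uniformly positive. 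To bound $H$ itself from below, the simplest route is continuity: $x\mapsto H(\u,x,r_0)$ is continuous and positive on the compact set $K\cap Z(\u)$. Positivity holds because $H(\u,x,r_0)=0$ would force $\u\equiv0$ on $\partial B_{r_0}(x)$; then $D_1=\int_{\partial B}\sum u_i\partial_\nu u_i=0$, so $N_1+1$ vanishes there, contradicting the positivity of $N_1+1$. If needed we shrink $r_0$ so that $B_{r_0}(x)\subset K_\delta$. This yields $N_1(\u,x,r)+1\le C$ for all $r\in(0,r_0)$ and $x\in K\cap Z(\u)$, with $C$ independent of $x$.

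\emph{Step 2: integrating the logarithmic derivative.} From \eqref{eq:H':D'} and \eqref{D con teo div},
\[
\frac{d}{dr}\log\frac{H(\u,x,r)}{r^{N-1}}=\frac{2}{H}\int_{\partial B_r(x)}\sum_i u_i\partial_\nu u_i\,d\sigma=\frac{2D_1(\u,x,r)}{H(\u,x,r)}=\frac{2N_1(\u,x,r)}{r}\le\frac{2C}{r}.
\]
Integrating over $(r_1,r_2)$ gives
\[
\frac{H(\u,x,r_2)}{r_2^{N-1}}\le\frac{H(\u,x,r_1)}{r_1^{N-1}}\Bigl(\frac{r_2}{r_1}\Bigr)^{2C},
\]
which is the claim, after renaming the constant to absorb the $-1$ shift.

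The main obstacle is the uniform upper bound for $N_1(\u,x,r_0)$ in Step 1, specifically the uniform lower bound on $H(\u,x,r_0)$. The plan handles it through compactness of $K\cap Z(\u)$ and continuity of $H$ in the center. If that fails, the fallback is to obtain the bound from \eqref{eq:O=1:nondegeneracy} together with the Poincar\'e-type comparison in Lemma \ref{p:Almgren-below}.
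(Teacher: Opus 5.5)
Your proposal is correct and follows the paper's implicit route. The paper gives no separate proof: it obtains this lemma as the uniform analogue of Lemma \ref{lem: doub}, i.e.\ by running \cite[Corollary 2.6]{TaTe12} with the uniform monotonicity of Lemma \ref{lem: alm unif}, which is exactly your argument (a uniform frequency bound via monotonicity plus compactness of $K\cap Z(\u)$, then integration of $\frac{d}{dr}\log\bigl(r^{1-N}H(\u,x,r)\bigr)=2N_1(\u,x,r)/r$).

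One wording fix in the positivity step: if $H(\u,x,r_0)=0$ then $N_1$ is undefined. The contradiction should instead be stated for $r_0^{2-N}D_1(\u,x,r_0)+r_0^{1-N}H(\u,x,r_0)$, which would then vanish but is strictly positive by \eqref{eq:O=1:nondegeneracy} and the inequality from the proof of Lemma \ref{p:Almgren-below}.
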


\section{Upper semi-continuity of the vanishing order map}\label{sec: upper}

A key ingredient in both the blow-up analysis and the study of the singular set is the following upper semi-continuity property for the vanishing order map $x_0 \in Z(\u) \mapsto \cO(\u,x_0)$. It is convenient to recall the Definitions \ref{def T}, \ref{def G} and \ref{def J} of $\mathcal{T}_{\lambda}(\O) $, $\cG(\Omega)$ and $J_{\lambda,\O}$, respectively.

\begin{proposition}\label{prop: upper}
Let $\{\lambda_n\} \subset \R^k$ be non-negative, and for each $n$ let $\u_n \in \mathcal{T}_{\lambda_n}(B_3)$. Let $x_n \in Z(\u_n) \cap B_1$, $x_n \to \xi$, $\lambda_n \to \lambda_0$, and suppose that $\u_n \to \u_0$ in $H^1_{\loc}(B_3)$ and in $C^{0,\alpha}_{\loc}(B_3)$ for some $\alpha \in (0,1)$, as $n\to\infty$, where $\u_0$ is a minimizer of $J_{\lambda_0,B_2}$. Suppose in addition that either $\lambda_0$ is positive, or $\lambda_0=0$, and let
\[
\bar d:= \limsup_n \mathcal{O}(\u_n,x_n).
 \]
 Then:
\begin{enumerate}
\item[($i$)] if $\bar d \ge 2$, then $\mathcal{O}(\u_0,\xi) \ge 2$;
\item[($ii$)] if $\bar d<2$, then $\mathcal{O}(\u_0,\xi) \ge \bar d$.
\end{enumerate}
\end{proposition}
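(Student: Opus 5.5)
The plan is to pass vanishing-order information from $(\u_n,x_n)$ to $(\u_0,\xi)$ through Weiss-type functionals. By Lemma \ref{lem: char O W} and Remark \ref{rem: O W G}, orders below a threshold correspond to Weiss energies that vanish at $0^+$, orders above it to Weiss energies that diverge to $-\infty$. The Weiss functional at a fixed radius is continuous under the assumed $H^1_{\loc}$ and $C^{0,\alpha}_{\loc}$ convergence: it involves only $\int_{B_r}|\nabla\u|^2$, $\int_{B_r}\lambda\cdot\u$ and $H$. It is also almost monotone in $r$ with constants controlled uniformly in $n$. So I will fix an exponent $\gamma<\min\{\bar d,2\}$, pass to a subsequence with $\cO(\u_n,x_n)>\gamma$, and show that $\cO(\u_0,\xi)\ge\gamma$. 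Letting $\gamma$ increase to $\min\{\bar d,2\}$ then gives both ($i$) and ($ii$). In case ($i$) the limit $\gamma\to 2^-$ gives $\cO(\u_0,\xi)\ge 2$; note $\cO(\u_0,\xi)\le 2$ in any case by Theorem \ref{prop: non-deg} when $\lambda_0>0$.

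\emph{Step 1: uniform Weiss bound along the sequence.} Pick $\gamma<\gamma'<\min\{\bar d,2\}$ with $\cO(\u_n,x_n)>\gamma'$. Since $\u_n\in\mathcal T_{\lambda_n}(B_3)$, formula \eqref{eq:W'} with $t=1$ gives
\[
W'_{\gamma,1}(\u_n,x_n,r)\ge -\Phi_\gamma(\u_n,x_n,r)\ge -\frac{C|\lambda_n|}{r^{N-1+2\gamma}}\int_{B_r(x_n)}|\u_n|\,dx .
\]
The uniform $C^{0,\alpha}$ bound and $\u_n(x_n)=0$ give $|\u_n|\le Cr^\alpha$ on $B_r(x_n)$, so the right-hand side is at least $-C r^{\alpha+1-2\gamma}$. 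This alone only controls the derivative when $\gamma<1+\alpha/2$. To reach all $\gamma<2$, I will bootstrap. If $\cO(\u_n,x_n)>\gamma_0$, then Lemma \ref{lem: char O W}($i$) gives $W_{\gamma_0,1}(\u_n,x_n,0^+)=0$, and almost monotonicity then gives $W_{\gamma_0,1}(\u_n,x_n,r)\ge -Cr^{\beta}$ uniformly in $n$. Combined with the convergence, this yields a growth bound $\|\u_n\|_{x_n,r}\le C r^{\gamma_0}$ uniform in $n$, which improves the error term. Iterating raises the admissible exponent up to any $\gamma<2$. With this in hand, integrating the derivative bound from $0$, where $W_{\gamma,1}(\u_n,x_n,0^+)=0$, gives
\[
W_{\gamma,1}(\u_n,x_n,r)\ge -C r^{\beta}, \qquad r\in(0,1),
\]
for some $\beta>0$ and $C$ independent of $n$.

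\emph{Step 2: passing to the limit.} At fixed $r$, translation by $x_n\to\xi$ together with the $H^1_{\loc}$ and uniform convergence gives $W_{\gamma,1}(\u_n,x_n,r)\to W_{\gamma,1}(\u_0,\xi,r)$, with $\lambda_n\to\lambda_0$ entering linearly. Hence $W_{\gamma,1}(\u_0,\xi,r)\ge -Cr^\beta$ for all small $r$, so $\liminf_{r\to0^+}W_{\gamma,1}(\u_0,\xi,r)\ge 0$. We first check that $\xi\in Z(\u_0)$, which holds by uniform convergence. If $\lambda_0=0$, then $\u_0\in\cG$ by Remark \ref{rem: min G T}, and the alternative in Remark \ref{rem: O W G} forces $\cO(\u_0,\xi)\ge\gamma$. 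If $\lambda_0>0$, then $\cO(\u_0,\xi)\le 2$ by Theorem \ref{prop: non-deg}; if $\cO(\u_0,\xi)<\gamma<2$, Lemma \ref{lem: char O W}($ii$) would give $\liminf W_{\gamma,1}=-\infty$, a contradiction. Hence $\cO(\u_0,\xi)\ge\gamma$.

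\emph{Main obstacle.} The hard step is the uniformity in $n$ in Step 1. Lemma \ref{L:W:almost:monotone} gives constants and radii that depend on the point, as the paper stresses. The bootstrap, uniform bounds $\|\u_n\|_{x_n,r}\le Cr^{\gamma_0}$ from Weiss lower bounds, needs care: a lower bound on $W$ bounds $D$ from below, not $H$ from above. I would try to circumvent this with $t=2$ and Lemma \ref{L:monotonicity:W_2}, which gives exact monotonicity but only for $\gamma\ge2$, which is exactly what case ($i$) needs. There the limit would be $W_{2,2}(\u_0,\xi,r)\ge0$, yielding $\cO(\u_0,\xi)\ge 2$ via a variant of Lemma \ref{lem: char O W}. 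For $\bar d<2$, if uniform error control fails, I would argue by contradiction using rescalings and the minimality of $\u_0$ to build a limit profile whose order contradicts the assumption.
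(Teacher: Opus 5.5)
Your architecture matches the paper's proof. You work with $W_{\gamma,1}$, seek a lower bound $W_{\gamma,1}(\u_n,x_n,r)\ge -Cr^{\beta}$ with $C$ independent of $n$, pass to the limit at fixed $r$, and conclude via Lemma \ref{lem: char O W} and Remark \ref{rem: O W G}. Your Step 2 is fine.

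The gap is in Step 1, which is the whole difficulty. You assert that a uniform Weiss lower bound ``combined with the convergence'' yields $\|\u_n\|_{x_n,r}\le Cr^{\gamma_0}$, and then rightly doubt it. As written, the bootstrap has no engine. Two ideas are missing.

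First, bootstrap only $H$, not the full norm $\|\cdot\|_{x_n,r}$. A Weiss lower bound gives no control of the Dirichlet energy, but none is needed, because
\[
\int_{B_r(x_n)}|\u_n|\,dx\le Cr^{N/2}\Big(\int_0^r H(\u_n,x_n,s)\,ds\Big)^{1/2}.
\]
Hence a uniform bound $H(\u_n,x_n,r)\le \bar C r^{N-1+2\sigma}$ already gives $\int_0^r\Phi_\gamma(\u_n,x_n,s)\,ds\le C r^{2+\sigma-2\gamma}$ for every $\gamma<(2+\sigma)/2$ (Lemma \ref{lem 2}).

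Second, your objection that a lower bound on $W$ cannot bound $H$ from above is wrong. From \eqref{eq:H':D'} and \eqref{D con teo div} one has
\[
\frac{d}{dr}\left(\frac{H(\u,x_0,r)}{r^{N-1+2\gamma}}\right)=\frac{2}{r}\,W_{\gamma,1}(\u,x_0,r).
\]
Suppose $\gamma<\cO(\u_n,x_n)$, so that $W_{\gamma,1}(\u_n,x_n,0^+)=0$, and $\int_0^1\Phi_\gamma(\u_n,x_n,s)\,ds\le\bar C$. Then $W_{\gamma,1}(\u_n,x_n,s)\ge-\bar C$, and integrating over $(r,1)$ gives $H(\u_n,x_n,r)\le C r^{N-1+2\gamma}(1+|\log r|)$ with $C$ independent of $n$ (Lemma \ref{lem 3}).

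Alternate these two steps, starting from $\sigma_0=\alpha$; your initial $C^{0,\alpha}$ estimate is exactly the first round. Each round raises the $H$-exponent from $\sigma$ to anything below $(2+\sigma)/2$, and the exponents tend to $2$ (Lemma \ref{lem: seq exp}). Every exponent used can be kept below your target $\gamma$, hence below $\cO(\u_n,x_n)$ along your subsequence. Once $(2+\sigma_j)/2$ exceeds $\gamma$, Lemma \ref{lem 2} supplies the uniform bound $W_{\gamma,1}(\u_n,x_n,r)\ge -Cr^{2+\sigma_j-2\gamma}$ that your Step 2 needs.

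Your fallbacks do not repair the gap. For ($i$), monotonicity of $W_{2,2}$ (Lemma \ref{L:monotonicity:W_2}) only gives $W_{2,2}(\u_n,x_n,r)\ge W_{2,2}(\u_n,x_n,0^+)$, and this limit need not vanish. For a $2$-homogeneous $\mathbf v\in\mathcal T_\lambda$ one computes $W_{2,2}(\mathbf v,0,r)\equiv-\int_{B_1}\lambda\cdot\mathbf v<0$. A lower bound uniform in $n$ would therefore require uniform control of the quadratic growth of $\u_n$ at $x_n$, which is the same missing uniform estimate. You would also need a $t=2$ analogue of Lemma \ref{lem: char O W}($ii$). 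The rescaling argument you sketch for ($ii$) is too vague to assess.
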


The proof of this proposition is rather long and will take the rest of the section. It is well known that statements like Proposition \ref{prop: upper} are extremely useful for the study of the nodal set of solutions to elliptic equations, see \cite{Han94, Han00}. For instance, the simplest consequence of the proposition is that we can conveniently divide $Z(\u)$ into regular and singular part according to the vanishing order, and that the singular set is relatively closed (see Theorem \ref{thm: fb}).

For sequences of functions in $\cG(\Omega)$, the result follows rather directly from the Almgren frequency formula, but this is not valid in the present setting when we deal with nodal points at the quadratic threshold (or above). The same issue occurs when dealing with sublinear equations, see \cite{SoaTer18} and \cite{SoaTor24}. In what follows, we adapt an ad-hoc iterative argument based on the study of the Weiss-type functionals $W_{\gamma,1}$, introduced in \cite{SoaTer18} for sublinear scalar equations. We recall the expression of $W_{\gamma,1}'$, see \eqref{eq:W'}, and in particular the definition of $\Phi_\gamma$, equation \eqref{def Phi_g}. We start with a few auxiliary statements, whose proofs are analogous to those of \cite[Lemma 5.3, 5.4 and 5.5]{SoaTer18}, and hence are omitted.

\begin{lemma}\label{lem 2}
Let $\u \in \mathcal{T}_\lambda(B_1(x_0))$ for some $\lambda>0$. Suppose that there exist $ \sigma \in (0,2)$ and $\bar C>0$ such that 
\[
H(\u,x_0,r) \le \bar C r^{N-1+2\sigma} \quad \text{for every $r \in (0,1)$}.
\]
Then there exists a constant $C=C(N)>0$ such that 
\[
\int_0^r \Phi_\gamma(\u,x_0,s)\,ds \le \frac{C |\lambda| {\bar{C}}^{1/2} }{2+\sigma -2 \gamma} r^{ 2+\sigma -2 \gamma},
\]
for every $\gamma \in \left[\sigma, \frac{2+\sigma}{2} \right)$ and every $r \in (0,1)$.
\end{lemma}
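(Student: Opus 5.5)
The plan is to bound $\Phi_\gamma(\u,x_0,s)$ pointwise in $s$ by a power of $s$, and then integrate. Recall
\[
\Phi_\gamma(\u,x_0,s)=\frac{N+2}{s^{N-1+2\gamma}}\int_{B_s(x_0)}(\lambda\cdot\u)\,dx \le \frac{(N+2)|\lambda|}{s^{N-1+2\gamma}}\int_{B_s(x_0)}|\u|\,dx .
\]
So everything reduces to an estimate of $\int_{B_s(x_0)}|\u|\,dx$ in terms of the hypothesis on $H$.

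\textbf{Step 1: bound $\int_{B_s}|\u|$.} Write the solid integral in polar coordinates and apply Cauchy--Schwarz on each sphere:
\[
\int_{B_s(x_0)}|\u|\,dx=\int_0^s\int_{\partial B_\rho(x_0)}|\u|\,d\sigma\,d\rho\le \int_0^s \big(|\partial B_1|\rho^{N-1}\big)^{1/2} H(\u,x_0,\rho)^{1/2}\,d\rho .
\]
By hypothesis $H(\u,x_0,\rho)^{1/2}\le \bar C^{1/2}\rho^{\frac{N-1}{2}+\sigma}$. The integrand is therefore at most $C(N)\bar C^{1/2}\rho^{N-1+\sigma}$, and integrating gives
\[
\int_{B_s(x_0)}|\u|\,dx\le C(N)\bar C^{1/2}\frac{s^{N+\sigma}}{N+\sigma}\le C(N)\bar C^{1/2}s^{N+\sigma}.
\]
Here we use $\sigma>0$ and $N\ge 2$, so that $N+\sigma$ is bounded below by a dimensional constant. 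Only $\sigma\ge 0$ is really needed here.

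\textbf{Step 2: pointwise bound and integration.} Inserting Step 1,
\[
\Phi_\gamma(\u,x_0,s)\le C(N)|\lambda|\bar C^{1/2}s^{N+\sigma-N+1-2\gamma}=C(N)|\lambda|\bar C^{1/2}s^{1+\sigma-2\gamma}.
\]
The exponent satisfies $1+\sigma-2\gamma>-1$ exactly when $\gamma<\frac{2+\sigma}{2}$, which is the assumed range, so the function is integrable near $0$. Integrating over $(0,r)$ yields
\[
\int_0^r\Phi_\gamma(\u,x_0,s)\,ds\le \frac{C(N)|\lambda|\bar C^{1/2}}{2+\sigma-2\gamma}\,r^{2+\sigma-2\gamma},
\]
which is the claim. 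The lower restriction $\gamma\ge\sigma$ is not used in this estimate. It is presumably kept only for compatibility with how the lemma is applied later.

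\textbf{Main point to check.} There is no real obstacle; the only delicate point is the conversion from the spherical $L^2$ bound to the solid $L^1$ bound. One alternative is to go through the scale-invariant Poincar\'e inequality \eqref{Poinc norm}, but that would require control of the gradient, which we do not have. The radial Cauchy--Schwarz argument avoids this, since it uses only $H$. It requires $\u$ to be continuous so that the spherical traces make sense, which holds because $\u\in\mathcal{T}_\lambda(B_1(x_0))$ is Lipschitz.
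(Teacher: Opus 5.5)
Your proof is correct and is essentially the argument the paper relies on. The paper omits the proof and refers to the analogous Lemma 5.3 in \cite{SoaTer18}, which applies Cauchy--Schwarz (H\"older) on each sphere to turn the bound on $H$ into $\int_{B_s(x_0)}|\u|\,dx \le C(N)\bar C^{1/2}s^{N+\sigma}$, and then integrates $s^{1+\sigma-2\gamma}$ over $(0,r)$, where $\gamma<(2+\sigma)/2$ is exactly what makes this integrable.
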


\begin{lemma}\label{lem 3}
Let $\u \in \mathcal{T}_\lambda(B_1(x_0))$ for some $\lambda\in \R^k$ positive. 
Suppose that 
\begin{enumerate}
\item[($i$)] $W_{\gamma,1}(\u,x_0,0^+) = 0$; 
\item[($ii$)] $\int_0^1 \Phi_\gamma(\u,x_0,s)\,ds \le \bar C$.
\end{enumerate}
for some $\gamma \in (0,2)$ and $\bar C>0$. Then there exist $\tilde C>0$ depending on $N$, $\bar C$, and upper bounds on $\|\u\|_{x_0,1}$ and on $|\lambda|$, such that 
\[
\frac{H(\u,x_0,r)}{r^{N-1+2(\gamma-\eps)}} \le \tilde C r^{2\eps} |\log r|,
\]
for every $\eps>0$ and $r \in (0,1)$.
\end{lemma}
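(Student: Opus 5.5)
We prove Lemma \ref{lem 3} by integrating the derivative identity $W_{\gamma,1}' = \Psi_\gamma - \Phi_\gamma$ from \eqref{def Phi_g} and turning the resulting lower bound on the Weiss functional into a differential inequality for $H$. Throughout we write $W_\gamma(r)=W_{\gamma,1}(\u,x_0,r)$, $H(r)=H(\u,x_0,r)$ and $D_1(r)=D_1(\u,x_0,r)$.

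\emph{Step 1: lower bound on $W_\gamma$.} Since $\Psi_\gamma\ge 0$, we have $W_\gamma'\ge -\Phi_\gamma$. Integrating from $s$ to $r$ and letting $s\to0^+$, assumption ($i$) gives
\[
W_\gamma(r) \ge -\int_0^r \Phi_\gamma(\u,x_0,s)\,ds \ge -\bar C \qquad \text{for } r\in(0,1),
\]
where the last inequality uses assumption ($ii$). At the same time, integrating $\Psi_\gamma\le W_\gamma'+\Phi_\gamma$ bounds $\int_0^1\Psi_\gamma\,ds$ by $W_\gamma(1)+\bar C$. Here $W_\gamma(1)$ is controlled by $\|\u\|_{x_0,1}$ and $|\lambda|$.

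\emph{Step 2: differential inequality for $H$.} By \eqref{eq:H':D'} and \eqref{D con teo div},
\[
\frac{d}{dr}\log\frac{H(r)}{r^{N-1+2\gamma}} = \frac{2}{H(r)}\Big(D_1(r)-\frac{\gamma}{r}H(r)\Big) = \frac{2r^{N-2+2\gamma}}{H(r)}\,W_\gamma(r).
\]
The lower bound on $W_\gamma$ from Step 1 alone is not enough, because it enters with the weight $r^{N-2+2\gamma}/H$, which may be large. It is better to work with the ratio $h(r):=H(r)/r^{N-1+2\gamma}$ directly:
\[
h'(r) = \frac{2}{r^{N-1+2\gamma}}\int_{\partial B_r(x_0)}\sum_i u_i\Big(\partial_\nu u_i-\frac{\gamma}{r}u_i\Big)\,d\sigma .
\]
By Cauchy--Schwarz, $|h'(r)|\le C\,h(r)^{1/2}\,\big(r^{-1}\Psi_\gamma(r)\big)^{1/2}$. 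This gives
\[
\Big|\frac{d}{dr}\sqrt{h(r)}\Big| \le C\sqrt{\Psi_\gamma(r)/r}.
\]
Integrating from $r$ to $1$ and applying Cauchy--Schwarz once more,
\[
\sqrt{h(r)} \le \sqrt{h(1)} + C\Big(\int_r^1\Psi_\gamma\,ds\Big)^{1/2}\Big(\int_r^1\frac{ds}{s}\Big)^{1/2} \le C\big(1+|\log r|^{1/2}\big).
\]
Hence $h(r)\le \tilde C(1+|\log r|)$. Multiplying by $r^{2\eps}$ yields the claimed bound $H/r^{N-1+2(\gamma-\eps)}\le \tilde C r^{2\eps}|\log r|$ for $r$ small. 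For $r$ close to $1$ one adjusts the constant, or replaces $|\log r|$ by $1+|\log r|$ in the estimate.

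\emph{Main obstacle.} The delicate point is controlling $\int_r^1\Psi_\gamma$ uniformly. This needs the upper bound $W_\gamma(1)\le C(\|\u\|_{x_0,1},|\lambda|)$ together with the lower bound $W_\gamma(r)\ge-\bar C$, and the latter is exactly where hypotheses ($i$) and ($ii$) are used. The remaining work is tracking the constant dependence on $N$, $\bar C$, $\|\u\|_{x_0,1}$ and $|\lambda|$ through these two integrations.
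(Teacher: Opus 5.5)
Your argument is correct, but it takes a detour compared with the direct argument behind the omitted proof (the paper defers to \cite[Lemma 5.4]{SoaTer18}). By \eqref{D con teo div}, the expression you wrote for $h'(r)$ is exactly
\[
h'(r)=\frac{2}{r^{N-1+2\gamma}}\Big(D_1(\u,x_0,r)-\frac{\gamma}{r}H(\u,x_0,r)\Big)=\frac{2}{r}\,W_{\gamma,1}(\u,x_0,r).
\]
So your Step 1 bound $W_{\gamma,1}(\u,x_0,r)\ge-\int_0^r\Phi_\gamma\,ds\ge-\bar C$ gives $h'(r)\ge-2\bar C/r$ at once. Integrating on $[r,1]$ yields $h(r)\le H(\u,x_0,1)+2\bar C|\log r|\le\|\u\|_{x_0,1}^2+2\bar C|\log r|$.

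Your remark that the lower bound on $W_{\gamma,1}$ ``alone is not enough'' is therefore a misdiagnosis. The bad weight $r^{N-2+2\gamma}/H$ appears only in $(\log h)'$, not in $h'$.

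Your route instead bounds $h'$ by Cauchy--Schwarz against $\Psi_\gamma$. It then controls $\int_0^1\Psi_\gamma=W_{\gamma,1}(\u,x_0,1)-W_{\gamma,1}(\u,x_0,0^+)+\int_0^1\Phi_\gamma\le\|\u\|_{x_0,1}^2+\bar C$. This is valid; to handle possible zeros of $H$, differentiate $\sqrt{h+\delta}$ and let $\delta\to0$. In both routes the logarithm comes from $\int_r^1 ds/s$.

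The comparison is as follows. The direct route uses only the lower bound on the Weiss functional and the value $H(\u,x_0,1)$. Yours additionally needs the upper bound $W_{\gamma,1}(\u,x_0,1)\le\|\u\|_{x_0,1}^2$ and the regularization. In exchange it gives a two-sided oscillation estimate for $\sqrt h$, which is not needed here.

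Your observation about $r$ near $1$ is right. As literally stated, letting $r\to1^-$ would force $H(\u,x_0,1)=0$, so the bound should be read with $1+|\log r|$ (or for $r\le1/2$). This is harmless in Proposition \ref{prop: upper}, since $r^{2\eps}(1+|\log r|)$ is bounded on $(0,1)$.
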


\begin{lemma}\label{lem: seq exp}
Let 
\begin{equation}
\label{seq exp}
\begin{cases}
\sigma_0=\alpha \in (0,1) \\
\displaystyle\sigma_n=\frac12\left(\frac{2+ \sigma_{n-1}}{2}\right) + \frac{\sigma_{n-1}}2 & \text{if }n \ge 1.
\end{cases}
\end{equation}
The sequence $\{\sigma_n\}$ is monotone increasing and converges to $2$ as $n\to\infty$. Moreover, $\sigma_n < (2+ \sigma_{n-1})/2$ for every $n$.
\end{lemma}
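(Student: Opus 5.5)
Since every $\sigma_n$ is obtained from $\sigma_{n-1}$ by an affine map, I would rewrite the recursion in closed form. Expanding the definition gives
\[
\sigma_n=\frac{2+\sigma_{n-1}}{4}+\frac{\sigma_{n-1}}{2}=\frac12+\frac34\,\sigma_{n-1}.
\]
This map has the unique fixed point $\sigma=2$, because $\sigma=\tfrac12+\tfrac34\sigma$ holds exactly when $\sigma=2$. Subtracting the fixed point, the recursion becomes
\[
2-\sigma_n=\frac34\,(2-\sigma_{n-1}),
\qquad\text{hence}\qquad
2-\sigma_n=\left(\frac34\right)^n(2-\alpha).
\]

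All the claims now follow from this formula. Since $\alpha\in(0,1)$, we have $2-\alpha>0$, so $2-\sigma_n>0$ for every $n$, that is $\sigma_n<2$. The quantity $(3/4)^n(2-\alpha)$ strictly decreases to $0$. Therefore $\{\sigma_n\}$ is strictly increasing and converges to $2$. Equivalently, monotonicity can be read off from $\sigma_n-\sigma_{n-1}=\tfrac14(2-\sigma_{n-1})>0$.

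For the last inequality, I would observe that the recursion expresses $\sigma_n$ as the midpoint of $\sigma_{n-1}$ and $(2+\sigma_{n-1})/2$. A direct computation gives
\[
\frac{2+\sigma_{n-1}}{2}-\sigma_n=\frac{2+\sigma_{n-1}}{4}-\frac{\sigma_{n-1}}{2}=\frac{2-\sigma_{n-1}}{4}.
\]
The right-hand side is strictly positive because $\sigma_{n-1}<2$, as shown above. This gives $\sigma_n<(2+\sigma_{n-1})/2$ for every $n\ge1$.

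The argument is elementary and has no real obstacle. The only point that needs care is to establish $\sigma_{n}<2$ for all $n$ first, since both the monotonicity and the final strict inequality rely on it.
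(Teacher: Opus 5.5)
Your proof is correct and complete. Rewriting the recursion as $\sigma_n=\tfrac12+\tfrac34\sigma_{n-1}$ gives $2-\sigma_n=(3/4)^n(2-\alpha)$, from which monotonicity, convergence to $2$, and $\frac{2+\sigma_{n-1}}{2}-\sigma_n=\frac{2-\sigma_{n-1}}{4}>0$ all follow. The paper omits the proof of this lemma and refers to the analogous statement in \cite{SoaTer18}, so your closed-form computation supplies exactly the elementary verification it leaves out.
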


We are ready to proceed with the proof:

\begin{proof}[Proof of Proposition \ref{prop: upper}]
For concreteness, we focus on point ($i$); point ($ii$) can be proved with minor changes. We denote by 
\[
d_n:= \mathcal{O}(\u_n,x_n) \quad \text{and} \quad d := \mathcal{O}(\u_0,\xi).
\]
Suppose by contradiction that $d <2$. Up to a subsequence, we can suppose that $d_n \to \bar d \ge 2$. By assumption, $\{\u_n\}$ is a bounded sequence in $C^{0,\alpha}(\overline{B_{3/2}})$. Then, since $\u_n(x_n)=0$ for every $n$, we have that
\[
H(\u_n,x_n,r) \le C \int_{\partial B_r(x_n)} |x-x_n|^{2\alpha} \, d \sigma \le C  r^{N-1+2\alpha} =: C_0 r^{N-1+2\alpha},
\]
for every $r \in (0,1)$ and $n \in \N$. We stress that $C_0$ is independent of $n$. 

At this point we recall the definition of $\sigma_0$ and $\sigma_1$ from \eqref{seq exp}, and apply Lemma \ref{lem 2} with $\sigma=\sigma_0=\alpha$ and $\gamma= \sigma_1 + \delta_1$, where $\delta_1>0$ is chosen in such a way that $\sigma_1+\delta_1 < (2+\sigma_0)/2$: we infer that
\begin{equation}\label{hp 2 lem 3 1}
\int_0^r \Phi_{\sigma_1+\delta_1}(\u_n,x_n,s)\,ds \le \frac{\bar C_0}{2+\sigma_0 -2(\sigma_1+\delta_1)} r^{2+\sigma_0 -2(\sigma_1+\delta_1)},
\end{equation}
for a positive constant $\bar C_0$ independent of $n$. Note that $(2+\sigma_0)/2 <2$, and hence at least for $n$ sufficiently large
\begin{equation}\label{hp 1 lem 3 1}
W_{\sigma_1+\delta_1}(\u_n,x_n,0^+) =0,
\end{equation}
by Lemma \ref{lem: char O W}. Indeed, since $d_n \to \bar d \ge 2$, we have that $d_n > \sigma_1+\delta_1$ for sufficiently large $n$. Equations \eqref{hp 2 lem 3 1} and \eqref{hp 1 lem 3 1} enable us to apply Lemma \ref{lem 3} with $\gamma=\sigma_1+\delta_1$ and $\eps=\delta_1$, deducing that there exists $C_1>0$ depending on $N$ and on $\sup_n\{\|\u_n\|_{x_n,1}\}$ (in particular, $C_1$ is independent of $n$) such that
\[
H(\u_n,x_n,r) \le C_1 r^{N-1+2\sigma_1} r^{2 \delta_1 } |\log r| \le C_1 r^{N-1+2\sigma_1}.
\]
The previous argument can be iterated as follows: first, we observe that either $(2+\sigma_1 )/2 > d$, or not. 

\emph{Case 1) $(2+\sigma_1 )/2 > d$.} We apply Lemma \ref{lem 2} with $\sigma=\sigma_1$ and any $\gamma \in (\sigma_1,d+\eps)$ with $\eps>0$ so small that $d+\eps< (2+\sigma_1 )/2$: we deduce that
\begin{equation}\label{concl}
\int_0^r \Phi_{\gamma}(\u_n,x_n,s)\,ds \le \frac{\bar C_1}{2+\sigma_1  -2\gamma} r^{2+\sigma_1  -2\gamma},
\end{equation}
for a positive constant $\bar C_1$ independent of $n$. Now, recalling the definition of $\Phi_\gamma$ (see \eqref{def Phi_g}), we have $W_{\gamma,1}'(\u_n,x_n,r) \ge -\Phi_\gamma(\u_n,x_n,r)$, whence
\begin{align*}
W_{\gamma,1}(\u_n,x_n,r) & \ge W_{\gamma,1}(\u_n,x_n,0^+) -  \int_{0}^r  \Phi_{\gamma}(\u_n,x_n,s)\,ds \ge - C r^{2+\sigma_1  -2\gamma},
\end{align*}
for large $n$, where the last inequality follows by \eqref{concl} and Lemma \ref{lem: char O W} (since $d_n \to \bar d \ge 2$, we have that $d_n >(2+\sigma_1 )/2>d+\eps>\gamma$ for sufficiently large $n$). Here $C$ denotes a positive constant $C$ independent of $n$, and the inequality holds for any $r \in (0,1)$. Now we take the limit as $n\to\infty$: by $H^{1}$ and uniform convergence, we deduce that $W_{\gamma,1}(\u_0,\xi,r) \ge  - C r^{2+\sigma_1  -2\gamma}$, and taking further the limit as $r \to 0^+$, we finally obtain 
\[
\liminf_{r \to 0^+} W_{\gamma,1}(\u_0,\xi,r) \ge 0,
\]
for every $\gamma \in (\sigma_1,d+\eps)$, with $d+\eps <2$. By Lemma \ref{lem: char O W} (see also Remark \ref{rem: O W G}), it follows that $\mathcal{O}(\u_0,\xi)$ is larger than any such $\gamma$, and in particular $\mathcal{O}(\u_0,\xi) \ge d+\eps/2$, in contradiction with the fact that $\mathcal{O}(\u_0,\xi)=d$.

\emph{Case 2) $(2+\sigma_1)/2 \le d$.} In this case we apply Lemma \ref{lem 2} with $\sigma=\sigma_1$ and $\gamma = \sigma_2+\delta_2$, $\sigma_2$ defined by \eqref{seq exp}, and $\delta_2>0$ small so that $\sigma_2+\delta_2 < (2+\sigma_1)/2$. We deduce that
\begin{equation}\label{hp 2 lem 3 2}
\int_0^r \Phi_{\sigma_2+\delta_2}(\u_n,x_n,s)\,ds \le \frac{\bar C_1}{2+\sigma_1 -2(\sigma_2+\delta_2)} r^{2+\sigma_1  -2(\sigma_2+\delta_2)},
\end{equation}
for $\bar C_1>0$ independent of $n$. Moreover, being $\sigma_2+\delta_2 < d<d_n$ eventually, we have that for every $n$ large
\begin{equation}\label{hp 1 lem 3 2}
W_{\sigma_2+\delta_2,1}(\u_n,x_n,0^+) = 0,
\end{equation}
by Lemma \ref{lem: char O W}. Equations \eqref{hp 2 lem 3 2} and \eqref{hp 1 lem 3 2} enable us to apply Lemma \ref{lem 3} with $\gamma=\sigma_2+\delta_2$ and $\eps=\delta_2$, deducing that there exists $C_2>0$ such that
\[
H(\u_n,x_n,r) \le C_2 r^{N-1+2\sigma_2} r^{2\delta_2} |\log r| \le C_2 r^{N-1+2\sigma_2}.
\]
At this point we check whether $(2+\sigma_2)/2 >d$ or not. If yes, we follow Case 1 to reach a contradiction. If not, we iterate the previous argument once again obtaining
\[
H(\u_n,x_n,r) \le C_3 r^{N-1+2\sigma_3},
\]
and so on. By Lemma \ref{lem: seq exp}, we are sure that there exists $j \in \N$ such that $(2+\sigma_j)/2> d$, so that the proof is complete after a finite number of iterations.
\end{proof}

\section{Blow-up analysis}\label{sec: blow-up}

We begin with the following lemma which implies the compactness of blow-up sequences under $H^1$-bounds.

\begin{lemma}\label{L:compactness}
Let $\u$ be a minimizer of \eqref{e:partition.functions}, 
let $\{x_n\}\subset Z(\u)$, and suppose that $r_n,\rho_n\to0^+$ as $n\to\infty$. 
Suppose moreover that 
\[
\u_n(x):=\frac{1}{\rho_n}\u(x_n+r_n x),
\]
is well defined for $x \in B_{2R}$, for every $n$. If 
\begin{equation}\label{eq:ass:compactness}
    \liminf_{n}\frac{\rho_n}{r_n^2}\geq c ,\qquad \text{and} \qquad \|\u_{n}\|_{0,2R}\le C, 
\end{equation}
for some constants $c,C>0$ independent of $r$, then up to a subsequence
\[
\u_{n}\to \u_0,\quad \text{strongly in }  H^1(B_R) \text{ and in }C^{0,\alpha}(\overline{B_R}).
\]
\end{lemma}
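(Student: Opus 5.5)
The plan is to first get uniform Lipschitz bounds, which give weak $H^1$ and $C^{0,\alpha}$ compactness, and then to upgrade weak $H^1$ convergence to strong convergence by a cut-off competitor in the rescaled almost-minimality \eqref{eq:alm:min:rescaled}.

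\textbf{Uniform Lipschitz bounds.} Each $\u_n$ satisfies, in $B_{2R}$,
\[
-\Delta u_{n,i}\le \frac{r_n^2}{\rho_n}\lambda_{2,1}(\omega_i)\ind_{\{u_{n,i}>0\}},\qquad -\Delta\Big(u_{n,i}-\sum_{j\ne i}u_{n,j}\Big)\ge -\frac{r_n^2}{\rho_n}\overline\Lambda .
\]
This follows by rescaling Proposition \ref{p:classeS}. Since $r_n^2/\rho_n\le 2/c$ for $n$ large, $\u_n\in\mathcal S^*_{M,k}(B_{2R})$ with $M$ independent of $n$. Lemma \ref{l:lipschitz-conti} is stated on $H^1_0$, but its proof is local. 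Applying it at centres $y\in B_R$ with radius $R$ gives
\[
[u_{n,i}]_{C^{0,1}(B_{R/2}(y))}\le \frac{C}{R}\,\|\u_n\|_{y,R}\le C(R)\,\|\u_n\|_{0,2R}\le C .
\]
Since $\u_n(0)=0$, the sequence is also bounded in $L^\infty(B_R)$, so it is bounded in $W^{1,\infty}(B_R)$. By Ascoli–Arzelà and reflexivity, up to a subsequence $\u_n\to\u_0$ in $C^{0,\alpha}(\overline{B_R})$ for every $\alpha<1$, weakly in $H^1(B_R)$ (in fact on slightly larger balls), and $\u_0\in\Sigma_k$ pointwise.

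\textbf{Strong $H^1$ convergence: the competitor.} By weak lower semicontinuity it suffices to show $\limsup_n\int_{B_t}|\nabla\u_n|^2\le\int_{B_t}|\nabla\u_0|^2$ for some $t\in(R,2R)$. I would use \eqref{eq:alm:min:rescaled} with a competitor interpolating between $\u_0$ and $\u_n$ near $\partial B_t$. The difficulty is that a convex combination of two segregated maps leaves $\Sigma_k$. To stay in $\Sigma_k$, I would build the competitor on the level of the one-variable projections $\hat u=u_i-\sum_{j\ne i}u_j$, following \cite{ConTerVer05,TaTe12}. Fix a cut-off $\eta$ equal to $1$ on $B_{t-\varepsilon}$ and $0$ near $\partial B_t$, and set
\[
\mf v_n := \big(\eta\u_0+(1-\eta)\u_n\big)\quad\text{corrected to lie in }\Sigma_k .
\]
The correction goes as follows. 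In the annulus, uniform convergence makes the supports of $\u_n$ and $\u_0$ nearly agree up to a set where both are smaller than $\|\u_n-\u_0\|_\infty=:\epsilon_n$. I would therefore replace each component by
\[
v_{n,i}=\Big(\eta\,(u_{0,i}-\epsilon_n)^+ + (1-\eta)\,(u_{n,i}-\epsilon_n)^+\Big)+\text{a layer } (1-\eta)\min\{u_{n,i},\epsilon_n\}\text{-type term}.
\]
This is arranged so that positivity of $v_{n,i}$ forces $u_{n,i}>0$ or $u_{0,i}>\epsilon_n$, and the latter implies $u_{n,i}>0$, which gives segregation. The simpler alternative is to interpolate only on the scalar combination and redistribute the phases, which I would also consider.

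\textbf{Passing to the limit.} The energy error of the competitor is controlled by $\int|\nabla\eta|^2|\u_n-\u_0|^2+2\epsilon_n\cdot$(Lipschitz bounds)$\cdot|B_t|$, which tends to $0$ as $n\to\infty$ for fixed $\varepsilon$. The linear terms $\frac{r_n^2}{\rho_n}\Lambda\cdot\u$ converge by uniform convergence. The additive error term in \eqref{eq:alm:min:rescaled} tends to zero by the remark that follows it. Combining these gives $\limsup\int_{B_{t-\varepsilon}}|\nabla\u_n|^2\le\int_{B_t}|\nabla\u_0|^2+o_\varepsilon(1)$, where the Lipschitz bounds handle the annulus contribution; letting $\varepsilon\to0$ concludes. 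The main obstacle is constructing this segregated interpolating competitor with vanishing extra energy. If the direct construction fails, I would instead argue via the equations: test $-\Delta u_{n,i}=\frac{r_n^2}{\rho_n}\lambda_i\ind-\mu_{n,i}$ from Corollary \ref{cor:EL eq} against $\phi u_{n,i}$. The measure term $\int\phi u_{n,i}\,d\mu_{n,i}$ vanishes because $\mu_{n,i}$ is supported where $u_{n,i}=0$. This yields $\int\phi|\nabla u_{n,i}|^2=-\int u_{n,i}\nabla u_{n,i}\cdot\nabla\phi+\frac{r_n^2}{\rho_n}\lambda_i\int\phi u_{n,i}$. The right-hand side converges by uniform plus weak convergence, which gives convergence of the local energies, provided the limit $\u_0$ satisfies the same identity (using the local uniform bounds on the $\mu_{n,i}$). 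This second route seems actually simpler, and I would try it first.
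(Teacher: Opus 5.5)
Your proposal is correct. Your compactness step (uniform Lipschitz bounds from the rescaled extremality conditions, then Ascoli--Arzel\`a) matches the paper's.

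\textbf{Second route.} Your preferred route to strong $H^1$ convergence is a close variant of the paper's argument. Both use the equation of Corollary~\ref{cor:EL eq} together with uniform convergence; they differ only in the test function.
\begin{itemize}
\item The paper tests with $\eta(u_{n,i}-u_{0,i})$. The measure term is then bounded by $\|u_{n,i}-u_{0,i}\|_{L^\infty}\,\mu_{n,i}(\supp\eta)\to0$, thanks to the uniform local mass bound on $\mu_{n,i}$. One gets $\int\eta\,\nabla u_{n,i}\cdot\nabla(u_{n,i}-u_{0,i})\,dx\to0$ without knowing anything about $\u_0$.
\item You test with $\phi u_{n,i}$, so the measure term vanishes identically. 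The cost is that you must then prove the same energy identity for $\u_0$, along a subsequence with $r_n^2/\rho_n\to\ell$.
\end{itemize}
The proviso you leave open is easy and needs no measures at all. Uniform convergence gives $-\Delta u_{0,i}=\ell\,\lambda_{2,1}(\omega_i)$ in the open set $\{u_{0,i}>0\}$. Test with $\phi(u_{0,i}-\epsilon)^+$ and let $\epsilon\to0$, using $\nabla u_{0,i}=0$ a.e.\ on $\{u_{0,i}=0\}$. The same trick also gives the identity for $u_{n,i}$ directly.

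\textbf{First route.} Your competitor argument via the rescaled almost-minimality is genuinely different, and it also works. The competitor reduces to $\eta(u_{0,i}-\epsilon_n)^+ +(1-\eta)u_{n,i}$. It is segregated, since positivity of this expression forces $u_{n,i}>0$. Its energy on the annulus is $O(\varepsilon)+o(1)$ by the uniform Lipschitz bounds.

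It is heavier than needed, though. It requires Proposition~\ref{p:almost} with constants uniform along the varying centres $x_n$, which need not stay in a fixed compact set. This uniformity does hold, because the proof only uses the global Lipschitz constant, but it has to be checked. The PDE route uses only the Euler--Lagrange information already available.
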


\begin{proof}Without loss of generality, we prove the statement for $R=1$, to simplify the notation. 
     Let us fix $i\in\{1,\dots,k\}$.
    By Corollary \ref{cor:EL eq}, we have that $u_{i,n}$ satisfies
\begin{equation}\label{eq:compact:1}
            -\Delta u_{i,n}= \lambda_{2,1}(\omega_{i})\frac{r_n^2}{\rho_n}\ind_{\{u_{i,n}>0\}} - \mu_{i,n} \quad\text{in }B_2,
    \end{equation}
    where $\mu_{i,n}$ is a non-negative Radon measure supported in $\{u_{i,n}=0\} \cap B_2$. In other words, for every $\varphi\in C_c^\infty(B_2)$, we have
    \[
    \int_{B_2}\varphi\, d\mu_{i,n} =  \int_{B_2} \left(\D u_{i,n}\cdot \D\varphi-\lambda_{2,1}(\omega_{i}) \frac{r_n^2}{\rho_n}\varphi\ind_{{\{u_{i,n}>0\}}}\right)\,dx.
    \]
    Hence, using the assumption \eqref{eq:ass:compactness}, it follows that $\mu_{i,n}$ is locally finite, uniformly with respect to $n$.

    From now on, we will fix an index $i$ and omit the dependence on $i$ to simplify the notation. Moreover, the following properties hold up to a subsequence; we will not mention this fact every time. 
    By \eqref{eq:ass:compactness}, we have that $u_{n}\rightharpoonup u_{0}$ weakly in $H^{1}(B_2)$ and a.e. in $B_2$; thus, there exists $\bar x \in B_{3/2}$ such that $\{u_n(\bar x)\}$ is bounded. Furthermore, by the Lipschitz estimate \eqref{eq:lip:estimates} it follows that $\| \nabla u_{n} \|_{L^\infty(B_{3/2})}$ is bounded as well, so that $u_n \to u_0$ in ${C^{0,\alpha}}(\overline{B_{3/2}})$ for every $\alpha\in(0,1)$. Now, let $\eta\in C_c^\infty(B_{3/2})$, and let us test equation \eqref{eq:compact:1} with $\eta (u_{n}-u_{0})$. Then
\[
    \int_{B_{3/2}} \D u_{n} \cdot \nabla(\eta (u_n-u_{0}))\, dx = \int_{B_{3/2}}\lambda_{2,1}(\omega_{i}) \frac{r_n^2}{\rho_{n}}\eta (u_{n}-u_{0})\ind_{\{u_{n}>0\}}\,dx - \int_{B_{3/2}}\eta (u_{n}-u_{0})\,d\mu_{n} \to 0,
\]
where we used \eqref{eq:ass:compactness}, the ${C^{0,\alpha}}(\overline{B_{3/2}})$ convergence, and the fact that $\mu_{n}$ is locally finite, uniformly in $n$.
Therefore, using again the ${C^{0,\alpha}}$ and weak $H^1$ convergences, we finally obtain
\[
\int_{B_{3/2}}  \eta |\D u_{n}|^2\,dx = \int_{B_{3/2}} \eta \D u_{n}\cdot \D u_{0}\, dx + \int_{B_{3/2}} (u_{0}-u_{n})\D u_{n}\cdot \D \eta \,dx +o(1) \to \int_{B_{3/2}}  \eta |\D u_{0}|^2\,dx.
\]
Since $\eta$ was arbitrarily chosen, this implies that $u_{n}\to u_{0}$ in $H^1(B_1)$. 
\end{proof}

The following lemma is a useful result that allows for the construction of segregated competitors in minimization problems. We refer to \cite[Theorem 1.2]{OgVe} for a similar construction.

\begin{Lemma}\label{L:construction:commpetitor}
    Assume that $\{\u_n\}\subset H^{1}(B_R,\Sigma_k) $ is a  non-negative sequence such that $ \u_n\to \u$ strongly in $H^1(B_R)$, and that there exists a constant $C>0$ such that $[\u_n]_{C^{0,1}(\overline{B_R})}\le C$ for all $n$.
    Assume that $\mathbf{v} \in H^{1}(B_R,\Sigma_k)$ and $\mathbf v=\u$ on $\partial B_R$.
    
    Then, there exists a sequence $\{\mathbf{v}_n\}\subset H^{1}(B_R,\Sigma_k)$ such that  $\mathbf v_n=\u_n$ on $\partial B_R$ and $\mathbf v_n\to \mathbf 
    v$ in $H^{1}(B_R)$.
\end{Lemma}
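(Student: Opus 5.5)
The natural construction is to keep the sets where $\mathbf v$ is positive and correct the boundary mismatch in a thin annulus. Set $\mathbf w_n := \u_n - \u$, so that $\mathbf w_n \to 0$ in $H^1(B_R)$. Since $\u_n$ and $\u$ are uniformly Lipschitz and $\u_n \to \u$ in $H^1$, interpolation also gives $\u_n \to \u$ uniformly on $\overline{B_R}$. Fix a sequence $\e_n \to 0^+$, to be chosen later, and set $t_n := 1-\e_n$. Define $\mathbf v_n$ in two pieces. In the inner ball $B_{t_nR}$, put $\mathbf v_n(x) := \mathbf v(x/t_n)$; this takes values in $\Sigma_k$ and has trace $\u(\cdot/t_n)$ on $\partial B_{t_nR}$. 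In the annulus $A_n := B_R \setminus B_{t_nR}$, interpolate radially between the trace $\u(R\theta)$ on the inner sphere and $\u_n(R\theta)$ on the outer sphere.

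The segregation constraint forbids linear interpolation, so the interpolation is done in two radial stages through zero. Split $A_n$ into an inner half and an outer half. On the inner half, with $\theta\in\partial B_1$, let $\mathbf v_n$ go linearly in $\rho$ from $\u(R\theta)$ to $0$. Multiplying a vector of $\Sigma_k$ by a scalar keeps it in $\Sigma_k$, so this stage is admissible. On the outer half, let $\mathbf v_n$ go linearly from $0$ to $\u_n(R\theta)$, which is again a scalar multiple of a $\Sigma_k$-valued function.

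Next I estimate the energy on the annulus. The tangential gradients are bounded by $C$ thanks to the uniform Lipschitz bounds. The radial derivative is at most $C\|\u\|_{L^\infty(\partial B_R)}/(\e_n R)$. This gives a radial contribution of order $\e_n^{-1}$, which does not vanish, so the crude two-stage interpolation fails.

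The main obstacle is therefore to avoid sending the traces through zero across a thin layer. The fix is to remove only the mismatch, not the whole trace. On the annulus, first use $\mathbf v_n := \u$ extended radially and constantly from the boundary; its energy is $O(\e_n)$ by the Lipschitz bound, and it glues continuously to $\mathbf v(\cdot/t_n)$. Then handle the difference between $\u$ and $\u_n$ on the outer boundary as follows. Since $\u(R\theta)$ and $\u_n(R\theta)$ both lie in $\Sigma_k$, the path through zero between them can be arranged componentwise: on the outer half of $A_n$ let
\[
\mathbf v_n = (1-s)\,\u(R\theta) + s\,\u_n(R\theta)
\]
wherever both values carry the same active index. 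Where the active indices differ, both $|\u|$ and $|\u_n|$ are at most $\|\u_n-\u\|_{L^\infty}=:\eta_n \to 0$, so there one routes through zero at cost $\eta_n^2/\e_n$. The radial energy on the layer is thus bounded by $\sup|\u_n-\u|^2\,\e_n^{-1} \le \eta_n^2\e_n^{-1}$, plus $O(\e_n)$ from the tangential part. Choosing $\e_n := \eta_n$ makes the total $O(\eta_n) \to 0$.

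Finally, $\mathbf v(\cdot/t_n)\to\mathbf v$ in $H^1(B_R)$ by continuity of dilations, after extending $\mathbf v$ by $\u$ outside $B_R$. Together with the vanishing of the annulus energy and of the $L^2$ norm on the annulus, this yields $\mathbf v_n\to\mathbf v$ in $H^1(B_R)$, and $\mathbf v_n=\u_n$ on $\partial B_R$ by construction.
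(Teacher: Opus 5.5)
Your construction matches the paper's in its main structure. You put $\mathbf v(\cdot/t_n)$ in the inner ball and, in a thin annulus, interpolate radially between the two traces. The interpolation is linear where $\mathbf u(R\theta)$ and $\mathbf u_n(R\theta)$ have the same active index, and passes through zero where they differ. The key point is that on the latter set both traces are bounded by $|\mathbf u_n-\mathbf u|$, so the layer width can be tuned to the mismatch. The paper measures the mismatch by $\|\mathbf u_n-\mathbf u\|_{L^2(\partial B_1)}$ rather than your sup norm, which spares the Arzel\`a--Ascoli step; both work.

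\textbf{Gap.} What is missing is the step that decides whether $\mathbf v_n\in H^1$ at all: how the path through zero is chosen, and how it glues in $\theta$ with the linear interpolation. If you use the two-stage path of your second paragraph (down to $0$ at mid-layer, then up to $\mathbf u_n$), the map jumps across the interface between the two angular regions. For example, take $R=1$, $\mathbf u=(x_N^+,x_N^-)$ and $\mathbf u_n=((x_N-1/n)^+,(x_N-1/n)^-)$. At $\theta_N=0$ and relative depth $s\in(0,1)$, the linear side gives $(0,s/n)$, while the two-stage side gives $(0,(2s-1)^+/n)$. This is a jump along an $(N-1)$-dimensional set, so $\mathbf v_n\notin H^1(B_R)$, and your bound ``tangential part $O(\varepsilon_n)$'' is not justified.

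\textbf{Fix.} Pass through zero along a single segment. Where $g_i:=u_i(R\theta)>0$ and $g_{j,n}:=u_{j,n}(R\theta)>0$ with $j\neq i$, put the positive part of $(1-s)g_i-s\,g_{j,n}$ in component $i$ and its negative part in component $j$. Equivalently, let $\hat g_i:=g_i-\sum_{j\neq i}g_j$, define $\hat g_{i,n}$ in the same way from $\mathbf u_n$, and set $v_{i,n}:=\big((1-s)\hat g_i+s\,\hat g_{i,n}\big)^+$ on the whole layer. This one formula has the following properties:
\begin{enumerate}
\item it covers both regions and reduces to your linear interpolation where the indices agree or one trace vanishes;
\item it is segregated;
\item it is Lipschitz in $\theta$ with a constant depending only on the uniform Lipschitz bound and $k$;
\item its radial derivative is at most $C|\mathbf u_n-\mathbf u|(R\theta)/(\varepsilon_n R)$.
\end{enumerate}
With it your energy estimate goes through, and it is exactly the paper's choice.

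Two minor points. First, $\mathbf u$ is only defined in $B_R$, so extend $\mathbf v$ outside $B_R$ by, e.g., $\mathbf u(Rx/|x|)$ before invoking continuity of dilations. Second, take $\varepsilon_n:=\eta_n+1/n$ to avoid $\varepsilon_n=0$.
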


\begin{proof}
Without loss of generality, we prove the statement for $R=1$. Let us fix a small parameter $\delta_n>0$ to be chosen later, and define
\[
A_n:=B_1\setminus B_{1-\delta_n} \quad \text{and} \quad t(r):=t_n(r)= \frac{1-r}{\delta_n} \quad \text{for }r\in[1-\delta_n,1].
\]
First, we define the competitor $\mf{v}_n$ in $B_{1-\delta_n}$, as
\[
\mf{v}_n(x)  :=\mathbf v\left(\frac{x}{1-\delta_n}\right) \qquad \text{for }x\in B_{1-\delta_n}.
\]
Next, we want to define $\mathbf v_n$ in the annulus $A_n$. We consider polar coordinates $(r,\theta)\in \R_+\times \mathbb S^{N-1}$, and let
\[
\mathbf g(\theta):= \u (1,\theta),\qquad
\text{ and }\qquad
\mathbf g_n(\theta) := \u_n(1,\theta).
\]
Fix $i\in\{1,\dots,k\}$ and $\theta \in \mathbb{S}^{N-1}$. We distinguish two cases: if $g_j(\theta)= g_{j,n}(\theta)=0$ for every $j\not=i$, we define for $r \in(1-\delta_n,1]$,
\[
\mathbf v_n(r,\theta):= \big( t(r) g_i(\theta)+(1-t(r)) g_{i,n}(\theta)\big) \, \mathbf e_i.
\]
Instead, if $g_i(\theta)>0$ and $g_{j,n}(\theta)>0$ for some $j\not=i$, we define for $r \in[1-\delta_n,1]$,
\[
\mathbf v_n(r,\theta):= \begin{cases}
  \displaystyle   \big(t(r) g_i(\theta)+(t(r)-1) g_{j,n}(\theta)\big)\,\mathbf{e}_i & \text{if } t(r) g_i(\theta)+(t(r)-1) g_{j,n}(\theta) \ge 0 \\
  \displaystyle   -\big(t(r) g_i(\theta)+(t(r)-1) g_{j,n}(\theta)\big)\,\mathbf{e}_j & \text{if } t(r) g_i(\theta)+(t(r)-1) g_{j,n}(\theta)<0.
\end{cases}
\]
We claim that this is the desired sequence, for an appropriate choice of $ \delta_n\to0$.

First, by construction, $\mathbf{v}_n \in H^1(B_1)$, and the segregation condition $v_{i,n}v_{j,n}=0$ for $i \neq j$ holds throughout $B_1$. Since $\mathbf{v}_n = \mathbf{u}_n$ on $\partial B_1$, it remains to prove that $\mathbf{v}_n \to \mathbf{v}$ in $H^1(B_1)$ as $n\to\infty$ by appropriately choosing $ \delta_n\to0$. For every $i=1,\dots,k$, we observe that
\[
\int_{B_{1-\delta_n}}|\D  v_{i,n}|^2\,dx = (1-\delta_n)^{N-2} \int_{B_{1}}|\D  v_i|^2\,dx \to \int_{B_{1}}|\D  v_i|^2\,dx\quad \text{ as }n \to \infty.
\]
Since in addition $\D v_{i,n} \to \D v_i$ a.e. in $B_1$, to conclude that $\mathbf{v}_n \to \mathbf{v}$ in $H^{1}(B_1)$ it suffices to show that $\|\nabla \mathbf{v}_n\|_{L^2(A_n)} \to 0$ as $n\to\infty$. For each $i=1,\dots,k$, using polar coordinates $(r,\theta) \in [1-\delta,1] \times \mathbb{S}^{N-1}$, we have
\begin{equation}\label{eq:construction:competitor:1}
    \int_{A_n} |\nabla v_{i,n}|^2 \, dx = \int_{\mathbb{S}^{N-1}} \int_{1-\delta_n}^{1} (\partial_r v_{i,n})^2 r^{N-1} \, dr d\theta + \int_{\mathbb{S}^{N-1}} \int_{1-\delta_n}^{1} \frac{|\nabla_\theta v_{i,n}|^2}{r^2} r^{N-1} \, dr d\theta=: \text{I} + \text{II}.
\end{equation}
The second term in the above identity vanishes due to assumption $[\u_n]_{C^{0,1}(\overline{B_1})}\le C$; indeed,
\[
\text{II}\le C \int_{\mathbb{S}^{N-1}} \int_{1-\delta_n}^{1} \sum_i \big(|\D_\theta g_i|^2 + |\D_\theta g_{i,n}|^2 \big)\, r^{N-3}dr d\theta \le C \delta_n \to 0 \quad \text{as }n \to \infty.
\]
Let us consider the sets $\mathcal C_i:=\{\theta\in\mathbb S^{N-1}:g_j(\theta)= g_{j,n}(\theta)=0 \, \text{ for } j\not=i \}$ and $\mathcal C_{i,j}:=\{\theta\in\mathbb S^{N-1}:g_i(\theta)>0,\, g_{j,n}(\theta)>0 \}$ for $i\not=j$.
Observing that $|t'(r)|= 1/\delta_n$, we can estimate the first term in \eqref{eq:construction:competitor:1} as follows:
\begin{align*}  \text{I}&=\int_{\mathbb{S}^{N-1}\cap \mathcal{C}_i} \int_{1-\delta_n}^{1} (\partial_r v_{i,n})^2 r^{N-1} \, dr d\theta
+\sum_{j\not=i}
\int_{\mathbb{S}^{N-1}\cap \mathcal{C}_{i,j}} \int_{1-\delta_n}^{1} (\partial_r v_{i,n})^2 r^{N-1} \, dr d\theta\\
&\le 
\int_{\mathbb{S}^{N-1}\cap \mathcal{C}_i} \int_{1-\delta_n}^{1} 
\frac{1}{\delta_n^2} |g_i(\theta)-g_{i,n}(\theta)|^2 r^{N-1}\, dr d\theta
+
\sum_{j\not=i}
\int_{\mathbb{S}^{N-1}\cap \mathcal{C}_{i,j}}\int_{1-\delta_n}^{1} 
 \frac{1}{\delta_n^2} |g_i(\theta)+g_{j,n}(\theta)|^2 r^{N-1}\, dr d\theta\\
&\le \frac{C}{\delta} \|g_i-g_{i,n}\|^2_{L^2(\mathbb S^{N-1})} + \frac{C}{\delta} \sum_{j\not=i} \|g_i+g_{j,n}\|^2_{L^2(\mathbb S^{N-1}\cap \mathcal{C}_{i,j})}\\
&\le \frac{C}{\delta} \left(
\|g_i-g_{i,n}\|^2_{L^2(\mathbb S^{N-1})}
+\sum_{j\not=i} 
\|g_i\|^2_{L^2(\mathbb S^{N-1}\cap \mathcal{C}_{i,j})} 
+
\sum_{j\not=i} \|g_{j,n}\|^2_{L^2(\mathbb S^{N-1}\cap \mathcal{C}_{i,j})} 
\right).
\end{align*}
By assumption $\u_n\to\u$ in $H^1(B_1)$ we have $\|\u_n- \u\|_{L^2(\partial B_1)}=:\e_n\to 0^+$. Hence, 
\[
\|g_i-g_{i,n}\|^2_{L^2(\mathbb S^{N-1})}\le \sum_i \|g_i-g_{i,n}\|^2_{L^2(\mathbb S^{N-1})}:=\e_n^2;
\]
moreover, since $g_{i,n}=0$ in $\mathcal{C}_{i,j}$ and $g_j=0$ when $g_i>0$, it follows that
\[
\|g_i\|^2_{L^2(\mathbb S^{N-1}\cap \mathcal{C}_{i,j})} = \|g_i-g_{i,n}\|^2_{L^2(\mathbb S^{N-1}\cap \mathcal{C}_{i,j})} \le \e_n^2, \qquad
\|g_{j,n}\|^2_{L^2(\mathbb S^{N-1}\cap \mathcal{C}_{i,j})} = \|g_j-g_{j,n}\|^2_{L^2(\mathbb S^{N-1}\cap \mathcal{C}_{i,j})} \le \e_n^2.
\]
Finally, choosing $\delta_n:={\e_n}$, we obtain $\text{I}\le C\e_n \to 0$ as $n\to\infty$ and the statement follows.
\end{proof}

\subsection{Blow-up analysis for \texorpdfstring{$\mathcal{O}(u,x_0)<2$}{}} The main result of this subsection is the following:

\begin{Proposition}\label{T:blow:up<2}
    Let $\u$ be a minimizer of \eqref{e:partition.functions}, $\Lambda=(\lambda_{2,1}(\omega_1),\dots,\lambda_{2,1}(\omega_k))$ and $x_0 \in Z(\u)$ be such that $\mathcal O(\u,x_0)<2$. Then, there exists a sequence $r_n\to 0^+$ such that 
    \[
    \u_{x_0,r_n}(x):=\frac{\u(x_0+r_nx)}{\Big(r_n^{1-N}H(\u,x_0,r_n)\Big)^{1/2}} \ \text{ converges to $\u_0\not\equiv0$ strongly in }H^1_{\loc}(\R^N) \text{ and in } C^{0,\alpha}_{\loc}(\R^N).
    \]
    Moreover, the limit $\u_0$ is a $\mathcal O(\u,x_0)$-homogeneous local minimizer of $J_{0,\cdot}$ in $\R^N$; in particular, $\u_0 \in \cG_{\loc}(\R^N)$ (see Definitions \ref{def J} and \ref{def G}).  
\end{Proposition}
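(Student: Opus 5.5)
Set $\gamma:=\mathcal O(\u,x_0)<2$, $\rho_r:=(r^{1-N}H(\u,x_0,r))^{1/2}$, and write $\u_r:=\u_{x_0,r}$. The plan is to check the hypotheses of Lemma \ref{L:compactness}, extract a limit, transfer the almost-minimality of Proposition \ref{p:almost} to that limit, and finally read off the homogeneity from the Almgren formula.

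\emph{Step 1: hypotheses of Lemma \ref{L:compactness}.} By \eqref{ordini uguali}, $\mathcal V(\u,x_0)=\gamma<2$. Lemma \ref{L:O<2:degenerate}, applied with some $\gamma'\in(\gamma,2)$, gives $\|\u\|_{x_0,r}\ge r^{\gamma'}$ for small $r$. Since $\|\u\|_{x_0,r}^2\le C r^{1-N}H(r)(N_1(r)+1)$ (see the proof of the preceding lemma) and $N_1$ is bounded near $0$ by Lemma \ref{p:Almgren-below}, we get $\rho_r\ge c\,r^{\gamma'}$. Hence $\rho_r/r^2\to+\infty$, so the first condition in \eqref{eq:ass:compactness} holds. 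For the $H^1$ bound at scale $R$, the doubling property of Lemma \ref{lem: doub} gives $H(\u,x_0,Rr)/(Rr)^{N-1}\le C R^{2C}\rho_r^2$. Together with the bounded frequency $N_1(\u,x_0,Rr)$, this yields $\|\u_r\|_{0,2R}\le C(R)$. Lemma \ref{L:compactness} and a diagonal argument then give $r_n\to0$ with $\u_{r_n}\to\u_0$ in $H^1_{\loc}$ and $C^{0,\alpha}_{\loc}$. Moreover, $\|\u_{r_n}\|_{L^2(\partial B_1)}=1$ passes to the limit by uniform convergence, so $\u_0\not\equiv0$.

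\emph{Step 2: minimality of the limit (main obstacle).} By \eqref{eq:alm:min:rescaled}, $\u_{r_n}$ minimizes $J_{\lambda_n,B_t}$ with $\lambda_n:=\frac{r_n^2}{\rho_{r_n}}\Lambda\to0$, up to an error $C t^{2N+1}\frac{r_n^{N+3}}{\rho_{r_n}^2}$. This error tends to $0$ because $\rho_{r_n}\ge c\,r_n^{\gamma'}$ with $\gamma'<2$. Now fix $t$ and a competitor $\mf v\in H^1(B_t,\Sigma_k)$ with $\mf v=\u_0$ on $\partial B_t$. The rescalings have uniformly bounded Lipschitz constants on $B_t$ by \eqref{eq:lip:estimates}. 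We therefore apply Lemma \ref{L:construction:commpetitor} to obtain $\mf v_n\to\mf v$ in $H^1(B_t)$ with $\mf v_n=\u_{r_n}$ on $\partial B_t$. Plugging $\mf v_n$ into the rescaled almost-minimality and using strong $H^1$ convergence together with $\lambda_n\to0$, we get
\[
\int_{B_t}|\nabla\u_0|^2\,dx\le\int_{B_t}|\nabla\mf v|^2\,dx.
\]
So $\u_0$ is a local minimizer of $J_{0,\cdot}$, and by Remark \ref{rem: min G T}, $\u_0\in\cG_{\loc}(\R^N)$. The delicate point here is the smallness of $r_n^2/\rho_{r_n}$. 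This is exactly where the hypothesis $\gamma<2$ enters, through Lemma \ref{L:O<2:degenerate}.

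\emph{Step 3: homogeneity.} We use scaling of the frequency: $N_1(\u,x_0,r_n s)$ equals the frequency $N_{1}$ of $\u_{r_n}$ at radius $s$, computed with coefficient $\lambda_n$. The strong $H^1$ and uniform convergence give $D$ and $H$ of $\u_{r_n}$ converging to those of $\u_0$, with $H(\u_0,0,s)>0$ by Proposition \ref{prop: alm G}($i$). The $\lambda_n$-term vanishes in the limit. Hence
\[
N(\u_0,0,s)=\lim_n N_1(\u,x_0,r_n s)=N_1(\u,x_0,0^+)=\gamma\qquad\text{for every } s>0,
\]
where the middle equality uses the existence of the limit from Lemma \ref{p:Almgren-below}, and the last uses \eqref{ordini uguali}. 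Since the frequency of $\u_0$ is constant in $s$, Proposition \ref{prop: alm G}($i$) shows that $\u_0$ is $\gamma$-homogeneous.
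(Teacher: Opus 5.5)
Your proposal is correct and follows the paper's proof step by step. You get compactness from Lemma \ref{L:compactness}, using $\rho_r/r^2\to+\infty$ together with doubling and bounded frequency for the $H^1$ bounds. You transfer the rescaled almost-minimality \eqref{eq:alm:min:rescaled} to the limit through the competitors of Lemma \ref{L:construction:commpetitor}. You obtain homogeneity from the scaling of $N_1$ combined with \eqref{ordini uguali} and Proposition \ref{prop: alm G}.

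The differences are only cosmetic. You derive $\rho_r\ge c\,r^{\gamma'}$ for all small $r$ from Lemma \ref{L:O<2:degenerate}, whereas the paper uses $\mathcal V(\u,x_0)<2$ along a subsequence. You also explicitly check two points the paper leaves implicit: the uniform Lipschitz bound required by Lemma \ref{L:construction:commpetitor}, and $H(\u_0,0,s)>0$.
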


A crucial step in the proof of the theorem consists in showing the local boundedness in $H^1$ of the sequence $\u_{x_0,r_n}$, which allows us to apply Lemma \ref{L:compactness} in order to deduce local convergence. Since $\mathcal{O}(u,x_0)<2$, to prove the boundedness we can adapt the argument of \cite[Section 3]{TaTe12}, based on the monotonicity of the frequency function. 

\begin{lemma}\label{L:bdd<2}
For every $R>0$, the sequence $\{\|\u_{x_0,r_n}\|_{0,R}\}$ is bounded.
\end{lemma}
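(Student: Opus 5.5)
We split $\|\u_{x_0,r_n}\|_{0,R}^2$ into a boundary part and a Dirichlet part, and bound each through the rescaled height function $H$: the boundary part by the doubling estimate of Lemma \ref{lem: doub}, the Dirichlet part by the frequency. Set $\rho_r:=(r^{1-N}H(\u,x_0,r))^{1/2}$, so $\u_{x_0,r}=\u(x_0+r\,\cdot)/\rho_r$ and $\|\u_{x_0,r}\|_{L^2(\partial B_1)}=1$. By scaling,
\[
\|\u_{x_0,r}\|_{0,R}^2=\frac{1}{\rho_r^2}\Big(\frac{1}{r^{N-2}}\cdot\frac{1}{R^{N-2}}\int_{B_{rR}(x_0)}|\nabla\u|^2\,dx+\frac{1}{(rR)^{N-1}}H(\u,x_0,rR)\Big).
\]
For the second term, Lemma \ref{lem: doub} with $r_1=r$ and $r_2=rR$ (for $R\ge1$ and $r$ so small that $rR<r_0$) gives $(rR)^{1-N}H(\u,x_0,rR)\le \rho_r^2R^{2C}$. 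Hence the boundary contribution is bounded by $R^{2C}$, uniformly in $r$.

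For the Dirichlet term, we write $\int_{B_s}|\nabla\u|^2=D_1(s)+\int_{B_s}(\Lambda\cdot\u)$ with $s=rR$. We then use the almost-monotonicity of Lemma \ref{p:Almgren-below}: since $N_1(s)+1\ge0$ and $e^{Cs^{\delta'}}(N_1(s)+1)$ is non-decreasing, we get $N_1(s)\le e^{Cr_0^{\delta'}}(N_1(r_0)+1)=:M$ for $s<r_0$. Therefore $D_1(s)\le M s^{-1}H(s)$, and
\[
\frac{1}{\rho_r^2 r^{N-2}R^{N-2}}D_1(rR)\le M\frac{(rR)^{1-N}H(rR)}{\rho_r^2}\le MR^{2C},
\]
again by doubling.

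The remaining term is the one to watch: $\frac{1}{\rho_r^2 r^{N-2}}\int_{B_{rR}}(\Lambda\cdot\u)$. We bound it by Cauchy–Schwarz together with the Poincaré inequality \eqref{Poinc norm}, which gives $\le C R^{N/2+1}\,\frac{r^2}{\rho_r}\,\|\u_{x_0,r}\|_{0,R}$. The key point is that $r^2/\rho_r\to0$. Indeed, $\rho_r^2=r^{1-N}H(r)$, and $\cV(\u,x_0)=\cO(\u,x_0)<2$ by \eqref{ordini uguali}, so for any $\gamma\in(\cO,2)$ we have $\limsup r^{1-N-2\gamma}H(r)=+\infty$. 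This alone is only a limsup. To upgrade it to a liminf, we combine the doubling/frequency bounds with Lemma \ref{L:O<2:degenerate}, via \eqref{2731}: $\|\u\|_{x_0,r}^2(1-o(1))\le r^{1-N}H(r)(N_1(r)+1)\le (M+1)\rho_r^2$, and Lemma \ref{L:O<2:degenerate} gives $\|\u\|_{x_0,r}\ge C_1 r^{2-\delta'}$. Hence $\rho_r\gtrsim r^{2-\delta'}$, that is, $r^2/\rho_r\to0$.

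Setting $X:=\|\u_{x_0,r}\|_{0,R}$, the three bounds combine into $X^2\le C(R)+o(1)X$, so $X$ is bounded uniformly for small $r$. The main obstacle is precisely this control of the lower-order $\Lambda$-term, i.e.\ securing $\rho_r/r^2\to\infty$ along all small radii rather than a subsequence. As shown above, this is what Lemma \ref{L:O<2:degenerate} together with the frequency bound provides. The bound $\rho_r/r^2\ge c$ also verifies the first hypothesis of \eqref{eq:ass:compactness}, which will be needed afterwards when applying Lemma \ref{L:compactness}.
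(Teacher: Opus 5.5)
Your proposal is correct and follows essentially the paper's route. The boundary term is controlled by the doubling property, and the Dirichlet term by the almost-monotone frequency together with the lower bound $\|\u\|_{x_0,s}\ge C_1 s^{2-\delta'}$ from Lemma~\ref{L:O<2:degenerate}, fed into the inequality behind \eqref{2731}. The only difference is bookkeeping for the $\Lambda$-term: the paper bounds it directly at scale $s=r_nR$ by $(N_1(s)+1)/(s^{-2}\|\u\|_{x_0,s}-C)\lesssim s^{\delta'}$, whereas you bound it by $C(R)\,(r^2/\rho_r)\,\|\u_{x_0,r}\|_{0,R}$ with $r^2/\rho_r\lesssim r^{\delta'}$ and absorb it into the left-hand side, using the same ingredients.
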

\begin{proof}
    The fact that 
    \[
    \frac{1}{R^{N-1}}\int_{\pa B_R} |\u_{x_0,r_n}|^2\,d\sigma \le C,
    \]
    follows easily from the doubling property, Lemma \ref{lem: doub} (we refer to \cite[Lemma 3.5]{TaTe12} for more details). Concerning the boundedness of the gradient, we observe that
    \be\label{461}
    \begin{split}
        \frac{1}{R^{N-2}}\int_{B_R} & |\nabla \u_{x_0,r_n}|^2\,dx  = \frac{(r_nR)^{1-N}H(\u,x_0,r_nR)}{r_n^{1-N}H(\u,x_0,r_n)}\cdot\frac{r_nR \int_{B_{r_nR}(x_0)}|\nabla \u|^2\,dx}{\int_{\pa B_{r_nR}(x_0)} |\u|^2\,d\sigma} \\
        & \le C \left( N_1(\u,x_0,r_nR) + \frac{(r_nR)^2}{(r_nR)^{1-N} H(\u,x_0,r_nR)}\cdot \frac{1}{(r_nR)^{N}} \int_{B_{r_nR}(x_0)}(\Lambda\cdot \u)\,dx\right),
    \end{split}
    \ee
    where we used again the doubling property. Now, $N_1(\u,x_0,r_n R)\le C$ by monotonicity of the frequency, Lemma \ref{p:Almgren-below}. Regarding the other term on the right side, it is convenient to note first that arguing as in \eqref{2731} and \eqref{e:51}
    \[
    (r_nR)^2\|\u\|_{x_0,r_nR} \left( \frac{\|\u\|_{x_0,r_nR}}{(r_nR)^2} -C\right) \le \frac{1}{(r_nR)^{N-2}}D_1(\u,x_0,r_nR)
    +\frac{1}{(r_nR)^{N-1}}H(\u,x_0,r_nR),
    \]
    and
    \[
    \frac{\|\u\|_{x_0,r_nR}}{(r_nR)^2} -C \ge C(r_nR)^{-\delta'},
    \]
    for some constants independent of $n$, where $\delta' \in (0,2-\cO(\u,x_0))$. Therefore, by the Poincar\'e inequality and the boundedness of $N_1(\u,x_0,r_nR)$, we deduce that
    \begin{align*}
        \frac{(r_nR)^2}{(r_nR)^{1-N} H(\u,x_0,r_nR)} &\cdot \frac{1}{(r_nR)^{N}} \int_{B_{r_nR}(x_0)}(\Lambda\cdot \u)\,dx  \le \frac{(r_nR)^2}{(r_nR)^{1-N} H(\u,x_0,r_nR)} \|\u\|_{x_0,r_n R} \\
        & \le \frac{N_1(\u,x_0,r_nR)+1}{{(r_nR)^{-2}}\|\u\|_{x_0,r_nR} -C} \le C(N_1(\u,x_0,r_nR)+1) (r_nR)^{\delta'} \le C.
    \end{align*}
Coming back to \eqref{461}, the thesis follows.
\end{proof}

\begin{proof}[Proof of Proposition \ref{T:blow:up<2}]
Since $\mathcal O(\u,x_0)=\mathcal{V}(\u,x_0)<2$, along a suitable subsequence $r_n\to 0^+$ we have
\[
\lim_{n} \frac{H(\u,x_0,r_n)}{r_n^{N+3}}=+\infty.
\]
Moreover, by Lemma \ref{L:bdd<2}, the sequence $\|\u_{x_0,r_n}\|_{0,2R}$ is uniformly bounded in $n$, for every $R>1$. Hence, Lemma \ref{L:compactness} and a diagonal selection give the existence of a subsequence (still denoted by $r_n$) such that $\u_{x_0,r_n} \to \u_0$ in $H^1_{\loc}(\R^N)$ and in $C^{0,\alpha}_{\loc}(\R^N)$. The limit is non-trivial, since $\|\u_0\|_{L^2(\pa B_1)}=1$.

Let us now prove that, for every $R>1$, $\u_0$ is a minimizer of $J_{0,B_{R}}$. Let $\mf v \in H^1({B_{R},\Sigma_k})$ be such that $\mf v=\u_0$ on $\partial B_{R}$. By Lemma \ref{L:construction:commpetitor}, there exists a sequence $\mf v_n \in H^1({B_{R},\Sigma_k})$ such that $\mf v_n= \u_{x_0,r_n}$ on $\partial B_{R}$ and $\mf v_n \to \mf v$ in $H^1(B_{R})$. As observed in \eqref{eq:alm:min:rescaled}, the almost minimality condition rescaled to $ \u_{x_0,r_n}$ can be rewritten as
 \begin{multline*}
    \int_{B_{R}}|\nabla  \u_{x_0,r_n}|^2\,dx - 2\left(\frac{r_n^{N+3}}{H(\u,x_0,r_n)}\right)^{1/2} \int_{B_{R}}(\Lambda\cdot  \u_{x_0,r_n})\,dx\\
    \leq 
\int_{B_{R}}|\nabla \mf{v}_{n}|^2\,dx - 2\left(\frac{r_n^{N+3}}{H(\u,x_0,r_n)}\right)^{1/2} \int_{B_{R}}(\Lambda \cdot \mf{v}_{n})\,dx + C \frac{r_n^{N+3}}{H(\u,x_0,r_n)} r_n^{N-1}.
\end{multline*}
Passing to the limit as $n \to +\infty$, we obtain
\[
\int_{B_{R}}|\D \u_0|^2\,dx \le \int_{B_{R}}|\D \mf v|^2\,dx.
\]
Thus, $\u_0$ is a minimizer of $J_{0,B_R}$ and belongs to the class $\mathcal{G}(B_{R})$, for every $R>1$.
 
Finally, to prove that $\u_0$ is $\mathcal O(\u,x_0)$-homogeneous, it is enough to note that for every $\rho>0$ 
\[
        \frac{\displaystyle{\rho}
\int_{B_\rho}\left(
    |\D  \u_{x_0,r_n}|^2-\left(\frac{r_n^{N+3}}{H(\u,x_0,r_n)}\right)^{1/2} (\Lambda\cdot  \u_{x_0,r_n})
    \right)\,dx }{\displaystyle{\int_{\partial B_{\rho}}| \u_{x_0,r_n}|^2\,d\sigma} }= N_1(\u,x_0,r_n\rho),
\]
and take the limit as $n \to \infty$: we deduce that $N_0(\u_0,0,\rho)=N_1(\u,x_0,0^+) = \mathcal O(\u,x_0)$
for every $\rho >0$, where the last equality is a consequence of \eqref{ordini uguali}.
Therefore, Proposition \ref{prop: alm G} allows us to conclude.
\end{proof}

\subsection{Blow-up analysis for \texorpdfstring{$\mathcal{O}(u,x_0)=2$}{}}

The blow-up analysis when the vanishing order is smaller than $2$ is based on the validity of the monotonicity of the frequency, Lemmas \ref{p:Almgren-below} and \ref{lem: alm unif}. Results of this type are not available at the quadratic threshold $\cO(\u,x_0)=2$. As a consequence, we have to argue in a different way, exploiting the non-degeneracy and the monotonicity of the Weiss-type functionals. This strategy is inspired by \cite{SoaTer18}.

\begin{Proposition}\label{T:blowup=2:harmonic}
    Let $\u$ be a minimizer of \eqref{e:partition.functions}, $\Lambda=(\lambda_{2,1}(\omega_1),\dots,\lambda_{2,1}(\omega_k))$, and let $x_0 \in Z(\u)$ be such that $\cO(\u,x_0)= 2$. 
    Then, there exists a sequence $r_n\to 0^+$ such that 
    \[
     \u_{x_0,r_n}(x):=\frac{\u(x_0+r_nx)}{\|\u\|_{x_0,r_n}} \ \text{ converges to } \u_0\not\equiv0 \text{ strongly in }H^1_{\loc}(\R^N) \text{ and in }C^{0,\alpha}_{\loc}(\R^N).
    \]
    Moreover, the following alternative holds:
    \begin{enumerate}
    \item[($i$)] If 
    \[
    \limsup_{n} \frac{\|\u\|_{x_0,r_n}}{r_n^2}=+\infty,
    \]
    then $\u_0$ is a $2$-homogeneous local minimizer of $J_{0,\cdot}$ in $\R^N$ (see Definition \ref{def J}). 
    \item[($ii$)] If 
    \[
    \limsup_{n} \frac{\|\u\|_{x_0,r_n}}{r_n^2}<+\infty,
    \]
    then there exists a positive $\tilde\Lambda \in \R^k$ such that $\u_0$ is a $2$-homogeneous local minimizer of $J_{\tilde \Lambda,\cdot}$ in $\R^N$ (see Definition \ref{def J}).
    \end{enumerate}
\end{Proposition}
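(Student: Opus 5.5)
The plan has four steps: compactness of the blow-up sequence, minimality of the limit, the case split according to $\rho_n/r_n^2$ with $\rho_n:=\|\u\|_{x_0,r_n}$, and homogeneity via the Weiss functional $W_{2,2}$, which is genuinely monotone by Lemma \ref{L:monotonicity:W_2}. Homogeneity is where the real work lies.

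\textbf{Compactness.} By the non-degeneracy of Theorem \ref{prop: non-deg}, $\liminf \rho_r/r^2>0$, so the first condition in \eqref{eq:ass:compactness} holds for every sequence. Local $H^1$ bounds on $B_R$ are less obvious, since the frequency formula is unavailable at order $2$. I would first pick $r_n\to0^+$ so that a doubling-type bound $\|\u\|_{x_0,Rr_n}\le C_R\,\|\u\|_{x_0,r_n}$ holds for all $R$. When $\limsup_r \|\u\|_{x_0,r}/r^2<\infty$, so that $\|\u\|_{x_0,r}\sim r^2$, any sequence works, because $\|\u\|_{x_0,Rr}\le C(Rr)^2\le C' R^2\|\u\|_{x_0,r}$. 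In general I would choose $r_n$ along which $r\mapsto\|\u\|_{x_0,r}/r^{2+\eps_n}$ attains a running minimum, as in standard constructions for finite-order points; the fact that $\cO=2$ guarantees such radii with $\rho_n\to0$. Lemma \ref{L:compactness} with a diagonal argument then yields convergence in $H^1_{\loc}\cap C^{0,\alpha}_{\loc}$. Nontriviality is immediate, since $\|\u_0\|_{0,1}=1$ by strong convergence.

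\textbf{Minimality and the case split.} Given $\mf v$ with $\mf v=\u_0$ on $\partial B_R$, Lemma \ref{L:construction:commpetitor} produces competitors $\mf v_n$ with the traces of $\u_{x_0,r_n}$. The rescaled almost-minimality \eqref{eq:alm:min:rescaled} has error term $C\,r_n^{N+3}/\rho_n^2\to0$, and its linear term carries the coefficient $r_n^2/\rho_n$.
\begin{itemize}
\item[(i)] If (along a subsequence) $\rho_n/r_n^2\to\infty$, the coefficient tends to $0$ and $\u_0$ minimizes $J_{0,B_R}$.
\item[(ii)] If $\rho_n/r_n^2$ is bounded, it is also bounded below by non-degeneracy, so along a subsequence $r_n^2/\rho_n\to c\in(0,\infty)$. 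Then $\u_0$ minimizes $J_{\tilde\Lambda,B_R}$ with $\tilde\Lambda=c\Lambda$, which is positive.
\end{itemize}

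\textbf{Homogeneity.} Since $\cO=2$, Lemma \ref{L:monotonicity:W_2} gives monotonicity of $W_{2,2}(\u,x_0,\cdot)$, so $W_{2,2}(\u,x_0,0^+)$ exists and lies in $[-\infty,\infty)$. To handle case (ii), rescale: $W_{2,2}(\u_{x_0,r_n},0,\rho)$, computed with weight $\tilde\Lambda_n=(r_n^2/\rho_n)\Lambda$, equals $(r_n^2/\rho_n)^{2}\,W_{2,2}(\u,x_0,r_n\rho)$. For fixed $\rho$ the right-hand side tends to $c^2W_{2,2}(\u,x_0,0^+)$. That limit is finite, because the left-hand side is bounded by the $H^1$ bounds. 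It is also independent of $\rho$, so by the rigidity part of Lemma \ref{L:monotonicity:W_2} applied to $\u_0\in\mathcal T_{\tilde\Lambda}$ (Remark \ref{rem: min G T}), $\u_0$ is $2$-homogeneous.

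Case (i) is the main obstacle. There the prefactor $(r_n^2/\rho_n)^2\to0$, so the argument above is degenerate: one would need $W_{2,2}(\u,x_0,r)$ to grow like $o(\rho_r^2/r^4)$ uniformly in scale, i.e.\ a sort of sub-doubling at the chosen radii. My plan there is to work instead with $W_{2,1}$, or with $W_{\gamma,t}$ for $\gamma$ slightly below $2$. Using Lemma \ref{lem 2} and Lemma \ref{lem 3}, I would show that the limit satisfies $W_2(\u_0,0,\rho)$ constant in $\rho$. Alternatively, I would use Proposition \ref{prop: alm G}: since $\u_0\in\cG$, it suffices to show that its frequency equals $2$ at all scales. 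That amounts to proving $N_1(\u,x_0,r_n\rho)\to2$, which I would derive from $\cO=\cV=2$ together with the choice of the $r_n$ as running-minimum radii.
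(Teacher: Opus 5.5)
The minimality step and case ($ii$) are fine and match the paper. Case ($i$), however, has a genuine gap: you never prove that $\u_0$ is $2$-homogeneous, and the substitutes you sketch do not work.

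Your diagnosis misses that Lemma \ref{L:monotonicity:W_2} already gives the one-sided control you need. Since $W_{2,2}(\u,x_0,\cdot)$ is non-decreasing, $W_{2,2}(\u_{x_0,r_n},0,t)=\frac{r_n^4}{\rho_n^2}W_{2,2}(\u,x_0,r_nt)\le\frac{r_n^4}{\rho_n^2}W_{2,2}(\u,x_0,r_0)$. The vanishing prefactor is exactly what sends the right-hand side to $0$. In the limit the weight $\Lambda r_n^2/\rho_n$ disappears, and you get $W_2(\u_0,0,t)\le 0$, i.e.\ $N(\u_0,0,t)\le 2$ for every $t>0$.

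The missing ingredient is the reverse inequality. The paper obtains it from the upper semicontinuity of the vanishing order, Proposition \ref{prop: upper}-($i$), applied to $\u_n=\u_{x_0,r_n}\in\mathcal T_{\Lambda r_n^2/\rho_n}$ with $x_n=0$, $\cO(\u_n,0)=2$ and $\lambda_n\to 0$. This yields $\cO(\u_0,0)\ge 2$, hence $N(\u_0,0,0^+)\ge 2$. Almgren monotonicity in $\cG$ (Proposition \ref{prop: alm G}) then forces $N(\u_0,0,\cdot)\equiv 2$.

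Your alternatives do not replace this. Lemmas \ref{lem 2} and \ref{lem 3} only improve the decay of $H$. They are the engine of Proposition \ref{prop: upper} and can give you $\cO(\u_0,0)\ge 2$, but not, by themselves, constancy of $W_2(\u_0,0,\cdot)$. Deriving $N_1(\u,x_0,r_n\rho)\to 2$ from ``$\cO=\cV=2$'' relies on \eqref{ordini uguali} and on the almost-monotonicity of $N_1$ (Lemma \ref{p:Almgren-below}). Both are proved only strictly below the quadratic threshold, which is precisely what is unavailable here.

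On compactness: a running minimum of $\|\u\|_{x_0,r}/r^{2+\eps}$ bounds $\|\u\|_{x_0,Rr_n}$ from below, which is the wrong direction. You need radii at which $\|\u\|_{x_0,r}/r^{2+\eps}$ is maximal on $[r_n,r_0]$. These exist because $\cO=2$ gives $\limsup_{r\to 0}\|\u\|_{x_0,r}/r^{2+\eps}=+\infty$. With that correction, special radii are legitimate for this existential statement. The paper instead proves the bound along every sequence (Lemma \ref{lem: bdd =2}), arguing by contradiction via $W_{2,2}$ and unique continuation in $\cG$.

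Incidentally, a two-sided version of your selection would close case ($i$) without Proposition \ref{prop: upper}. Choose $r_n$ with $\|\u\|_{x_0,sr_n}\le s^{2-\eps_n}\|\u\|_{x_0,r_n}$ for $s\le 1$ and $\|\u\|_{x_0,sr_n}\le s^{2+\eps_n}\|\u\|_{x_0,r_n}$ for $s\ge 1$, with $\eps_n\to 0$. Such radii exist because $\log(\|\u\|_{x_0,r}/r^2)$ is bounded below (non-degeneracy) and is $o(|\log r|)$ (since $\cO=2$). Then $\|\u_0\|_{0,s}\le s^2$ for all $s>0$. The identity for $\frac{d}{dr}\log(H/r^{N-1})$ in Proposition \ref{prop: alm G}, together with monotonicity of $N$, turns this into $N(\u_0,0,\cdot)\equiv 2$. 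As written, however, neither frequency bound is established.
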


As in the previous section, we first show the uniform local boundedness in $H^1$.

\begin{lemma}\label{lem: bdd =2}
For every $R>0$, the sequence $\{\|\u_{x_0,r_n}\|_{0,R}\}$ is bounded.    
\end{lemma}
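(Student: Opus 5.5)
We want to bound $\|\u_{x_0,r_n}\|_{0,R}$, where $\u_{x_0,r_n}=\u(x_0+r_n\cdot)/\|\u\|_{x_0,r_n}$ and $\cO(\u,x_0)=2$. By scaling,
\[
\|\u_{x_0,r_n}\|_{0,R}=R\,\frac{\|\u\|_{x_0,Rr_n}}{\|\u\|_{x_0,r_n}}.
\]
For $R\le 1$ a crude bound is immediate, since $\|\u\|_{x_0,Rr_n}^2\le R^{2-N}\|\u\|_{x_0,r_n}^2$ up to the boundary term. So the real content is a \emph{doubling estimate at the quadratic threshold}:
\[
\|\u\|_{x_0,Rr}\le C(R)\,\|\u\|_{x_0,r}\quad\text{for $r$ small, $R>1$.}
\]
The Almgren formula is no longer available here, so the plan is to obtain this from two ingredients: the $2$-non-degeneracy (Theorem \ref{prop: non-deg}), which gives $\|\u\|_{x_0,r}\ge c\,r^2$, and the monotonicity of $W_{2,2}$ (Lemma \ref{L:monotonicity:W_2}) together with the almost-monotone $W_{\gamma,1}$.

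First, the free parameter: I will choose the sequence $r_n$ so that doubling holds along it, rather than for all $r$. Set $f(r):=r^{-2}\|\u\|_{x_0,r}$. By non-degeneracy $f\ge c>0$. Since $\cO=2$, we have $f(r)\le r^{-\eps}$ for every $\eps>0$ and $r$ small. I then argue by a standard selection. If no sequence with $f(Rr_n)\le C f(r_n)$ for all $R\in(1,R_0]$ existed, then $f(2r)\ge M f(r)$ with $M$ large on a whole interval $(0,r_1)$. Iterating gives $f(2^{-j}r_1)\le M^{-j}f(r_1)\to0$, which contradicts $f\ge c$. I would refine this, in the spirit of \cite{SoaTer18}, by taking $r_n$ among "almost maximizing" scales of $f$ on dyadic intervals. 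One can pick $r_n$ with $f(r)\le 2f(r_n)$ for all $r\in[r_n,\sqrt{r_n}]$: indeed, $\sup_{[r,\sqrt r]}f$ grows at most like $r^{-\eps}$, so good scales exist. This directly gives
\[
\|\u\|_{x_0,Rr_n}\le 2R^2\|\u\|_{x_0,r_n}\quad\text{for every fixed $R$ once $n$ is large,}
\]
and hence $\|\u_{x_0,r_n}\|_{0,R}\le 2R^3$.

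The step I expect to be the main obstacle is making this selection rigorous for all $R>0$ simultaneously, and compatible with the later use in Proposition \ref{T:blowup=2:harmonic}. There, the homogeneity of the limit must come from $W_{2,2}$ having a limit at $0^+$, so the selected sequence must not spoil that. If the pure selection argument becomes awkward, the fallback is to use monotonicity directly. From $W_{2,2}(\u,x_0,r)\le W_{2,2}(\u,x_0,r_0)$ and the Lipschitz bound, $r^{-N-2}D_2\le C+2r^{-N-3}H$. Then, via the formula for $H'$ and \eqref{D con teo div}, control the growth of $\log(r^{1-N}H)$ by $4/r$ plus errors. This yields $H(Rr)/(Rr)^{N-1}\lesssim R^{4}H(r)/r^{N-1}+C(Rr)^4$. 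Combined with non-degeneracy, which absorbs the $r^4$ term into $\|\u\|_{x_0,r}^2$, and with the gradient bound from $W_{2,2}$, this gives the doubling for every sequence.

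Finally, the gradient part of $\|\u_{x_0,r_n}\|_{0,R}$ follows as in \eqref{461}: write $D_1=D_2+\int\Lambda\cdot\u$, bound $\int\Lambda\cdot\u$ by Poincaré by $r^{N+2}\|\u\|_{x_0,r}$, and divide by $\|\u\|^2_{x_0,r_n}\ge c r_n^4$. This closes the bound.
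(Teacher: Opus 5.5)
Your selection argument, taken on its own, proves less than the paper does. It gives boundedness only along specially chosen scales (those with $f\le 2f(r_n)$ on $[r_n,\sqrt{r_n}]$). The paper's contradiction argument shows $\sup_{r<r_0}\|\u\|_{x_0,Rr}/\|\u\|_{x_0,r}<\infty$, i.e.\ boundedness along \emph{every} $r_n\to0^+$. That stronger form is what is used later: the proof of Proposition~\ref{T:blowup=2:harmonic} starts from an arbitrary sequence with $\alpha_n\to\infty$ or $\alpha_n\to\ell$, and Federer's reduction in Section~\ref{sec: dim} needs homogeneous blow-ups along arbitrary sequences.

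The selection claim itself is true, but not for the reason you give, and your negation ``$f(2r)\ge Mf(r)$ on an interval'' is not the right one. The correct argument: if every small $r$ had some $r'\in[r,\sqrt r]$ with $f(r')>2f(r)$, then climbing upward from $r$ to a fixed scale $r_1$ takes at least $\log_2(\log(1/r)/\log(1/r_1))$ steps. This forces $f\ge c\log(1/r)/\log(1/r_1)$ at some scale in $[r_1,\sqrt{r_1}]$, which is absurd. Only $f\ge c$ and boundedness of $f$ at a fixed scale are used, not $f\le r^{-\eps}$.

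Your fallback, however, does prove the full statement, by a route different from the paper's, provided it is run linearly rather than on $\log(r^{1-N}H)$. In logarithmic form the errors are $Cr^{N+2}/H$, which are not integrable unless you already know $H\gtrsim r^{N+3}$, and $2$-non-degeneracy of $\|\u\|_{x_0,r}$ does not give that. Set $h(s):=s^{-N-3}H(\u,x_0,s)$, $a(s):=s^{-2}\|\u\|_{x_0,s}$ and $C_W:=W_{2,2}(\u,x_0,r_0)$. Monotonicity and Poincar\'e give $a^2\le C_W+3h+Ca$, hence $a\le C(1+\sqrt h)$. By \eqref{eq:H':D'} and \eqref{D con teo div},
\[
h'(s)=\frac{2D_2(s)+2\int_{B_s}\Lambda\cdot\u\,dx}{s^{N+3}}-\frac{4h(s)}{s}\le\frac{2C_W+C\,a(s)}{s}\le\frac{C\,(2+h(s))}{s}.
\]
Thus $2+h(Rr)\le R^{C}(2+h(r))$. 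Using $h(r)\le a(r)^2$ and $a(r)\ge c$, you get $a(Rr)^2\le CR^{C}a(r)^2$, i.e.\ $\|\u\|_{x_0,Rr}\le C(R)\|\u\|_{x_0,r}$ for all small $r$. Closing the $\int\Lambda\cdot\u$ term through $a\lesssim 1+\sqrt h$ is the step your sketch leaves implicit.

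The paper argues by contradiction through blow-up compactness (Lemma~\ref{L:compactness}), passage to the limit in \eqref{eq:alm:min:rescaled} to obtain a nontrivial $J_{0}$-minimizer, the lower bound \eqref{H su norm non-deg}, and unique continuation in $\cG$. Your route is elementary and quantitative by comparison. It uses only the monotonicity of $W_{2,2}$, the $H'$ identity, Poincar\'e and non-degeneracy, gives an explicit $C(R)=CR^{\beta}$, and does not need $\cO(\u,x_0)=2$. The paper's argument instead reuses machinery it needs anyway for the blow-up analysis, at the cost of a non-explicit constant.

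Two harmless slips. First, $\|\cdot\|_{x_0,r}$ is scale invariant, so $\|\u_{x_0,r_n}\|_{0,R}=\|\u\|_{x_0,Rr_n}/\|\u\|_{x_0,r_n}$ with no factor $R$. Second, for $R<1$ the boundary term needs a short radial trace estimate.
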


\begin{proof}
    Since $\|\u_{x_0,r_n}\|_{0,R} = \|\u\|_{x_0,R r_n}/\|\u\|_{x_0,r_n}$, we proceed by contradiction assuming that for a sequence $r_n \to 0^+$ 
\begin{equation}\label{30 11 1}
\frac{\|\u\|_{x_0,R r_n}}{ \|\u\|_{x_0,r_n}} \to +\infty \quad \text{as $n \to \infty$}.
\end{equation}
We claim that in such a case
\begin{equation}\label{30 11 2}
\frac{\|\u\|_{x_0,Rr_n}}{ (R r_n)^2} \to +\infty \quad \text{as $n \to \infty$}.
\end{equation}
If not, by non-degeneracy (Theorem \ref{prop: non-deg}), up to a subsequence we would have that 
\[
\|\u\|_{x_0,Rr_n} \le C (Rr_n)^{2} \le C R^{2} \|\u\|_{x_0,r_n},
\]
contradicting \eqref{30 11 1}. Thus \eqref{30 11 2} holds. Now, by the Poincar\'e inequality and \eqref{30 11 2} 
\[
\frac{1}{\|\u\|_{x_0,R r_n}^2}\cdot \frac1{(R r_n)^{N-2}} \int_{B_{R r_n}(x_0)} (\Lambda\cdot\u)\,dx \le \frac{C (R r_n)^2} {\|\u\|_{x_0,Rr_n}^2} \cdot \|\u\|_{x_0,Rr_n} \to 0 \qquad \text{as $n \to \infty$}.
\]
This estimate and the monotonicity of $W_{2,2}$ (Lemma \ref{L:monotonicity:W_2}) yield
\begin{align*}
C & \ge W_{2,2}(\u,x_0,R r_n)  \\
& = \frac{\|\u\|_{x_0, R r_n}^2}{(R r_n)^{4}}\left[1 - \frac{3 H(\u,x_0, R r_n)}{(R r_n)^{N-1}\|\u\|_{x_0, R r_n}^2 } -  \frac2{(R r_n)^{N-2} \|\u\|_{x_0,R r_n}^2} \int_{B_{R r_n}(x_0)} (\Lambda\cdot \u)\,dx\right] \\
& \ge  \frac{\|\u\|_{x_0, R r_n}^2}{(R r_n)^{4}} \left[ \frac34 -  \frac{3H(\u,x_0, R r_n)}{(R r_n)^{N-1} \|\u\|_{x_0, R r_n}^2}\right],
\end{align*}
for every $n$ large, which together with \eqref{30 11 2} implies that
\begin{equation}\label{H su norm non-deg}
\frac{H(\u,x_0, R r_n)}{(R r_n)^{N-1}\|\u\|_{x_0, R r_n}^2 } \ge \frac1{4} >0,
\end{equation}
for every $n$ large. 

We are ready to reach a contradiction. Let us consider the sequence $\{\u_{x_0,R r_n}\}$. By definition, it is bounded in $H^1(B_1)$, and hence by compactness of the Sobolev embedding of $H^1$ into $L^2$, and of the trace operator, up to a subsequence $\u_{R r_n} \to \bar \u$ weakly in $H^1(B_1)$, strongly in $L^2(B_1)$ and strongly in $L^2(\pa B_1)$. Actually, since in the present setting the assumptions of Lemma \ref{L:compactness} are satisfied (by \eqref{30 11 2}), the convergence is also strong in $H^1_{\loc}(B_1)$ and in $C^{0,\alpha}_{\loc}(B_1)$. We claim that $\bar \u$ is a minimizer of $J_{0,B_\rho}$ for every $\rho \in (0,1)$. This can be proved exactly as in the proof of Proposition \ref{T:blow:up<2}, by passing to the limit in the scaled almost minimality condition \eqref{eq:alm:min:rescaled} and using \eqref{30 11 2}. Now, on the one hand, by estimate \eqref{H su norm non-deg}
\[
H(\bar \u,0,1) = \lim_{n } H(\u_{x_0,R r_n}, 0,1) = \lim_{n} \frac{H(\u,x_0, R r_n)}{(R r_n)^{N-1}\|\u\|_{x_0, R r_n}^2 } \ge C>0,
\]
so that $\bar \u \not \equiv 0$ in $B_1$, and by minimality $\bar \u \in \cG_{\loc}(B_1)$. But on the other hand, having assumed \eqref{30 11 2} we also deduce that
\[
\|\bar \u\|_{0,1/R} = \lim_{n} \|\u_{R r_n}\|_{0,1/R} = \lim_{n} \frac{\|\u\|_{x_0,r_n}}{\|\u\|_{x_0,R r_n}} = 0,
\]
which forces $\bar \u \equiv 0$ in $B_{1/R}$. Since $\bar \u \in \cG(B_{\rho})$ for every $\rho \in (1/R,1)$, and the unique continuation property holds for functions in $\cG(B_{\rho})$ (see \cite[Remark 2.5]{TaTe12}), it is necessary that $\bar \u \equiv 0$ in $B_\rho$ for every $\rho \in (1/R,1)$; namely $\bar \u \equiv 0$ in $B_1$, which gives the desired contradiction.
\end{proof}

\begin{proof}[Proof of Proposition \ref{T:blowup=2:harmonic}]
    \emph{Case ($i$)}. Let $r_n$ be such that $\alpha_n:= \|\u\|_{x_0,r_n}/r_n^2 \to +\infty$. By Lemmas \ref{L:compactness} and \ref{lem: bdd =2}, up to a subsequence and a diagonal selection we have convergence $\u_{x_0,r_n} \to \u_0$ in $H^1_{\loc}(\R^N)$ and in $C^{0,\alpha}_{\loc}(\R^N)$, with $\u_0 \not \equiv 0$ as $\|\u_0\|_{0,1}=1$. As in the proof of Proposition \ref{T:blow:up<2}, by passing to the limit in the scaled almost minimality condition \eqref{eq:alm:min:rescaled} and using the fact that $\alpha_n \to +\infty$ we infer that $\u_0$ is a minimizer of $J_{0,B_R}$ with fixed trace, for every $R>0$, and hence $\u_0 \in \cG_{\loc}(\R^N)$. It remains to show that $\u_0$ is $2$-homogeneous. To this end, we first observe that by monotonicity
    \[
    W_{2,2}(\u_{x_0,r_n},0,t) = \frac{r_n^4}{\|\u\|_{x_0,r_n}^2} W_{2,2}(\u,x_0,r_nt) \le \frac{r^4}{\|\u\|_{x_0,r_n}^2} W_{2,2}(\u,x_0,r_0).
    \]
    Since $\u \in H^1(B_{r_0})$, this implies that
    \[
    \frac{1}{t^{N-2}}\int_{B_t}|\nabla \u_{x_0,r_n}|^2\,dx \le C\frac{t^2}{\alpha_n^2} + \frac{1}{\alpha_n t^N} \int_{B_t} (\Lambda\cdot \u_{x_0,r_n})\,dx + \frac{2}{t^{N-1}} \int_{\pa B_t} |\u_{x_0,r_n}|^2\,d\sigma,
    \]
    and passing to the limit we deduce that
    \[
    \frac{1}{t^{N-2}}\int_{B_t}|\nabla \u_{0}|^2\,dx \le \frac{2}{t^{N-1}} \int_{\pa B_t} |\u_{0}|^2\,d\sigma,
    \]
    for every $t>0$. Recalling that $\u_0$ is an element of $\cG_{\loc}(\R^N)$, we can rewrite this condition in terms of the frequency as $N(\u_0,0,t) \le 2$ for every $t$. On the other hand, Proposition \ref{prop: upper} ensures that $\mathcal{O}(\u_0,0) \ge 2$, namely $N( \u_0,0,0^+) \ge 2$, which by monotonicity gives $N(\u_0,0,t) \ge 2$ for every $t>0$. Therefore, the frequency function is constant, equal to $2$, and hence $\u_0$ is a quadratic homogeneous function in $\cG_{\loc}(\R^N)$ (see Proposition \ref{prop: alm G}). 

    \emph{Case ($ii$)}. Let $r_n$ be such that $\alpha_n:= \|\u\|_{x_0,r_n}/r_n^2 \to \ell$, with $\ell \in (0,+\infty)$ by assumption and non-degeneracy, Theorem \ref{prop: non-deg}. As in case ($i$), up to a subsequence and a diagonal selection we have convergence $\u_{x_0,r_n} \to \u_0$ in $H^1_{\loc}(\R^N)$ and in $C^{0,\alpha}_{\loc}(\R^N)$, with $\u_0 \not \equiv 0$. Moreover, $\u_0$ is a minimizer of $J_{\tilde \Lambda,B_R}$ with fixed trace, for every $R>0$, where $\tilde \Lambda:= \Lambda/\ell$ is a positive vector. Concerning the homogeneity, for every $t>0$
    \[
    W_{2,2}(\u_{x_0,r_n},0,t) = \frac{r_n^4}{\|\u\|_{x_0,r_n}^2} W_{2,2}(\u,x_0,r_nt) \quad \implies \quad W_{2,2}(\u_{0},0,t) = \frac{1}{\ell^2}W_{2,2}(\u,x_0,0^+),
    \]
    where the last limit exists by Lemma \ref{L:monotonicity:W_2}. Therefore, the Weiss-type functional $W_{2,2}$ of $\u_0$ is constant, and the homogeneity follows.
\end{proof}

\section{Regularity of the nodal set}\label{sec: dim}

In this section, we provide the proof of Theorem \ref{thm: fb}. For the sake of clarity, we divide the proof into two parts: the first addresses the dimensional estimates of the nodal and singular sets, while the second focuses on the regularity of the regular part of the nodal set.

\subsection{Hausdorff dimension of the nodal and singular sets}

We recall that the singular set is defined as
\[\Sigma(\u) = \left\{ x_0 \in Z(\u) \ \left| \ \mathcal{O}(\u, x_0) > 1 \right.\right\}.
\]
As a consequence of the results in Sections \ref{sec: upper} and \ref{sec: blow-up}, we obtain the dichotomy $\mathcal{O}(\u, x_0) = 1$ or $\mathcal{O}(\u, x_0) \ge 3/2$ for every $x_0 \in Z(\u)$, which implies that $\Sigma(\u)$ is a relatively closed subset of $Z(\u)$. Next, by applying Federer's dimensional reduction principle, we provide sharp estimates for the Hausdorff dimension of the nodal set $Z(\u)$ and the singular set $\Sigma(\u)$.

\begin{proof}[Proof of Theorem \ref{thm: fb},($a$)]

    \emph{Step 1) Either $\cO(\u,x_0) =1$ or $\cO(\u,x_0) \ge 3/2$}.
    By Theorem \ref{prop: non-deg}, we know that $\cO(\u,x_0) \in [1,2]$. Suppose that $\cO(\u,x_0) <2$ (otherwise there is nothing to prove). Then, up to a subsequence $\u_{x_0,r} \to \u_0$ as $r \to 0^+$, with $\u_0 \in \cG_{\loc}(\R^N)$ homogeneous of degree $\cO(\u,x_0)$. As a consequence, $\cO(\u,x_0)=N(\u_0,0,0^+)$, and the claim follows from Proposition \ref{prop: alm G}-($iii$).

    \emph{Step 2) $\Sigma(\u)$ is a relatively closed subset of $Z(\u)$.} Let $\{x_n\} \subset \Sigma(\u)$ be a converging sequence: $x_n \to x_0 \in \Omega$. Let us consider $\limsup_n \cO(\u,x_n) \ge 3/2$, by Step 1. If $\limsup_{n} \mathcal{O}(\u,x_n) <2$, the estimate $\cO(\u,x_0) \ge 3/2$ follows from Proposition \ref{prop: upper}-($ii$). If instead $\limsup_{n} \mathcal{O}(\u,x_n) \ge 2$, the fact that $\cO(\u,x_0) \ge 2>3/2$ follows from Proposition \ref{prop: upper}-($i$). In both cases, $x_0 \in \Sigma(\u)$.

    \emph{Step 3) Dimension of the nodal set.} 
    As observed in \cite[Theorem 4.5]{TaTe12}, we only need to prove that the Hausdorff
dimension estimates hold for $Z(\u) \cap B_1$ and $\Sigma(\u) \cap B_1$, whenever $\u$ is a minimizer of $J_{\Lambda,\O}$ with a positive $\Lambda \in \R^k$ and $\Omega \supset \supset B_2$. To this end, let 
    \[
    \mathcal{F}:=\left\{ \mathbf{v} \in L^\infty_{\loc}(\R^N,\R^k) \left|\begin{array}{l} \text{$\mathbf{v} \gneqq 0$ and there exist a domain $\O \supset\supset B_2$ such that }\\
    \text{$\mathbf{v}$ is either a minimizer of $J_{\lambda,\O}$ for a positive $\lambda \in \R^k$,}\\
    \text{or a minimizer of $J_{0,\O}$.}\end{array}\right.\right\}, \]
    endowed with the $C^{0,\alpha}(\overline{B_2})$ and strong $H^1(B_2)$ convergence. We apply Federer's dimensional reduction principle exactly as stated in \cite[Theorem 4.6]{TaTe12}. The family $\mathcal{F}$ is closed under scaling and translations. Moreover, there exist homogeneous blow-up limits within the class $\mathcal{F}$ at every nodal point. For every $\mathbf{v}$, let $\mathcal{S}(\mf{v}):= Z(\mf{v}) \cap B_1$, which is a closed subset of $B_1$. The local uniform convergence of the blow-up sequences allows us to easily check the validity of the so-called ``singular set assumptions" (assumption (A3) in \cite[Theorem 4.6]{TaTe12}). Therefore, Federer's principle and the fact that $\mf{v} \equiv 0$ does not belong to the class $\mathcal{F}$ imply that $\mathrm{dim}_{\mathcal{H}}(Z(\u))  \le N-1$.
    
     \emph{Step 4) Dimension of the singular set.} With $\mathcal{F}$ as in the previous step, let $\mathcal{S}^*(\mf{v}):= \Sigma(\mf{v}) \cap B_1$, which is a closed subset of $B_1$ by Step 2. It is clear that 
     \[
     \Sigma\left(\frac{\mf{v}(x_0+r\,\cdot)}{\rho_r}\right) = \frac{\Sigma(\mf{v})-x_0}{r}.
     \]
     Thus, to check that the singular set assumption is satisfied, it remains to show that if a blow-up sequence $\mf{v}_{x_0,r_n}$ (defined as in Section \ref{sec: blow-up}) is convergent to $\bar{\mf{v}}$, then
     \[
     \forall \eps>0 \ \exists \bar n: \ n>\bar n \ \implies \ \mathcal{S}^*(\mf{v}_{x_0,r_n}) \subset \{x \in \R^N \ \left| \ \dist(x,\mathcal{S}^*(\bar{\mf{v}}))<\eps\right.\}.
     \]
     By contradiction, we assume that there exists $\bar \eps>0$ and a sequence $\{x_n\} \subset B_1$ such that $x_n \in \mathcal{S}^*(\mf{v}_{x_0,r_n})$ for every $n$, and $\dist(x_n, \mathcal{S}^*(\bar{\mf{v}})) \ge \bar \eps$ along a sequence of indices $n\to\infty$. Up to a subsequence $x_n \to \bar x \in \overline{B_1}$. Since $\cO(\mf{v}_{x_0,r_n},x_n) \ge 3/2$ for every $n$ by Step 1, Proposition \ref{prop: upper} implies that $\cO(\bar{\mf{v}},\bar x) \ge 3/2$ as well, so that $\dist(x_n, \mathcal{S}^*(\bar{\mf{v}})) \to0$, which is the desired contradiction.

     This means that Federer's principle is applicable, and implies that there exists an integer $0 \le d \le N-1$ such that 
     $\mathrm{dim}_{\mathcal{H}}(\mathcal{S}^*(\mf{v}))  \le d$ for every $\mf{v} \in \mathcal{F}$. Moreover, there exist a $d$-dimensional subspace $L \subset \R^N$ and a function $\mf{v} \in \mathcal{F}$ (for some $\alpha \ge 0$) such that $\Sigma(\mf{v}) = L$ and $\mf{v}$ is $\alpha$-homogeneous with respect to any $x_0 \in L$, namely
     \[
     \frac{\mf{v}(x_0+\lambda x)}{\lambda^\alpha} =\mf{v}(x) \quad \text{for all } x_0 \in L \ \text{and} \ \lambda>0.
     \]
     Hence, if we suppose by contradiction that $d=N-1$, up to a rotation we have that $L=\R^{N-1} \times \{0\}$ and that $\mf{v}$ depends only on $x_N$, $\mf{v}(x) = \mf{w}(x_N)$. To reach a contradiction, it is sufficient to show that there is no $\mf{v}$ satisfying all these properties. If $\mf{v}$ is a minimizer of $J_{0,B_2}$, then $\mf{v} \in \cG(B_2)$. If $w_i(1)=0$ (resp. $w_i(-1)=0$), then by homogeneity $v_i \equiv 0$ in $\R^{N-1} \times [0,+\infty)$ (resp. $v_i \equiv 0$ in $\R^{N-1} \times (-\infty,0]$). Since $Z(\mf{v})$ has empty interior, it is necessary that one component, say $v_1$, satisfies $w_1(1)>0$, and another component, say $v_2$, satisfies $w_2(-1)>0$. By homogeneity and by the segregation condition, $\{w_1>0\}  =(0,+\infty)$ and $\{w_2>0\}=(-\infty,0)$.
     Moreover, since $\mf{v} \in \cG(B_2)$, $w_1$ is harmonic on $(0,+\infty)$ and $w_2$ is harmonic on $(-\infty,0)$. Thus, both $w_1$ and $w_2$ are linear and positive in $(0,+\infty)$ and $(-\infty,0)$, respectively, so that $\cO(\mf{v},(x',0))=1$, in contradiction with the fact that $\Sigma(\mf{v}) = \R^{N-1} \times \{0\}$.
     
   If $\mf{v} \in \mathcal{F}$ is a homogeneous minimizer of $J_{\lambda,\Omega}$ for a positive $\lambda$, we can argue exactly as before. The only difference is that $\mf{v} \in \mathcal{T}_\lambda(B_2)$, and hence $w_1''=-\lambda_1$ in $(0,+\infty)$ and $w_2''=-\lambda_2$ in $(-\infty,0)$. By homogeneity, we deduce that $w_1(x_N) = - \lambda_1 x_N^2/2$ and $w_2(x_N) = -\lambda_2 x_N^2/2$, which on the other hand is not possible since $\mf{v} \ge 0$. This contradiction finally gives the desired estimate. 
\end{proof}

\subsection{Regular part of the nodal set}

In this section we prove that the regular nodal set $\mathcal R(\u) = Z(\u) \setminus \Sigma(\u)$ is locally a regular $(N-1)$-dimensional hypersurface. Since $\mathcal R(\u)=\{x \in Z(\u) \ \left| \ \cO(\u,x)=1\right.\}$, and at point of vanishing order $1$ we have a uniform Almgren-type monotonicity formula, we can adapt the argument used in \cite{TaTe12}. We shall consider blow-up sequences with varying centers $x_n$, under the assumption that $\mathcal O(\u,x_n)=1$ for every $n$, $x_n \to x_0$, and $\mathcal O(\u,x_0)=1$. 

\begin{Proposition}\label{T:blow:up=1}
    Let $\u$ be a minimizer of \eqref{e:partition.functions} and $\Lambda=(\lambda_{2,1}(\omega_1),\dots,\lambda_{2,1}(\omega_k))$. Let $\{x_n\} \subset Z(\u)$ be such that $\mathcal O(\u,x_n)=1$ for every $n$, $x_n \to x_0$ and $\mathcal O(\u,x_0)=1$. Then, there exists a sequence $r_n\to 0^+$ such that 
    \[
    \u_{x_n,r_n}(x):=\frac{\u(x_n+r_nx)}{\Big(r_n^{1-N}H(\u,x_n,r_n)\Big)^{1/2}}
 \ \text{ converges to $\u_0 \not \equiv 0$ strongly in }H^1_{\loc}(\R^N) \text{ and in }C^{0,\alpha}_{\loc}(\R^N).
    \]
    Moreover, the limit $\u_0$ is a $1$-homogeneous minimizer of $J_{0,\cdot}$ in $\R^N$; in particular, $\u_0 \in \cG_{\loc}(\R^N)$ and hence, up to a relabelling of the components and to a rotation,
    \be\label{1-hom in G}
    \u_0(x) = c(x_N^+, x_N^-, 0,\dots,0),
    \ee
    for some constant $c>0$.
\end{Proposition}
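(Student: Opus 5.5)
The plan follows the proof of Proposition \ref{T:blow:up<2}, with every estimate made uniform in the centre. By Step 2 of the proof of Theorem \ref{thm: fb}-($a$), the set $\Sigma(\u)$ is relatively closed in $Z(\u)$. Hence we can fix a compact neighbourhood $K$ of $x_0$ and $\delta>0$ such that $\cO(\u,x)=1$ for every $x\in K_\delta\cap Z(\u)$. This is exactly the hypothesis of Lemmas \ref{lem: alm unif} and \ref{lem: doub unif}, and for $n$ large it holds with $x_n\in K$.

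\textbf{Choice of radii.} I would take any $r_n\to0^+$, subject to one constraint. Estimate \eqref{eq:O=1:nondegeneracy} gives, uniformly in $n$, $\|\u\|_{x_n,r}\ge C r^{1+\e}$ for $r<r_0$. Combining this with the uniform Almgren bound $N_1(\u,x_n,r)\le C$ for $r<r_0$ (Lemma \ref{lem: alm unif}) and with \eqref{2731}, one gets $r^{1-N}H(\u,x_n,r)\ge c\,r^{2+2\e}$. Therefore
\[
\frac{r_n^{N+3}}{H(\u,x_n,r_n)}\le C r_n^{2-2\e}\to0,
\]
and the scaling factors $\rho_n=(r_n^{1-N}H(\u,x_n,r_n))^{1/2}$ satisfy $\rho_n/r_n^2\to+\infty$.

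\textbf{Compactness and minimality.} For local $H^1$-bounds I would repeat Lemma \ref{L:bdd<2} with centres $x_n$. The boundary $L^2$ bound on $\partial B_R$ comes from the uniform doubling property, Lemma \ref{lem: doub unif}. The gradient bound comes from \eqref{461}, using the uniform frequency bound together with the uniform nondegeneracy above in place of Lemma \ref{L:O<2:degenerate}. Lemma \ref{L:compactness} together with a diagonal argument then yields convergence $\u_{x_n,r_n}\to\u_0$ in $H^1_{\loc}$ and $C^{0,\alpha}_{\loc}$. The limit is nontrivial, since $\|\u_0\|_{L^2(\partial B_1)}=1$.

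Minimality of $\u_0$ for $J_{0,B_R}$ follows by passing to the limit in the rescaled almost minimality \eqref{eq:alm:min:rescaled}. Proposition \ref{p:almost} applies at every centre of $Z(\u)\cap K$ with constants independent of the centre. Competitors are built through Lemma \ref{L:construction:commpetitor}, which uses the uniform Lipschitz bound on $\u_{x_n,r_n}$. That bound comes from \eqref{eq:lip:estimates} and the local $H^1$-bounds. The $\Lambda$-terms vanish in the limit because $r_n^2/\rho_n\to0$. By Remark \ref{rem: min G T}, $\u_0\in\cG_{\loc}(\R^N)$.

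\textbf{Homogeneity (main obstacle).} This is the delicate step, because the centres move and we cannot simply invoke $N_1(\u,x_0,0^+)$. For fixed $\rho$, the identity in the proof of Proposition \ref{T:blow:up<2} gives $N_0(\u_0,0,\rho)=\lim_n N_1(\u,x_n,r_n\rho)$.

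For the upper bound, Lemma \ref{lem: alm unif} (uniform in $n$) gives, for any fixed $s<r_0$ and $n$ large,
\[
N_1(\u,x_n,r_n\rho)+1\le e^{Cs^{1-\e}}\big(N_1(\u,x_n,s)+1\big).
\]
Letting $n\to\infty$ and using the continuity of $x\mapsto N_1(\u,x,s)$ for fixed $s$ (valid since $H(\u,x_0,s)>0$), we get $N_0(\u_0,0,\rho)+1\le e^{Cs^{1-\e}}(N_1(\u,x_0,s)+1)$. Letting $s\to0$ gives $N_0(\u_0,0,\rho)\le N_1(\u,x_0,0^+)=\cO(\u,x_0)=1$.

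For the lower bound, $0\in Z(\u_0)$ by uniform convergence, and every nodal point of a function in $\cG$ has frequency at least $1$ (Proposition \ref{prop: alm G}-($iii$)). Monotonicity then gives $N_0(\u_0,0,\rho)\ge1$. So the frequency is constantly $1$, and $\u_0$ is $1$-homogeneous by Proposition \ref{prop: alm G}-($i$).

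\textbf{Classification.} A $1$-homogeneous element of $\cG_{\loc}(\R^N)$ vanishing at $0$ has, by Theorem \ref{thm: fb}-($a$) applied to $\u_0$, nodal set of dimension at most $N-1$. Its positivity components are cones on which $u_i$ is harmonic and $1$-homogeneous, so each restriction is linear. Hence each component lives on a half-space, and the empty interior of $Z(\u_0)$ forces exactly two components on complementary half-spaces. Condition ($iii$) of Definition \ref{def G} (equipartition) gives equal slopes, which is \eqref{1-hom in G}.
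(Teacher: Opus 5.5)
The blow-up, compactness, minimality and homogeneity steps follow the paper's route: run Proposition~\ref{T:blow:up<2} with Lemmas~\ref{lem: alm unif} and~\ref{lem: doub unif}. The details you add there are correct. The uniform bound \eqref{eq:O=1:nondegeneracy}, combined with the uniform frequency bound, does give $\rho_n/r_n^2\to+\infty$ along every sequence. Comparing with a fixed scale $s$, through the continuity of $x\mapsto N_1(\u,x,s)$, is a sound way to obtain $N_0(\u_0,0,\rho)\le 1$ with moving centres. The gap is in your final classification paragraph, where two steps fail.

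\emph{First}, the inference ``$u_i$ is harmonic and $1$-homogeneous on a cone, hence linear'' is false for $N\ge 3$. Take any domain $\omega\subset\mathbb S^{N-1}$ whose first Dirichlet eigenvalue equals $N-1$, with positive eigenfunction $\varphi_\omega$. Then $r\varphi_\omega(\theta)$ is positive, harmonic, Lipschitz and $1$-homogeneous in the cone over $\omega$, and vanishes on its boundary. Such $\omega$ need not be hemispheres. Linearity only follows from a two-phase comparison:
\begin{itemize}
\item each positivity cone has spherical section with $\lambda_1=N-1$;
\item by the spherical Faber--Krahn (Sperner) inequality, each section therefore has measure at least $|\mathbb S^{N-1}|/2$;
\item once you know there are at least two disjoint cones, there are exactly two, each of half measure;
\item the equality case (equivalently, the Friedland--Hayman inequality) then forces complementary hemispheres.
\end{itemize}

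\emph{Second}, you never rule out that both half-spaces belong to the \emph{same} component. The function $c\,|x_N|\,\mathbf e_1$ is $1$-homogeneous and lies in $\cG_{\loc}(\R^N)$. Its nodal set has empty interior, and it satisfies the identity in Definition~\ref{def G}-($iii$) at every centre; the paper itself lists $(|x_2|,0,\dots,0)$ as an element of $\cG_{\loc}(\R^2)$. Yet it is not of the form \eqref{1-hom in G}.

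Excluding this case, and likewise a single positivity cone dense in $\R^N$, requires minimality of $\u_0$ for $J_{0,\cdot}$, not just membership in $\cG$. Suppose that near a nodal point only one component is non-zero. Replacing that component by its harmonic extension gives an admissible segregated competitor, so minimality forces the component to be harmonic there. A non-negative harmonic function vanishing at an interior point is identically zero, which contradicts the empty interior of $Z(\u_0)$. Alternatively, use $-\Delta(u_i-\sum_{j\ne i}u_j)\ge 0$ and the strong maximum principle, as in Step 3 of the proof of Theorem~\ref{thm: fb}-($b$).

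Finally, when you invoke Definition~\ref{def G}-($iii$) for equal slopes, use centres off the hyperplane $\{x_N=0\}$. At centres on it, $(a x_N^+, b x_N^-,0,\dots,0)$ satisfies the identity for every pair $a,b>0$. At a centre $t e_N$ with $t\neq0$ and $r>|t|$, the flux through $\{x_N=0\}$ is proportional to $(b^2-a^2)t$, which forces $a=b$.
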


\begin{proof}
    We proceed exactly as in Proposition \ref{T:blow:up<2}, but we use the uniform-in-$x$ monotonicity formula and doubling property, Lemmas \ref{lem: alm unif} and \ref{lem: doub unif}, in place of Lemmas \ref{p:Almgren-below} and \ref{lem: doub}. For this purpose, we stress that by convergence and recalling that $\mathcal R(\u)$ is relatively open in $Z(\u)$, the points $x_0$ and $x_n$ belong to a common compact set $K$ at positive distance from $\Sigma(\u)$. Finally, the fact that $1$-homogeneous elements of $\cG_{\loc}(\R^N)$ are of type \eqref{1-hom in G} follows from the definition, see \cite{TaTe12} for the details.
\end{proof}

To prove the regularity of $\mathcal{R}(\u)$, we rely on the approach introduced in \cite[Section 5]{TaTe12}. We include a brief sketch of the argument here for the sake of completeness, emphasizing the main differences specific to our case.

\begin{proof}[Proof of Theorem \ref{thm: fb},($b$)]

 \emph{Step 1)}  Let us take an open set $\tilde\Omega\subset\Omega$ such that $\mathcal R(\u)\cap \overline{\tilde\Omega} =Z(\u)\cap \overline{\tilde\Omega}$. Using the compactness of the blow-up sequence with varying centers $x_n$ established in Proposition \ref{T:blow:up=1}, one can argue exactly as in \cite[Lemma 5.3]{TaTe12} to prove that $\mathcal R(\u)\cap \tilde\Omega$ satisfies the $(N-1)$-dimensional $(\delta,R)$-Reifenberg flat condition: for any given $0<\delta<1$ there exists $R>0$ such that for every $x\in \mathcal R(\u)\cap \tilde\Omega$ and $0<r<R$ there exists a hyper-plane $H_{x,r}$ containing $x$ such that 
 \[
 d_\mathcal H (\mathcal R(\u)\cap B_r(x), H_{x,r}\cap B_r(x) )\le \delta r,
 \]
 where $d_\mathcal H(\cdot,\cdot)$ denotes the Hausdorff distance.

 \emph{Step 2)}  As a consequence of the previous step, we prove a clean-up property arguing exactly as in \cite[Lemma 5.4]{TaTe12}: given $x_0\in \mathcal R(\u)$, there exists a radius $R_0>0$ such that $\mathcal R(\u)\cap B_{R_0}(x_0) = Z(\u)\cap B_{R_0}(x_0)$, and $B_{R_0}(x_0)\setminus \mathcal R(\u)$ has exactly two connected components $\O_1, \O_2$.

 \emph{Step 3)} We show that there exist two indices $i\neq j$ such that $u_i>0$ in $\O_1$ and $u_j>0$ in $\O_2$. If not, we would have $u_i>0$ in $\O_1 \cup \O_2$ and $u_j\equiv 0$ in $B_{R_0}(x_0)$ for every $j\neq i$. Using Proposition \ref{p:classeS}, it readily follows that
  \[
  -\Delta u_i = \lambda_{2,1}(\omega_i)\ind_{\{u_i>0\}} \ge 0 \quad \text{in } B_{R_0}(x_0).
  \]
  By the strong maximum principle, it follows that $u_i>0$ in $B_{R_0}(x_0)$, which contradicts $x_0\in Z(\u)$. 

 \emph{Step 4)} Using again Proposition \ref{p:classeS}, we see that the function $w:=u_i-u_j$ solves the \emph{unstable two-phase obstacle problem}
 \[
 -\Delta w = \lambda_{2,1}(\omega_i) \ind_{\{w>0\}} -\lambda_{2,1}(\omega_j) \ind_{\{w<0\}}\quad \text{in }B_{R_0}(x_0).
 \]
 By classical Schauder theory, $w \in C^{1,\alpha}_\loc(B_{R_0}(x_0))$ for every $\alpha\in(0,1)$. Arguing as in \cite[Corollary 5.8]{TaTe12} and using that $\mathcal{O}(\u,x_0)=1$ combined with Proposition \ref{T:blow:up=1}, we obtain that $\nabla w(x_0)\neq 0$. Consequently, the Implicit Function Theorem ensures that $\mathcal R(\u)\cap B_{R_0}(x_0)$ is locally a $C^{1,\alpha}$ hypersurface, completing the proof.
\end{proof}

\section{Homogeneous solutions in dimension \texorpdfstring{$N=2$}{}}\label{sec: hom}

The blow-up analysis reveals that at any nodal point a minimizer $\u=(u_1,\dots,u_k)$ of \eqref{e:partition.functions} can look either as a $\gamma$-homogeneous local minimizer of $J_{0,\cdot}$ in $\R^N$, with the additional condition $\gamma \in \{1\} \cup [3/2, 2]$, or as a $2$-homogeneous minimizer of $J_{\lambda,\cdot}$ in $\R^N$, for some positive $\lambda \in \R^k$. Thus, it is natural to wonder whether we have explicit examples of such configurations with $k$ non-trivial components in at least one of these classes, for every integer $k \ge 2$. Here we give a partial answer to this question in dimension $N=2$ (and hence in any dimension).

\subsection{Extremality conditions for \texorpdfstring{$2$}{2}-homogeneous local minimizers of \texorpdfstring{$J_{\lambda,\cdot}$}{}} Let $\lambda \in \R^k$ be positive. If $\u$ is a $2$-homogeneous local minimizer of $J_{\lambda,\cdot}$ in $\R^2$, then as before the following extremality conditions are satisfied:
\begin{enumerate}
    \item [($i$)]  $\u \in \mathcal{T}_{\lambda,\loc}(\R^2)$;
    \item [($ii$)] $Z(\u)$ has empty interior;
    \item [($iii$)] the reflection law holds at regular points of $Z(\u)$:
    given $x_0 \in \mathcal R(\u)$ and denoting by $\nu$ the normal vector to $Z(\u)$ at $x_0$, it holds \[
    \lim_{\substack{x\to x_0\\ x\in \{u_i>0\}} }\D u_i(x)\cdot \nu = -\lim_{\substack{x\to x_0\\ x\in \{u_j>0\}}}  \D u_j(x)\cdot \nu.
    \]
\end{enumerate}
For point ($i$), we refer to Remark \ref{rem: min G T}. Point ($ii$) is a consequence of the fact that local minimizers are non-degenerate; this can be proved as in Theorem \ref{prop: non-deg} (actually, the proof for local minimizers is simpler since we do not have to deal with the normalization condition on the $L^1$ norm). Point ($iii$) can be proved as in Theorem \ref{thm: fb},$(b)$.

A slightly weaker question than the existence of $2$-homogeneous local minimizers of $J_{\lambda,\cdot}$ in $\R^2$ consists in discussing the existence of $\gamma$-homogeneous functions (with $\gamma \in \{1\} \cup [3/2,2]$) satisfying the previous extremality conditions. In what follows we show that, for every $k \ge 3$, there exist multiple $2$-homogeneous admissible profiles with $k$ non-trivial components and at least $5$ connected components.

\begin{proposition}\label{prop: homog}
    Let $k \ge 2$ and $\lambda \in \R^k$ be positive. Then there exist infinitely many $2$-homogeneous functions in $H^1_{\loc}(\R^N,\Sigma_k)$ satisfying the above conditions ($i$)-($iii$). 
\end{proposition}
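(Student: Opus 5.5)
The plan is to construct the profiles explicitly in $\R^2$, where $2$-homogeneous functions reduce to an ODE on the circle. The $N$-dimensional statement then follows by extending the functions trivially in the remaining variables.

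\emph{Step 1: one sector.} Write a $2$-homogeneous component in polar coordinates as $u_i=r^2 g(\theta)$. The equation $-\Delta u_i=\lambda_i$ in its positivity set becomes $g''+4g=-\lambda_i$. A solution that is positive on an interval and vanishes at its endpoints is, up to a rotation,
\[
g(\theta)=A\cos\big(2(\theta-\theta_c)\big)-\frac{\lambda_i}{4},\qquad A=\frac{\lambda_i}{4\cos(2\varphi)},
\]
with half-opening $\varphi\in(0,\pi/4)$. Each positivity sector therefore has opening $2\varphi<\pi/2$, so at least $5$ sectors are needed to cover the plane. On the boundary rays of this sector, the modulus of the normal derivative of $u_i$ equals
\[
r\,|g'|=2rA\sin(2\varphi)=\frac{r\lambda_i}{2}\tan(2\varphi).
\]

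\emph{Step 2: gluing.} Fix an integer $m\ge5$ and cyclically assign labels $c(1),\dots,c(m)\in\{1,\dots,k\}$ to $m$ consecutive sectors. Adjacent labels must differ, and all $k$ labels should be used; for $k=2$ we take $m$ even and alternate the labels. The reflection law on each common ray requires the slopes $\lambda_{c(h)}\tan(2\varphi_h)$ of neighbouring sectors to coincide. Going around the circle, this forces all of them to equal a common value $s>0$, that is,
\[
2\varphi_h=\arctan\big(s/\lambda_{c(h)}\big).
\]
The sectors close up exactly when
\[
F(s):=\sum_{h=1}^m \arctan\big(s/\lambda_{c(h)}\big)=2\pi .
\]
The function $F$ is continuous and strictly increasing from $0$ to $m\pi/2>2\pi$, since $m\ge5$. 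Hence there is a unique solution $s$. Placing the sectors consecutively with these openings defines $\u=(u_1,\dots,u_k)$, with $u_i$ given in sector $h$ by the formula of Step 1 when $i=c(h)$ and $u_i=0$ there otherwise. Distinct values of $m$ give different numbers of nodal rays, so the resulting profiles are geometrically distinct and not related by rigid motions. This produces infinitely many profiles.

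\emph{Step 3: verification.} The constructed $\u$ is non-negative, segregated, piecewise smooth, continuous across the rays, and hence locally Lipschitz. Each $u_i$ solves $-\Delta u_i=\lambda_i$ in $\{u_i>0\}$. The nodal set $Z(\u)$ is a finite union of rays, so it has empty interior, which is condition ($ii$). The regular points of $Z(\u)$ are the rays without the origin. There the normal derivatives from the two sides have equal modulus $rs/2$ and opposite signs with respect to a fixed normal, which is condition ($iii$).

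\emph{Main obstacle: the domain variation identity.} The remaining point, and the main one, is item ($iii$) of Definition \ref{def T}. I would prove the stronger inner-variation identity of Lemma \ref{l:inner} for every $\xi\in C^\infty_c(\R^2,\R^2)$. Integrate by parts on each closed sector, where $u_i$ is smooth up to the boundary away from the origin, and excise a small ball $B_\eps$ around $0$. The interior terms vanish because $-\Delta u_i=\lambda_i$ in the sector. Since $u_i=0$ and $\nabla u_i=(\partial_\nu u_i)\nu$ on the rays, the boundary term on each ray reduces to $-\tfrac12|\partial_\nu u_i|^2\,\xi\cdot\nu$. On a common ray the two outward normals are opposite and the values $|\partial_\nu u|^2$ coincide by Step 2, so these contributions cancel in pairs. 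The contribution of $\partial B_\eps$ is $O(\eps^3)$ because $|\nabla\u|=O(r)$, and it vanishes as $\eps\to0$. Finally, choosing $\xi_\eps=x\rho_\eps$ as in the proof of Lemma \ref{l:inner} yields the radial Pohozaev identity required in item ($iii$) of Definition \ref{def T}, at $x_0=0$ and at every other nodal point. This gives condition ($i$) and completes the proof.
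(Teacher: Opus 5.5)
Your proposal is correct and follows essentially the paper's construction: the same reduction to $g''+4g=-\lambda_i$ on sectors of opening $<\pi/2$, the same slope matching $\lambda_i\tan A_i=\text{const}$, and the same monotone $\arctan$-sum closing equation. The paper instead repeats the block of labels $1,\dots,k$ periodically $m$ times and cites \cite{SoaTer18} for $k=2$, whereas you allow any cyclic labeling with distinct neighbours (you should also require $m\ge k$ so that all labels can be used), and your integration-by-parts check of the domain variation identity, where the ray terms $-\tfrac12|\partial_\nu u|^2\,\xi\cdot\nu$ cancel pairwise, makes explicit what the paper leaves to ``direct computations''.
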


\begin{proof}
In case $k=2$, it is sufficient to consider a $2$-homogeneous sign-changing solution of 
\[
-\Delta w = \lambda_1 \ind_{\{w>0\}}-\lambda_2  \ind_{\{w<0\}} \qquad \text{in $\R^2$}, 
\]
and set $(u_1,u_2) = (w^+,w^-)$. As proved in \cite{SoaTer18}, there exist infinitely many such solutions, any solution is $C^{1,\alpha}$ regular, and its nodal set has empty interior. This implies that $(u_1,u_2)$ satisfies ($i$)-($iii$) above.

Thus, we can focus on the case $k \ge 3$. By definition and direct computations, $\u$ is $2$-homogeneous and satisfies ($i$)-($iii$) if: 
\begin{enumerate}
    \item [($a$)] $\u(r,\theta) =r^2 (\varphi_1(\theta),\dots,\varphi_k(\theta))$, where $\varphi_i$ are such that: if  $(a_i,b_i) \subset [0,2\pi)=\R/2\pi\Z$ is a positivity interval of $\varphi_i$, then
\be\label{761}
\begin{cases}
    \varphi_i''+4\varphi_i =-\lambda_i, \quad \varphi_i >0 & \text{in $(a_i,b_i)$}\\
    \varphi_i(a_i)=0=\varphi_i(b_i).
\end{cases}
\ee
    \item [($b$)] $\{\varphi_i>0\} \cap \{\varphi_j>0\} = \emptyset$ and $\bigcup_i \overline{\{\varphi_i>0\}}=\R/2\pi\Z$.
    \item [($c$)] If $(a_i,b_i)$ and $(a_j,b_j)$ are positivity intervals of $\varphi_i$ and $\varphi_j$, respectively, with $i \neq j$, and $b_i=a_j$, then $-\varphi_i'(b_i^-) =\varphi_j'(a_j^+)$.
\end{enumerate}
Problem \eqref{761} has the explicit solution
\[
\varphi_i(\theta) = \frac{\lambda_i}{4}\left(\frac{\cos(2\theta-a_i-b_i)}{\cos(b_i-a_i)}-1\right),
\]
which is positive only when $0<b_i-a_i<\pi/2$. 

Let $m \ge 1$ be an integer such that $k \pi/2>2\pi/m$. We define $\u$ in the circular sector $\{r\ge 0, \theta \in [0,2\pi/m]\}$ so that $\{\varphi_1>0\}=(0,a_1)$, $\{\varphi_2>0\}=(a_1,a_2)$, \ldots, $\{\varphi_k>0\} = (a_{k-1},2\pi/m)$, where $0=a_0<a_1<a_2<\dots<a_{k-1}<a_k=2\pi/m$ are determined by condition ($c$) above, and satisfy the constraint $0<a_i-a_{i-1}<\pi/2$. Introducing the new unknowns $A_i=a_i-a_{i-1} \in (0,\pi/2)$, simple computations yield the system
\be\label{860}
\lambda_i \tan A_i = const. = \alpha \quad \iff \quad A_i = \arctan\left(\frac{\alpha}{\lambda_i}\right),
\ee
where the value $\alpha$ must satisfy the scalar equation
\be\label{861}
\sum_{i=1}^k \arctan\left(\frac{\alpha}{\lambda_i}\right) = \sum_{i=1}^k A_i = \frac{2\pi}{m}.
\ee
Thus, our construction is complete if we can find $\alpha>0$ satisfying \eqref{861}. Denoting by $\Phi(\alpha)$ the left hand side in \eqref{861}, we observe that $\Phi$ is strictly increasing in $(0,+\infty)$, with $\Phi(0^+) = 0$ and $\Phi(+\infty) = k\pi/2$; since by assumption $k\pi/2>2\pi/m$, there exists a unique $\alpha>0$ solving \eqref{861}, and hence a uniquely determined $\u$ in the circular sector $\{r\ge 0, \theta \in [0,2\pi/m]\}$ satisfying the previous properties. At this point, we define $\u$ in the whole plane by periodicity: 
\[
\u(r,\theta)=\u\left(r,\theta-\frac{2\pi j}{m}\right),\quad \text{ for } j=1,\dots,m \text{ and } \theta\in\left[\frac{2\pi j}{m},\frac{2\pi (j+1)}{m}\right].
\]
In doing this, we have to check that this extension still satisfies ($a$)-($c$) above (in particular, $\varphi_1(0^+) = \varphi_k(2\pi/m^-)$). This is already included in \eqref{860}, so that $\u$ satisfies all the extremality conditions.

Since this construction can be performed for any $m \ge 1$ such that $k\pi/2>2\pi/m$, and there are infinitely many such integers, the proof is complete.
\end{proof}

\subsection{Extremality conditions for \texorpdfstring{$\gamma$}{2}-homogeneous local minimizers of \texorpdfstring{$J_{0,\cdot}$}{}}\label{sec: hom harm} Assuming that $\u$ is a $\gamma$-homogeneous local minimizer of $J_{0,\cdot}$ in $\R^2$, then the following extremality conditions are satisfied:
\begin{enumerate}
    \item [($i$)]$\u \in \cG_{\loc}(\R^2)$;
    \item [($ii$)] $Z(\u)$ has empty interior;
    \item [($iii$)] the reflection law holds at regular points of $Z(\u)$.
\end{enumerate}
Indeed, the fact that local minimizers belong to $\cG_{\loc}(\R^2)$ has already been observed in Remark \ref{rem: min G T}; and at this point both ($ii$) and ($iii$) are consequences of ($i$), as proved in \cite{TaTe12}. 

Such extremal configurations can be classified completely in dimension $N=2$ (we refer to \cite{TaTe12} and \cite[Theorem 1.4]{SoTe15} for more details). Clearly, the linear profiles $(x_2^+,x_2^-,0,\dots,0)$ and $(|x_2|,0,\dots,0)$ are admissible $1$-homogeneous profiles in $\cG_{\loc}(\R^2)$ (however, by minimality, it is not difficult to see that the only admissible configuration arising as a blow-up profile in Proposition \ref{T:blow:up<2} is the former one). Up to rotations, multiplications by a positive quantity, and relabelling of the components, these are the only linear profiles in $\cG_{\loc}(\R^2)$. A $3/2$-homogeneous function in $\cG_{\loc}(\R^2)$, with exactly 3 non-trivial components, is represented by
\[
u_1(r,\theta) = r^{3/2}\sin\left(\frac{3}{2}\theta\right) \ind_{\{0<\theta<2\pi/3\}}(\theta), \quad u_2(r,\theta) = u_1\left(r,\theta-\frac{2\pi}{3}\right), \quad u_3(r,\theta) = u_1\left(r,\theta-\frac{4\pi}{3}\right),
\]
with $u_j \equiv 0$ for $j \neq 1,2,3$; finally, we have some admissible $2$-homogeneous configurations in $\cG_{\loc}(\R^2)$, such that if $u_i \not \equiv  0$, then each connected component of its positivity set is a cone of opening $\pi/2$, say $\{r>0, \theta \in (0,\pi/2)\}$, and $u_i(r,\theta) = r^2 \sin(2\theta)$. Any such configuration has at most $4$ non-trivial components. 

If $k \ge 5$, then there is no $\gamma$-homogeneous extremal profile in $\cG_{\loc}(\R^2)$ with $\gamma < 5/2$ and at least five non-trivial components.

\begin{remark}
    The preceding discussion reveals a fundamental difference between the configurations
that may arise as blow-up limits when the limiting profile belongs to
$\mathcal{G}_{\mathrm{loc}}(\mathbb{R}^N)$ and those arising when the limiting
profile belongs to $\mathcal{T}_{\lambda,\mathrm{loc}}(\mathbb{R}^N)$ for some
positive $\lambda$, at least in dimension $N=2$. Indeed, in the former case
the profiles are essentially classifiable and, thanks to the bound on the degree
of homogeneity, they have at most four non-trivial components; in the latter case,
instead, there exist infinitely many extremal profiles with $k$ non-trivial
components, for every $k$.
\end{remark}

\section{Connectedness of the optimal partition in Problem B}\label{sec: conn}

In this last section, we prove Proposition \ref{prop: connect}. In particular, given a minimizer $\u=(u_1,\dots,u_k)$ of \eqref{pbB}, under the condition $\lambda_1=\dots=\lambda_k=\bar \lambda>0$ and assuming that $\{\varphi_i>0\}\subset \pa \Omega$ is connected for every $i$, then $\omega_i:=\{u_i>0\}$ is connected for every $i$. 

\begin{proof}[Proof of Proposition \ref{prop: connect}] 
By contradiction, let us assume that $\omega_1=\{u_1>0\} = \omega^* \cup \omega^{**}$, where $\omega^*$ and $\omega^{**}$ are two non-empty disjoint open sets.

\emph{Case 1)} Assume that $A_1:=\operatorname{int}(\overline{\omega^*\cup \omega^{**}} )$ is connected, and let $\bar u_1$ be the minimizer of the functional 
\be\label{eq:conn:I}
I_{A_1}(v) := \int_{A_1} (|\nabla v|^2 - 2 \bar \lambda v) \, dx, \text{ for } v \in H^{1}(A_1),\,  v = \varphi_1 \text{ on } \partial \Omega \cap \overline{A_1} \text{ and } v=0 \text{ on } \partial  A_1\setminus (\partial \Omega \cap \overline{A_1}).
\ee
Any minimizer is non-negative, and actually strictly positive by the strong maximum principle. Thus, in particular, $u_1$ is not a minimizer, and
\[
\int_{A_1} (|\nabla \bar u_1|^2 - 2\bar\lambda \bar u_1) \, dx < \int_{A_1} (|\nabla u_1|^2 - 2\bar\lambda u_1) \, dx.
\]
Consequently, the competitor $(\bar u_1, u_2, \dots, u_k)$ in the minimization problem \eqref{pbB} has an energy strictly less than the minimizer $\u$, which is a contradiction.

\emph{Case 2)} Assume that $\operatorname{int}(\overline{\omega^*\cup \omega^{**}} )$ is not connected. First, since $\{\varphi_1>0\}$ is connected, it follows that either $\{\varphi_1>0\} \subset \overline{\omega^*}$ or $\{\varphi_1>0\} \subset \overline{\omega^{**}}$. Let us suppose that the first condition holds. Since $\omega^{**}$ is an $N$-dimensional open set, its boundary has dimension at least $N-1$, which means that it contains some regular points. Therefore there exist another index $j \in \{2,\dots,k\}$, say $j=2$, and a connected component of $\omega_2$, which we denote by $\omega_2'$, such that $\omega^{**}$ is adjacent to $\omega_2'$; in particular, $A_2 := \operatorname{int}(\overline{\omega^{**}\cup \omega_2'} )$ is connected (it could be that $\omega_2'=\omega_2$, but we cannot exclude that also $\omega_2$ and any other $\omega_i$ are disconnected). At this point we define a function $\bar u_2 \in H^1(\Omega)$ so that $\bar u_2|_{A_2}$ is a minimizer of \eqref{eq:conn:I} in $A_2$, under the boundary condition $\bar u_2 = \varphi_2$ on $\partial \Omega \cap \overline{A_2}$ and $\bar u_2=0$ on $\partial  A_2\setminus (\partial \Omega \cap \overline{A_2})$; and, outside of $A_2$, we set $\bar u_2=u_2$ in $\Omega \setminus A_2$. Then, arguing as in Case 1, by the strong maximum principle (and exploiting the fact that $\lambda_1=\lambda_2=\bar \lambda$) it follows that
\[
\int_{A_2} (|\nabla \bar u_2|^2 - 2\bar\lambda \bar u_2) \, dx < \int_{\omega^{**}} (|\nabla u_1|^2 - 2\bar\lambda u_1) \, dx 
+
\int_{\omega_2'} (|\nabla u_2|^2 - 2\bar\lambda u_2) \, dx,
\]
and hence the competitor $(u_1|_{\omega^*}, \bar u_2, u_3, \dots, u_k)$ has strictly lower energy than the original minimizer $\u$, which is a contradiction. Thus, the proof is complete.
\end{proof}

\section*{Acknowledgment} 
The authors are research fellows of Istituto Nazionale di Alta Matematica INDAM group GNAMPA. G.F. and G.T. are supported by the GNAMPA project "Struttura fine e regolarit\`a in problemi variazionali non-lineari" codice CUP E53C25002010001. 

\medskip

\noindent \textbf{Data availability:} No data were used for the research described in the article.

\medskip

\noindent \textbf{Conflict of interest:} The authors declare that they have no conflict of interest.

\end{document}